\providecommand{\SG@adddot}{.}
\def\tospace#1{\@tospace#1 \tospace@delimiter}
\def\@tospace#1 #2\tospace@delimiter{#1}
\def\MR#1{\edef\MR@help{{http://www.ams.org/mathscinet-getitem?mr=\tospace{#1}}{\tospace{#1}}}%
\expandafter\href\MR@help\SG@adddot}
\renewcommand*{\backref}[1]{}
\renewcommand*{\backrefalt}[4]{%
	\ifcase #1 %
	\or
	  Cited page #2.
	\else
	  Cited pages #2.
	\fi
}
\newcommand{\Proba}{\mathbb{P}}
\newcommand{\E}{\mathbb{E}}
\newcommand{\N}{\mathbb{N}}
\newcommand{\R}{\mathbb{R}}
\newcommand{\dd}{\mathop{}\!\mathrm{d}}
\newcommand{\norm}[1]{\left\| #1 \right\|}
\newcommand{\st}{\,:\,}
\newcommand{\boF}{\mathcal{F}}
\newcommand{\boE}{\mathcal{E}}
\newcommand{\boB}{\mathcal{B}}
\newcommand{\boC}{\mathcal{C}}
\newcommand{\hatT}{\mathcal{L}}
\newcommand{\Z}{\mathbb{Z}}
\newcommand{\breakdots}{,\allowbreak\dotsc,\allowbreak}
\newcommand{\ic}{\mathbf{i}}
\DeclareMathOperator{\Lip}{Lip}
\DeclareMathOperator{\Card}{Card}
\newcommand{\coloneqq}{\mathrel{\mathop:}=}
\newcommand{\eqqcolon}{=\mathrel{\mathop:}}
\newtheorem{thm}{Theorem}[section]
\newtheorem{prop}[thm]{Proposition}
\newtheorem{definition}[thm]{Definition}
\newtheorem{lem}[thm]{Lemma}
\newtheorem{cor}[thm]{Corollary}
\newtheorem*{prop*}{Proposition}
\theoremstyle{definition}
\newtheorem{rmk}[thm]{Remark}
\numberwithin{equation}{section}
\title{Optimal concentration inequalities for dynamical systems}
\author{Jean-Ren\'e Chazottes, S\'ebastien Gou\"ezel}
\address{CPHT, CNRS UMR 7644, Ecole polytechnique, 91128 Palaiseau Cedex, France}
\email{chazottes@cpht.polytechnique.fr}
\address{IRMAR, CNRS UMR 6625,
Universit\'e de Rennes 1, 35042 Rennes, France}
\email{sebastien.gouezel@univ-rennes1.fr}
\date{May 11, 2012}
\begin{document}

\begin{abstract}
For dynamical systems modeled by a Young tower with exponential
tails, we prove an exponential concentration inequality for all
separately Lipschitz observables of $n$ variables. When tails are
polynomial, we prove polynomial concentration inequalities. Those
inequalities are optimal. We give some applications of such
inequalities to specific systems and specific observables.
\end{abstract}

\maketitle

\setcounter{tocdepth}{1} \tableofcontents

\section{Introduction}

Let $X$ be a metric space. A function $K$ on $X^n$ is separately
Lipschitz if, for all $i$, there exists a constant $\Lip_i(K)$ with
  \begin{equation*}
  |K(x_0,\dotsc, x_{i-1},x_i,x_{i+1},\dotsc, x_{n-1})
  - K(x_0,\dotsc, x_{i-1},x'_i,x_{i+1},\dotsc, x_{n-1})|
  \leq \Lip_i(K) d(x_i, x'_i),
  \end{equation*}
for all points $x_1,\dotsc,x_n, x'_i$ in $X$.

Consider a stationary process $(Z_0,Z_1,\dotsc)$ taking values in
$X$. We say that this process satisfies an exponential concentration
inequality if there exists a constant $C$ such that, for any
separately Lipschitz function $K(x_0,\dotsc, x_{n-1})$, one has
  \begin{equation}\label{eq_exp-concentration}
  \E( e^{K(Z_0,\dotsc, Z_{n-1}) - \E(K(Z_0,\dotsc, Z_{n-1}))}) \leq e^{C\sum_{j=0}^{n-1} \Lip_j(K)^2}.
  \end{equation}
One should stress that this inequality is valid for all $n$ (i.e.,
the constant $C$ does not depend on the number of variables one is
considering).
An important consequence of such an inequality is a control on the
deviation probabilities: for all $t>0$,
  \[
  \Proba\Bigl(|K(Z_0,\dotsc, Z_{n-1}) - \E(K(Z_0,\dotsc, Z_{n-1}))|>t\Big)
  \leq 2 e^{-\frac{t^2}{4C\sum_{j=0}^{n-1} \Lip_j(K)^2}}.
  \]
This inequality follows from the inequality $\Proba(Y>t)\leq
e^{-\lambda t} \E(e^{\lambda Y})$ ($\lambda>0$) with $Y=K(Z_0,\dotsc,
Z_{n-1}) - \E(K(Z_0,\dotsc, Z_{n-1}))$,  then we use
inequality~\eqref{eq_exp-concentration} and optimize over $\lambda$
by taking $\lambda = t/(2C\sum_{j=0}^{n-1} \Lip_j(K)^2)$.

In some cases, it is not reasonable to hope for such an exponential
inequality. One says that $(Z_0,Z_1,\dotsc)$ satisfies a polynomial
concentration inequality with moment $Q\geq 2$ if there exists a
constant $C$ such that, for any separately Lipschitz function
$K(x_0,\dotsc, x_{n-1})$, one has
  \begin{equation}
  \label{eq_LQ_concentration}
  \E \Bigl(|K(Z_0,\dotsc, Z_{n-1}) - \E(K(Z_0,\dotsc, Z_{n-1}))|^Q\Bigr)
  \leq C\left(\sum_{j=0}^{n-1} \Lip_j(K)^2\right)^{Q/2}.
  \end{equation}

An important consequence of such an inequality is a control on the
deviation probabilities: for all $t>0$,
  \begin{equation}
  \label{eq_weak_LQ_concentration}
  \Proba( |K(Z_0,\dotsc, Z_{n-1}) - \E(K(Z_0,\dotsc, Z_{n-1}))| > t)
  \leq C t^{-Q} \left(\sum_{j=0}^{n-1} \Lip_j(K)^2\right)^{Q/2}.
  \end{equation}
The inequality~\eqref{eq_weak_LQ_concentration} readily follows
from~\eqref{eq_LQ_concentration} and the Markov inequality. However,
it is weaker in general. We will say that $(Z_0,Z_1,\dotsc)$
satisfies a weak $L^Q$ concentration inequality
if~\eqref{eq_weak_LQ_concentration} holds for any separately
Lipschitz function $K$.

For instance, if $Z_0,Z_1,\dotsc$ is an i.i.d.~ process, then it
satisfies an exponential concentration inequality if $Z_i$ is bounded
\cite[Page 68]{ledoux_book}, a polynomial concentration inequality
with moment $Q\geq 2$ if $Z_i\in L^Q$ \cite{boucheron_et_al}, and a
weak $L^Q$ concentration inequality if $\Proba(|Z_i|>t) \leq C
t^{-Q}$ (while we could not locate a proper reference in the
literature, this follows easily from classical martingale techniques
and a weak $L^Q$ Rosenthal-Burkholder inequality -- see
Theorem~\ref{thm_rosenthal_weak} below).

\bigskip

Our main goal in this article is to study processes coming from
dynamical systems: we consider a map $T$ on a metric space $X$, and
an invariant probability measure $\mu$. Under suitable assumptions,
we wish to show that the process $(x, Tx,\dotsc)$ (where $x$ is
distributed following $\mu$) satisfies concentration inequalities.
Equivalently, we are interested in the concentration properties of
the measure $\mu_n$ on $X^n$ given by $\dd\mu_n(x_0,\dotsc,
x_{n-1})=\dd\mu(x_0) \delta_{x_1=Tx_0}\dotsm
\delta_{x_{n-1}=Tx_{n-2}}$. This is not a product measure but, if the
map $T$ is sufficiently mixing, one may expect that $T^k(x)$ is more
or less independent of $x$ is $k$ is large, making the process
$(x,Tx,\dotsc)$ look like an independent process to some extent.

Such questions have already been considered in the literature. In
particular, \cite{collet_concentration_BV} proves that a
(non-necessarily Markov) piecewise uniformly expanding map of the
interval satisfies an exponential concentration inequality.
Polynomial concentration inequalities (with moment $2$, also called
Devroye inequalities) have been proved in less expanding situations
(exponential Young towers -- including H\'enon maps -- in
\cite{chazottes_collet_schmitt0}, intermittent map with parameter
close enough to $0$ in \cite{chazottes_intermittent}). Our goal is to
prove optimal concentration inequalities for the same kind of
systems. In particular, we will prove that Young towers with
exponential tails satisfy an exponential concentration inequality,
and that in Young towers with polynomial tails one can get polynomial
concentration with a moment directly related to the tails of the
return time on the basis of the tower.

Concentration inequalities are a tool to bound systematically the
fluctuations of `complicated' observables of the form
$K(x,Tx,\dotsc,T^{n-1}x)$. For instance, the function $K$ can have a
complicated analytic expression or can be implicitly defined (e.g.~as
an optimization problem). If we are able to get a good estimate of
the Lipschitz constants, we can apply the concentration inequality we
have at our disposal. Various examples of observables have been
studied in \cite{collet_concentration_BV,
chazottes_collet_schmitt,chazottes_intermittent}. Since we establish
here optimal concentration inequalities, this improves automatically
the bounds previously available for these observables. We shall state
explicitly some of the new results which can be obtained.

\bigskip

\textbf{Outline of the article:} The proofs we will use for different
classes of systems are all based on classical martingale arguments.
It is enlightening to explain them in the simplest possible
situation, subshifts of finite type endowed with a Gibbs measure. We
will do so in Section~\ref{sec_subshift}. The following 4 sections
are devoted to proofs of concentration inequalities in various kinds
of dynamical systems with a combinatorial nature, namely Young towers
with exponential tails in Section~\ref{sec_exponential}, with
polynomial tails in Section~\ref{sec_non_uniform} (the invertible
case is explained in Section~\ref{sec_non_uniform_invertible}), and
with weak polynomial tails in Section~\ref{sec_non_uniform_weak}.
Several applications to concrete dynamical systems and to specific
observables are described in Section~\ref{sec_applications}. Finally,
an appendix is devoted to the proof of a particularly technical
lemma.

In this paper, the letter $C$ denotes a constant that can vary from
line to line (or even on a single line).

\section{Subshifts of finite type}

\label{sec_subshift}

In this section, we describe a strategy to prove concentration
inequalities. It is very classical, uses martingales, and was for
instance implemented for dynamical systems in
\cite{collet_concentration_BV} and for weakly dependent processes in
\cite{rio_concentration}. Our proofs for more complicated systems
will also rely on this strategy. However, it is enlightening to
explain it in the most simple situation, subshifts of finite type.

\subsection{Unilateral subshifts of finite type}

Let $X\subset \Sigma^\N$ be the state space of a topologically mixing
one-sided subshift of finite type, with an invariant Gibbs measure
$\mu$, and the combinatorial distance $d(x, y)=\beta^{s(x, y)}$ where
$\beta<1$ is some fixed number and $s(x,y)$ is the separation time of
$x$ and $y$, i.e., the minimum number $n$ such that $T^n x$ and $T^n
y$ do not belong to the same element of the Markov partition. Writing
$x=(x_0 x_1\dotsc)$ and $y=(y_0 y_1 \dotsc)$, then $s(x,y)=\inf\{n
\st x_n\not=y_n\}$.

\begin{thm}
\label{thm_subshift}
The system $(X,T,\mu)$ satisfies an exponential concentration
inequality.
\end{thm}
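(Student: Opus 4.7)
I plan to follow the classical martingale approach used by Rio~\cite{rio_concentration} for weakly dependent processes and by Collet et al.~\cite{collet_concentration_BV} for piecewise expanding maps. View $K_n := K(x, Tx, \dotsc, T^{n-1}x)$ as a function of the symbol sequence $x = (x_0, x_1, \dotsc)$ and, relative to the filtration $\mathcal{F}_m = \sigma(x_0, \dotsc, x_m)$, write
\begin{equation*}
K_n - \E(K_n) = \sum_{m \geq 0} D_m, \qquad D_m := \E(K_n \mid \mathcal{F}_m) - \E(K_n \mid \mathcal{F}_{m-1}).
\end{equation*}
Once an $L^\infty$ bound $\|D_m\|_\infty \leq c_m$ is established with $\sum_m c_m^2 \leq C \sum_i \Lip_i(K)^2$, the standard Azuma--Hoeffding step (iterating $\E(e^{D_m} \mid \mathcal{F}_{m-1}) \leq e^{c_m^2/2}$ and taking expectations) delivers \eqref{eq_exp-concentration}.

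\textbf{Bounding the martingale differences.} Since $D_m$ has vanishing conditional mean given $\mathcal{F}_{m-1}$, its sup norm is controlled by the oscillation in $x_m$ of $\E(K_n \mid x_0, \dotsc, x_m)$. I would split this into two contributions. The \emph{direct} part comes from the fact that altering the symbol $x_m$ moves each $T^i x$ (for $i \leq m$) by a point at combinatorial distance $\beta^{m-i}$, contributing at most $\sum_{i \leq m} \Lip_i(K)\, \beta^{m-i}$ to the oscillation. The \emph{indirect} part is that the conditional Gibbs measure on the future symbols also depends on $x_m$; the Gibbs property, equivalently the spectral gap of the transfer operator on Lipschitz functions, damps this dependence exponentially, so that the conditional expectation of a function sensitive to coordinate $i > m$ carries a Lipschitz constant in $x_m$ of order $\Lip_i(K)\, \beta^{i-m}$. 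Altogether one should obtain
\begin{equation*}
\|D_m\|_\infty \leq C \sum_{i=0}^{n-1} \Lip_i(K)\, \beta^{|m-i|}.
\end{equation*}

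\textbf{Conclusion and main obstacle.} The right-hand side is, up to a constant, the convolution of $(\Lip_i(K))_i$ with the $\ell^1$ kernel $(\beta^{|k|})_k$, so Young's inequality $\|a * b\|_{\ell^2} \leq \|a\|_{\ell^2} \|b\|_{\ell^1}$ yields $\sum_m \|D_m\|_\infty^2 \leq C \sum_i \Lip_i(K)^2$, closing the argument. The substantive work lies in the indirect contribution: one needs a quantitative statement that, for any Lipschitz $\psi$ depending on symbols at position $\geq i > m$, the oscillation in $x_m$ of $\E(\psi \mid x_0, \dotsc, x_m)$ is bounded by $C \beta^{i-m} \|\psi\|_{\Lip}$. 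For Gibbs measures this is a classical consequence of the exponential contraction of the transfer operator on Lipschitz functions; it is precisely this step that must be replaced, at considerable cost, by tower-based arguments in the less uniformly mixing systems treated in the following sections.
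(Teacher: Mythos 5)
Your strategy is correct and will give the exponential concentration inequality, but it uses the filtration in the opposite direction from the paper. You build a forward Doob martingale relative to $\mathcal{F}_m = \sigma(x_0, \dotsc, x_m)$, while the paper takes the decreasing $\sigma$-fields $\boF_p$ of events measurable with respect to $(x_j)_{j \geq p}$, that is $\boF_p = T^{-p}\boB$, and the reverse martingale $D_p = \E(K \mid \boF_p) - \E(K \mid \boF_{p+1})$. The reverse choice is the dynamically natural one: conditioning on $T^p x$ amounts to averaging over its preimages with weights given by the transfer operator, so the key estimate (Lemma~\ref{lem_controle_Kp_fullshift}) is a statement about equidistribution of preimages and yields the one-sided bound $|D_p| \leq C\sum_{j\leq p}\rho^{p-j}\Lip_j(K)$. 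Your forward version produces the two-sided weight $\rho^{|m-i|}$ you describe, and the $i>m$ half must be extracted from the dependence on $x_m$ of the conditional law $\nu_{x_0\dotsc x_m}$ of the tail symbols. This does work: $\nu_a - \nu_b$ (the laws given $x_m=a$ versus $x_m=b$, same earlier symbols) equals $(h_a - h_b)\dd\mu$ for Lipschitz densities with $\int(h_a-h_b)\dd\mu = 0$, one applies $\norm{\hatT^k(h_a - h_b)}_\infty \leq C\rho^k$, and one telescopes $K$ across coordinates exactly as the paper does with the functions $f_i$ of~\eqref{eq_define_fi}. Both routes close identically via Cauchy--Schwarz (your Young convolution inequality) and Hoeffding--Azuma. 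Two small precision points: the rate governing the indirect contribution is the spectral-gap rate $\rho$ of $\hatT$ on Lipschitz functions, not the metric base $\beta$ --- the paper is careful to distinguish them, and even enlarges $\rho$ to absorb a polynomial factor when it sums over scales; and the per-coordinate claim "oscillation in $x_m$ of order $\Lip_i(K)\,\rho^{i-m}$" really requires the telescoping decomposition first, since $K$ is simultaneously Lipschitz in all variables. The paper's reverse filtration is also the one that transfers verbatim to the Young tower arguments of the later sections, where iterating the transfer operator remains tractable but the conditional law of the future given the past is not.
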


Fix a separately Lipschitz function $K(x_0,\dotsc, x_{n-1})$. We
consider it as a function on $X^\N$ depending only on the first $n$
coordinates (therefore, we will write $\Lip_i(K)=0$ for $i\geq n$).
We endow $X^{\N}$ with the measure $\mu_\infty$ limit of the $\mu_N$
when $N\to\infty$. On $X^\N$, let $\boF_p$ be the $\sigma$-algebra of
events depending only on the coordinates $(x_j)_{j\geq p}$ (this is a
decreasing sequence of $\sigma$-fields). We want to write the
function $K$ as a sum of reverse martingale differences with respect
to this sequence. Therefore, let $K_p = \E(K|\boF_p)$ and
$D_p=K_{p}-K_{p+1}$. The function $D_p$ is $\boF_p$-measurable and
$\E(D_p|\boF_{p+1}) =0$. Moreover, $K-\E(K) = \sum_{p\geq 0} D_p$.

The main point of the proof is to get a good bound on $D_p$:
\begin{lem}
\label{lem_close}
There exist $C>0$ and $\rho < 1$ such that, for any $p$, one has
  \begin{equation*}
  |D_p|\leq C\sum_{j=0}^p \rho^{p-j} \Lip_j(K).
  \end{equation*}
\end{lem}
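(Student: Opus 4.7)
The strategy is to use the Ruelle--Perron--Frobenius transfer operator $\mathcal{L}$ associated with the normalized Gibbs potential $\phi$ (so that $\mathcal{L} 1 = 1$). Viewing $K$ as a function of its first coordinate $y$ via $K(y) = K(y, Ty, \dotsc, T^{n-1}y)$, the $T$-invariance of $\mu$ gives the factorization $K_p = \mathcal{L}^p K \circ T^p$. Setting $z = T^p x$ and using $\sum_{w \st Tw = Tz} e^{\phi(w)} = 1$, one obtains
\[
D_p(x) = \sum_{w \st Tw = Tz} e^{\phi(w)} \bigl(\mathcal{L}^p K(z) - \mathcal{L}^p K(w)\bigr),
\]
so the task reduces to controlling $|\mathcal{L}^p K(z) - \mathcal{L}^p K(w)|$ when $z$ and $w$ differ only at their first symbol.

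For such a pair, the preimages under $T^p$ admit a natural bijection $y \leftrightarrow y'$ that keeps the first $p$ symbols $(a_0, \dotsc, a_{p-1})$ fixed and swaps the symbol at position $p$ from $z_0$ to $w_0$. Splitting the difference along this bijection produces an ``easy'' piece in which the common weights $e^{S_p\phi(y)}$ are kept and one compares $K(y) - K(y')$: since $T^k y$ and $T^k y'$ differ at position $p-k$ for $k \leq p$ and coincide for $k > p$, the separately Lipschitz property gives directly $|K(y) - K(y')| \leq \sum_{k=0}^{p} \Lip_k(K) \beta^{p-k}$, which is already of the form demanded by the lemma. The remaining ``hard'' piece involves the distortion $e^{S_p\phi(y)} - e^{S_p\phi(y')}$, which is only \emph{bounded} (by the standard bounded distortion estimate) rather than small, and this is the real obstacle.

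To get around it, I would combine the zero-sum identity $\sum (e^{S_p\phi(y)} - e^{S_p\phi(y')}) = \mathcal{L}^p 1(z) - \mathcal{L}^p 1(w) = 0$ with a coordinate-by-coordinate telescoping of $K$ in the free symbols: fixing a reference preimage $y^* = (a_0^*, \dotsc, a_{p-1}^*, w_0, z_1, \dotsc)$ and toggling $a_j$ from $a_j^*$ to $a_j$ one coordinate at a time, one writes $K(y') - K(y^*) = \sum_{j=0}^{p-1} G_j(a_0, \dotsc, a_j)$ where $G_j$ depends only on the first $j+1$ free coordinates and satisfies $\|G_j\|_\infty \leq \sum_{k=0}^{j} \Lip_k(K) \beta^{j-k}$ by the same separately Lipschitz argument applied to a single-coordinate change. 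The hard piece thereby reduces to a sum of contributions of the form $|\mathcal{L}^p G_j(z) - \mathcal{L}^p G_j(w)|$, each of which is bounded by $C \rho_0^{p-j} \|G_j\|_\infty$ for some $\rho_0 < 1$ via the exponential decay of correlations for the Gibbs measure --- i.e., the spectral gap of $\mathcal{L}$ on a suitable H\"older class, with the H\"older exponent chosen small enough to absorb the $\beta^{-\alpha j}$ factor produced by $\|G_j\|_{\text{H\"ol}}$. Summing the resulting geometric double sum in $j$ and $k$, and combining with the easy piece, yields $|D_p| \leq C \sum_{j=0}^{p} \rho^{p-j} \Lip_j(K)$ for some $\rho \in (0,1)$ depending only on $\beta$ and the spectral gap of $\mathcal{L}$.
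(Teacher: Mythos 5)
Your strategy shares the paper's backbone --- decompose $D_p$, telescope so that each increment depends only on the first $j$ coordinates, apply the spectral gap of $\hatT$, and sum the resulting geometric double series --- but the telescope is set up differently, and as written the argument only goes through on the full shift, not on a general topologically mixing SFT. The paper's telescope (inside the proof of Lemma~\ref{lem_controle_Kp_fullshift}) replaces the \emph{arguments} $T^i y$ of $K$ one slot at a time by a fixed reference point $x_*\in X$, producing the Lipschitz functions $f_i$ of~\eqref{eq_define_fi} with $|f_i|\leq\Lip_i(K)$ and $\Lip(f_i)\leq C\sum_{k\leq i}\Lip_k(K)\beta^{i-k}$; the spectral gap is applied to $f_i$ directly, the resulting sup bound controls the oscillation of $K_p$ in its first variable, and $D_p$ is obtained by averaging over the siblings of $x_{p+1}$. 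Because $x_*$ is a point of $X$ rather than a symbol, no admissibility constraint ever arises, and because $K_p(x_p,\dotsc)$ and $K_p(x'_p,\dotsc)$ are each compared with the same integral, no coupling of $T^{-p}z$ with $T^{-p}w$ is ever needed. You instead telescope over the \emph{symbolic coordinates} $a_0,\dotsc,a_{p-1}$ of the preimage, which is a natural alternative that produces the same estimates, with the cylinder-measurable $G_j$'s playing the role of the $f_i$'s.

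The two places your version needs repair for a general SFT are exactly the two places the paper's organization sidesteps. First, the ``natural bijection'' between $T^{-p}z$ and $T^{-p}w$ when $z_0\neq w_0$ need not exist: admissibility of $a_{p-1}\to z_0$ and of $a_{p-1}\to w_0$ are different constraints, and the two preimage sets may have different cardinalities. Your ``easy piece'' $\sum_y g^{(p)}(y)\bigl(K(y)-K(y')\bigr)$ is therefore not well defined in general, and the leftover unmatched preimages contribute something of order $\sup|K|$, which is far too large. (The paper does pair preimages when estimating $\Lip(f_i)$, but only for $z,z'$ in the \emph{same} $1$-cylinder, where the pairing is automatic; the comparison across different $1$-cylinders is delegated entirely to the spectral gap.) Second, the intermediate sequences $(a_0,\dotsc,a_j,a_{j+1}^*,\dotsc,a_{p-1}^*,w_0,z_1,\dotsc)$ of your coordinate-by-coordinate telescope may violate the transition rule at $a_j\to a_{j+1}^*$, and the base point may violate it at $a_{p-1}^*\to w_0$. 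Both issues are standard and repairable, but they are genuine gaps in the proof as stated; the paper's choice of telescoping over the \emph{slots} of $K$, with a reference \emph{point} $x_*\in X$ rather than a reference symbol, is made precisely to avoid them.
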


We then use the Hoeffding-Azuma inequality (see e.g.~\cite[Page
33]{milman_schechtman} or \cite[Page 68]{ledoux_book}), saying that
for such a sum of martingale increments,
  \begin{equation*}
  \E(e^{\sum_{p=0}^{P-1} D_p}) \leq e^{\sum_{p=0}^{P-1} \sup |D_p|^2}.
  \end{equation*}
The Cauchy-Schwarz inequality gives
  \begin{equation*}
  \left( \sum_{j=0}^p \rho^{p-j} \Lip_j(K) \right)^2
  \leq \left( \sum_{j=0}^p \rho^{p-j} \Lip_j(K)^2 \right) \cdot
  \left( \sum_{j=0}^p \rho^{p-j} \right)
  \leq C \sum_{j=0}^p \rho^{p-j} \Lip_j(K)^2.
  \end{equation*}
Summing over $p$, we get $\sum_{p=0}^{P-1} \sup |D_p|^2 \leq C\sum_j
\Lip_j(K)^2$. Using the Hoeffding-Azuma inequality at a fixed index
$P$, and then letting $P$ tend to infinity, we get $\E(e^{\sum D_p})
\leq e^{C\sum \Lip_j(K)^2}$, which is the desired exponential
concentration inequality since $\sum D_p = K-\E(K)$.

It remains to prove Lemma~\ref{lem_close}. Let $g$ denote the inverse
of the jacobian of $T$, and $g^{(k)}$ the inverse of the jacobian of
$T^k$. Let $\hatT$ denote the transfer operator associated to the map
$T$, defined by duality by $\int u\cdot v\circ T\dd\mu = \int \hatT u
\cdot v \dd\mu$. It can be written as $\hatT u(x)=\sum_{Ty=x}g(y)
u(y)$. In the same way, $\hatT^k u(x)=\sum_{T^k y =x} g^{(k)}(y)
u(y)$. One can define a Markov chain by jumping from a point $x$ to
one of its preimages $y$ with the probability $g(y)$, then $\hatT$ is
simply the Markov operator corresponding to this Markov chain. In
particular,
  \begin{align*}
  K_p(x_p, x_{p+1},\dotsc)
  & =\E(K|\boF_p) (x_p,x_{p+1},\dotsc) = \E(K(X_0,\dotsc, X_{p-1}, x_p,\dotsc) |X_p=x_p)
  \\ & = \sum_{T^p(y)=x_p} g^{(p)}(y)K(y,\dotsc, T^{p-1}y, x_p,\dotsc).
  \end{align*}
To prove that $D_p$ is bounded, i.e., $K_p$ is close to $K_{p+1}$,
one should show that this quantity does not depends too much on
$x_p$. The preimages of $x_p$ under $T^p$ equidistribute in the
space, therefore one should be able to show that $K_p$ is close to an
integral quantity. This is done in the following lemma.
\begin{lem}
\label{lem_controle_Kp_fullshift}
We have
  \begin{equation*}
  \left|K_p(x_p,\dotsc) -\int K(y,\dotsc, T^{p-1}y, x_p,\dotsc)\dd \mu(y)\right|
  \leq C \sum_{j=0}^{p-1} \Lip_j(K) \rho^{p-1-j},
  \end{equation*}
where $C>0$ and $\rho<1$ only depend on $(X,T)$.
\end{lem}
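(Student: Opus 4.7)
The plan is to identify $K_p$ as the image, under the transfer operator, of a function of a single variable, and to deduce the estimate from the spectral gap of $\hatT$ on Lipschitz functions.

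First, the explicit formula for $K_p$ established just before the statement of the lemma gives $K_p(x_p,x_{p+1},\dotsc) = \hatT^p \psi_p(x_p)$, where $\psi_p(y) := K(y,Ty,\dotsc,T^{p-1}y,x_p,x_{p+1},\dotsc)$ is regarded as a function of the single variable $y$, with the tail $(x_p,x_{p+1},\dotsc)$ held fixed. Since $\mu$ is $T$-invariant, $\int \psi_p \dd\mu = \int \hatT^p \psi_p \dd\mu$, so the lemma reduces to controlling the pointwise deviation of $\hatT^p \psi_p$ from its mean.

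The next ingredient is the Ruelle--Perron--Frobenius theorem for the Gibbs measure $\mu$: $\hatT$ acts quasi-compactly on the space of Lipschitz functions on $(X,d)$, its essential spectral radius is at most $\beta$, and $1$ is the only eigenvalue of modulus $1$ (and is simple). Consequently there exist $C>0$ and $\rho \leq \beta$ with $\rho<1$ such that $\|\hatT^n f - \int f \dd\mu\|_\infty \leq C \rho^n \Lip(f)$ for every Lipschitz $f\colon X \to \R$.

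It then remains to estimate $\Lip(\psi_p)$. Using the separately Lipschitz hypothesis on $K$ together with the elementary bound $d(T^j y, T^j y') \leq \beta^{-j} d(y,y')$ (immediate from $s(T^j y, T^j y') \geq s(y,y') - j$), we find
\[
|\psi_p(y) - \psi_p(y')| \leq \sum_{j=0}^{p-1} \Lip_j(K) d(T^j y, T^j y') \leq \biggl(\sum_{j=0}^{p-1} \Lip_j(K) \beta^{-j}\biggr) d(y,y'),
\]
so $\Lip(\psi_p) \leq \sum_{j=0}^{p-1} \Lip_j(K) \beta^{-j}$. Combining this with the decay-of-correlations estimate of the previous paragraph and the elementary bound $\rho^p \beta^{-j} \leq \rho^{p-j} \leq \rho^{p-1-j}$ (valid because $\rho \leq \beta$ and $\rho \leq 1$), we obtain
\[
\Bigl|K_p(x_p,\dotsc) - \int K(y,\dotsc,T^{p-1}y,x_p,\dotsc) \dd\mu(y)\Bigr| \leq C \sum_{j=0}^{p-1} \Lip_j(K) \rho^{p-1-j},
\]
as desired.

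The only genuinely delicate point in this scheme is that the spectral-gap rate $\rho$ can be taken to satisfy $\rho \leq \beta$; on an SFT this is essentially automatic from the Lasota--Yorke inequality, but the analogue of this transfer-operator estimate is precisely what becomes the main technical difficulty in the non-uniformly hyperbolic settings treated in the subsequent sections.
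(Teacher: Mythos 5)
Your reduction $K_p(x_p,\dotsc)=\hatT^p\psi_p(x_p)$ with $\psi_p(y)=K(y,Ty,\dotsc,T^{p-1}y,x_p,\dotsc)$ is correct, and the spectral gap is the right tool, but the final step contains a genuine gap: you need $\rho\leq\beta$, and this is \emph{not} true in general. The Lasota--Yorke (Doeblin--Fortet) inequality bounds the \emph{essential} spectral radius of $\hatT$ on $\boC$ by $\beta$, but says nothing about the isolated eigenvalues in the annulus $\beta<|\lambda|<1$. For a generic Gibbs potential those eigenvalues are present and their moduli are determined by the potential, independently of the parameter $\beta$ chosen in the metric $d(x,y)=\beta^{s(x,y)}$. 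If $\beta$ is taken small, the correlation-decay rate $\rho$ will typically exceed $\beta$, and then $\rho^p\,\Lip(\psi_p)\leq\rho^p\sum_{j<p}\Lip_j(K)\beta^{-j}$ is useless: the terms with $j$ close to $p-1$ give $\rho^p\beta^{-(p-1)}=(\rho/\beta)^{p-1}\rho\to\infty$. Applying $\hatT^p$ directly to a function whose Lipschitz constant grows like $\beta^{-(p-1)}$ cannot beat this blow-up unless $\rho\leq\beta$.

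The paper's proof sidesteps this by telescoping in the coordinates: it writes $K_p(x_p,\dotsc)=\sum_{i=0}^{p-1}\hatT^{p-i}f_i(x_p)+K(x_*,\dotsc,x_*,x_p,\dotsc)$, where $f_i$ is the contribution of replacing only the $i$-th coordinate by $x_*$. Because $f_i$ involves a difference along a single coordinate, one gets $|f_i|\leq\Lip_i(K)$ and $\Lip(f_i)\leq C\sum_{j\leq i}\Lip_j(K)\beta^{i-j}$: \emph{bounded} in $i$, with a decaying tail. Then the spectral gap gives $\|\hatT^{p-i}f_i-\int f_i\|_\boC\leq C\rho^{p-i}\sum_{j\leq i}\Lip_j(K)\beta^{i-j}$. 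Here one can enlarge $\rho$ to $\max(\rho,\beta)$ without loss, absorb $\beta^{i-j}$ into $\rho^{i-j}$, and sum over $i$ to get a bound of the form $\sum_j\Lip_j(K)(p-j)\rho^{p-j}\leq C'\sum_j\Lip_j(K)(\rho')^{p-1-j}$ for any $\rho'\in(\rho,1)$. The key is that each $\hatT^{p-i}$ acts on a function with controlled norm, so only the condition $\rho\geq\beta$ is ever needed, which is harmless. Your one-shot approach requires the opposite (and false) inequality $\rho\leq\beta$.

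(One could in principle salvage your argument by passing to a weaker H\"older metric $d_{\beta'}$ with $\beta'$ close enough to $1$ that $\rho\leq\beta'$, noting that a $d_\beta$-Lipschitz function is $d_{\beta'}$-Lipschitz when $X$ is bounded, but you would then have to redo the estimates in the weaker metric, and your write-up does not do this.)
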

This lemma implies in particular that $K_p(x_p, x_{p+1},\dotsc) -
K_p(x'_p,x_{p+1},\dotsc)$ is bounded by $C \sum_{j=0}^{p} \Lip_j(K)
\rho^{p-j}$. Averaging over the preimages $x'_p$ of $x_{p+1}$, we get
the same bound for $D_p(x_p, x_{p+1},\dotsc)$, proving
Lemma~\ref{lem_close}.

\begin{proof}[Proof of Lemma~\ref{lem_controle_Kp_fullshift}]
The equidistribution of the Markov chain starting from $x_p$ is
formulated most conveniently in terms of the transfer operators,
which act on functions of one variable. Therefore, we will eliminate
the variables $x_0,\dotsc, x_{p-1}$ one after the other. Let us fix a
point $x_*$ in $X$, we decompose $K_p$ as
  \begin{align*}
  K_p(x_p,\dotsc)&=\sum_{i=0}^{p-1} \sum_{T^p(y)=x_p} g^{(p)}(y)
  (K(y,\dotsc, T^i y, x_*,\dotsc,x_*, x_p,\dotsc)
  \\
  &\hphantom{=\sum_{i=1}^{p-1} \sum_{T^p(y)=x_p} g^{(p)}(y)(}
  - K(y,\dotsc, T^{i-1}y, x_*,\dotsc, x_*, x_p,\dotsc))\\ &
  \ \ +K(x_*,\dotsc,x_*, x_p,\dotsc).
  \end{align*}
For fixed $i$, we may group together those points $y\in T^{-p}(x_p)$
that have the same image under $T^i$, splitting the sum
$\sum_{T^p(y)=x_p}$ as $\sum_{T^{p-i}(z)=x_p} \sum_{T^i(y)=z}$. Since
the jacobian is multiplicative, one has $g^{(p)}(y) = g^{(i)}(y)
g^{(p-i)}(z)$. Let us define a function
  \begin{equation}
  \label{eq_define_fi}
  \begin{split}
  f_i(z)&=\sum_{T^i y = z}g^{(i)}(y)(K(y,\dotsc, T^i y, x_*,\dotsc,x_*, x_p,\dotsc)
  \\& \hphantom{= \sum_{T^i y = z}g^{(i)}(y)(}
  - K(y,\dotsc, T^{i-1}y, x_*,\dotsc, x_*, x_p,\dotsc))
  \\ & = \sum_{T^i y = z}g^{(i)}(y) H(y,\dotsc, T^i y).
  \end{split}
  \end{equation}
Denoting by $\hatT$ the transfer operator (which satisfies $\hatT^k
f(x) = \sum_{T^k(z)=x} g^{(k)}(z) f(z)$), we obtain
  \begin{equation*}
  K_p(x_p,\dotsc)=\sum_{i=0}^{p-1} \hatT^{p-i} f_i(x_p) + K(x_*,\dotsc,x_*, x_p,\dotsc).
  \end{equation*}

The function $H$ is bounded by $\Lip_i(K)$, hence $|f_i|\leq
C\Lip_i(K)$ (since $\sum_{T^i y =z} g^{(i)}(y)=1$ by invariance of
the measure). To estimate the Lipschitz norm of $f_i$, we write
  \begin{equation}
  \label{wpoiuxcvjl,mlkxwc}
  \begin{split}
  f_i(z)-f_i(z')= {} &\sum (g^{(i)}(y)-g^{(i)}(y')) H(y,\dotsc, T^i y)
  \\& +
  \sum g^{(i)}(y')(H(y,\dotsc, T^i y)-H(y',\dotsc, T^i y')),
  \end{split}
  \end{equation}
where $z$ and $z'$ are two points in the same partition element, and
their respective preimages $y$, $y'$ are paired according to the
cylinder of length $i$ they belong to. A distortion control gives
$|g^{(i)}(y)-g^{(i)}(y')| \leq C g^{(i)}(y) d(z,z')$, hence the first
sum is bounded by $C\Lip_i(K) d(z,z')$. For the second sum,
substituting successively each $T^j y$ with $T^j y'$, we have
  \begin{equation*}
  |H(y,\dotsc, T^i y)-H(y',\dotsc, T^i y')| \leq 2\sum_{j=0}^i \Lip_j(K) d(T^j y, T^j y') \leq
  2\sum_{j=0}^i \Lip_j(K) \beta^{i-j} d(z,z').
  \end{equation*}
Summing over the different preimages of $z$, we deduce that the
Lipschitz norm of $f_i$ is bounded by $C\sum_{j=0}^i \Lip_j(K)
\beta^{i-j}$.

Let $\boC$ be the space of Lipschitz functions on $X$, with its
canonical norm $\norm{f}_\boC=\sup|f|+\Lip(f)$. The operator $\hatT$
has a spectral gap on $\boC$: there exist $C>0$ and $\rho<1$ such
that $\norm{\hatT^k f - \int f \dd\mu}_\boC \leq C \rho^k
\norm{f}_\boC$. We get $\norm{\hatT^{p-i} f_i - \int f_i \dd\mu}_\boC
\leq C \rho^{p-i} \sum_{j=0}^i \Lip_j(K) \beta^{i-j}$. This bound in
$\boC$ implies in particular a bound for the supremum. Increasing
$\rho$ if necessary, we can assume $\rho\geq \beta$. Summing those
bounds, one obtains
  \begin{align*}
  \Bigl|K_p(x_p,\dotsc) &- \sum_{i=0}^{p-1} \int f_i \dd\mu- K(x_*,\dotsc,x_*, x_p,\dotsc)\Bigr|
  \\
  &\leq C\sum_{i=0}^{p-1} \rho^{p-i} \sum_{j=0}^i \Lip_j(K) \rho^{i-j}
  \leq C\sum_{j=0}^{p-1} \Lip_j(K) \rho^{p-j} (p-j)
  \\ &
  \leq C'\sum_{j=0}^{p-1} \Lip_j(K) (\rho')^{p-j},
  \end{align*}
for any $\rho'\in (\rho,1)$.

Finally, when one computes the sum of the integrals of $f_i$, there
are again cancelations, leaving only $\int K(y,\dotsc, T^{p-1}y,
x_p,\dotsc)\dd \mu(y)$.
\end{proof}

\subsection{Bilateral subshifts of finite type}

\label{subsec_bilateral}

We consider now $X_{\Z}\subset \Sigma^{\Z}$ the state space of a
topologically mixing bilateral subshift of finite type, together with
an invariant Gibbs measure $\mu_{\Z}$. For two points $x=(\dotsc
x_{-1}x_0 x_1\dotsc)$ and $y=(\dotsc y_{-1}y_0 y_1 \dotsc)$ in
$X_{\Z}$, let $s_{\Z}$ be their bilateral separation time, i.e.,
$\inf \{|n| \st x_n\not=y_n\}$, and define a distance
$d_{\Z}(x,y)=\beta^{s_{\Z}(x,y)}$ for some $\beta < 1$. We denote a
function on $X_{\Z}^n$ by $K_{\Z}(x_0\breakdots x_{n-1})$, to
emphasize the dependence both on the past and the future.

\begin{thm}
\label{thm_subshift_bilateral}
The system $(X_{\Z}, T, \mu_{\Z})$ satisfies an exponential
concentration inequality.
\end{thm}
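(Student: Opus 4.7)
My plan is to redo the martingale argument of Theorem~\ref{thm_subshift} with a two-sided filtration reflecting the bilateral structure. Write $x_0 \in X_{\Z}$ as $(y_k)_{k \in \Z}$, and treat $F(x_0) = K_{\Z}(x_0, Tx_0, \dotsc, T^{n-1} x_0)$ as a single function on $X_{\Z}$. Introduce the decreasing family of $\sigma$-algebras $\mathcal{G}_p = \sigma(y_k : k \geq p)$ for $p \in \Z$. Topological mixing of the Gibbs measure forces $\mathcal{G}_p$ to decrease to the trivial $\sigma$-algebra as $p \to +\infty$, while $\mathcal{G}_p$ tautologically increases to the full $\sigma$-algebra as $p \to -\infty$. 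Setting $F_p = \E(F \mid \mathcal{G}_p)$ and $D_p = F_p - F_{p+1}$, the telescoping sum $\sum_{p=-N}^{N-1} D_p = F_{-N} - F_N$ converges to $F - \E(F)$ in $L^2$ as $N \to \infty$.

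The main task is to bound $\sup|D_p|$ in two regimes. For $p \geq 0$, the argument parallels Lemmas~\ref{lem_close}--\ref{lem_controle_Kp_fullshift}: after disintegrating $\mu_{\Z}$ over the tail $\sigma$-algebra $\mathcal{G}_p$, the conditional expectation $F_p$ can be written as a sum of iterates of the unilateral transfer operator $\hatT$ applied to auxiliary Lipschitz functions (playing the role of the $f_i$ in~\eqref{eq_define_fi}), and the spectral gap on $\boC$ yields a decay $\rho^{p-j}$ for the future slots $j < p$. The past slots $j \geq p$ contribute $\beta^{j-p} \Lip_j(K_{\Z})$ directly from the Lipschitz hypothesis and the bilateral distance. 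Choosing $\rho \geq \beta$, one obtains
\begin{equation*}
|D_p| \leq C \sum_{j=0}^{n-1} \rho^{|p-j|} \Lip_j(K_{\Z}), \qquad p \geq 0.
\end{equation*}
For $p < 0$ the situation is simpler: $\mathcal{G}_p$ already contains $y_0, \dotsc, y_{n-1}$, and $D_p$ reflects only the perturbation of revealing one additional past coordinate $y_p$, which moves $T^i x_0$ at position $p - i$ only. Together with a bounded distortion factor coming from the conditional Gibbs measure, this yields
\begin{equation*}
|D_p| \leq C \sum_{j=0}^{n-1} \beta^{j-p} \Lip_j(K_{\Z}), \qquad p < 0.
\end{equation*}

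Squaring these bounds, applying Cauchy-Schwarz exactly as in the proof of Theorem~\ref{thm_subshift}, and summing both contributions over $p \in \Z$ gives $\sum_p \sup |D_p|^2 \leq C \sum_j \Lip_j(K_{\Z})^2$. The Hoeffding-Azuma inequality applied to the truncated martingale, followed by $N \to \infty$, then delivers the exponential concentration for $K_{\Z} - \E(K_{\Z})$. I expect the main obstacle to be the $p \geq 0$ regime, where the bilateral counterpart of Lemma~\ref{lem_controle_Kp_fullshift} requires a careful disintegration of $\mu_{\Z}$ over the future coordinates and a verification that the Lipschitz-norm estimates on the auxiliary functions survive the conditional expectation; the $p < 0$ bound, by contrast, is essentially a one-line consequence of the definition of $d_{\Z}$ and the Gibbs regularity.
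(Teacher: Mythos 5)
Your proposal is a valid alternative route, but it differs genuinely from the paper's argument, so a comparison is warranted.

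The paper does \emph{not} redo the martingale analysis on $X_\Z$. Instead it reduces Theorem~\ref{thm_subshift_bilateral} to Theorem~\ref{thm_subshift} by an approximation: one shifts $K_\Z$ forward in time by $N$ steps (setting $K_N(x_0,\dotsc,x_{n+N-1}) = K_\Z(x_N,\dotsc,x_{n+N-1})$), then composes with a map $\Phi_N$ that replaces the past of each argument by a synthetic, legally admissible past built from a fixed reference and the zeroth symbols of the preceding arguments. The resulting $\tilde K_N = K_N\circ\Phi_N$ depends only on the futures, so Theorem~\ref{thm_subshift} applies directly; the only work is showing $\Lip_i(\tilde K_N) \leq \sum_{j\geq i}\Lip_j(K_N)\beta^{j-i}$, Cauchy--Schwarz, and letting $N\to\infty$. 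This costs a small bookkeeping lemma but recycles all the hard analysis from the one-sided case verbatim. Your approach keeps the original $K_\Z$ and instead changes the filtration: you condition directly on the two-sided symbolic tail $\mathcal{G}_p = \sigma(y_k : k\geq p)$ on $X_\Z$, producing a two-sided martingale $\sum_{p\in\Z}D_p$, and you establish increment bounds of the shape $C\sum_j \rho^{|p-j|}\Lip_j(K_\Z)$. The $p<0$ half really is essentially free from the definition of $d_\Z$ together with a total-variation bound on the change in the conditional Gibbs measure (using that the past coordinates $(y_k)_{k<p}$ influence each $T^j x_0$ only at positions $< p-j < 0$). The $p\geq 0$ half is the genuine work: you must re-derive the telescoping/transfer-operator argument of Lemma~\ref{lem_controle_Kp_fullshift} in a setting where the conditional measure of $(y_k)_{k<p}$ given the future is a $g$-measure chain and where the auxiliary functions $f_i$ now carry the known future symbols as parameters. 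This is plausible — the one-sided transfer operator and its spectral gap on $\boC$ are exactly the right tool — but it is a nontrivial re-implementation rather than a corollary, and you still need the analogue of the paper's ``legal past'' device to avoid forbidden transitions when the SFT is not a full shift. In exchange, your route is self-contained and yields explicit pointwise bounds $|D_p|\leq C\sum_j\rho^{|p-j|}\Lip_j(K_\Z)$ on all increments, which can be useful elsewhere; the paper's route is shorter and front-loads all the analysis into the already-proved one-sided theorem.

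One point worth making explicit in your write-up: conditioning on $T^p x_0$ is useless on $X_\Z$ since $T$ is invertible there, so the choice of filtration $\mathcal{G}_p = \sigma(y_k : k\geq p)$ — a strictly smaller $\sigma$-algebra than $\sigma(T^p x_0)$ — is the essential new ingredient, and it is precisely what restores the averaging that drives the spectral-gap argument.
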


This is stronger than Theorem~\ref{thm_subshift}, which proves this
statement for functions $K_{\Z}(x_0\breakdots x_{n-1})$ depending
only on the future $(x_i)_0^\infty$ of each variable. We will deduce
Theorem~\ref{thm_subshift_bilateral} from this statement by an
approximation argument, by sending everything far away in the future.

\begin{proof}
Let us first assume that $X_{\Z}$ is the full shift. We fix a
function $K_{\Z}(x_0,\dotsc, x_{n-1})$ depending both on the past and
future of the variables. For $N\in \N$, we define $K_N(x_0 \breakdots
x_{n+N-1})=K_{\Z}(x_N,\dotsc,x_{n+N-1})$. Thanks to the invariance of
the measure, it is equivalent to prove concentration inequalities for
$K_{\Z}$ or $K_N$.

Let us now define a function $\Phi_N : X_{\Z}^{n+N} \to X_{\Z}^{n+N}$
depending only of the future of the variables, and let us write
$\tilde K_N=K_N\circ \Phi_N$. Since this function only depends on the
future, Theorem~\ref{thm_subshift} applies to it.

We set $\Phi_N(x_0,\dotsc,x_{n+N-1})=(y_0,\dotsc,y_{n+N-1})$, where
the $y_i$ are defined inductively as follows. First, let us choose an
arbitrary past $(p)_{-\infty}^{-1}$, and let
$y_0=((p)_{-\infty}^{-1}, (x_0)_0^\infty)$: it only depends on the
future of $x_0$. If $y_0,\dotsc, y_{i-1}$ are already defined, we let
$y_i=((y_{i-1})_{-\infty}^0, (x_i)_0^\infty)$. In other words,
  \begin{equation}
  \label{defyinaif}
  y_i=((p)_{-\infty}^{-1}, (x_0)_0, (x_1)_0,\dotsc, (x_{i-1})_0, (x_i)_0^\infty),
  \end{equation}
with an origin laid on $(x_i)_0$. This defines the function $\Phi_N$,
only depending on the future of the points.

Let us study the Lipschitz constants of $\tilde K_N=K_N\circ \Phi_N$.
If we fix $x_j$ for $j\not=i$ and vary $x_i$, then we change $y_j$
for $j\geq i$, at its coordinate with index $-(j-i)$. Therefore,
  \begin{equation*}
  \Lip_i(\tilde K_N) \leq \sum_{j\geq i} \Lip_j(K_N) \beta^{j-i}.
  \end{equation*}
With Cauchy-Schwarz inequality, we get $\sum \Lip_i(\tilde K_N)^2
\leq C \sum \Lip_i(K_N)^2 = C\sum \Lip_i(K_{\Z})^2$, for some
constant $C$. Applying Theorem~\ref{thm_subshift} to $\tilde K_N$ and
changing variables by $x'=T^N x$, we obtain
  \begin{multline*}
  \int e^{\tilde K_N(T^{-N}x',\dotsc, T^{-1}x', x',\dotsc, T^{n-1}x')} \dd \mu_{\Z}(x')
  \\
  \leq e^{\int \tilde K_N(T^{-N}x',\dotsc, T^{-1}x', x',\dotsc, T^{n-1}x')\dd \mu_{\Z}(x')}
  e^{C\sum_{i=0}^{n-1} \Lip_i(K_{\Z})^2}.
  \end{multline*}
By construction, the function $\tilde K_N(T^{-N}x',\dotsc, T^{-1}x',
x',\dotsc, T^{n-1}x')$ converges to $K_{\Z}(x' \breakdots T^{n-1}x')$
when $N$ tends to infinity. Hence, the previous equation gives the
desired exponential concentration.

When $X_{\Z}$ is not the full shift, there is an additional
difficulty: one can not define $\Phi_N$ as above, since a point
defined in~\eqref{defyinaif} might use forbidden transitions. We
should therefore modify the definition of $\Phi_N$ as follows. For
any symbol $a$ of the alphabet, we fix a legal past $p(a)$ of $a$. We
define $\Phi_N(x_0,\dotsc,x_{N+n-1}) = (y_0,\dotsc, y_{N+n-1})$ by
$y_0=(p( (x_0)_0), (x_0)_0^\infty)$ (this point is admissible). Then,
if the transition from $(x_{i-1})_0$ to $(x_i)_0$ is permitted, we
let $y_i=((y_{i-1})_{-\infty}^0, (x_i)_0^\infty)$, and otherwise we
let $y_i=( p( (x_i)_0), (x_i)_0^\infty)$. Therefore, the points $y_i$
only use permitted transitions. The rest of the argument goes through
without modification.
\end{proof}

\section{Uniform Young towers with exponential tails}

\label{sec_exponential}

There are two different definitions of Young towers, given
respectively in \cite{lsyoung_annals} and \cite{lsyoung_recurrence}.
The difference is on the definition of the separation time: in the
first definition, one considers that the dynamics is expanding at
every iteration, while in the second definition one considers that
the dynamics is expanding only when one returns to the basis of the
tower. Therefore, there is less expansion with the second definition
than with the first one, making it more difficult to handle. We will
say that Young towers in the first sense are uniform, while Young
towers in the second sense are non-uniform. In this section, we work
with the (easier) first definition, which turns out to be the most
interesting when dealing with exponential tails. Here is the formal
definition of a uniform Young tower: it is a space $\Delta$
satisfying the following properties.

\begin{enumerate}
\item This space is partitioned into subsets
    $\Delta_{\alpha,\ell}$ (for $\alpha\in \N$ and $\ell\in
    [0,\phi(\alpha)-1]$, where $\phi$ is an integer-valued return
    time function). The dynamics sends bijectively
    $\Delta_{\alpha,\ell}$ on $\Delta_{\alpha,\ell+1}$ if $\ell <
    \phi(\alpha) - 1$, and $\Delta_{\alpha,\phi(\alpha)-1}$ on
    $\Delta_0\coloneqq \bigcup_\alpha \Delta_{\alpha,0}$.
\item The distance is given by $d(x,y)=\beta^{s(x,y)}$ where
    $\beta<1$ and $s(x,y)$ is the separation time for the whole
    dynamics, i.e., the first $n$ such that $T^n x$ and $T^n y$
    are not in the same element of the partition.
\item There is an invariant probability measure $\mu$ such that
    the inverse $g$ of its jacobian satisfies $|g(x)/g(y)-1|\leq
    C d(Tx, Ty)$ for any $x$ and $y$ in the same element of the
    partition.
\item We have $\gcd(\phi(\alpha)\st \alpha\in \N) = 1$ (i.e., the
    tower is aperiodic).
\end{enumerate}

When the return time function $\phi$ has exponential tails, i.e.,
there exists $c_0>0$ with $\int_{\Delta_0} e^{c_0 \phi} \dd \mu <
\infty$, we say that the tower has exponential tails. We will write
$h(x)=\ell$ if $x\in \Delta_{\alpha,\ell}$: this is the height of the
point in the tower. For $x\in \Delta$, we will also denote by $\pi x$
its projection in the basis, i.e., the unique point $y\in \Delta_0$
such that $T^{h(x)}(y)=x$.

\begin{thm}
\label{thm_exponential}
Let $(\Delta,T,\mu)$ be a uniform Young tower with exponential tails.
It satisfies an exponential concentration inequality: there exists
$C>0$ such that, for any $n\in \N$, for any separately Lipschitz
function $K(x_0,\dotsc, x_{n-1})$,
  \begin{equation}
  \label{main_ineq}
  \int e^{K(x,Tx,\dotsc, T^{n-1}x)} \dd \mu (x) \leq e^{\int K(x,\dotsc, T^{n-1}x)\dd \mu(x)}
  e^{C\sum_{i=0}^{n-1} \Lip_i(K)^2}.
  \end{equation}
\end{thm}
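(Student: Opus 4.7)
My plan is to adapt the martingale strategy used for subshifts in Section~\ref{sec_subshift} to the Young tower setting. The crucial new input is that, under the exponential tails assumption, the transfer operator $\hatT u(x)=\sum_{Ty=x} g(y) u(y)$ enjoys a spectral gap on an appropriate Banach space $\boC$ of bounded Lipschitz-type functions on $\Delta$: by Young's theorem there exist $C>0$ and $\theta<1$ with $\norm{\hatT^k f - \int f \dd\mu}_\boC \leq C \theta^k \norm{f}_\boC$. This will play the same role as the spectral gap used in Lemma~\ref{lem_controle_Kp_fullshift}.

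First I would fix a separately Lipschitz function $K(x_0,\dotsc,x_{n-1})$, introduce the decreasing filtration $\boF_p = T^{-p}\boB$ on $\Delta$, set $K_p=\E(K|\boF_p)$ and $D_p = K_p - K_{p+1}$, and decompose $K-\E(K) = \sum_{p\geq 0} D_p$. If I can prove the bound $|D_p|\leq C\sum_{j=0}^p \rho^{p-j}\Lip_j(K)$ for some $\rho<1$, the Cauchy-Schwarz plus Hoeffding-Azuma computation following Lemma~\ref{lem_close} goes through verbatim and produces \eqref{main_ineq}.

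To establish this bound on $D_p$, it is enough, exactly as in the subshift proof, to prove the analog of Lemma~\ref{lem_controle_Kp_fullshift}:
\[
  \left|K_p(x_p,\dotsc) - \int K(y, Ty, \dotsc, T^{p-1}y, x_p,\dotsc)\dd\mu(y)\right|
  \leq C\sum_{j=0}^{p-1} \Lip_j(K)\rho^{p-j}.
\]
For this I would eliminate the variables $x_0,\dotsc,x_{p-1}$ one at a time and write $K_p(x_p,\dotsc) = \sum_{i=0}^{p-1} \hatT^{p-i} f_i(x_p) + K(x_*,\dotsc,x_*,x_p,\dotsc)$, with $f_i(z) = \sum_{T^i y = z} g^{(i)}(y) H_i(y,\dotsc, T^i y)$ as in \eqref{eq_define_fi}, where $\norm{H_i}_\infty \leq \Lip_i(K)$ and $H_i$ has Lipschitz constant at most $2\Lip_j(K)$ in its $j$-th variable. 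Invariance of $\mu$ gives $|f_i|\leq \Lip_i(K)$, and the distortion hypothesis (3), applied through the telescoping computation \eqref{wpoiuxcvjl,mlkxwc}, yields $\Lip(f_i) \leq C\sum_{j=0}^i \Lip_j(K)\beta^{i-j}$. Applying the spectral gap on $\boC$ to each $f_i$, summing over $i$ and absorbing the resulting linear factor $(p-j)$ into a slightly larger $\rho\in(\max(\beta,\theta),1)$, yields the claim; the integrals $\sum_i \int f_i \dd\mu$ telescope to $\int K(y,\dotsc,T^{p-1}y,x_p,\dotsc)\dd\mu(y)$ as before.

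The main obstacle is not the general scheme, which mirrors the subshift case, but the verification that each $f_i$ actually lies in the space $\boC$ on which $\hatT$ has its spectral gap. The Markov partition of $\Delta$ has infinitely many elements (one column per $\alpha\in\N$), so the sum defining $f_i(z)$ runs over infinitely many preimages and, when comparing $f_i(z)$ with $f_i(z')$ inside a common partition element, one must bijectively pair preimages sharing the same column histories up to time $i$. Controlling these infinite sums requires both the distortion hypothesis (3) and the specific weighted $\boC$-norm for which the exponential tails of $\phi$ render $\hatT$ quasicompact. This combinatorial and analytic bookkeeping, rather than the martingale core of the argument, is the step that has to be executed with care and is what distinguishes the tower setting from the finite-branch subshift case.
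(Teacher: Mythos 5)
There is a genuine gap, and it is not where you think it is. The bookkeeping of the infinite branch sum, the distortion control, and the verification that $f_i$ lies in Young's weighted space $\boC$ all go through essentially as in the subshift case; the paper carries out exactly this. The problem is that the spectral gap estimate $\norm{\hatT^{p-i}f_i - \int f_i}_\boC \leq C\theta^{p-i}\norm{f_i}_\boC$ only controls $|\hatT^{p-i}f_i(x_p) - \int f_i|$ up to a factor $e^{\epsilon h(x_p)}$, because the norm on $\boC$ is weighted by height. Consequently the analogue of Lemma~\ref{lem_controle_Kp_fullshift} is only valid for $x_p \in \Delta_0$, not for all $x_p$. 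When you then estimate $D_p(x_p,x_{p+1},\dotsc)$ with $h(x_{p+1})=0$, the preimages $z_\alpha$ of $x_{p+1}$ sit at heights $h_\alpha=\phi(\alpha)-1$, which are unbounded, and the resulting bound
\[
|D_p(z_\alpha,x_{p+1},\dotsc)| \leq C\sum_{j<p-h_\alpha}\Lip_j(K)\rho^{p-h_\alpha-j} + \sum_{j=p-h_\alpha}^p\Lip_j(K)
\]
has no geometric decay in $j$ once $h_\alpha$ is comparable to $p$. Your claimed bound $|D_p|\leq C\sum_{j\le p}\rho^{p-j}\Lip_j(K)$ is therefore false: $D_p$ is \emph{unbounded}, and Hoeffding--Azuma cannot be applied. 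This is the point where the tower genuinely diverges from the subshift.

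The paper repairs this in two steps that your outline omits entirely. First, one reduces to the case $\Lip_i(K)\leq\epsilon_0$ for all $i$, by replacing $K$ with a truncated $\tilde K$ obtained by freezing the high-Lipschitz coordinates at a base point $x_*$; the difference $|K-\tilde K|$ is then controlled by $\sum\Lip_i(K)^2/\epsilon_0$. Second, instead of a pointwise bound on $D_p$ followed by Hoeffding--Azuma, one proves directly a conditional exponential-moment estimate $\E(e^{D_p}\mid\boF_{p+1})\leq\exp(C_1\sum_{j\le p}\Lip_j(K)^2\rho^{p-j})$ (Lemma~\ref{lem_inductive_exponential}) by expanding $e^{A}-1-A$ and integrating the height-dependent bound against $\mu(\phi=h)$; it is exactly here that the exponential tail $\mu(\phi=h)\lesssim\rho_0^h$, together with $\epsilon_0$ small, beats the growth $e^{C_0\epsilon_0 h}$ of $e^{|A(z_\alpha)|}$. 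Iterating this conditional estimate yields $\E(e^{K-\E K})\leq e^{C\sum\Lip_j(K)^2}$. Without both of these ingredients your argument stops at the unboundedness of $D_p$.
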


Let us first remark that, for any $\epsilon_0>0$, it is sufficient to
prove the theorem for functions $K$ such that $\Lip_i(K) \leq
\epsilon_0$ for all $i$. Assume indeed that this is the case, and let
us prove the general case. Let $K(x_0,\dotsc, x_{n-1})$ be a
separately Lipschitz function. Let us fix an arbitrary point $x_*$ in
$\Delta$. To any $(x_0,\dotsc,x_{n-1})$, we associate
$(y_0,\dotsc,y_{n-1})$ by $y_i=x_i$ if $\Lip_i(K)\leq \epsilon_0$ and
$y_i=x_*$ otherwise. The function $\tilde K(x_0,\dotsc,
x_{n-1})=K(y_0,\dotsc,y_{n-1})$ satisfies $\Lip_i(\tilde K) \leq
\epsilon_0$ for all $i$. Moreover,
  \begin{equation*}
  |K-\tilde K|\leq \sum_i \Lip_i(K) \mathds{1}(\Lip_i(K)>\epsilon_0)
  \leq \sum_i \Lip_i(K)^2/\epsilon_0.
  \end{equation*}
Therefore, the inequality~\eqref{main_ineq} for $\tilde K$ readily
implies the same inequality for $K$, with a different constant
$C'=C+2/\epsilon_0$.

Let us now fix a suitable $\epsilon_0$ (the precise conditions will
be given in the proof of Lemma~ \ref{lem_inductive_exponential}), and
let us consider a function $K$ with $\Lip_i(K)\leq \epsilon_0$ for
all $i$. To prove the exponential concentration inequality, we follow
the strategy of Section~\ref{sec_subshift}. Let
$K_p(x_p,\dotsc)=\E(K|\boF_p)(x_p,\dotsc)$, the first step is to
prove an analogue of Lemma~\ref{lem_controle_Kp_fullshift}. Since the
transfer operator has a spectral gap on a suitable space of
functions, as shown by Young in \cite{lsyoung_annals}, we can easily
mimic the proof of this lemma.
\begin{lem}
\label{lem_controle_Kp}
For all $x_p\in \Delta_0$,
  \begin{equation*}
  \left|K_p(x_p,\dotsc) -\int K(y,\dotsc, T^{p-1}y, x_p,\dotsc)\dd \mu(y)\right|
  \leq C \sum_{j=0}^{p-1} \Lip_j(K) \rho^{p-1-j},
  \end{equation*}
where $C>0$ and $\rho<1$ only depend on $\Delta$.
\end{lem}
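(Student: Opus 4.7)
The plan is to mimic the proof of Lemma~\ref{lem_controle_Kp_fullshift} almost line for line, replacing the spectral gap of the transfer operator on Lipschitz functions on the subshift by Young's spectral gap for $\hatT$ on the space $\boC$ of Lipschitz functions on $\Delta$ with respect to the symbolic metric $d(x,y)=\beta^{s(x,y)}$ (established in \cite{lsyoung_annals}): there exist $C>0$ and $\rho<1$ with $\norm{\hatT^k f - \int f\dd\mu}_\boC \leq C\rho^k \norm{f}_\boC$.

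First I would express $K_p$ via preimages, $K_p(x_p,\dotsc) = \sum_{T^p y = x_p} g^{(p)}(y) K(y, Ty, \dotsc, T^{p-1}y, x_p, \dotsc)$, fix an arbitrary reference point $x_* \in \Delta$, and telescope by successively replacing $y, Ty, \dotsc, T^{p-1}y$ by $x_*$ to rewrite
\begin{equation*}
K_p(x_p,\dotsc) = \sum_{i=0}^{p-1} \hatT^{p-i} f_i(x_p) + K(x_*,\dotsc,x_*, x_p,\dotsc),
\end{equation*}
where $f_i$ is defined exactly as in \eqref{eq_define_fi}. The sup bound $|f_i|\leq \Lip_i(K)$ follows from $\sum_{T^i y = z} g^{(i)}(y)=1$ together with $\operatorname{diam}(\Delta)\leq 1$ in the metric $d$. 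The crux is the Lipschitz bound on $f_i$: for $z,z'$ in the same partition element, pairing preimages by the length-$i$ cylinder they belong to and splitting the difference $f_i(z)-f_i(z')$ as in \eqref{wpoiuxcvjl,mlkxwc}, the first part is controlled by iterating property (3) of the tower to get $|g^{(i)}(y)-g^{(i)}(y')|\leq C g^{(i)}(y) d(z,z')$, while the second uses that, for paired preimages, $d(T^j y, T^j y') \leq \beta^{i-j} d(z,z')$, which is immediate from the definition of $s$ via the full dynamics. This yields $\Lip(f_i) \leq C\sum_{j=0}^i \Lip_j(K)\beta^{i-j}$, and hence $\norm{f_i}_\boC \leq C\sum_{j=0}^i \Lip_j(K)\beta^{i-j}$.

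Applying Young's spectral gap at $x_p$, assuming WLOG $\rho\geq \beta$, and summing over $i$ as in the subshift proof,
\begin{equation*}
\Bigl|K_p(x_p,\dotsc) - \sum_{i=0}^{p-1} \int f_i \dd\mu - K(x_*,\dotsc,x_*,x_p,\dotsc)\Bigr|
\leq C\sum_{j=0}^{p-1} \Lip_j(K)(\rho')^{p-j}
\end{equation*}
for any $\rho'\in (\rho,1)$, absorbing the linear factor $p-j$ into a slight enlargement of $\rho$. Finally, by invariance of $\mu$ the integrals $\int f_i\dd\mu$ telescope to $\int K(y, Ty, \dotsc, T^{p-1}y, x_p,\dotsc)\dd\mu(y) - K(x_*,\dotsc,x_*, x_p,\dotsc)$, and the claim follows after cancellation.

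The main technical step is the bound on $\Lip(f_i)$, since it combines distortion control with geometric contraction of the symbolic distance. Both ingredients are built into the uniform tower axioms: property (3) gives distortion, and $s$ being the separation time for the \emph{full} dynamics gives $d(Ty, Ty') = d(y,y')/\beta$ along shared partition trajectories. This is exactly where the uniformity hypothesis plays its role; in the non-uniform setting of Section~\ref{sec_non_uniform}, contraction only occurs on returns to $\Delta_0$, and this step will have to be replaced by a more delicate induced argument. The restriction $x_p \in \Delta_0$ in the statement plays no role in the proof itself; it is the natural form in which the estimate will be used downstream.
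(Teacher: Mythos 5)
The telescoping decomposition, the bound $|f_i|\leq\Lip_i(K)$, and the Lipschitz estimate $\Lip(f_i)\leq C\sum_{j\le i}\Lip_j(K)\beta^{i-j}$ all match the paper's argument (and the latter does use the uniformity of the tower, exactly as you note). The gap is in the way you invoke Young's spectral gap. You assert that $\hatT$ has a spectral gap on the space of Lipschitz functions on $\Delta$ for the symbolic metric $d$, but that is not what \cite{lsyoung_annals} supplies. The spectral gap is proved on a \emph{weighted} space: functions $f$ satisfying $|f(x)|\le Ce^{\epsilon h(x)}$ and $|f(x)-f(y)|\le Cd(x,y)e^{\epsilon h(x)}$ for $x,y$ in the same partition element, with $\epsilon>0$ small. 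A bound in this norm controls $|\hatT^{p-i}f_i(x)-\int f_i|$ only up to the factor $e^{\epsilon h(x)}$, which equals $1$ exactly when $h(x)=0$.

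Consequently your closing sentence is wrong, and genuinely so: the restriction $x_p\in\Delta_0$ is not merely the form in which the estimate gets used later, it is precisely where the weight becomes trivial, and this is the only height at which the conclusion actually holds. The paper remarks immediately after the lemma that the bound is false when $h(x_p)$ is large, since a point high in the tower has a single preimage chain down to $\Delta_0$ and there is no averaging over preimages at all. If a spectral gap did hold on the unweighted Lipschitz space, your proof would yield the lemma for every $x_p$, contradicting that remark. The repair is small but essential: observe that your bounds on $|f_i|_\infty$ and $\Lip(f_i)$ a fortiori bound the weighted norm $\norm{f_i}_\boC$ (since the weight $e^{\epsilon h}\geq 1$ only relaxes the norm), then apply the weighted spectral gap and specialize the weighted supremum bound at $x_p\in\Delta_0$, where $e^{\epsilon h(x_p)}=1$.
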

The main difference with the subshift case is that this bound is only
valid for $h(x_p)=0$. It is of course false if $h(x_p)$ is large,
since there is no averaging mechanism in this case.
\begin{proof}
As in the proof of Lemma~\ref{lem_controle_Kp_fullshift}, we write
  \begin{equation*}
  K_p(x_p,\dotsc) = \sum_{i=0}^{p-1} \hatT^{p-i} f_i(x_p) + K(x_*,\dotsc,x_*, x_p,\dotsc),
  \end{equation*}
where the function $f_i$ is bounded by $\Lip_i(K)$, and the Lipschitz
norm of $f_i$ on any partition element is at most $C\sum_{j=0}^i
\Lip_j(K) \rho^{i-j}$ for some $\rho<1$.

Let $\boC$ be the space of function on $\Delta$ such that $|f(x)|\leq
C e^{\epsilon h(x)}$ and $|f(x)-f(y)|\leq C d(x,y)e^{\epsilon h(x)}$
for all $x$, $y$ in the same partition element. Young proves in
\cite{lsyoung_annals} that, if $\epsilon$ is small enough, then
$\hatT$ has a spectral gap on $\boC$: there exist $C>0$ and $\rho<1$
such that $\norm{\hatT^k f - \int f \dd\mu}_\boC \leq C \rho^k
\norm{f}_\boC$.

We obtain $\norm{\hatT^{p-i} f_i - \int f_i \dd\mu}_\boC \leq C \rho^{p-i}
\sum_{j=0}^i \Lip_j(K) \rho^{i-j}$. This bound in $\boC$ gives in
particular a bound on the supremum for points at height $0$, and in
particular at the point $x_p$. Summing those bounds over $i$, we get
the desired result exactly as in the proof of
Lemma~\ref{lem_controle_Kp_fullshift}.
\end{proof}

The next step of the proof is the following lemma. It is here that
the Lipschitz constants $\Lip_j(K)$ should all be bounded by
$\epsilon_0$. As before, let $K_p=\E(K|\boF_p)$, and
$D_p=K_p-K_{p+1}$.
\begin{lem}
\label{lem_inductive_exponential}
There exist $\epsilon_0>0$, $C_1>0$ and $\rho<1$ such that any
function $K(x_0,\dotsc, x_{n-1})$ with $\Lip_j(K)\leq \epsilon_0$ for
all $j$ satisfies, for any $p$,
  \begin{equation*}
  \E(e^{D_p}|\boF_{p+1})(x_{p+1},\dotsc) \leq e^{ C_1\sum_{j=0}^p
  \Lip_j(K)^2 \rho^{p-j}}.
  \end{equation*}
\end{lem}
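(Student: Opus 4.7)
The strategy follows the classical one of bounding the conditional moment-generating function of the reverse martingale increments $D_p$, exploiting both the zero-mean property $\E(D_p|\boF_{p+1}) = 0$ and the tower structure. A preliminary case analysis on $h(x_{p+1})$ drastically simplifies matters. When $h(x_{p+1}) \ge 1$, the tower dynamics is a measure-preserving bijection on the interior of each column, so $x_{p+1}$ has a unique preimage $x_p$; the conditional expectation $K_p(x_p,x_{p+1},\ldots)$ is then $\boF_{p+1}$-measurable, which forces $K_p = K_{p+1}$ and $D_p \equiv 0$ on this set, making the inequality trivial. Only $x_{p+1} \in \Delta_0$ requires work: the preimages $x_p$ range over the tops $\Delta_{\alpha, \phi(\alpha)-1}$ of the columns, with backward-Markov weights $g(x_p) \le C e^{-c_0 h(x_p)}$ coming from the exponential tail of the return time $\phi$.

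On this set, I would write
\begin{equation*}
\E(e^{D_p}|\boF_{p+1})(x_{p+1}, \ldots) = \sum_{Tx_p = x_{p+1}} g(x_p)\, e^{D_p(x_p, x_{p+1}, \ldots)}
\end{equation*}
and apply the elementary inequality $e^y \le 1 + y + \tfrac{1}{2} y^2 e^{|y|}$ together with the zero-mean identity $\sum_{Tx_p = x_{p+1}} g(x_p) D_p(x_p, \ldots) = 0$ to reduce the problem to proving
\begin{equation*}
\sum_{Tx_p = x_{p+1}} g(x_p)\, D_p(x_p,\ldots)^2\, e^{|D_p(x_p,\ldots)|} \le C \sum_{j=0}^{p} \Lip_j(K)^2\, \rho^{p-j};
\end{equation*}
the inequality $1+x \le e^x$ then finishes the job. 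For the pointwise estimate on $D_p$, I would adapt the proof of Lemma~\ref{lem_controle_Kp} to points of arbitrary height: the $\boC$-norm spectral-gap statement already provides such a bound, at the cost of an extra factor $e^{\epsilon h(x_p)}$ when evaluating at the top of a column, and the averaged quantity $I(x_p,\ldots) := \int K(y,\ldots,T^{p-1}y,x_p,\ldots)\,d\mu(y)$ is itself $\Lip_p(K)$-Lipschitz in $x_p$. Combining these yields
\begin{equation*}
|D_p(x_p,\ldots)| \le C e^{\epsilon h(x_p)} A_p + \Lip_p(K), \qquad A_p := \sum_{j=0}^{p} \Lip_j(K)\,\rho^{p-j}.
\end{equation*}

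The hard part will be the summability estimate. The factor $e^{|D_p|}$ grows like $\exp(C A_p e^{\epsilon h(x_p)})$, whose super-exponential growth in $h(x_p)$ a priori overwhelms the merely exponential decay $e^{-c_0 h(x_p)}$ of $g(x_p)$. Making the two balance requires first choosing $\epsilon$ strictly smaller than $c_0$ so that $\sum_{x'_p} g(x'_p) e^{\epsilon h(x'_p)}$ is finite, and then taking $\epsilon_0$ small enough that $A_p$ is sufficiently tiny on the relevant range; a careful splitting of the sum over $h(x_p)$ (small versus large heights), combined with a final Cauchy--Schwarz step converting $A_p^2 = (\sum_j \Lip_j(K) \rho^{p-j})^2$ into $\sum_j \Lip_j(K)^2 \rho^{p-j}$ via the $\ell^2 \cdot \ell^2$ factorization against the geometric weights, produces the required bound. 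This delicate quantitative comparison between the two exponentials is the technical crux and is precisely why the smallness hypothesis $\Lip_j(K) \le \epsilon_0$ is imposed.
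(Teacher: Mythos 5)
Your overall framework is right---the case split on $h(x_{p+1})$, the reduction to a single sum over preimages, the use of $\sum g(z_\alpha) D_p(z_\alpha,\ldots)=0$ together with $|e^y-1-y|\le y^2 e^{|y|}$ and a final $1+x\le e^x$---but there is a genuine gap in your pointwise bound on $D_p$, and you half-acknowledge it yourself without repairing it.

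The problem is your estimate $|D_p(x_p,\ldots)| \le C e^{\epsilon h(x_p)} A_p + \Lip_p(K)$, obtained by ``adapting Lemma~\ref{lem_controle_Kp} to points of arbitrary height'' via the $\boC$-norm. That space deliberately allows $e^{\epsilon h}$ growth, so evaluating the spectral-gap estimate at the top of a column costs an exponential factor. But then $e^{|D_p|}$ grows like $\exp(C A_p e^{\epsilon h})$, which is \emph{doubly} exponential in $h$, and the exponential-tail weight $g(z_\alpha)\le C e^{-c_0 h}$ is only singly exponential: no choice of $\epsilon<c_0$ and no smallness of $\epsilon_0$ can make $e^{-c_0 h}\exp(C\epsilon_0 e^{\epsilon h})$ summable in $h$, because $e^{\epsilon h}\to\infty$ for any $\epsilon>0$. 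The paper explicitly refuses to extend Lemma~\ref{lem_controle_Kp} to $h(x_p)>0$: the averaging mechanism fails there, and the lemma is stated only on $\Delta_0$.

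What the paper does instead is exploit the tower structure combinatorially. For $z$ at height $h\le p$, one has $K_p(z,\ldots)=K_{p-h}(\pi z,\dotsc,z,\ldots)$ with $\pi z\in\Delta_0$, so Lemma~\ref{lem_controle_Kp} applies at the \emph{base point} $\pi z$ at time $p-h$; the passage from time $p-h$ to $p$ is then controlled trivially by $\sum_{j=p-h}^{p}\Lip_j(K)$. This gives
\begin{equation*}
|A(z)|\le C\sum_{j<p-h}\Lip_j(K)\rho^{p-h-j}+\sum_{j=p-h}^{p}\Lip_j(K)\le C_0(h+1)\epsilon_0,
\end{equation*}
which is \emph{linear} in $h$, not exponential. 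Consequently $e^{|A(z)|}\le e^{C_0\epsilon_0(h+1)}$ is only singly exponential, and after choosing $\epsilon_0$ small enough that $\mu(\phi=h)\,e^{C_0\epsilon_0 h}\le\rho_1^h$ for some $\rho_1<1$, the sum over $\alpha$ converges and the Cauchy--Schwarz step you envision for $A_p^2$ goes through. That reduction to the base point is the missing idea: without it, the hypothesis $\Lip_j(K)\le\epsilon_0$ cannot save a doubly-exponential term, and it is precisely this linear-in-$h$ control---not a balance of two exponentials---that makes the smallness hypothesis effective.
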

\begin{proof}
If the height of $x_{p+1}$ is positive, then this point has a unique
preimage $y$, and $D_p(y, x_{p+1},\dotsc)=0$. Therefore, $
\E(e^{D_p}|\boF_{p+1})(x_{p+1},\dotsc) = 1$ and the estimate is
trivial.

Assume now that $h(x_{p+1})=0$. Let us denote by $\{z_\alpha\}$ the
preimages of $x_{p+1}$ under $T$ (with $z_\alpha\in
\Delta_{\alpha,\phi(\alpha)-1}$). Let $A(z)=D_p(z, x_{p+1},\dotsc)$,
we have $\E(e^{D_p}|\boF_{p+1})(x_{p+1},\dotsc) =\sum g(z_\alpha)
e^{A(z_\alpha)}$.

Fix a point $z=z_\alpha$, with height $h\geq 0$. If $h\leq p$,
consider the projection $\pi z$ of $z$ in the basis of the tower.
Since $K_p(z,\dotsc)=K_{p-h}(\pi z,\dotsc, z,\dotsc)$,
Lemma~\ref{lem_controle_Kp} shows that $K_p(z,\dotsc)$ is equal to
$\int K(y,\dotsc, T^{p-h}y, \pi z,\dotsc)\dd\mu(y)$ up to $C
\sum_{j=0}^{p-h-1} \Lip_j(K) \rho^{p-h-1-j}$. Up to an additional
error $\sum_{j=p-h}^{p}\Lip_j(K)$, this is equal to $\int K(y,\dotsc,
T^p y, x_{p+1},\dotsc)\dd\mu(y)$. Applying again
Lemma~\ref{lem_controle_Kp} (but to the point $x_{p+1}$), we obtain
  \begin{equation*}
  |A(z)|=|K_p(z,x_{p+1},\dotsc)-K_{p+1}(x_{p+1},\dotsc)|
  \leq C \sum_{j< p-h}  \Lip_j(K) \rho^{p-h-j}
  + \sum_{j=p-h}^p \Lip_j(K).
  \end{equation*}
This estimate is also trivially true if $h>p$ (by convention, one
sets $\Lip_j(K)=0$ for $j<0$). In particular, since $\sup \Lip_j(K)
\leq \epsilon_0$, we always get $|A(z)|\leq C_0(h+1)\epsilon_0$ for
some $C_0>0$ (independent of the value of $\epsilon_0$). Using the
inequality $(x_1+\dotsb+x_k)^2 \leq k\sum x_i^2$, we get
  \begin{equation}
  \label{bound_Az}
  \begin{split}
  |A(z)|^2 &\leq C\left(\sum_{j< p-h}  \Lip_j(K) \rho^{p-h-j}\right)^2
  + C(h+1) \sum_{j=p-h}^p \Lip_j(K)^2
  \\&\leq C \sum_{j< p-h}  \Lip_j(K)^2 \rho^{p-h-j} + C (h+1) \sum_{j=p-h}^p \Lip_j(K)^2,
  \end{split}
  \end{equation}
where we used Cauchy-Schwarz inequality in the last inequality.

The function $A$ satisfies a neat bound on points $z_\alpha$ with
small height, but it is unbounded on points with large height.
Therefore, Hoeffding-Azuma inequality does not apply (contrary to the
subshift of finite type case). While there are certainly exponential
inequalities in the literature that can handle this situation, it is
simpler to reprove everything since we are not interested in good
constants.

We have $|e^{A}-1-A|\leq A^2 e^{|A|}$ for any real number $A$.
Therefore,
  \begin{equation*}
  \left|\sum_\alpha  g(z_\alpha) (e^{A(z_\alpha)}-1- A(z_\alpha))\right|\leq
  \sum g(z_\alpha) A(z_\alpha)^2 e^{|A(z_\alpha)|}.
  \end{equation*}
In the right hand side, $g(z_\alpha) \leq C\mu(\Delta_{\alpha,0})$ by
bounded distortion, and $|A(z_\alpha)|\leq
C_0\epsilon_0(1+\phi(\alpha))$ as we explained above. Together
with~\eqref{bound_Az}, we get
  \begin{multline*}
  \sum g(z_\alpha) A(z_\alpha)^2 e^{|A(z_\alpha)|}
  \\
  \leq C\sum_{h\geq 0} \mu(\phi =h) e^{C_0 \epsilon_0 h} \left(  \sum_{j< p-h}
  \Lip_j(K)^2 \rho^{p-h-j} + (h+1) \sum_{j=p-h}^p \Lip_j(K)^2 \right).
  \end{multline*}
Since the tower has exponential tails, we have $\mu(\phi=h)\leq
\rho_0^h$ for some $\rho_0<1$. If $\epsilon_0$ is small enough, we
get $\mu(\phi =h) e^{C_0 \epsilon_0 h} \leq \rho_1^h$ for some
$\rho_1<1$. Therefore, in the previous bound, the coefficient of
$\Lip_j(K)^2$ is at most
  \begin{equation*}
  \sum_{h< p-j} \rho_1^h \rho^{p-h-j} + \sum_{h \geq p-j} (h+1) \rho_1^h
  \leq (p-j) \rho_2^{p-j} + \rho_2^{p-j},
  \end{equation*}
for some $\rho_2<1$. This is bounded by $C\rho^{p-j}$ for some
$\rho<1$. Hence, we have proved that
  \begin{equation*}
  \left|\sum_\alpha  g(z_\alpha) (e^{A(z_\alpha)}-1- A(z_\alpha))\right|
  \leq C \sum_{j\leq p} \rho^{p-j} \Lip_j(K)^2.
  \end{equation*}
Since $\sum g(z_\alpha)=1$ and $\sum g(z_\alpha) A(z_\alpha)=0$, the
left hand side if equal to $\left| \sum g(z_\alpha)
e^{A(z_\alpha)}-1\right|$. Finally,
  \begin{align*}
  |\E(e^{D_p}|\boF_{p+1})(x_{p+1},\dotsc)|&
  =\left|\sum g(z_\alpha) e^{A(z_\alpha)}\right|
  \leq 1 + C \sum_{j\leq p} \rho^{p-j} \Lip_j(K)^2
  \\&
  \leq e^{C \sum_{j\leq p} \rho^{p-j} \Lip_j(K)^2}.
  \end{align*}
This concludes the proof.
\end{proof}

\begin{proof}[Proof of Theorem~\ref{thm_exponential}]
Consider a function $K$ with $\Lip_j(K)\leq \epsilon_0$ for all $j$.
Using inductively Lemma~\ref{lem_inductive_exponential}, we get for
any $P$
  \begin{equation*}
  \E\left(e^{\sum_{p=0}^{P-1} D_p} |\boF_P\right)
  \leq e^{ C_1\sum_{p=0}^{P-1} \sum_{j=0}^p
  \Lip_j(K)^2 \rho^{p-j}}
  \leq e^{C \sum \Lip_j(K)^2}.
  \end{equation*}
Since $\sum_{p=0}^{P-1} D_p$ converges to $K-\E(K)$ when $P$ tends to
infinity, we obtain $\E(e^{K-\E(K)}) \leq e^{C \sum \Lip_j(K)^2}$.
This proves the exponential concentration inequality in this case.
The general case follows, as we explained after the statement of the
theorem.
\end{proof}

The exponential concentration inequalities for uniform Young towers
with exponential tails easily extends to invertible situations, as
follows. Consider $T_{\Z}:\Delta_{\Z}\to \Delta_{\Z}$ the natural
extension of such a Young tower, with bilateral separation time
$s_{\Z}$, and distance $d_{\Z}(x,y)=\beta^{s_{\Z}(x,y)}$ for some
$\beta<1$.
\begin{thm}\label{thm_exponential_bis}
The transformation $T_{\Z}$ satisfies an exponential concentration
inequality.
\end{thm}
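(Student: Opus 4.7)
The plan is to mimic the approximation strategy used in Subsection~\ref{subsec_bilateral} to deduce Theorem~\ref{thm_subshift_bilateral} from Theorem~\ref{thm_subshift}, with Theorem~\ref{thm_exponential} now playing the role of the one-sided result. Given a separately Lipschitz function $K_{\Z}(x_0,\dotsc,x_{n-1})$ on $\Delta_{\Z}^n$, I first shift by $N$ by setting $K_N(x_0,\dotsc,x_{n+N-1}) \coloneqq K_{\Z}(x_N,\dotsc,x_{n+N-1})$: by $T_{\Z}$-invariance of $\mu_{\Z}$, a concentration estimate for $K_N$ with constants uniform in $N$ translates back into one for $K_{\Z}$ after sending $N\to\infty$.

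The second step is to construct a map $\Phi_N:\Delta_{\Z}^{n+N}\to\Delta_{\Z}^{n+N}$ producing tuples whose entries depend only on the projections $\pi x_i \in \Delta$, so that $\tilde K_N \coloneqq K_N \circ \Phi_N$ descends to a separately Lipschitz function on $\Delta^{n+N}$ to which Theorem~\ref{thm_exponential} applies. To this end I fix, for each partition element of $\Delta_0$, a preferred preimage column, giving a canonical pre-orbit $p(a)\in\Delta_{\Z}$ for every $a\in\Delta$ (unique preimage inside a column, preferred column at each return to the base). Then I define $\Phi_N(x_0,\dotsc,x_{n+N-1})=(y_0,\dotsc,y_{n+N-1})$ inductively, following the pattern of~\eqref{defyinaif}: set $y_0=p(\pi x_0)$, and for $i\geq 1$ let $y_i$ have future $(\pi x_i,T\pi x_i,T^2\pi x_i,\dotsc)$; for the past, inherit $(y_i)_{-k}=(y_{i-1})_{-(k-1)}$ whenever the combinatorial compatibility at the level of the Young-tower partition is satisfied between $\pi x_{i-1}$ and $\pi x_i$, and use the canonical past $p(\pi x_i)$ as a fallback.

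The Lipschitz bookkeeping then proceeds exactly as in the proof of Theorem~\ref{thm_subshift_bilateral}: perturbing $\pi x_i$ affects $y_j$ for $j\geq i$ essentially only at its past coordinate of index $-(j-i)$, so $\Lip_i(\tilde K_N)\leq C\sum_{j\geq i}\Lip_j(K_N)\beta^{j-i}$, and Cauchy-Schwarz yields $\sum_i\Lip_i(\tilde K_N)^2 \leq C\sum_i \Lip_i(K_{\Z})^2$. Applying Theorem~\ref{thm_exponential} to $\tilde K_N$, performing the change of variables $x=T_{\Z}^{-N}x'$, and sending $N\to\infty$ completes the argument: on a genuine $T_{\Z}$-orbit $(T_{\Z}^{-N}x',\dotsc,T_{\Z}^{n-1}x')$ the inheritance compatibility holds at every step, so $y_{N+k}$ coincides with $T_{\Z}^k x'$ at all coordinates of index $\geq -N$ and therefore converges to it in $d_{\Z}$ as $N\to\infty$; dominated convergence delivers the exponential concentration inequality for $K_{\Z}$.

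The delicate point will be checking that the canonical past $p$ can be chosen compatibly with the partition structure so that $\Phi_N$ is, within each fixed combinatorial pattern of partition elements, Lipschitz with the required geometric decay $\beta^{j-i}$. Concretely, $p$ inherits Lipschitz regularity from the uniformly contracting inverse branches of $T$ on each column; the only discontinuities of the inheritance rule occur when the tuple changes partition element, an event already carrying unilateral distance $\geq 1$, which is harmless for separately Lipschitz control.
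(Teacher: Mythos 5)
Your proposal follows exactly the approach the paper uses: the paper's proof of this theorem is a single sentence pointing to the argument for Theorem~\ref{thm_subshift_bilateral}, i.e.\ shift by $N$, replace each $x_i$ by a point $y_i$ depending only on the forward part of $x_0,\dotsc,x_i$ via an inheritance rule with canonical-past fallback, bound $\Lip_i(\tilde K_N)\leq\sum_{j\geq i}\Lip_j(K_N)\beta^{j-i}$, apply the one-sided result (here Theorem~\ref{thm_exponential}), change variables, and let $N\to\infty$. Your write-up correctly transposes each of these steps to the Young tower setting (in particular handling the Markov structure of the tower — unique preimage at positive height, many preimages into $\Delta_0$ — via the preferred-column canonical past), so it is the paper's argument with the details spelled out rather than a different route.
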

The proof is exactly the same as the proof of
Theorem~\ref{thm_subshift_bilateral}, exploiting the result for the
non-invertible transformation.

\section{Non-uniform Young towers with polynomial tails}

\label{sec_non_uniform}

In this section, we consider Young towers in the sense of
\cite{lsyoung_recurrence}, i.e., non-uniform Young towers. The
combinatorial definition is the same as in
Section~\ref{sec_exponential}, the difference is on the definition of
the separation time (and therefore of the distance) as follows. Let
$\Delta_0$ be the basis of the tower, let $T_0:\Delta_0\to \Delta_0$
be the induced map on $\Delta_0$ (i.e., $T_0(x)=T^{\phi(x)}(x)$ where
$\phi(x)$ is the return time of $x$ to $\Delta_0$). For $x,y\in
\Delta_0$, let $s(x,y)$ be the smallest integer $n$ such that $T_0^n
(x)$ and $T_0^n (y)$ are not in the same partition element. This
separation time is extended to $\Delta$ as follows. For $x,y\in
\Delta$, let $s(x,y)=s(\pi x,\pi y)$ if $x$ and $y$ are in the same
partition element, and $s(x,y)=0$ otherwise. In other words, $s(x,y)$
is the number of returns to the basis before the trajectories of $x$
and $y$ separate. Finally, the new distance is
$d(x,y)=\beta^{s(x,y)}$ for some $\beta<1$.

Intuitively, we are now considering maps that are expanding only when
one returns to the basis, and can be isometries between successive
returns, while the maps of Section~\ref{sec_exponential} are always
expanding. The setting is not uniformly expanding any more, rather
non-uniformly expanding. For instance, intermittent maps can be
modeled using non-uniform Young towers.

If the tails are not exponential any more, one can not hope to get
exponential concentration inequalities. If the tails have a moment of
order $q\geq 2$, then the moments of order $2q-2$ of Birkhoff sums
are controlled, and this is optimal \cite[Theorem
3.1]{melbourne_nicol_large_deviations}. Our goal in this section is
to generalize this result to a concentration inequality (with the
same optimal moment).

\begin{thm}
\label{thm_nonuniform}
Let $T:\Delta\to \Delta$ be a non-uniform Young tower. Assume that,
for some $q\geq 2$, $\sum \phi(\alpha)^q \mu(\Delta_{\alpha,0}) <
\infty$. Then $T$ satisfies a polynomial concentration inequality
with moment $2q-2$, i.e., there exists a constant $C>0$ such that,
for any $n\in\N$, for any separately Lipschitz function
$K(x_0,\dotsc, x_{n-1})$,
  \begin{equation*}
  \int \left|K(x,\dotsc, T^{n-1}x) -\int K(y,\dotsc, T^{n-1}y)\dd\mu(y)\right|^{2q-2}\dd\mu(x)
  \leq C\left(\sum_j \Lip_j(K)^2 \right)^{q-1}.
  \end{equation*}
\end{thm}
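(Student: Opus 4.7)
The plan is to follow the reverse-martingale decomposition of Section~\ref{sec_exponential}, writing $K-\E(K) = \sum_p D_p$ with $D_p = K_p - K_{p+1}$ and $K_p = \E(K\mid\boF_p)$, but to replace the Hoeffding--Azuma step by a polynomial Burkholder--Rosenthal inequality for reverse martingales, of the form
\begin{equation*}
\biggl\| \sum_p D_p \biggr\|_Q^Q \leq C\,\E\biggl( \sum_p \E(D_p^2\mid\boF_{p+1}) \biggr)^{Q/2} + C \sum_p \E|D_p|^Q,
\end{equation*}
applied with $Q = 2q-2$. The target bound $C(\sum_j\Lip_j(K)^2)^{q-1}$ has exactly the shape such an inequality can deliver.

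Next I would establish an analogue of Lemma~\ref{lem_controle_Kp} at points of the basis $\Delta_0$. The induced map $T_0 = T^\phi\colon \Delta_0\to\Delta_0$ is uniformly expanding and its transfer operator has a genuine spectral gap on a Lipschitz-type Banach space, so the argument of Section~\ref{sec_subshift} carries over on the induced system to give a geometric estimate for $K_p(x_p,\dotsc)$ when $x_p\in\Delta_0$. Pushing this estimate up along a column of the tower, where $T$ is a bijective isometry for the non-uniform distance, yields exactly the same pointwise bound as in the exponential proof: if $z_\alpha$ is a preimage of $x_{p+1}$ at height $h$, then
\begin{equation*}
|A(z_\alpha)| \coloneqq |D_p(z_\alpha,x_{p+1},\dotsc)| \leq C\sum_{j<p-h}\rho^{p-h-j}\Lip_j(K) + \sum_{j=p-h}^{p}\Lip_j(K),
\end{equation*}
for some $\rho<1$, and $A$ vanishes unless $h(x_{p+1})=0$.

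Inserting this into the conditional variance $\E(D_p^2\mid\boF_{p+1}) = \sum_\alpha g(z_\alpha)A(z_\alpha)^2$, the geometric contribution sums over $p$ to $C\sum_j \Lip_j(K)^2$, while the tail contribution gives a coefficient of $\Lip_j(K)^2$ proportional to $\sum_{h\geq p-j}h\,\mu(\phi=h)$; after summing over $p$, this is controlled by $\sum_h h^2\mu(\phi=h)\leq \E(\phi^q)<\infty$. Raising the total conditional variance to the power $q-1$ and integrating yields the desired $C(\sum_j\Lip_j(K)^2)^{q-1}$. The residual $L^Q$ term $\sum_p \E|D_p|^Q$ is handled by the same pointwise estimate combined with the Cauchy--Schwarz bound $\bigl(\sum_{j=p-h}^p\Lip_j(K)\bigr)^Q \leq (h+1)^{Q/2}\bigl(\sum_{j=p-h}^p\Lip_j(K)^2\bigr)^{Q/2}$ and the tail bound $\mu(\phi>h)\leq Ch^{-q}$.

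The main obstacle is the combinatorial bookkeeping in this last step: the matching between the martingale exponent $Q=2q-2$ and the return-time moment $q$ is exactly critical, and the H\"older and Cauchy--Schwarz steps must be arranged so that precisely $\E(\phi^q)$ appears rather than a higher moment. This tight balance is also what makes $Q=2q-2$ sharp, in agreement with the lower bound for Birkhoff sums in~\cite{melbourne_nicol_large_deviations}.
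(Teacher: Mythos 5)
Your overall skeleton is the right one and matches the paper: decompose $K-\E(K)=\sum_p D_p$ into reverse-martingale increments and apply Rosenthal--Burkholder with $Q=2q-2$. But the central estimate you propose is wrong, and in a way that is not a technicality.

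You claim that because the \emph{induced} transfer operator has a spectral gap, one gets a geometric bound
\begin{equation*}
|A(z_\alpha)| \leq C\sum_{j<p-h}\rho^{p-h-j}\Lip_j(K) + \sum_{j=p-h}^{p}\Lip_j(K),
\end{equation*}
just as in the uniform (exponential-tail) case. This is not true for a non-uniform Young tower, and the failure is exactly the point of the theorem. The function $K_p(x_p,\dotsc)$ averages $K$ over preimages of $x_p$ under $T^p$ (the full tower dynamics), not under $T_0^m$. Those preimages equidistribute only at a rate governed by how often a typical $T^p$-preimage orbit has returned to $\Delta_0$, and a positive-measure set of such orbits has very few returns (governed by the tails of $\phi$). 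Concretely, the paper's Lemma~\ref{lem_Psin} shows that the expected contraction factor $\Psi_n=\rho^{\psi_n}$ only has a polynomial tail $c_n^{(q-1)}$, not a geometric one. Similarly, the renewal operators $T_k$ (which replace the $k$-th power of the transfer operator on $\Delta_0$) converge to the projection $\Pi$ only at the polynomial rate $\norm{T_k-\Pi}\leq c_k^{(q-2)}$ supplied by the Wiener lemma (Proposition~\ref{prop_wiener}). Feeding these into Lemmas~\ref{lem_regularite_fi}--\ref{control_Kp_non_uniform} yields $|K_p-\int K|\leq\sum_j\Lip_j(K)\,c_{p-j}^{(q-2)}$, a polynomial tail rather than your $\rho^{p-j}$. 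The balance between this $c_n^{(q-2)}$ decay and the moment exponent $2q-2$ is precisely what makes the result tight; if your geometric estimate held, you would be (falsely) proving the exponential inequality of Section~\ref{sec_exponential} in a setting where it is known to fail.

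There is a second, smaller gap in your treatment of the residual term $\sum_p\E|D_p|^{Q}$. After the Cauchy--Schwarz step you have terms of the form $\sum_{h}\mu(\phi=h)h^{q-1}\sum_p\bigl(\sum_{j=p-h+1}^{p}\Lip_j(K)^2\bigr)^{q-1}$, and the sum over $p$ of the inner $(q-1)$-th powers is not obviously controlled by $\bigl(\sum_j\Lip_j(K)^2\bigr)^{q-1}$: a given $\Lip_j(K)^2$ appears in $h$ of the windows $[p-h+1,p]$. The paper handles this by splitting $p$ into residue classes mod~$h$ so that the windows are disjoint within each class, picking up exactly one extra factor of $h$ that is absorbed by $\sum_h h\,c_h^{(1)}<\infty$. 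Your proposal does not address this combinatorial point, and a naive bound would demand a higher moment of $\phi$ than is available.
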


The proof is considerably more difficult than the arguments in the
previous section (and also than the arguments of
\cite{melbourne_nicol_large_deviations} since the main inequality
these arguments rely on, due to Rio, is of no help in our situation).
The general strategy is the same as in the previous sections:
decompose $K-\E(K)$ as $\sum D_p$ where $D_p$ is a martingale
difference sequence, obtain good estimates on $D_p$, and then apply a
martingale inequality (in our case, the Rosenthal-Burkholder
inequality) to obtain a bound on $K-\E(K)$. The difficulty comes from
the non-uniform expansion of the map: instead of a uniformly decaying
geometric series as in the previous sections, our estimates will be
non-uniform, quantified by the number of visits to the basis in a
definite amount of time.

The rest of this section is devoted to the proof of
Theorem~\ref{thm_nonuniform}. In particular, we will always assume
that $\Delta$ is a non-uniform Young tower satisfying $\sum
\phi(\alpha)^q \mu(\Delta_{\alpha,0}) < \infty$ for some $q\geq 2$.

\begin{rmk}
The arguments below also give an exponential concentration inequality
in non-uniform Young towers with exponential tails, thereby
strengthening Theorem ~\ref{thm_exponential}. Since most interesting
Young towers with exponential tails are uniform, we will not give
further details in this direction.
\end{rmk}

\subsection{Notations}

As usual, the letter $C$ denotes a constant that may change from one
occurrence to the next. Let us also introduce a similar notation for
sequences. For $Q\geq 0$, we will write $c_n^{(Q)}$ for a sequence of
nonnegative numbers such that $\sum n^Q c_n^{(Q)} < \infty$, and we
will allow this sequence to change from one line to the other (or
even on the same line). We will also write $d_n^{(Q)}$ for a generic
nondecreasing sequence with $\sum n^Q d_n^{(Q)} < \infty$.

If $u_n$ and $v_n$ are sequences, their convolution $u\star v$ is
given by $(u\star v)_n=\sum_{k=0}^n u_k v_{n-k}$. One easily checks
that, for $Q, Q'\geq 0$,
  \begin{equation}
  \label{convol_OK}
  (c^{(Q)} \star c^{(Q')})_n \leq c^{(\min(Q,Q'))}_n.
  \end{equation}
Following the above convention, this statement should be understood
as follows: if two sequences $u$ and $v$ satisfy, respectively, $\sum
n^Q u_n < \infty$ and $\sum n^{Q'}v_n < \infty$, then $w=u\star v$
satisfies $\sum n^{\min(Q,Q')} w_n < \infty$. Indeed, letting
$Q''=\min(Q,Q')$,
  \begin{align*}
  \sum n^{Q''} w_n & =\sum_{k, \ell} (k+\ell)^{Q''} u_k v_\ell
  \leq \sum_{k,\ell} (k+1)^{Q''} (\ell+1)^{Q''} u_k v_\ell
  \\&
  \leq \left(\sum (k+1)^Q u_k\right)\cdot \left( \sum (\ell+1)^{Q'}v_\ell\right)
  <\infty.
  \end{align*}

We also have for $Q\geq 1$
  \begin{equation}
  \label{serie_maj}
  \sum_{k=n}^\infty c_k^{(Q)} \leq d^{(Q-1)}_n.
  \end{equation}
Indeed,
  \begin{equation*}
  \sum n^{Q-1} \sum_{k=n}^\infty c_k^{(Q)}
  =\sum_k \left(\sum_{n=0}^k n^{Q-1}\right) c_k^{(Q)}
  \leq \sum_k C k^Q c_k^{(Q)}
  < \infty,
  \end{equation*}
and the sequence $\sum_{k=n}^\infty c_k^{(Q)}$ is nonincreasing.

\subsection{Renewal sequences of operators, estimates on the returns
to the basis}
\label{subsec_renewal}

An important tool for our study will be renewal sequences of
operators, as developed by Sarig and Gou\"ezel \cite{sarig_decay,
gouezel_decay, gouezel_these}, that we will now quickly describe.

Consider a function $f$, we wish to understand $\hatT^n
f(x)=\sum_{T^n y=x} g^{(n)}(y)f(y)$ for $x\in\Delta_0$. For a
preimage $y$ of $x$ under $T^n$, we can consider its first entrance
into $\Delta_0$, and then its successive returns to $\Delta_0$. We
obtain a decomposition
  \begin{equation}
  \label{def_Tn}
  1_{\Delta_0} \hatT^n = \sum_{k+b=n}T_k B_b,
  \end{equation}
where $T_k$ takes the successive returns to $\Delta_0$ (during time
$k$) into account, and $B_b$ deals with the part of the trajectory
outside $\Delta_0$. Formally, for $x\in \Delta_0$, $T_k f(x)=\sum
g^{(k)}(y)f(y)$ where the sum is restricted to those $y$ such that
$T^k y=x$ and $y\in \Delta_0$. The operator $B_b$, in turn, is given
on $\Delta_0$ by $B_b f(x)=\sum g^{(b)}(y)f(y)$ where the sum is
restricted to those $y$ with $T^b y=x$ and $y,\dotsc, T^{b-1}y\not\in
\Delta_0$.

The operators $B_b$ are essentially trivial to understand, their
behavior being controlled by the tails of the return time function
$\phi$. On the other hand, the operators $T_k$ embody most of the
dynamics of the transformation. To understand them, we introduce yet
another operator $R_j$ considering only the first return to
$\Delta_0$ at time $j$, i.e., $R_j f(x)=\sum g^{(j)}(y)f(y)$ where
the sum is restricted to those $y$ such that $T^j y=x$ and $y\in
\Delta_0$, $Ty,\dotsc, T^{j-1}y\not\in \Delta_0$. Splitting a
trajectory into its successive excursions outside of $\Delta_0$, one
obtains
  \begin{equation*}
  T_k=\sum_{\ell\geq 1} \sum_{j_1+\dotsb+j_\ell=k} R_{j_1}\dotsm R_{j_\ell}.
  \end{equation*}
Formally, this equation can be written as
  \begin{equation}
  \label{renewal_equation}
  \sum T_k z^k=(I-\sum R_j z^j)^{-1}.
  \end{equation}
In fact, the series defined in this equation are holomorphic for
$|z|<1$ (as operators acting on the space $\boC$ of Lipschitz
functions on $\Delta_0$) and this equality is a true equality between
holomorphic functions. Moreover, the spectral radius of $\sum R_j
z^j$ is at most $1$ for $|z|\leq 1$.

A powerful way to use the previous equality is Banach algebra
techniques. Simple examples of Banach algebras are given by Banach
spaces $\boB$ of sequences $c_n$ such that, if $(c_n)_{n\in\N} \in
\boB$ and $(c'_n)_{n\in\N} \in \boB$, then their convolution $c\star
c'$ still belongs to $\boB$. For instance, this is the case of
sequences with a moment of order $Q\geq 0$ (by~\eqref{convol_OK}), or
of sequences satisfying $c_n=O(1/n^Q)$ for some $Q>1$. Given such a
Banach algebra of sequences $\boB$, one can consider the Banach
algebra $\tilde\boB$ of sequences of operators $(M_n)_{n\in\N} $
(acting on some fixed Banach space $\boC$) such that the sequence
$(\norm{M_n})_{n\in\N} $ belongs to $\boB$. One easily checks that
$\tilde\boB$ is again a Banach algebra (for the convolution product).

When the Banach algebra of sequences $\boB$ satisfies a technical
condition (its characters should all be given by evaluation of the
power series $\sum c_n z^n$ at a point $z$ of the unit disk), which
is satisfied in all examples we mentioned above, then one can use the
Wiener lemma to obtain the following property: if a sequence of
operators $(M_n)_{n\in\N} $ belongs to $\tilde\boB$ and $\sum M_n
z^n$ is invertible as an operator on $\boC$ for any $z$ in the closed
unit disk, then $(M_n)_{n\in\N} $ is invertible in $\tilde\boB$. In
particular, the power series $\sum M'_n z^n = (\sum M_n z^n)^{-1}$
satisfies $(\norm{M'_n})_{n\in\N}  \in \boB$.

Using Banach algebra arguments and the renewal
equation~\eqref{renewal_equation}, the following proposition is
proved in \cite[Proposition 2.2.19]{gouezel_these}.
\begin{prop}
\label{prop_wiener}
Consider a Banach algebra of sequences $\boB$ satisfying several
technical conditions. If the sequence $(\sum_{k>n}
\mu(\phi=k))_{n\in\N} $ belongs to $\boB$, then this is also the case
of the sequence $(\norm{T_{n+1}-T_n})_{n\in\N} $. Moreover, $T_n$
converges to $\Pi : f\mapsto (\int_{\Delta_0} f) 1_{\Delta_0}$.
\end{prop}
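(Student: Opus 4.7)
The plan is to apply the Banach-algebra renewal machinery of Sarig and Gou\"ezel directly to the identity~\eqref{renewal_equation}. First I would verify that $(R_n)_n$ is a sequence in $\tilde\boB$: since $\int R_j f \dd\mu = \int_{\{\phi=j\}} f\dd\mu$, a standard distortion argument on each first-return branch gives $\norm{R_j}_\boC \leq C \mu(\phi=j)$, and the hypothesis $(\sum_{k>n}\mu(\phi=k))_n \in \boB$ forces $(\mu(\phi=n))_n$ itself to lie in $\boB$ by monotone domination. Hence $R(z)=\sum R_j z^j$ is a $\tilde\boB$-element, and one would conclude by Wiener's lemma applied to $(I-R(z))^{-1} = \sum T_n z^n$ if $I-R(z)$ were invertible throughout the closed unit disk.

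The obstruction is that $R(1)=\hatT_0$ is the transfer operator of the induced Gibbs--Markov map $T_0$, which admits $1$ as a simple isolated eigenvalue with eigenfunction $1_{\Delta_0}$ and, by aperiodicity of the tower, has no other spectrum on the unit circle; so $I-R(1)$ has a one-dimensional kernel. This simple pole at $z=1$ is exactly what produces the limit $\Pi$, and must be extracted algebraically. I would use the Abel-type identity $I - R(z) = (I-R(1)) + (1-z)\tilde Q(z)$ with $\tilde Q(z)=\sum_n z^n \sum_{k>n}R_k$, whose coefficients satisfy $\norm{\tilde Q_n}\leq C \sum_{k>n}\mu(\phi=k)$: this is precisely the sequence whose membership in $\boB$ is the hypothesis of the proposition, hence $(\tilde Q_n)_n \in \tilde\boB$. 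Decomposing $\boC$ along the spectral projector $P_0$ of $R(1)$ at the eigenvalue $1$ and writing $(I-R(z))^{-1}$ in block form with respect to this splitting, one separates a scalar singular factor $1/(1-\lambda(z))$ (where $\lambda(z)$ is the perturbed eigenvalue of $R(z)$ near $1$) from a block which is invertible throughout the closed disk and whose inverse has coefficients in $\tilde\boB$ by a direct Wiener-lemma application. Kac's identity $\int_{\Delta_0}\phi\dd\mu = 1$ computes $\lambda'(1)$ and pins the residue so that the singular contribution is exactly $\Pi/(1-z)$.

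Putting the pieces together yields $\sum T_n z^n = \Pi/(1-z) + H(z)$ with $(\norm{H_n})_n \in \boB$. Hence $T_n = \Pi + H_n \to \Pi$, and the sequence of differences $(\norm{T_{n+1}-T_n})_n = (\norm{H_{n+1}-H_n})_n$ lies in $\boB$ because the algebras allowed by the ``technical conditions'' of the proposition are stable under shift, hence under finite differences. The main obstacle I anticipate is the quantitative block inversion just described: one has to obtain $\tilde\boB$-bounds on the perturbed spectral projector $P(z)$ rather than mere holomorphy, and one has to verify that the residue at the pole is really $\Pi$ rather than a rescaled multiple of it. These computations are the substance of the renewal theorem in~\cite{gouezel_these} and rely crucially on the quasi-compactness of $\hatT_0$ together with the tail hypothesis on $\phi$.
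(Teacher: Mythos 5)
The paper does not prove Proposition~\ref{prop_wiener} itself; it explicitly defers to \cite[Proposition 2.2.19]{gouezel_these}, which in turn is built on Sarig's renewal-sequence-of-operators framework. Your sketch correctly reproduces the structure of that proof: bounding $\norm{R_j}$ by $\mu(\phi=j)$, reducing everything to the identity $\sum T_n z^n = (I-R(z))^{-1}$, observing that $R(1)=\hatT_0$ has $1$ as a simple isolated eigenvalue (with no other peripheral spectrum by aperiodicity) so that $z=1$ is a simple pole, and then extracting that pole via the Abel identity $I-R(z)=(I-R(1))+(1-z)\tilde Q(z)$ --- where, crucially, $\tilde Q_n=\sum_{k>n}R_k$ is exactly the sequence whose membership in $\tilde\boB$ is the stated hypothesis --- before applying Wiener's lemma to the regular block. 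The identification of the residue as $\Pi$ via $\int_{\Delta_0}\phi\dd\mu=1$ (Kac, with the paper's normalization $\mu(\Delta)=1$) is also correct.

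So this is essentially the same approach as the one the paper invokes. The one place where your account is honest rather than complete is the block inversion: you flag that one must show the perturbed spectral projector $P(z)$ and eigenvalue $\lambda(z)$ have coefficients in $\tilde\boB$ and $\boB$, not merely that they are holomorphic in the open disk, and that $(1-\lambda(z))^{-1}-(1-z)^{-1}\lambda'(1)^{-1}$ has coefficients in $\boB$. That is indeed the technical heart of the cited proof and the place where the precise "technical conditions" on $\boB$ (character condition, partitions-of-unity condition, solidity, shift-invariance) are used; your sketch relies on these at exactly the points --- monotone domination for $(\mu(\phi=n))_n\in\boB$, shift-stability for passing from $(\norm{H_n})_n$ to $(\norm{H_{n+1}-H_n})_n$ --- where the paper's ``several technical conditions'' are meant to be invoked. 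As a blind reconstruction of a result the paper only cites, this is accurate and appropriately scoped.
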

The technical conditions on the Banach algebra (all the characters of
$\boB$ should be given by the evaluation at a point of the closed
unit disk, and the symmetrized version of $\boB$ should contain the
Fourier coefficients of partitions of unity of the circle) will not
be important for us, let us only mention that they are satisfied for
the Banach algebras of series with moments of order $Q\geq 0$.

\medskip

The contraction properties of the dynamics $T$ are dictated by the
number of returns to the basis. Their asymptotics are estimated in
the next lemma.

\begin{lem}
\label{lem_Psin}
For $x\in \Delta$, let $\psi_n(x)=\Card\{0\leq k\leq n-1 \st T^k x\in
\Delta_0\}$ be the number of visits to the basis of $x$ before time
$n$, and let $\Psi_n(x)=\rho^{\psi_n(x)}$, where $\rho<1$. If the
return time on $\Delta_0$ has a moment of order $q\geq 1$ (i.e.,
$\mu(\phi =n)\leq c_n^{(q)}$), we have
  \begin{equation*}
  \int_{T^{-n}\Delta_0} \Psi_n \dd \mu(x) \leq c_n^{(q-1)}.
  \end{equation*}
\end{lem}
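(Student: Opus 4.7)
My plan is to convert the integral into a calculation involving a weighted transfer operator, and then extract the bound from a weighted version of the renewal decomposition of Section~\ref{subsec_renewal}. Let $\varphi(x) = \rho$ if $x\in \Delta_0$ and $\varphi(x) = 1$ otherwise, so that $\Psi_n(x) = \prod_{k=0}^{n-1}\varphi(T^k x)$. Introducing the weighted transfer operator $\hatT_\varphi f \coloneqq \hatT(\varphi f)$, a short induction based on the identity $\hatT(u\cdot v\circ T) = v\hatT u$ gives $\hatT^n \Psi_n = \hatT_\varphi^n 1$, and the $L^2$-duality between $\hatT$ and $T$ then yields
\[
\int_{T^{-n}\Delta_0} \Psi_n \dd\mu = \int 1_{\Delta_0}\circ T^n \cdot \Psi_n \dd\mu = \int 1_{\Delta_0}\, \hatT_\varphi^n 1 \dd\mu.
\]

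Next, I would repeat the first-return bookkeeping leading to~\eqref{def_Tn}, this time tracking the weight $\varphi$. Since each first-return excursion corresponds to exactly one visit of the trajectory to $\Delta_0$ at its start, every $R_j$ acquires an extra factor $\rho$. This gives the weighted decomposition
\[
1_{\Delta_0} \hatT_\varphi^n 1 = \sum_{k+b=n} \tilde T_k B_b 1, \qquad \tilde R_j \coloneqq \rho R_j, \qquad \sum_k \tilde T_k z^k = (I - \rho R(z))^{-1}.
\]
Because the spectral radius of $R(z)$ is at most $1$ on the closed unit disk, that of $\rho R(z)$ is at most $\rho < 1$, so $I - \rho R(z)$ is \emph{uniformly} invertible on $\{|z|\leq 1\}$. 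Applying the Wiener-type argument underlying Proposition~\ref{prop_wiener} in the Banach algebra of operator-valued sequences with $q$-th moment --- which contains $(\norm{R_j})_j$ by the tail hypothesis $\mu(\phi=j)\leq c_j^{(q)}$ --- then yields $\norm{\tilde T_k} \leq c_k^{(q)}$.

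Finally, $\int_{\Delta_0} B_b 1 \dd\mu = \mu(\phi > b) \leq c_b^{(q-1)}$ by~\eqref{serie_maj}, so the convolution inequality~\eqref{convol_OK} gives
\[
\int_{T^{-n}\Delta_0}\Psi_n\dd\mu = \sum_{k+b=n}\int_{\Delta_0}\tilde T_k B_b 1\dd\mu \leq \sum_{k+b=n}\norm{\tilde T_k}\,\mu(\phi>b) \leq c_n^{(q-1)},
\]
which is the desired estimate.

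The main obstacle is the middle paragraph: one must verify that the Wiener-type machinery of Proposition~\ref{prop_wiener} carries over to the damped series $\rho R(z)$, and that the norms of the $R_j$ on an appropriate Banach space inherit the $q$-th moment of $\phi$. The invertibility condition on $I - \rho R(z)$ is in fact \emph{easier} to verify than the one for $I - R(z)$ thanks to the damping factor $\rho < 1$, so the argument transfers without essential modification once the Banach space of functions on $\Delta_0$ has been fixed (the Lipschitz space adapted to the induced map $T_0$, as in the Young-tower framework of~\cite{gouezel_these}).
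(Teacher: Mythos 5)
Your argument is correct and follows essentially the same path as the paper: your $\tilde T_k$ is exactly the paper's operator $U_n$, defined there by $\sum U_n z^n=(I-\rho\sum R_j z^j)^{-1}$ and identified as $U_n f(x)=\sum g^{(n)}(y)\Psi_n(y)f(y)$ over preimages $y\in\Delta_0$, and the Wiener-lemma estimate $\norm{U_n}\leq c_n^{(q)}$ (valid since $\rho<1$ forces uniform invertibility of $I-\rho\sum R_j z^j$ on the closed disk) is the heart of both proofs. The only presentational difference is that you package the reduction from $T^{-n}\Delta_0$ to $\Delta_0\cap T^{-k}\Delta_0$ in the weighted decomposition $1_{\Delta_0}\hatT_\varphi^n=\sum_{k+b=n}\tilde T_k B_b$, whereas the paper performs the same bookkeeping concretely by integrating over the sets $\Lambda_b$ of points first entering $\Delta_0$ at time $b$ and invoking bounded distortion to reduce to the $\Delta_0\to\Delta_0$ case, before convolving with $\mu(\Lambda_b)\leq c_b^{(q-1)}$ exactly as you do.
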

This bound is optimal: on $\Delta_{\alpha, \phi(\alpha)-n}$ (for
$\alpha$ with $\phi(\alpha)>n$), we have $\Psi_n=1$. Therefore, the
integral in the lemma is bounded from below by
$\mu(\bigcup_{\phi(\alpha)>n} \Delta_{\alpha, 0}) \sim
\sum_{n+1}^\infty c_k^{(q)} \sim c_n^{(q-1)}$.
\begin{proof}
Let us define an operator $U_n$ by the series $\sum U_n
z^n=\sum_{k=0}^\infty (\rho \sum R_n z^n)^k=(I-\rho \sum R_n
z^n)^{-1}$. Then $U_n f(x)=\sum g^{(n)}(y) \Psi_n(y) f(y)$, where the
sum is restricted to those $y\in \Delta_0$ with $T^n y = x$.
Integrating and changing variables, we obtain
  \begin{equation*}
  \int_{\Delta_0} U_n 1(x)\dd \mu(x)
  =\int_{\Delta_0 \cap T^{-n}(\Delta_0)}\Psi_n(y) \dd \mu(y).
  \end{equation*}

Since the spectral radius of $\sum R_n z^n$ is at most $1$ for
$|z|\leq 1$, it follows that $I-\rho \sum R_n z^n$ is invertible on
$\boC$ (since $\rho<1$). Moreover, the sequence $\norm{R_n}$
satisfies $\norm{R_n} \leq C \mu(\phi=n)\leq c_n^{(q)}$. It follows
from Wiener's Lemma that $\sum U_n z^n = (I-\rho \sum R_n z^n)^{-1}$
belongs to the same Banach algebra of operators, i.e.,
$\norm{U_n}\leq c_n^{(q)}$. We obtain
 \begin{equation*}
  \int_{\Delta_0\cap T^{-n}\Delta_0} \Psi_n(y) \dd \mu (y) \leq c_n^{(q)}.
  \end{equation*}
To study the integral of $\Psi_n$ on $T^{-n}\Delta_0$, denote by
$\Lambda_b$ the set of points in $\Delta$ that enter $\Delta_0$
exactly at time $b$. On $\Lambda_b$, we have
$\Psi_n(y)=\Psi_{n-b}(T^b y)$. A distortion control gives
  \begin{equation*}
  \int_{\Lambda_b \cap T^{-n}\Delta_0} \Psi_n
  \leq C \mu(\Lambda_b) \int_{\Delta_0 \cap T^{-(n-b)}\Delta_0} \Psi_{n-b}
  \leq C \mu(\Lambda_b)c_{n-b}^{(q)}.
  \end{equation*}
Moreover, for $b>0$, $\Lambda_b=\bigcup_{\phi(\alpha)\geq b}
\Delta_{\alpha, \phi(\alpha)-b}$, hence $\mu(\Lambda_b)\leq
\sum_{\ell\geq b} c_\ell^{(q)} \leq c_b^{(q-1)}$. We obtain
  \begin{equation*}
  \int_{T^{-n}\Delta_0}  \Psi_n(y) \dd \mu(y)
  = \sum_{b=0}^n \int_{\Lambda_b \cap T^{-n}\Delta_0} \Psi_n(y)\dd\mu(y)
  \leq C\sum_{b=0}^n c_b^{(q-1)} c_{n-b}^{(q)}.
  \end{equation*}
By~\eqref{convol_OK}, this is bounded by $c_n^{(q-1)}$.
\end{proof}

\subsection{Bounding \texorpdfstring{$D_p$}{Dp}}

To follow the same strategy as in the previous sections, we need to
show that $K_p$ is close to an integral, as in
Lemma~\ref{lem_controle_Kp_fullshift}. To do so, as in the proof of
this lemma, we define a function $f_i$ as in~\eqref{eq_define_fi},
and control its iterates under the transfer operator. The first step
is to control its Lipschitz constant.

\begin{lem}
\label{lem_regularite_fi}
For $z$ and $z'$ with zero height, $|f_i(z)|\leq C \Lip_i(K)$ and
  \begin{equation*}
  |f_i(z)-f_i(z')|\leq C d(z,z') \sum_{j=0}^i \Lip_j(K)c_{i-j}^{(q-1)}.
  \end{equation*}
\end{lem}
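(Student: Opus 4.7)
The proof follows the template of Lemma~\ref{lem_controle_Kp_fullshift}, with the uniform geometric factor $\beta^{i-j}$ replaced by a quantity that reflects the number of returns to $\Delta_0$ during $i-j$ iterations. The sup bound $|f_i(z)|\leq C\Lip_i(K)$ is immediate: since $H(y,\dotsc,T^i y)$ is the difference of $K$ with its $i$-th argument swapped between $T^i y$ and $x_*$, we have $|H|\leq \Lip_i(K)$, and invariance gives $\sum_{T^i y=z} g^{(i)}(y)=1$.

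For the Lipschitz bound, I would pair preimages $y$ of $z$ with preimages $y'$ of $z'$ lying in the same $T^i$-cylinder, and decompose $f_i(z)-f_i(z')$ exactly as in~\eqref{wpoiuxcvjl,mlkxwc}. The jacobian term is handled by bounded distortion and contributes $C\Lip_i(K) d(z,z')$. For the $H$-term, separate Lipschitzness yields
\[
|H(y,\dotsc, T^i y) - H(y',\dotsc, T^i y')|\leq 2\sum_{j=0}^i \Lip_j(K)\, d(T^j y, T^j y').
\]
The point specific to the non-uniform setting is that, since $T^j y$ and $T^j y'$ stay in the same partition element up to time $i$ where they reach $z,z'\in\Delta_0$, the separation time splits as $s(T^j y, T^j y') = \psi_{i-j}(T^j y') + s(z,z')$, hence $d(T^j y, T^j y')\leq \beta^{\psi_{i-j}(T^j y')} d(z,z')$. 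Summing over the paired $y'$, the change of variables $w=T^j y'$ together with $\hatT^j 1=1$ reduces the $j$-th contribution to $2d(z,z')\Lip_j(K)\cdot \hatT^{i-j}(\beta^{\psi_{i-j}})(z')$.

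The main obstacle is then the pointwise estimate $\hatT^n(\beta^{\psi_n})(z')\leq c_n^{(q-1)}$ for $z'\in \Delta_0$; Lemma~\ref{lem_Psin} controls only the $\mu$-integral of this quantity. I would derive the sup bound through a renewal decomposition in the spirit of Section~\ref{subsec_renewal}, classifying preimages $w$ of $z'$ under $T^n$ by their first entry time $b$ into $\Delta_0$: for $z'\in\Delta_0$,
\[
\hatT^n(\beta^{\psi_n})(z') = \sum_{b=0}^n U_{n-b}(B_b\mathds{1})(z'),
\]
where $U_m$ is the operator appearing in the proof of Lemma~\ref{lem_Psin} with $\rho=\beta$ (so that $\norm{U_m}_\boC\leq c_m^{(q)}$ by Wiener) and $B_b$ is the first-entry operator of Section~\ref{subsec_renewal}. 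A direct distortion computation gives $\norm{B_b\mathds{1}}_\boC\leq C\mu(\phi\geq b)\leq c_b^{(q-1)}$ via~\eqref{serie_maj}, and the convolution rule~\eqref{convol_OK} then yields the desired bound $c_n^{(q-1)}$. The delicate step will be justifying the Lipschitz bound on $B_b\mathds{1}$ (rather than only a sup bound), which reduces to distortion control on the top-of-column jacobians, routine but slightly tedious bookkeeping.
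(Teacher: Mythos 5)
Your argument matches the paper's through the decomposition~\eqref{wpoiuxcvjl,mlkxwc}, the distortion bound on the jacobian-difference term, and the key estimate $d(T^j y, T^j y')\leq \beta^{\psi_{i-j}(T^j y')} d(z,z')$ for the $H$-term; after changing variables the quantity to control is indeed $\hatT^{i-j}\Psi_{i-j}(z')$ for $z'\in\Delta_0$. You diverge only at the last step. The paper writes, ``by bounded distortion,'' that this pointwise value is bounded by $C\int_{T^{-(i-j)}\Delta_0}\Psi_{i-j}$ and then quotes Lemma~\ref{lem_Psin}; the justification is that $\Psi_n$ is constant on $n$-cylinders and the return map to $\Delta_0$ is full-branch, so $\hatT^n\Psi_n$ is constant on $\Delta_0$ up to distortion constants, hence its value at any $z'\in\Delta_0$ is comparable to $\mu(\Delta_0)^{-1}\int_{\Delta_0}\hatT^n\Psi_n=\mu(\Delta_0)^{-1}\int_{T^{-n}\Delta_0}\Psi_n$. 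You instead observe, correctly, that Lemma~\ref{lem_Psin} as stated controls only the integral, and supply the pointwise estimate via the renewal decomposition $\hatT^n\Psi_n(z')=\sum_{b\leq n}U_{n-b}(B_b\mathds{1})(z')$, using $\norm{U_m}_\boC\leq c_m^{(q)}$ (Wiener, exactly as in the proof of Lemma~\ref{lem_Psin}) and $\norm{B_b\mathds{1}}_\boC\leq c_b^{(q-1)}$ by distortion, closing with~\eqref{convol_OK}. This is correct and self-contained; what it buys is an explicit sup bound without invoking the sup-to-average comparison, at the cost of re-deploying $U_m$ and the Banach-algebra machinery rather than the one-line distortion argument the paper uses.
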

\begin{proof}
The inequality $|f_i(z)|\leq C \Lip_i(K)$ is trivial. To control the
Lipschitz constant, as in~\eqref{wpoiuxcvjl,mlkxwc}, we decompose
  \begin{equation*}
  \begin{split}
  f_i(z)-f_i(z')={}&\sum (g^{(i)}(y)-g^{(i)}(y')) H(y,\dotsc, T^i y)
  \\& +
  \sum g^{(i)}(y')(H(y,\dotsc, T^i y)-H(y',\dotsc, T^i y')).
  \end{split}
  \end{equation*}
Using distortion controls, we bound the first sum by $C \Lip_i(K)
d(z,z')$. For the second sum, we replace successively each $T^j y$
with $T^j y'$, writing it as
  \begin{multline*}
  \sum_{T^i y' = z'} \sum_{j=0}^i g^{(i)}(y') (H(y,\dotsc, T^{j-1}y, T^j y, T^{j+1}y',\dotsc, T^i y')
  \\
  - H(y,\dotsc, T^{j-1}y, T^j y', T^{j+1}y',\dotsc, T^i y')).
  \end{multline*}
Since the distance between $T^j y$ and $T^j y'$ is bounded by
$\Psi_{i-j}(T^j y') d(z,z')$, we obtain a bound
  \begin{align*}
  \sum_{T^i y'=z'} & \sum_{j=0}^i g^{(i)}(y') \Psi_{i-j}(T^j y') \Lip_j(K) d(z,z')
  \\&\leq d(z,z')\sum_{j=0}^i \sum_{T^{i-j}(y'_j) = z'} g^{(i-j)}(y'_j) \Psi_{i-j}(y'_j) \Lip_j(K)
  \\&\leq C d(z,z') \sum_{j=0}^i \Lip_j(K) \int_{T^{-(i-j)}\Delta_0} \Psi_{i-j},
  \end{align*}
by bounded distortion. With Lemma~\ref{lem_Psin}, this gives the
result.
\end{proof}

To follow the strategy of proof of
Lemma~\ref{lem_controle_Kp_fullshift}, we need to understand the
iterates of $f_i$ under the transfer operator. This is done in the
next lemma.

\begin{lem}
\label{lem_Lrfi}
For any $r\geq 0$ and any $z\in \Delta_0$, we have
  \begin{equation*}
  \left|\hatT^r f_i(z)-\int_{\Delta} f_i\right|\leq \sum_{j=0}^i \Lip_j(K)
  \left(\sum_{k=0}^r c_k^{(q-2)}c_{i-j+r-k}^{(q-1)}\right).
  \end{equation*}
\end{lem}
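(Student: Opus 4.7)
The plan is to apply the renewal decomposition~\eqref{def_Tn} of the transfer operator on $\Delta_0$, to control the asymptotic behavior of $T_k$ via Proposition~\ref{prop_wiener}, and to estimate carefully the operators $B_b f_i$. For $z\in\Delta_0$, \eqref{def_Tn} gives $\hatT^r f_i(z)=\sum_{b=0}^{r}T_{r-b}B_b f_i(z)$. Since the sets $\Lambda_b$ of Section~\ref{subsec_renewal} partition $\Delta$ according to the first entry time to $\Delta_0$, one has $\int_\Delta f_i\dd\mu=\sum_{b\geq 0}\int_{\Delta_0}B_b f_i\dd\mu$. Setting $\Pi g\coloneqq\bigl(\int_{\Delta_0}g\dd\mu\bigr)\cdot 1_{\Delta_0}$, the quantity of interest splits as $\hatT^r f_i(z)-\int_\Delta f_i\dd\mu=\sum_{b=0}^{r}(T_{r-b}-\Pi)B_b f_i(z)-\sum_{b>r}\int_{\Delta_0}B_b f_i\dd\mu$.

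For the operator-norm decay of $T_k-\Pi$ on $\boC$, I invoke Proposition~\ref{prop_wiener}: the hypothesis $\mu(\phi=k)\leq c_k^{(q)}$ together with~\eqref{serie_maj} yields $\sum_{k>n}\mu(\phi=k)\leq d_n^{(q-1)}$, whence $\norm{T_{n+1}-T_n}_\boC\leq c_n^{(q-1)}$; summing one more time via~\eqref{serie_maj} gives $\norm{T_n-\Pi}_\boC\leq c_n^{(q-2)}$. The tail sum $\sum_{b>r}\int_{\Delta_0}B_b f_i$ is then immediate: $|f_i|\leq C\Lip_i(K)$ together with $\mu(\Lambda_b)\leq c_b^{(q-1)}$ gives an upper bound $C\Lip_i(K)\,c_r^{(q-2)}$, which is absorbed into the right-hand side of the statement as the $j=i$, $k=r$ contribution to the convolution.

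The heart of the proof is then a uniform estimate of the form $\norm{B_b f_i}_\boC\leq C\sum_{j=0}^{i}\Lip_j(K)\,c_{(i-j)+b}^{(q-1)}$, which at $b=0$ specializes exactly to Lemma~\ref{lem_regularite_fi}. For $b\geq 1$, $B_b f_i(x)=\sum_{\alpha:\phi(\alpha)>b}g^{(b)}(y_\alpha(x))\,f_i(y_\alpha(x))$ with $y_\alpha(x)\in\Delta_{\alpha,\phi(\alpha)-b}$; I would split the analysis according to whether $\phi(\alpha)\geq b+i$, in which case the preimage tree used to evaluate $f_i(y_\alpha(x))$ lies entirely inside the column of $\alpha$ and contributes an aggregate weight bounded by $\mu(\phi\geq b+i)\leq c_{b+i}^{(q-1)}$, or $\phi(\alpha)<b+i$, in which case the preimages unfold through the basis and the argument of Lemma~\ref{lem_regularite_fi} combined with Lemma~\ref{lem_Psin} produces the shift by $b$ in the exponent of $c^{(q-1)}$ for each source index $j$. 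The convolution identity~\eqref{convol_OK} then glues these ``in-column'' and ``through-basis'' contributions into the desired source-by-source bound.

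Combining everything, $\sum_{b=0}^{r}\norm{T_{r-b}-\Pi}_\boC\cdot\norm{B_b f_i}_\boC\leq C\sum_{b=0}^{r}c_{r-b}^{(q-2)}\sum_j\Lip_j(K)\,c_{(i-j)+b}^{(q-1)}$, and the substitution $k=r-b$ recasts the right-hand side as the target bound $C\sum_{j=0}^{i}\Lip_j(K)\sum_{k=0}^{r}c_k^{(q-2)}c_{(i-j)+r-k}^{(q-1)}$. The main obstacle is the key estimate on $\norm{B_b f_i}_\boC$: in the non-uniformly expanding setting the Lipschitz constant of $B_b f_i$ on partition elements of $\Delta_0$ is not directly uniform, and a delicate source-by-source tracking of each $\Lip_j(K)$ through preimage trees weighted by the tails of $\phi$ is required to produce the convolution structure appearing in the statement.
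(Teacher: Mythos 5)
Your overall architecture is exactly the paper's: decompose $\hatT^r$ via $\sum_{k+b=r}T_kB_b$, replace $T_k$ by $\Pi+E_k$ with $\norm{E_k}_\boC\leq c_k^{(q-2)}$, control the main term by the $\Lambda_b$ tail, and reduce to the estimate $\norm{B_b f_i}_\boC\leq C\sum_{j\leq i}\Lip_j(K)\,c^{(q-1)}_{i-j+b}$. That reduction and the final convolution bookkeeping are correct and match the paper.

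The one place where your argument stops short is precisely the step you flag as the main obstacle: the $\boC$-norm bound on $B_bf_i$. Your ``in-column'' versus ``through-basis'' split (according to $\phi(\alpha)\geq b+i$ or not) is a reasonable heuristic, but it is not what actually produces the exponent $i-j+b$; merging $j$-dependence into a coarse $\mu(\phi\geq b+i)\leq c^{(q-1)}_{b+i}$ bound loses the $-j$, and it is only harmless here because these are generic upper-bound sequences via the nonincreasing variant $d^{(q-1)}$. The paper's mechanism is cleaner: write $B_bf_i(z)=\sum_\alpha g^{(b)}(z_\alpha)f_i(z_\alpha)$ with $z_\alpha\in\Delta_{\alpha,\phi(\alpha)-b}$, split $B_bf_i(z)-B_bf_i(z')$ into a distortion-of-$g^{(b)}$ term (giving $C\Lip_i(K)\,c_b^{(q-1)}d(z,z')$, same as the sup bound~\eqref{borneBb_sup}) and a difference-of-$f_i$ term, and for the latter apply Lemma~\ref{lem_regularite_fi} not to $f_i$ at $z_\alpha$ directly but to $f_{i-h_\alpha}$ at the projections $\pi z_\alpha,\pi z'_\alpha$ in the base, where $h_\alpha=\phi(\alpha)-b$. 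This yields, per column $\alpha$, $\sum_{j\leq i-h_\alpha}\Lip_j(K)c^{(q-1)}_{i-j-h_\alpha}+\sum_{j>i-h_\alpha}\Lip_j(K)$; summing over $\alpha$ with $g^{(b)}(z_\alpha)\leq C\mu(\Delta_{\alpha,0})$ and grouping by $\ell=\phi(\alpha)$, the coefficient of each $\Lip_j(K)$ is a finite convolution $\sum_\ell\mu(\phi=\ell)c^{(q-1)}_{i-j-(\ell-b)}$ plus a tail $\sum_{\ell>b+i-j}\mu(\phi=\ell)$, both of which are $\leq c^{(q-1)}_{i-j+b}$ by~\eqref{convol_OK} and~\eqref{serie_maj}. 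You should carry out this per-$j$ accounting rather than the binary column split; otherwise the argument is sound and the conclusion follows as you state.
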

\begin{proof}
We will use the decomposition $1_{\Delta_0}\hatT^r = \sum_{k+b=r} T_k
B_b$ given by~\eqref{def_Tn} to understand $\hatT^r f_i$.

Let us first describe the asymptotics of $T_k$. Let $\boC$ denote the
space of Lipschitz functions on the basis $\Delta_0$ of the tower. We
define an operator $\Pi$ on $\boC$ by $\Pi f=(\int_{\Delta_0} f)
1_{\Delta_0}$. The operators $T_n$ converge to $\Pi$. Since
$\norm{T_n - T_{n+1}}\leq c_n^{(q-1)}$ by
Proposition~\ref{prop_wiener}, we have
  \begin{equation}
  \label{TkOK_optimal}
  \norm{T_k-\Pi}  \leq \sum_{n=k}^\infty \norm{T_n-T_{n+1}}
  \leq  \sum_{n=k}^\infty  c_n^{(q-1)}
  \leq c_k^{(q-2)},
  \end{equation}
by~\eqref{serie_maj}.

\smallskip

We will now estimate $\norm{B_b f_i}_{\boC}$ using
Lemma~\ref{lem_regularite_fi}. For $z\in \Delta_0$, we have
  \begin{equation*}
  B_b f_i(z)=\sum_{\phi(\alpha)\geq b} g^{(b)}(z_\alpha) f_i(z_\alpha),
  \end{equation*}
where $z_\alpha$ is the unique preimage of $z$ under $T^b$ in
$\Delta_{\alpha, \phi(\alpha)-b}$. We have
  \begin{equation}
  \label{borneBb_sup}
  |B_b f_i|_\infty\leq
  |f_i|_\infty \cdot C \sum_{\phi(\alpha)\geq b}\mu(\Delta_{\alpha,0})
  \leq C|f_i|_\infty c_b^{(q-1)}
  \leq C\Lip_i(K) c_b^{(q-1)}.
  \end{equation}
Let us now estimate $B_b f_i(z)-B_b f_i(z')$ for $z$ and $z'$ in the
same partition element. If we form the difference
$g^{(b)}(z_\alpha)-g^{(b)}(z'_\alpha)$, the resulting term is bounded
by $Cd(z,z') \Lip_i(K) c_b^{(q-1)}$ (using distortion controls and
the same computation as in~\eqref{borneBb_sup}). On the other hand,
denoting by $h_\alpha=\phi(\alpha)-b$ the height of $z_\alpha$, we
have
  \begin{equation*}
  |f_i(z_\alpha)- f_i(z'_\alpha)| \leq C\left(\sum_{j=0}^{i-h_\alpha} \Lip_j(K) c_{i-j-h_\alpha}^{(q-1)}
  + \sum_{j=i-h_\alpha+1}^i \Lip_j(K)\right) d(z,z').
  \end{equation*}
This follows from Lemma~\ref{lem_regularite_fi} applied to the
function $f_{i-h_\alpha}$ and the points $\pi z_\alpha$ and $\pi
z'_\alpha$. Summing over $\alpha$, we obtain a bound for the
Lipschitz constant of $B_b f_i$ of the form
  \begin{equation*}
  \sum_{\phi(\alpha) \geq b}  g^{(b)}(z_\alpha) \left[\sum_{j=0}^{i- h_\alpha}
	\Lip_j(K)c^{(q-1)}_{i-j-h_\alpha}
  + \sum_{j=i-h_\alpha+1}^i \Lip_j(K) \right].
  \end{equation*}
By bounded distortion, $g^{(b)}(z_\alpha) \leq C
\mu(\Delta_{\alpha,0})$. Taking the union over $\alpha$ and writing
$\ell=\phi(\alpha)$, we get that the coefficient of $\Lip_j(K)$ in
this sum is bounded by
  \begin{equation*}
  C \sum_{\ell = b}^{b + i-j} \mu(\phi=\ell) c^{(q-1)}_{i-j- (\ell-b)}
  + C \sum_{\ell=b+i-j+1}^\infty \mu(\phi=\ell).
  \end{equation*}
The second term is bounded by $c_{i-j+b}^{(q-1)}$
by~\eqref{serie_maj}, while the first term is bounded by
  \begin{equation*}
  \sum_{\ell=0}^{i-j+b } c_\ell^{(q)} c_{i-j+b - \ell}^{(q-1)}
  \leq c^{(q-1)}_{i-j+b}
  \end{equation*}
by~\eqref{convol_OK}. We have shown that
  \begin{equation*}
  \norm{B_b f_i}_{\boC} \leq \sum_{j=0}^i \Lip_j(K) c_{i-j+b}^{(q-1)}.
  \end{equation*}
(The contribution of~\eqref{borneBb_sup} is compatible with this
bound.)

\smallskip

Let us now study $\hatT^r f_i$ on $\Delta_0$. We write $T_k=\Pi+E_k$
with $\norm{E_k}\leq c_k^{(q-2)}$, by~\eqref{TkOK_optimal}. Hence,
  \begin{equation}
  \label{eqLr_opt}
  \hatT^r f_i = \sum_{k+b=r}T_k B_b f_i = \sum_{k+b=r} \Pi B_b f_i
  + \sum_{k+b=r} E_k B_b f_i.
  \end{equation}
The first term is a constant function equal to $\sum_{b=0}^r
\int_{\Delta_0} B_b f_i$. Denoting by $\Lambda_b$ the set of points
that enter $\Delta_0$ exactly at time $b$, we have $\int_{\Delta_0}
B_b f_i =\int_{\Lambda_b} f_i$. As a consequence
  \begin{equation*}
  \sum_{b=0}^r \int_{\Delta_0} B_b f_i -\int f_i = -\int_{\bigcup_{b>r}\Lambda_b} f_i
  \leq |f_i|_\infty \sum_{b>r} \mu(\Lambda_b)
  \leq \Lip_i(K) \sum_{b>r} c_b^{(q-1)}
  \leq \Lip_i(K) c_r^{(q-2)},
  \end{equation*}
by~\eqref{serie_maj}. This bound is compatible with the statement of
the lemma. The second term of~\eqref{eqLr_opt} is bounded (in $\boC$
norm, thus in sup norm) by
  \begin{equation*}
  \sum_{k+b=r} c_k^{(q-2)} \norm{B_b f_i}_{\boC}
  \leq \sum_{j=0}^i \Lip_j(K)\cdot \sum_{k+b=r} c_k^{(q-2)}c_{i-j+b}^{(q-1)}.
  \end{equation*}
This proves the lemma.
\end{proof}

We can now obtain the following lemma, which is the analogue in our
setting of Lemma ~\ref{lem_controle_Kp_fullshift}.
\begin{lem}
\label{control_Kp_non_uniform}
For all $x_p\in \Delta_0$,
  \begin{equation*}
  \left|K_p(x_p,\dotsc) -\int K(y,\dotsc, T^{p-1}y, x_p,\dotsc)\dd \mu(y)\right|
  \leq \sum_{j=0}^{p-1} \Lip_j(K) c_{p-j}^{(q-2)}.
  \end{equation*}
\end{lem}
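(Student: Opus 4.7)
The plan is to mimic the proof of Lemma~\ref{lem_controle_Kp_fullshift}, but with the polynomial decay estimates from Lemma~\ref{lem_Lrfi} replacing the geometric ones. I would start by fixing a reference point $x_*\in\Delta$ and performing the same telescoping decomposition, namely
\begin{equation*}
K_p(x_p,\dotsc) = \sum_{i=0}^{p-1}\hatT^{p-i}f_i(x_p) + K(x_*,\dotsc,x_*,x_p,\dotsc),
\end{equation*}
with $f_i$ defined by~\eqref{eq_define_fi}. Integrating the telescoping identity for $f_i$ with respect to $\mu$ and using $T$-invariance, I obtain symmetrically
\begin{equation*}
\int K(y,\dotsc, T^{p-1}y, x_p,\dotsc)\,\dd\mu(y) = \sum_{i=0}^{p-1}\int f_i\,\dd\mu + K(x_*,\dotsc,x_*,x_p,\dotsc),
\end{equation*}
so that the lemma reduces to bounding $\sum_{i=0}^{p-1}\bigl|\hatT^{p-i}f_i(x_p)-\int f_i\,\dd\mu\bigr|$.

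Since $x_p\in\Delta_0$, I can apply Lemma~\ref{lem_Lrfi} with $r=p-i$ to each term, producing a double sum of convolutions of $c^{(Q)}$ sequences. Exchanging the order of summation, the coefficient of each $\Lip_j(K)$ becomes $\sum_{i=j}^{p-1}\sum_{k=0}^{p-i} c_k^{(q-2)} c_{p-j-k}^{(q-1)}$. Substituting $s=p-i$ and swapping the two remaining sums converts this into an expression of the form $\sum_{k=0}^{p-j}(p-j-\max(k,1)+1)\,c_k^{(q-2)}c_{p-j-k}^{(q-1)}$.

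The main combinatorial step is to show that this expression is bounded by $c_{p-j}^{(q-2)}$. For this I would use the elementary observation that multiplying a sequence by $(n+1)$ costs one order of moment: if $c_n$ has moment $Q$, then $(n+1)c_n$ has moment $Q-1$. Absorbing the factor $(p-j-k+1)$ into $c^{(q-1)}_{p-j-k}$ thus produces a $c^{(q-2)}_{p-j-k}$ sequence, and the resulting convolution $c^{(q-2)}\star c^{(q-2)}$ is again a $c^{(q-2)}$ sequence via~\eqref{convol_OK}. Summing over $j$ gives the claimed bound.

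I expect this combinatorial bookkeeping with the $c^{(Q)}$ notation to be the only real obstacle, since Lemmas~\ref{lem_regularite_fi} and~\ref{lem_Lrfi} already do the hard analytic work on the transfer-operator side. The delicate point is that naive bounds (e.g.\ bounding the inner convolution by $c^{(q-2)}_{p-j}$ and then summing over $i$ to pick up a useless factor $p-j$) lose a full order of moment; one must exchange sums in the right order so that the arising factor $(p-j-k+1)$ is absorbed into a single $c^{(q-1)}$ instead.
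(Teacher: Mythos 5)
Your proposal is correct and follows essentially the same route as the paper's proof: decompose $K_p$ and its integral via the $f_i$, apply Lemma~\ref{lem_Lrfi} with $r=p-i$, exchange the order of summation so the coefficient of each $\Lip_j(K)$ is a triple sum, and absorb the arising linear factor $(p-j-k)$ into $c^{(q-1)}_{p-j-k}$ to produce a $c^{(q-2)}$ sequence before applying~\eqref{convol_OK}. The paper writes the coefficient simply as $\sum_{k=0}^{p-j} c_k^{(q-2)} (p-k-j) c_{p-k-j}^{(q-1)}$, which is the same bookkeeping you carried out (up to an inessential $+1$), so there is no difference in substance.
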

\begin{proof}
Just like in the proof of Lemma~\ref{lem_controle_Kp_fullshift}
  \begin{equation*}
  \left|K_p(x_p,\dotsc)-\int K(y,\dotsc, T^{p-1}y, x_p,\dotsc)\right|
  \leq \sum_{i=0}^{p-1} \left|\hatT^{p-i}f_i(x_p)-\int f_i\right|.
  \end{equation*}
By Lemma ~ \ref{lem_Lrfi}, this quantity is bounded by
  \begin{equation*}
  C\sum_{i=0}^{p-1} \sum_{j=0}^i \Lip_j(K) \left( \sum_{k=0}^{p-i} c_k^{(q-2)}c_{i-j+p-i-k}^{(q-1)}\right).
  \end{equation*}
The coefficient of $\Lip_j(K)$ in this sum is
  \begin{equation*}
  \sum_{k=0}^{p-j} c_k^{(q-2)} (p-k-j) c_{p-k-j}^{(q-1)}
  \leq \sum_{k=0}^{p-j} c_k^{(q-2)}c_{p-k-j}^{(q-2)}
  \leq c_{p-j}^{(q-2)}
  \end{equation*}
by~\eqref{convol_OK}. This proves the lemma.
\end{proof}

The previous lemma makes it possible to control the moments of
$D_p=K_p-K_{p+1}$:
\begin{lem}
\label{lem_devroye_Kp_opt_Lq}
For all $\kappa\leq 2q$,
  \begin{equation*}
  \E( |D_p|^\kappa |\boF_{p+1})(x_{p+1},\dotsc)
  \leq C\sum_{j=0}^p
  \Lip_j(K)^\kappa c_{p-j}^{(q-2)} + C \sum_{h\geq 0}c_h^{(q-\kappa/2)}
  \left(\sum_{j=p-h+1}^{p}\Lip_j(K)^2\right)^{\kappa/2}.
  \end{equation*}
\end{lem}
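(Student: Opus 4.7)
The plan is to follow the skeleton of the proof of Lemma~\ref{lem_inductive_exponential}, with the exponential decay of $|A(z_\alpha)|$ in the height replaced by the polynomial decay encoded by the sequences $c_n^{(q-2)}$, and with the $e^A-1-A$ expansion of the exponential case replaced by moment estimates (weighted Jensen for the `bulk' contribution, and Cauchy-Schwarz for the `bad' contribution of points with large return time).

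As in Lemma~\ref{lem_inductive_exponential}, the case $h(x_{p+1})>0$ is trivial: $x_{p+1}$ has a unique preimage $z$ under $T$, so $K_{p+1}(x_{p+1},\dotsc)=K_p(z,x_{p+1},\dotsc)$ and $D_p$ vanishes on this fiber. So I would assume $h(x_{p+1})=0$ and enumerate the preimages $\{z_\alpha\}$ with $z_\alpha\in\Delta_{\alpha,\phi(\alpha)-1}$. Writing $A(z)=D_p(z,x_{p+1},\dotsc)$ and using bounded distortion to replace $g(z_\alpha)$ by $C\mu(\Delta_{\alpha,0})$, I may group preimages by the return time $h=\phi(\alpha)\geq 1$, which packages $\sum_{\alpha\st\phi(\alpha)=h} g(z_\alpha)$ into a factor bounded by $C\mu(\phi=h)$.

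The central pointwise estimate is
\[
|A(z_\alpha)|\leq C\sum_{j=0}^{p-h}\Lip_j(K)\,c^{(q-2)}_{p-h+1-j} + C\sum_{j=p-h+1}^{p}\Lip_j(K),
\]
with the convention $\Lip_j(K)=0$ for $j<0$, so that $h>p$ is automatically covered by the trivial global bound. Since $z_\alpha$ has height $h-1$, we have $K_p(z_\alpha,x_{p+1},\dotsc)=\tilde K_{p-h+1}(\pi z_\alpha,x_{p+1},\dotsc)$, where $\tilde K$ is obtained from $K$ by collapsing the $h$ variables at positions $p-h+1,\dotsc,p$ (forced to be the successive images $T^i\pi z_\alpha$) into a single variable at position $p-h+1$. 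Because $T$ is isometric inside a column of the tower, this gives $\Lip_{p-h+1}(\tilde K)\leq\sum_{j=p-h+1}^{p}\Lip_j(K)$, with $\Lip_j(\tilde K)=\Lip_j(K)$ for $j<p-h+1$. Applying Lemma~\ref{control_Kp_non_uniform} to $\tilde K_{p-h+1}$ at $\pi z_\alpha$, and to $K_{p+1}$ at $x_{p+1}$, and comparing both sides to the common integral $\int K(y,\dotsc,T^p y,x_{p+1},\dotsc)\dd\mu(y)$ (the swap $K\leftrightarrow\tilde K$ inside the integrand costs $C\sum_{j=p-h+1}^{p}\Lip_j(K)$ since $d\leq 1$ on $\Delta$), the three errors combine and are absorbed into the two displayed sums.

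Finally, I would raise this bound to the $\kappa$-th power via $(a+b)^\kappa\leq C(a^\kappa+b^\kappa)$ and sum against $\mu(\phi=h)$. For the first sum, weighted Jensen with the finite-mass weights $c^{(q-2)}$ gives $\bigl(\sum_k c^{(q-2)}_k a_k\bigr)^\kappa\leq C\sum_k c^{(q-2)}_k a_k^\kappa$; after swapping the order of summation, the inner convolution of $\mu(\phi=\cdot)$ against $c^{(q-2)}$ is itself $c^{(q-2)}$ by~\eqref{convol_OK}, producing the first term $C\sum_j\Lip_j(K)^\kappa c_{p-j}^{(q-2)}$ of the statement. For the second sum, Cauchy-Schwarz on the $h$ terms yields $\bigl(\sum_{j=p-h+1}^{p}\Lip_j(K)\bigr)^\kappa\leq h^{\kappa/2}\bigl(\sum_{j=p-h+1}^{p}\Lip_j(K)^2\bigr)^{\kappa/2}$, and $h^{\kappa/2}\mu(\phi=h)=c_h^{(q-\kappa/2)}$ (this is exactly where the hypothesis $\kappa\leq 2q$ is used, because $q-\kappa/2\geq 0$ is needed for the moment to remain nonnegative), producing exactly the second term of the statement. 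The main obstacle is bookkeeping: tracking the index shifts between height $h-1$ and return time $h$, and checking that every generic $c^{(Q)}$ that appears has moment at least $q-2$ so that it can be safely handled by the convolution bound~\eqref{convol_OK}.
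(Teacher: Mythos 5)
Your proposal is correct and follows essentially the same route as the paper: you bound $|A(z_\alpha)|$ pointwise via a double application of Lemma~\ref{control_Kp_non_uniform}, raise to the $\kappa$-th power by separating the two sums, then sum over $\alpha$ against $\mu(\phi=h)$, using Jensen/H\"older with the summable weights $c^{(q-2)}$ for the first piece and Cauchy--Schwarz plus $h^{\kappa/2}\mu(\phi=h)\leq c_h^{(q-\kappa/2)}$ for the second. The only deviation is the ``collapsed'' auxiliary function $\tilde K$: the paper avoids this detour by observing directly that when $x_p=z$ has height $h$, the coordinates $x_{p-1},\dotsc,x_{p-h}$ are deterministic, so $K_p(z,x_{p+1},\dotsc)=K_{p-h}(\pi z,T\pi z,\dotsc,z,x_{p+1},\dotsc)$, and Lemma~\ref{control_Kp_non_uniform} applies to $K$ itself at time $p-h$ and point $\pi z$. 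Your collapse gives the same estimate (the index shift $c^{(q-2)}_{p-h+1-j}$ versus $c^{(q-2)}_{p-h-j}$ is immaterial since $c^{(Q)}$ is generic), but it introduces bookkeeping about how $\tilde K$ is defined for points $z$ outside $\Delta_0$ and whether its Lipschitz structure survives; sticking with the conditional-expectation identity keeps the proof cleaner.
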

\begin{proof}
We follow closely the strategy of the proof of
Lemma~\ref{lem_inductive_exponential}. If the height of $x_{p+1}$ is
positive, the estimate is trivial. Otherwise, let $\{z_\alpha\}$
denote the preimages of $x_{p+1}$ under $T$, with respective height
$h_\alpha=\phi(\alpha)-1$. Let $A(z)=D_p(z, x_{p+1},\dotsc)$, we have
$\E(|D_p|^\kappa|\boF_{p+1})(x_{p+1},\dotsc)=\sum g(z_\alpha)
|A(z_\alpha)|^\kappa$.

Fix a point $z=z_\alpha$ with height $h\geq 0$. If $h\leq p$,
consider the projection $\pi z$ of $z$ in the basis of the tower.
Using Lemma~\ref{control_Kp_non_uniform} (at time $p-h$ for the point
$\pi z$, and at time $p+1$ for the point $x_{p+1}$), we get
  \begin{equation}
  \label{eq_mainineq_A}
  |A(z)|\leq \sum_{j\leq p-h} \Lip_j(K) c_{p-h-j}^{(q-2)} + \sum_{j=p-h+1}^p \Lip_j(K).
  \end{equation}
This estimate also holds (trivially) if $h>p$.

To estimate $|A(z)|^\kappa$, we first use the inequality
$(x+y)^\kappa \leq C x^\kappa+Cy^\kappa$ to separate the two sums.
Then, in the first sum, since $c_{p-h-j}^{(q-2)}$ is summable, we may
use H\"older inequality to get $\left(\sum_{j\leq p-h} \Lip_j(K)
c_{p-h-j}^{(q-2)}\right)^\kappa \leq C \sum_{j\leq p-h}
\Lip_j(K)^\kappa c_{p-h-j}^{(q-2)}$. For the second sum, we write
$\left(\sum_{j=p-h+1}^{p}\Lip_j(K)\right)^2 \leq h
\sum_{j=p-h+1}^{p}\Lip_j(K)^2$, and we obtain
  \begin{equation*}
  |A(z)|^\kappa \leq \sum_{j\leq p-h} \Lip_j(K)^\kappa  c_{p-h-j}^{(q-2)}
  + C h^{\kappa/2} \left(\sum_{j=p-h+1}^p \Lip_j(K)^2\right)^{\kappa/2}.
  \end{equation*}
Summing over $\alpha$, we get that $\sum g(z_\alpha)
|A(z_\alpha)|^\kappa$ is at most
  \begin{equation*}
  C\sum_{h=0}^\infty \mu(\phi=h) \left(\sum_{j\leq p-h} \Lip_j(K)^\kappa  c_{p-h-j}^{(q-2)}
  + h^{\kappa/2} \left(\sum_{j=p-h+1}^p \Lip_j(K)^2\right)^{\kappa/2}\right).
  \end{equation*}
In the first sum, the coefficient of $\Lip_j(K)^\kappa$ is at most
  \begin{equation*}
  \sum_{h=0}^{p-j} c_h^{(q)} c_{p-h-j}^{(q-2)}
  \leq c_{p-j}^{(q-2)}
  \end{equation*}
by~\eqref{convol_OK}. In the second sum, $\mu(\phi=h)h^{\kappa/2}
\leq c_h^{(q-\kappa/2)}$, yielding the statement of the lemma.
\end{proof}

\subsection{Proof of Theorem~\ref{thm_nonuniform}}

\label{subsec_proof_nonuniform}

We will use the following Rosenthal-Burkholder martingale inequality
\cite[Theorem 21.1 and Inequality (21.5)]{burkholder}. Let $\boF_p$
be a decreasing sequence of $\sigma$-algebras, and let $D_p$ be a
sequence of reverse martingale difference with respect to $\boF_p$
(i.e., $D_p$ is $\boF_p$-measurable and $\E(D_p|\boF_{p+1})=0$). For
all $Q\geq 2$,
  \begin{equation*}
  \norm{\sum D_p}^Q_{L^Q} \leq C \E\left( \left[\sum_p \E(D_p^2 | \boF_{p+1})\right]^{Q/2}\right)
  +C \sum_p \E( |D_p|^Q).
  \end{equation*}

We apply this inequality to $\boF_p$ the $\sigma$-algebra of sets
depending only on $x_p,x_{p+1},\dotsc$, to $D_p = K_p-K_{p+1}$ and to
$Q=2q-2$. By Lemma~\ref{lem_devroye_Kp_opt_Lq} with $\kappa=2$, we
have
  \begin{equation}
  \label{pouwxcvlmkj}
  \E(D_p^2|\boF_{p+1})(x_{p+1},\dotsc) \leq C\sum_{j=0}^p \Lip_j(K)^2 c_{p-j}^{(q-2)}
  +C\sum_{h\geq 0} c_h^{(q-1)} \sum_{j=p-h+1}^{p}\Lip_j(K)^2.
  \end{equation}
The coefficient of $\Lip_j(K)^2$ in this estimate is bounded by
$c_{p-j}^{(q-2)} + \sum_{h \geq p-j+1} c_h^{(q-1)} \leq
c_{p-j}^{(q-2)}$. Hence, the first term in Rosenthal-Burkholder
inequality is bounded by
  \begin{equation*}
  C\left(\sum_{p} \sum_{j=0}^p \Lip_j(K)^2 c_{p-j}^{(q-2)}\right)^{q-1}
  \leq C \left(\sum_j \Lip_j(K)^2 \right)^{q-1}.
  \end{equation*}
For the second term, we should bound $\sum_p \E(|D_p|^{2q-2})$. We
sum the estimates of Lemma ~ \ref{lem_devroye_Kp_opt_Lq} (with
$\kappa=2q-2$), to get
  \begin{equation}
  \label{wopxiucvmlwxjcv}
  \sum_p \E(|D_p|^{2q-2})
  \leq C \sum_{j}\sum_{p\geq j} \Lip_j(K)^{2q-2} c_{p-j}^{(q-2)}
  + C \sum_{h\geq 0} c_h^{(1)} \sum_p \left(\sum_{j=p-h+1}^{p}\Lip_j(K)^2\right)^{q-1}.
  \end{equation}
In the first sum, the coefficient of $\Lip_j(K)^{2q-2}$ is $\sum_k
c_k^{(q-2)}\leq C$, therefore this sum is bounded by $C\sum_j
\Lip_j(K)^{2q-2} \leq C\left( \sum \Lip_j(K)^2 \right)^{q-1}$.

The second sum is more delicate. Let us fix $h$ and $p_0 \in [0, h)$,
and let us consider the contribution of those $p$ in $p_0+\Z h$. The
intervals $[p-h+1, p]$ are disjoint. The inequality $\sum x_i^{q-1}
\leq (\sum x_i)^{q-1}$ yields
  \begin{equation*}
  \sum_{p\equiv p_0 \; [h]} \left(\sum_{j=p-h+1}^{p}\Lip_j(K)^2\right)^{q-1}
  \leq \left( \sum_{p\equiv p_0 \; [h]} \sum_{j=p-h+1}^{p}\Lip_j(K)^2\right)^{q-1}
  \leq \left(\sum_j \Lip_j(K)^2\right)^{q-1}.
  \end{equation*}
Summing over the $h$ possible value of $p_0$, we get that the second
sum of~\eqref{wopxiucvmlwxjcv} is bounded by
  \begin{equation*}
  C\sum_{h\geq 0} c_h^{(1)} h \left(\sum_j \Lip_j(K)^2\right)^{q-1}
  \leq C \left(\sum_j \Lip_j(K)^2\right)^{q-1},
  \end{equation*}
since $\sum h c_h^{(1)} <\infty$ by definition.

We have proved that $\norm{\sum D_p}_{L^{2q-2}}^{2q-2} \leq C\left(
\sum_j \Lip_j(K)^2\right)^{q-1}$. Since $\sum D_p=K-\E(K)$, this
proves Theorem~\ref{thm_nonuniform}. \qed

\section{Invertible non-uniform Young towers}
\label{sec_non_uniform_invertible}

Let $T:X\to X$ be a non-uniform Young tower, with invariant measure
$\mu$. Its natural extension $T_{\Z}:X_{\Z}\to X_{\Z}$ preserves a
probability measure $\mu_{\Z}$. There is a natural distance on
$X_{\Z}$, defined as follows. First, the positive separation time
$s(x,y)$ is defined as for $T$. One can also define a negative
separation time $s_-(x,y)$ in the same way, but towards the past: one
iterates towards the past until the points are in different elements
of the Markov partition, and one counts the number of visits to
$\Delta_0$ in between. The distance $d_{\Z}$ is then defined by
$d_{\Z}(x,y)=\beta^{\min( s(x,y), s_-(x,y))}$. Geometrically, this
distance is interpreted as follows: when one returns to the basis,
there is uniform contraction along stable manifolds (corresponding to
the past), and uniform expansion along unstable manifolds. Two points
are close in the unstable direction if they remain close in the
future for a long time (distance $\beta^{s(x,y)}$), while they are
close in the stable direction if they have a long common past
(distance $\beta^{s_-(x,y)}$).

\begin{thm}
\label{thm_Lq_inv}
Let $(T_{\Z},X_{\Z},\mu_{\Z})$ be the natural extension of a Young
tower in which the return time function $\phi$ has a moment of order
$q$. This system satisfies a concentration inequality with moment
$2q-2$, i.e., there exists a constant $C>0$ such that, for any
$n\in\N$, for any function $K_{\Z}(x_0,\dotsc, x_{n-1})$ which is
separately Lipschitz for the distance $d_{\Z}$,
  \begin{multline*}
  \int \left|K_{\Z}(x,\dotsc, T^{n-1}x) -\int K_{\Z}(y,\dotsc, T^{n-1}y)\dd\mu_{\Z}(y)\right|^{2q-2}
  \dd\mu_{\Z}(x)
  \\
  \leq C\left(\sum_j \Lip_j(K_{\Z})^2 \right)^{q-1}.
  \end{multline*}
\end{thm}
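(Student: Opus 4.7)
The plan is to mimic the proof of Theorem~\ref{thm_subshift_bilateral}, approximating $K_{\Z}$ by functions depending only on the unstable (future) coordinates of the natural extension, applying Theorem~\ref{thm_nonuniform} to the approximation, and passing to the limit. Let $\pi_+\colon\Delta_{\Z}\to\Delta$ denote the projection onto the zeroth coordinate, so that $\pi_+\circ T_{\Z}=T\circ\pi_+$.

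For $N\in\N$, we set $K_N(x_0,\dots,x_{n+N-1})=K_{\Z}(x_N,\dots,x_{n+N-1})$; by $\mu_{\Z}$-invariance, proving a concentration inequality for $K_N$ is equivalent to proving one for $K_{\Z}$. We then construct a map $\Phi_N\colon\Delta_{\Z}^{n+N}\to\Delta_{\Z}^{n+N}$, $(x_0,\dots,x_{n+N-1})\mapsto(y_0,\dots,y_{n+N-1})$, whose $i$-th component depends only on the futures $\pi_+(x_0),\dots,\pi_+(x_i)$: we set $\pi_+(y_i)=\pi_+(x_i)$ and build the past of $y_i$ inductively, starting with a fixed canonical past for $y_0$, placing $\pi_+(x_{i-1})$ at past position $-1$ of $y_i$ with the past of $y_{i-1}$ filling in the earlier positions whenever the transition $T\pi_+(x_{i-1})=\pi_+(x_i)$ is admissible, and falling back to a measurable section $h\colon\Delta\to\Delta_{\Z}$ of $\pi_+$ when it fails. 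Setting $\tilde K_N=K_N\circ\Phi_N$ yields a function which depends only on the futures and hence descends to a separately Lipschitz function on $\Delta^{n+N}$ for the non-invertible metric $d$.

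The core of the argument is to show that $\sum_i\Lip_i(\tilde K_N)^2\leq C\sum_i\Lip_i(K_{\Z})^2$ with $C$ independent of $N$. Varying the unstable coordinate $\pi_+(x_i)$ alters only the future of $y_i$, so $d_{\Z}(y_i,y_i')\leq d(\pi_+(x_i),\pi_+(x_i'))$; it modifies each $y_j$ with $j>i$ primarily at past position $-(j-i)$, yielding $d_{\Z}(y_j,y_j')\leq\beta^{s_-}$ where $s_-$ counts past returns of $y_j$ to $\Delta_0$ before that position. Combining these bounds via Cauchy-Schwarz and using the polynomial moment of $\phi$ to control the distribution of past returns yields the required summation. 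Applying Theorem~\ref{thm_nonuniform} to $\tilde K_N$ on $\Delta^{n+N}$ then gives
\begin{equation*}
\int\left|\tilde K_N-\int\tilde K_N\dd\mu_{\Z}\right|^{2q-2}\dd\mu_{\Z}\leq C\left(\sum_i\Lip_i(K_{\Z})^2\right)^{q-1}.
\end{equation*}

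To pass to the limit $N\to\infty$, we perform the change of variables $x=T_{\Z}^{-N}x'$. For a $\mu_{\Z}$-typical $x'$, all transitions in the construction are admissible and the past of $y_{N+k}$ agrees with that of $T_{\Z}^k x'$ on the first $N+k$ positions; Birkhoff's theorem applied to $1_{\Delta_0}$ shows that the number of basis visits grows linearly, so $d_{\Z}(y_{N+k},T_{\Z}^k x')\to 0$ almost surely and $\tilde K_N(x,\dots,T_{\Z}^{n+N-1}x)\to K_{\Z}(x',T_{\Z}x',\dots,T_{\Z}^{n-1}x')$. A truncation combined with dominated convergence transfers the concentration inequality to $K_{\Z}$. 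The main obstacle is the Lipschitz estimate: unlike in the subshift case, where every backward step contracts distances by a factor $\beta$, here contractions only occur at returns to the basis, so the polynomial moment condition on $\phi$ has to be combined with the past-return profile to close the summation --- effectively re-using the combinatorial bookkeeping developed in the proof of Theorem~\ref{thm_nonuniform}.
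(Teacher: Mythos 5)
Your proposal cannot work, and the paper says so explicitly just after the statement of Theorem~\ref{thm_Lq_inv}: ``since the contraction is not uniform, we are not able to reduce this theorem to Theorem~\ref{thm_nonuniform}, contrary to what we have done for subshifts of finite type or uniform Young towers.'' The obstruction is precisely the step you identify as ``the core of the argument,'' namely the bound $\sum_i\Lip_i(\tilde K_N)^2\leq C\sum_i\Lip_i(K_{\Z})^2$. Recall that $\Lip_i(\tilde K_N)$ is a \emph{uniform} (worst-case over all the other variables) constant. Varying $x_i$ changes the past of $y_j$ ($j>i$), and the resulting displacement $d_{\Z}(y_j,y_j')$ is $\beta^{s_-}$ where $s_-$ is the number of returns to $\Delta_0$ among $\pi_+(x_i),\dots,\pi_+(x_{j-1})$. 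Since $\phi$ is unbounded, for every $j-i$ there exist admissible choices of $x_i,\dots,x_{j-1}$ with zero returns to the basis, so $\beta^{s_-}=1$ and the worst case gives only $\Lip_i(\tilde K_N)\lesssim\sum_{j\geq i}\Lip_j(K_{\Z})$, with no geometric gain at all. Your suggestion to use ``the polynomial moment of $\phi$ to control the distribution of past returns'' inside this Lipschitz estimate is a category error: the Lipschitz constant is a pointwise supremum and is blind to the measure of the bad set. The moment condition can only help after one integrates, but the whole point of your reduction was to keep the integration entirely inside the black-box Theorem~\ref{thm_nonuniform}.

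The paper's actual proof abandons the reduction. It defines a function $K$ on $(X\cup\{x_*\})^n$ depending only on the non-invertible coordinates, and then \emph{reimplements} the martingale argument of Section~\ref{sec_non_uniform} from scratch, rather than invoking Theorem~\ref{thm_nonuniform} as a black box. The key structural device is that the auxiliary point $x_*$ blocks propagation of modifications: in a difference $K(\dotsc,x_i,\dotsc)-K(\dotsc,x_i',\dotsc)$, if some later coordinate $x_k$ equals $x_*$, the induced $y_k,y_k'$ coincide for that and all later indices. Because each $f_i$ with $i<p-1$ contains such a blocking point, the analogues of Lemmas~\ref{lem_regularite_fi} and~\ref{lem_Lrfi} go through. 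The one term without a blocking point, $f_{p-1}$, produces the extra error $\sum_{j\geq p}\Lip_j(K_{\Z})\,\beta^{\Card\{k\in[p,j]:x_k\in\Delta_0\}}$ in the analogue of Lemma~\ref{control_Kp_non_uniform}; controlling the contribution of this term inside the Rosenthal--Burkholder inequality is exactly the content of the technical estimate~\eqref{eq:maineq_invert}, which the paper defers to Theorem~\ref{thm_most_technical} in Appendix~\ref{sec_appendix}. In short, the absence of a uniform stable contraction forces a genuinely new argument, not an approximation-and-reduction.
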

This implies Theorem~\ref{thm_nonuniform} (if one considers a
function $K_{\Z}$ depending only on the future of the points), but
the converse is not true: since the contraction is not uniform, we
are not able to reduce this theorem to Theorem~\ref{thm_nonuniform},
contrary to what we have done for subshifts of finite type or uniform
Young towers.

For the proof, we will work with the non-invertible system $X$, or
rather with an augmented space $X_*=X \cup\{x_*\}$ where $x_*$ is a
new point (at distance $1$ of any point of $X$, with zero measure).

Let us start with a function $K_{\Z}$ on $X_{\Z}$, depending on the
past and the future of points. We define a new function $K$ on
$X_*^n$ as follows. We let $K(x_0,\dotsc, x_{n-1})=K_{\Z}(y_0,\dotsc,
y_{n-1})$ where the $y_i$ are defined inductively. For each element
$a$ of the partition, let us fix an admissible past $p(a)$. Let us
also fix a point $y_*\in X_{\Z}$. Let $y_0 = (p((x_0)_0), x_0)$
(unless $x_0=x_*$, in which case let $y_0 = y_*$). If $y_{i-1}$ is
defined, let us define $y_i$. If $x_i=x_*$, we take $y_i=y_*$. If the
transition from $(x_{i-1})_0$ to $(x_i)_0$ is not permitted, let
$y_i=(p((x_i)_0), x_i)$. Otherwise, let $y_i=((y_{i-1})_{-\infty}^0,
x_i)$.

We claim that this function $K$ satisfies an inequality
  \begin{equation}
  \label{eq_K_inv}
  \int_{X_*} \left|K(x,\dotsc,T^{n-1}x) -\int K(y,\dotsc, T^{n-1}y)\dd \mu(y)\right|^{2q-2} \dd \mu(x)
  \leq C\left(\sum_{j=0}^{n-1} \Lip_j(K_{\Z})^2\right)^{q-1}.
  \end{equation}
This implies Theorem~\ref{thm_Lq_inv} by using the same argument as
in Subsection~\ref{subsec_bilateral}: let $K_N(y_0\breakdots
y_{n+N-1})=K_{\Z}(y_N,\dotsc, y_{N+n-1})$, and let $\tilde K_N$ be
the function obtained from $K_N$ by applying the above procedure.
After a change of variables, we get from~\eqref{eq_K_inv}
  \begin{equation*}
  \int_{X_{\Z}} \left| \tilde K_N(T^{-N} x,\dotsc, x, Tx,\dotsc, T^{n-1}x) - \E(\tilde K_N)\right|^{2q-2} \dd\mu_{\Z}(x)
  \leq C\left(\sum_{j=0}^{n-1} \Lip_j(K_{\Z})^2\right)^{q-1}.
  \end{equation*}
When $N$ tends to $\infty$, $\tilde K_N(T^{-N} x,\dotsc, x,
Tx,\dotsc, T^{n-1}x)$ converges to $K_{\Z}(x,\dotsc, T^{N-1}x)$.
Hence, we obtain the desired concentration inequality by letting $N$
tend to infinity in the previous equation.

To prove~\eqref{eq_K_inv}, we follow the same strategy as in the
previous section. Note that we can not directly apply
Theorem~\ref{thm_nonuniform} since the Lipschitz constants of $K$ are
not easily bounded in terms of those of $K_{\Z}$, due to the
non-uniform expansion. Therefore, we have to reimplement the strategy
from scratch.

Let us first start with a crucial remark. When one controls the
Lipschitz constants of $K$ in terms of those of $K_{\Z}$, a point
$x_*$ blocks the propagation of modifications, in the following
sense: consider a difference $K(x_0,\dotsc,x_{n-1}) - K(x'_0,\dotsc,
x'_{n-1})$ where $x_i$ and $x'_i$ coincide at all indices but $j$. By
construction of $K$, this is equal to $K_{\Z}(y_0,\dotsc, y_{n-1}) -
K_{\Z}(y'_0,\dotsc, y'_{n-1})$ for some points $y_i, y'_i \in
X_{\Z}$. The definition shows that $y_i=y'_i$ for $i< j$. On the
other hand, $y_i$ and $y'_i$ might be different for all $i\geq j$,
not only for $i=j$. However, if there is an index $k>j$ such that
$x_k=x'_k=x_*$, then $y_i=y'_i$ for $i\geq k$: this follows directly
from the construction. Therefore, $K(x_0,\dotsc,x_{n-1}) -
K(x'_0,\dotsc, x'_{n-1})$ will be estimated only in terms of
$\Lip_i(K_{\Z})$ for $j\leq i <k$.

To follow the same strategy as in the previous sections, we need to
show that $K_p$ is close to an integral, as in
Lemma~\ref{lem_controle_Kp_fullshift}. To do so, as in the proof of
this lemma, we define a function $f_i$ as in~\eqref{eq_define_fi},
and control its iterates under the transfer operator. We decompose
$K_p(x_p,\dotsc)=\sum_{i=0}^{p-1} \hatT^{p-i} f_i (x_p)+K(x_*,\dotsc,
x_*, x_p,\dotsc)$, where
  \begin{equation*}
  \begin{split}
  f_i(z)&=\sum_{T^i y = z}g^{(i)}(y)(K(y,\dotsc, T^i y, x_*,\dotsc,x_*, x_p,\dotsc)
  \\& \hphantom{= \sum_{T^i y = z}g^{(i)}(y)(}
  - K(y,\dotsc, T^{i-1}y, x_*,\dotsc, x_*, x_p,\dotsc))
  \end{split}
  \end{equation*}

When  $i < p-1$, there is a point $x_*$ in the definition of $f_i$,
blocking the propagation of modifications as we explained above.
Therefore, we may follow the proofs of Lemmas~\ref{lem_regularite_fi}
and~\ref{lem_Lrfi} in this setting, to obtain the following:
\begin{lem}
If $i < p-1$, we have for any $r\geq 0$ and any $z\in \Delta_0$
  \begin{equation*}
  \left|\hatT^r f_i(z)-\int_{\Delta} f_i\right|\leq \sum_{j=0}^i \Lip_j(K_{\Z})
  \left(\sum_{k=0}^r c_k^{(q-2)}c_{i-j+r-k}^{(q-1)}\right).
  \end{equation*}
\end{lem}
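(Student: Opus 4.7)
The strategy is to mirror the proofs of Lemmas~\ref{lem_regularite_fi} and~\ref{lem_Lrfi}, leveraging the blocking property of $x_*$ recalled just above the statement. Since $i<p-1$, at least one $x_*$ sits strictly between index $i$ and index $p$ in the two configurations whose difference defines $f_i$, so only the Lipschitz constants $\Lip_j(K_{\Z})$ with $0\leq j\leq i$ can enter the estimates.

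First I would establish the analogue of Lemma~\ref{lem_regularite_fi}: for $z,z'\in\Delta_0$ in the same partition element,
\[
|f_i(z)|\leq C\Lip_i(K_{\Z}), \qquad |f_i(z)-f_i(z')|\leq Cd(z,z')\sum_{j=0}^i\Lip_j(K_{\Z})c_{i-j}^{(q-1)}.
\]
The sup bound is immediate. For the Lipschitz bound one pairs preimages $y,y'$ of $z,z'$ sharing the same $i$-cylinder, splits the difference as in~\eqref{wpoiuxcvjl,mlkxwc}, and handles the jacobian term by the usual distortion estimate. For the $K$ term, the blocking property reduces the difference to a sum $\sum_{j=0}^i\Lip_j(K_{\Z})d_{\Z}(y^{(j)},y'^{(j)})$, where $y^{(j)},y'^{(j)}\in X_{\Z}$ are the natural-extension points produced by the construction. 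The crucial geometric point is that $y^{(j)}$ and $y'^{(j)}$ have a common past of arbitrary length (built inductively from the shared prefix $y,\dotsc,T^{j-1}y$ inside the cylinder), so $d_{\Z}(y^{(j)},y'^{(j)})$ is controlled by the future distance $d(T^j y,T^j y')\leq \Psi_{i-j}(T^j y')d(z,z')$. Summing over preimages with bounded distortion and invoking Lemma~\ref{lem_Psin} then yields the stated Lipschitz bound.

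The second step is the renewal-decomposition argument of Lemma~\ref{lem_Lrfi}, which can then be carried out verbatim. Using $1_{\Delta_0}\hatT^r=\sum_{k+b=r}T_k B_b$, one estimates $\norm{B_b f_i}_{\boC}\leq\sum_{j=0}^i\Lip_j(K_{\Z})c_{i-j+b}^{(q-1)}$ by summing over return times $\phi(\alpha)\geq b$ and applying the preceding Lipschitz estimate to $f_{i-h_\alpha}$ with $h_\alpha=\phi(\alpha)-b$. Writing $T_k=\Pi+E_k$ with $\norm{E_k}\leq c_k^{(q-2)}$ from~\eqref{TkOK_optimal}, the principal part $\sum_b \Pi B_b f_i$ equals $\int f_i$ up to the tail $|\int_{\cup_{b>r}\Lambda_b}f_i|\leq C\Lip_i(K_{\Z})c_r^{(q-2)}$, and the remainder $\sum_{k+b=r}E_k B_b f_i$ is bounded in sup-norm by $\sum_{j=0}^i\Lip_j(K_{\Z})\sum_{k+b=r}c_k^{(q-2)}c_{i-j+b}^{(q-1)}$. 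Combining these two bounds yields the lemma.

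The only real obstacle is the geometric verification in the first step, namely the bound $d_{\Z}(y^{(j)},y'^{(j)})\leq Cd(T^j y,T^j y')$, which requires unwinding the inductive construction of the $y_i$ from the legal-past prescription $p(\cdot)$ and checking that the shared trajectory up to position $j$ endows $y^{(j)}$ and $y'^{(j)}$ with a common past long enough to dominate any past-separation contribution in $d_{\Z}$. Once this point is settled, the remainder of the proof is structurally identical to its non-invertible counterpart in Section~\ref{sec_non_uniform}.
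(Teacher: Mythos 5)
Your proof is correct and follows the paper's intended approach: the paper only remarks that one ``may follow the proofs of Lemmas~\ref{lem_regularite_fi} and~\ref{lem_Lrfi} in this setting,'' and you supply precisely the geometric verification that makes this legitimate, namely that for $i<p-1$ the $x_*$ at position $i+1$ blocks the propagation of modifications so that only $\Lip_j(K_{\Z})$ with $j\leq i$ enter, and that $d_{\Z}(y^{(j)},y'^{(j)})$ reduces to the forward distance $d(T^jy,T^jy')$. A small sharpening of your key geometric point: the pasts of $y^{(j)}$ and $y'^{(j)}$ are in fact \emph{identical}, not merely a long common prefix, since the inductive past-construction depends only on the sequence of partition symbols $((T^{j'}y)_0)_{j'\leq j}$ (which agree for $y,y'$ in the same $i$-cylinder) together with the fixed prescription $p((y)_0)$, so $d_{\Z}(y^{(j)},y'^{(j)})=d(T^jy,T^jy')$ with no extra constant.
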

On the other hand, there is no such blocking effect for $f_{p-1}$,
yielding a worse estimate. Indeed, in $f_{p-1}$, one considers
averages of terms of the form $K(y,\dotsc, T^{p-1}y,
x_p,\dotsc)-K(y,\dotsc, T^{p-2}y, x_*, x_p,\dotsc)$. Considering the
definition of $K$ in terms of $K_{\Z}$, this difference reads
$K_{\Z}(y'_0,\dotsc, y'_{n-1}) - K_{\Z}(y''_0,\dotsc, y''_{n-1})$
where the points $y'_j, y''_j$ belong to $X_{\Z}$, coincide for $j <
p-1$ and may differ for $j\geq p-1$. For $j>p-1$, the points $y'_j$
and $y''_j$ have the same future, and the same past up to the index
$j-p$. Therefore, $d_{\Z}(y'_j, y''_j) \leq \beta^{\Card\{k\in [p, j]
\st x_k\in \Delta_0\}}$. Averaging over the points $y$ with
$T^{p-1}(y)=z$, we get
  \begin{equation*}
  |f_{p-1}(z)| \leq \sum_{j=p-1}^{n-1} \Lip_j(K_{\Z})
  \beta^{\Card\{k\in [p,j]\st x_k\in \Delta_0\}}.
  \end{equation*}
The functions $\hatT f_{p-1}$ and $\hatT f_{p-1}-\int f_{p-1}$ also
satisfy the same bound.

Still following the strategy of proof of
Section~\ref{sec_non_uniform}, we deduce from those estimates an
analogue of Lemma~\ref{control_Kp_non_uniform}, with an additional
error term coming from $f_{p-1}$: for all $x_p\in \Delta_0$,
  \begin{multline*}
  \left|K_p(x_p,\dotsc) -\int K(y,\dotsc, T^{p-1}y, x_p,\dotsc)\dd \mu(y)\right|
  \\
  \leq C\sum_{j=0}^{p-1} \Lip_j(K_{\Z}) c_{p-j}^{(q-2)} + C\sum_{j=p}^{n-1} \Lip_j(K_{\Z})
  \beta^{\Card\{k\in [p,j]\st x_k\in \Delta_0\}}.
  \end{multline*}

In turn, this yields an analogue of
Lemma~\ref{lem_devroye_Kp_opt_Lq}, still with an additional error
term: for all $\kappa\leq 2q$, and for all $x_{p+1}\in \Delta_0$
  \begin{align}
  \label{eq_conditionnelle_inversible}
  \raisetag{-60pt}
  \begin{split}
  \E( |D_p|^\kappa |\boF_{p+1})(x_{p+1},\dotsc)
  \leq {} & C\left(\sum_{j\geq p+1} \Lip_j(K_{\Z})
  \beta^{\Card\{k\in [p+1,j]\st x_k\in \Delta_0\}}\right)^\kappa
  \\ & \hspace{-21pt} + C\sum_{j=0}^p
  \Lip_j(K_{\Z})^\kappa c_{p-j}^{(q-2)} + C \sum_{h\geq 0}c_h^{(q-\kappa/2)}
  \left(\sum_{j=p-h+1}^{p}\Lip_j(K_{\Z})^2\right)^{\kappa/2}.
  \end{split}
  \end{align}
On the other hand, $\E( |D_p|^\kappa |\boF_{p+1})(x_{p+1},\dotsc) =
0$ if $h(x_{p+1})>0$.

We can now conclude the proof of~\eqref{eq_K_inv}, following the
strategy we used to prove Theorem~\ref{thm_nonuniform} in
Subsection~\ref{subsec_proof_nonuniform}. By Rosenthal-Burkholder
inequality, we have
  \begin{equation*}
  \E|K-\E K|^{2q-2} = \E\left|\sum D_p\right|^{2q-2}
  \leq C \E\left( \left[\sum_p \E(D_p^2 | \boF_{p+1})\right]^{q-1}\right)
  +C \sum \E( |D_p|^{2q-2}).
  \end{equation*}
The conditional expectations are estimated thanks
to~\eqref{eq_conditionnelle_inversible}. The terms that were already
present in the proof of Theorem~\ref{thm_nonuniform} are handled
exactly in the same way. Therefore, we only need to deal with the
additional term. Let us define a function $\Phi_j(x)=\beta^{\Card
\{k\in [1, j] \st T^k(x)\in \Delta_0\}}$ for $x\in \Delta_0$, and
$\Phi_j(x)=0$ elsewhere (it is closely related to the function
$\Psi_j$ of Lemma~\ref{lem_Psin}, with the difference that it is
supported in $\Delta_0$). The additional term in the
Rosenthal-Burkholder inequality is bounded by
  \begin{multline*}
  C\int \left[ \sum_{p\geq 0} \left(\sum_{j\geq p+1} \Lip_j(K_{\Z}) \Phi_{j-p-1}(T^{p+1}x)\right)^2 \right]^{q-1}
  \dd \mu(x)
  \\
  + C \sum_{p\geq 0} \int \left(\sum_{j\geq p+1} \Lip_j(K_{\Z}) \Phi_{j-p-1}(T^{p+1}x)\right)^{2q-2} \dd\mu(x).
  \end{multline*}
The inequality $\sum x_i^{q-1} \leq (\sum x_i)^{q-1}$ shows that the
second term is bounded by the first one. Therefore, to conclude the
proof, it is sufficient to prove the following inequality:
  \begin{equation}
  \label{eq:maineq_invert}
  \int \left[ \sum_{p\geq 0} \left(\sum_{j\geq p+1} \Lip_j(K_{\Z}) \Phi_{j-p-1}(T^{p+1}x)\right)^2 \right]^{q-1}
  \dd \mu(x)
  \leq C \left(\sum  \Lip_j(K_{\Z})^2\right)^{q-1}.
  \end{equation}

This estimate is formulated solely in terms of the non-invertible
system. Its proof is technical and complicated. Therefore, we defer
it to Theorem~\ref{thm_most_technical} in
Appendix~\ref{sec_appendix}. Modulo this result, this concludes the
proof of~\eqref{eq_K_inv}, and of Theorem~\ref{thm_Lq_inv}.

\section{Weak polynomial concentration inequalities}
\label{sec_non_uniform_weak}

The results of Section~\ref{sec_non_uniform} are not completely
satisfactory for the significant example of intermittent maps.
Indeed, for Pomeau-Manneville maps of index $\alpha\in (0,1)$ (with
$T(x)=x+cx^{1+\alpha}(1+o(1))$ for small $x$, see~\eqref{MPmap}
below), the return time function to the rightmost interval satisfies
a bound $\mu\{\phi = n\} \sim C/n^{1/\alpha+1}$. Therefore, the
corresponding Young tower has a moment of order $q$ for any
$q<1/\alpha$ (which yields a concentration inequality of order $Q$
for any $Q< 2/\alpha - 2$ when $\alpha < 1/2$), but it does not have
a moment of order $1/\alpha$. Indeed, it only has a \emph{weak}
moment of order $1/\alpha$, meaning that $\mu\{\phi > t\} \leq C
t^{-1/\alpha}$. An optimal concentration statement for such a map
would therefore be formulated in terms of weak moments. This is our
goal in this section.

\begin{thm}
\label{thm_nonuniform_weak}
Let $T:\Delta\to \Delta$ be a non-uniform Young tower. Assume that,
for some $q> 2$, the return time $\phi$ to the basis of the tower has
a weak moment of order $q$, i.e., there exists a constant $C>0$ such that
$\mu\{ x\in \Delta_0 \st \phi(x)>t\} \leq C t^{-q}$ for all $t>0$. Then $T$ satisfies
a weak polynomial concentration inequality with moment $2q-2$, i.e.,
there exists a constant $C>0$ such that, for any $n\in\N$, for any
separately Lipschitz function $K(x_0,\dotsc, x_{n-1})$, and any
$t>0$,
  \begin{multline*}
  \mu\left\{ x\st
  \left|K(x,\dotsc, T^{n-1}x) -\int K(y,\dotsc, T^{n-1}y)\dd\mu(y)\right| > t
  \right\}
  \\ \leq
  Ct^{-(2q-2)}\left(\sum_j \Lip_j(K)^2 \right)^{q-1}.
  \end{multline*}
\end{thm}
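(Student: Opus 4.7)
The plan is to follow the proof of Theorem~\ref{thm_nonuniform} step by step, with two substantive changes. First, the hypothesis $\mu\{\phi>t\}\leq Ct^{-q}$ is strictly weaker than $\sum \phi(\alpha)^q \mu(\Delta_{\alpha,0}) < \infty$, so the strong-moment calculus of Subsection~\ref{subsec_renewal} must be replaced by a weak-moment calculus. Second, at the final step, the Rosenthal-Burkholder inequality must be replaced by its weak-$L^Q$ analog, Theorem~\ref{thm_rosenthal_weak}.

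Concretely, I would introduce a notation $w_n^{(Q)}$ for sequences satisfying $\sup_n n^Q w_n^{(Q)} < \infty$ and establish the short calculus: convolving a weak-$Q$ sequence with a strong-$Q'$ sequence for $Q'>Q$ yields a weak-$Q$ sequence, and $\sum_{k\geq n} w_k^{(Q)} \leq w_n^{(Q-1)}$ for $Q>1$. A weak moment of order $q$ for $\phi$ implies a strong moment of any order $q'<q$, so the sequences $c_n^{(q')}$ remain available for every $q'<q$ and can be freely combined with the $w$ sequences. The Banach algebra of weak-moment sequences satisfies the hypotheses of Wiener's lemma used in \cite{gouezel_these}, so Proposition~\ref{prop_wiener} applies and yields $\norm{T_{n+1}-T_n}\leq w_n^{(q-1)}$, hence $\norm{T_k-\Pi}\leq w_k^{(q-2)}$ by the tail rule. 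The arguments of Lemmas~\ref{lem_Psin}, \ref{lem_regularite_fi}, \ref{lem_Lrfi}, \ref{control_Kp_non_uniform} and~\ref{lem_devroye_Kp_opt_Lq} then go through verbatim with $c^{(Q)}$ replaced by $w^{(Q)}$ throughout, yielding in particular the pointwise bound
\begin{equation*}
\E(D_p^2\mid \boF_{p+1}) \leq C\sum_{j=0}^p \Lip_j(K)^2 w_{p-j}^{(q-2)},
\end{equation*}
and hence the deterministic global bound $\sum_p \E(D_p^2\mid \boF_{p+1}) \leq C\sum_j \Lip_j(K)^2$.

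For the concluding step, I would apply the weak Rosenthal-Burkholder inequality of Theorem~\ref{thm_rosenthal_weak} with $Q=2q-2$ to the reverse martingale differences $D_p = K_p - K_{p+1}$. The quadratic-variation term contributes $C(\sum_j \Lip_j(K)^2)^{q-1}$ directly from the pointwise bound above. The main obstacle of the proof is the jump term: the strong bound $\sum_p \E|D_p|^{2q-2}$ used in Subsection~\ref{subsec_proof_nonuniform} is no longer at our disposal, since in the weak setting $\sum_h h w_h^{(1)}$ diverges, consistently with the fact that the theorem asserts only a weak concentration inequality. The weak Rosenthal-Burkholder inequality should instead control the weak-$L^{2q-2}$ norm of $\sup_p|D_p|$ (or a closely related maximal object); this weak norm can then be bounded via a distribution-function argument combining the pointwise estimate of Lemma~\ref{lem_devroye_Kp_opt_Lq} with the tail bound $\mu\{h(x_{p+1})\geq h\} \leq w_h^{(q-1)}$. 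Verifying that this distribution-function computation yields the right $t^{-(2q-2)}(\sum\Lip_j(K)^2)^{q-1}$ scaling, and pinning down the precise formulation of the weak Rosenthal-Burkholder inequality compatible with our bounds, is where the technical work is concentrated.
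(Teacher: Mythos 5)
Your architecture is right: invoke the weak Rosenthal--Burkholder inequality (Theorem~\ref{thm_rosenthal_weak}) with $Q=2q-2$, observe that the quadratic-variation term is handled by the pointwise bound on $\E(D_p^2|\boF_{p+1})$ coming from the strong second moment of $\phi$, and recognize that the jump term is where the work lies (your observation that $\sum_h h\,w_h^{(1)}$ diverges is exactly the right diagnostic of why the strong argument of Subsection~\ref{subsec_proof_nonuniform} breaks). But you stop precisely at the step that carries the proof, and the two ideas it requires are not in your sketch.

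First, the replacement for Lemma~\ref{lem_devroye_Kp_opt_Lq} in the weak setting is not a conditional $L^\kappa$ bound but a conditional \emph{tail} bound on $D_p$ (Lemma~\ref{lem_devroye_Kp_opt_weak_Lq}). Starting from~\eqref{eq_mainineq_A} with $c^{(0)}$ in place of $c^{(q-2)}$ (here the paper does \emph{not} replace $c$ by $w$ in the intermediate lemmas; it simply uses the strong second moment of $\phi$ implied by $q>2$, which keeps the sequences summable --- your verbatim $c\to w$ substitution would make $c_h^{(q-\kappa/2)}$ into a non-summable $w_h^{(1)}$ at $\kappa=2q-2$, exactly where you need it), one writes $|A(z)|\leq A_1(z)+A_2(z)$ and estimates $\Proba(|D_p|>t\,|\,\boF_{p+1})$ by $\Proba(A_1>t/2)+\Proba(A_2>t/2)$. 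The piece $A_1$ is killed by Markov's inequality (H\"older then applies because $c^{(0)}$ is summable). The piece $A_2=\sum_{j=p-h+1}^p\Lip_j(K)$ is where the weak moment enters directly: $A_2>t/2$ forces the height $h$ past some threshold, and $\mu(\phi\geq h)\leq Ch^{-q}$ is used with the elementary inequality $(\sum_{p-h+1}^p x_j)^2\leq h\sum x_j^2$ to produce the scaling $h^{-2}(\sum_{p-h+1}^p\Lip_j(K))^2\cdot(\sum\Lip_j^2)^{q-2}$. None of this appears in your sketch; it is the part you defer to "where the technical work is concentrated."

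Second, the resulting bound on $\norm{D_p}_{L^{2q-2,w}}^{2q-2}$ contains the term $\sup_{h>0}(h^{-1}\sum_{j=p-h+1}^p\Lip_j(K))^2$, i.e.\ a one-sided Hardy--Littlewood maximal function of the sequence $(\Lip_j(K))$. Summing over $p$ then uses the $\ell^2$-boundedness of the Hardy--Littlewood maximal operator to close the estimate with $C(\sum\Lip_j(K)^2)^{q-1}$. This maximal inequality is the second essential ingredient and is entirely absent from your proposal. Until those two steps are supplied, the argument is a plan rather than a proof. (Minor: from $\mu\{\phi>n\}\leq Cn^{-q}$ the Wiener-lemma route gives $\norm{T_{n+1}-T_n}$ in weak $L^q$ and $\norm{T_k-\Pi}$ in weak $L^{q-1}$, not $q-1$ and $q-2$ as you wrote; the error is lossy, not fatal, but it signals that the weak calculus needs more care than a mechanical $c\to w$ substitution.)
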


Let us introduce a convenient notation. When $Z$ is a real-valued
random variable and $Q\geq 1$, we write $\norm{Z}_{L^{Q, w}} = \sup t
P (|Z|>t)^{1/Q}$, so that $\Proba(|Z|>t) \leq t^{-Q}
\norm{Z}^Q_{L^{Q, w}}$. This is the weak $L^Q$ (semi)norm of $Z$.
With this notation, the statement of the theorem becomes
$\norm{K-\E(K)}_{L^{2q-2, w}}^{2q-2} \leq C \left(\sum_j \Lip_j(K)^2
\right)^{q-1}$, in close analogy with the statement of
Theorem~\ref{thm_nonuniform}. Note that $\norm{Z}_{L^{Q, w}}$ is not
a true norm: the triangle inequality fails, and is replaced by
$\norm{Z+Z'}_{L^{Q, w}} \leq C(
\norm{Z}_{L^{Q,w}}+\norm{Z'}_{L^{Q,w}})$. On the other hand,
  \begin{equation*}
  \norm{\max(|Z|, |Z'|)}_{L^{Q,w}}^Q \leq \norm{Z}_{L^{Q,w}}^Q+\norm{Z'}_{L^{Q,w}}^Q.
  \end{equation*}

Since a sequence with a weak moment of order $q>2$ has a strong
moment of order $2$, we may use intermediate results of the proof of
Theorem~\ref{thm_nonuniform} (and especially
Lemma~\ref{control_Kp_non_uniform}) to prove
Theorem~\ref{thm_nonuniform_weak}. The proofs diverge at the level of
Lemma~\ref{lem_devroye_Kp_opt_Lq}: the version we will need in the
weak moments case is the following.

\begin{lem}
\label{lem_devroye_Kp_opt_weak_Lq}
Assume that $\phi$ has a weak moment of order $q>2$. For all $t>0$,
  \begin{multline*}
  \Proba( |D_p|>t |\boF_{p+1})(x_{p+1}, \dotsc)
  \leq  Ct^{-(2q-2)} \sum_{j = 0}^p \Lip_j(K)^{2q-2}
  c_{p-j}^{(0)}
  \\
  + Ct^{-(2q-2)} \left(\sum \Lip_j(K)^2\right)^{q-2} \sup_{h>0} \left(h^{-1}\sum_{j=p-h+1}^p \Lip_j(K)\right)^2.
  \end{multline*}
\end{lem}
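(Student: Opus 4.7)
The plan is to mimic the proof of Lemma~\ref{lem_devroye_Kp_opt_Lq} up to the pointwise bound on $A(z_\alpha)$, and then replace the Markov-type aggregation by a weak-moment argument that interpolates two different upper bounds on the tail.

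First I would set up exactly as in Lemma~\ref{lem_devroye_Kp_opt_Lq}: if $h(x_{p+1})>0$ the conclusion is trivial, and otherwise I denote by $\{z_\alpha\}$ the $T$-preimages of $x_{p+1}$, set $A(z)=D_p(z,x_{p+1},\dotsc)$, and write $\Proba(|D_p|>t\mid \boF_{p+1}) = \sum_\alpha g(z_\alpha) \mathds{1}_{|A(z_\alpha)|>t}$. Lemma~\ref{control_Kp_non_uniform}, applied at height $h$ and at $x_{p+1}$, yields $|A(z_\alpha)|\leq B_1(h_\alpha)+B_2(h_\alpha)$ where $h_\alpha=\phi(\alpha)-1$ and
\[
B_1(h)=C\sum_{j\leq p-h}\Lip_j(K) c^{(q-2)}_{p-h-j},\qquad B_2(h)=C\sum_{j=p-h+1}^{p}\Lip_j(K).
\]
I split $\mathds{1}_{|A|>t}\leq \mathds{1}_{B_1>t/2}+\mathds{1}_{B_2>t/2}$ and treat the two terms separately.

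For the $B_1$ piece I would use Markov with exponent $2q-2$. Since $\sum_j c^{(q-2)}_{p-h-j}\leq C$ (because $q>2$), H\"older gives $B_1(h)^{2q-2}\leq C\sum_j \Lip_j(K)^{2q-2} c^{(q-2)}_{p-h-j}$. Summing against $g(z_\alpha)$ (using $g(z_\alpha)\leq C\mu(\Delta_{\alpha,0})$ by bounded distortion) I obtain a bound in which the coefficient of $\Lip_j(K)^{2q-2}$ is a convolution $(\mu(\phi=\cdot)\star c^{(q-2)})_{p-j}$. The weak moment of order $q>2$ implies $\mu(\phi=h)\leq c^{(q-\varepsilon)}_h$ for small $\varepsilon>0$, so by~\eqref{convol_OK} this convolution is of class $c^{(q-2)}\subset c^{(0)}$, yielding the first term of the statement.

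The key new idea is for the $B_2$ piece. Set $S=\sum_j\Lip_j(K)^2$ and $M_p=\sup_{h>0} h^{-1}\sum_{j=p-h+1}^p \Lip_j(K)$. Two independent upper bounds on $B_2$ are available: the trivial one $B_2(h)\leq C h M_p$, and the Cauchy--Schwarz one $B_2(h)^2\leq C^2 h S$. Consequently the event $B_2(h_\alpha)>t/2$ forces \emph{both} $h_\alpha\geq c t/M_p$ and $h_\alpha\geq c' t^2/S$. Using $g(z_\alpha)\leq C\mu(\Delta_{\alpha,0})$ again,
\[
\sum_\alpha g(z_\alpha)\mathds{1}_{B_2(h_\alpha)>t/2}\leq C\,\mu\bigl(\phi\geq \max(ct/M_p,c't^2/S)\bigr)\leq C\bigl(\max(ct/M_p,c't^2/S)\bigr)^{-q}
\]
by the weak $q$-th moment. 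The main step is then the elementary interpolation: whenever $h\geq A$ and $h\geq B$, one has $h\geq A^{\alpha}B^{1-\alpha}$ for any $\alpha\in[0,1]$. Choosing $\alpha=2/q$ (permitted because $q\geq 2$) with $A=ct/M_p$, $B=c't^2/S$, this gives
\[
\bigl(\max(A,B)\bigr)^{-q}\leq C\,(M_p/t)^{2}(S/t^{2})^{q-2}=C\,t^{-(2q-2)}\,S^{q-2}M_p^{2},
\]
which is exactly the second term in the statement. Adding the two bounds concludes the proof; the main obstacle, and the only place where the strategy genuinely differs from the strong-moment case, is precisely this interpolation between the $M_p$- and $S$-controlled tails of $B_2$, which replaces the (now unavailable) bound $\sum h\,\mu(\phi=h)<\infty$.
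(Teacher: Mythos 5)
Your proposal is correct and follows essentially the same route as the paper's own proof: the same reduction to $h(x_{p+1})=0$, the same decomposition $|A(z)|\leq B_1(h)+B_2(h)$ coming from Lemma~\ref{control_Kp_non_uniform}, the same Markov/H\"older argument for the $B_1$ piece, and for the $B_2$ piece the same interplay between the weak $q$-th moment of $\phi$ and the two bounds $B_2(h)\leq C h M_p$ and $B_2(h)^2\leq C h S$. The only differences are cosmetic: for $B_1$ the paper simply invokes the strong moment of order $2$ and writes $c^{(0)}$ where you write $c^{(q-2)}$ (a weak $q$-th moment gives only $c^{(q-2-\epsilon)}$, not $c^{(q-2)}$, but this is harmless since both lie in $c^{(0)}$); for $B_2$ the paper carries out the interpolation implicitly, multiplying $\mu(\phi\geq h_0)\leq Ch_0^{-q}$ by $\bigl(2\sum_{j=p-h_0+1}^p\Lip_j(K)/t\bigr)^{2q-2}\geq 1$, splitting the power as $(\cdot)^2\cdot(\cdot)^{2q-4}$, and using Cauchy--Schwarz to convert the second factor into $h_0^{q-2}S^{q-2}$, which is algebraically identical to your geometric-mean inequality with exponent $\alpha=2/q$.
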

\begin{proof}
If $h(x_{p+1})>0$, then $x_{p+1}$ has a unique preimage $x_p$, and
$D_p(x_p, x_{p+1},\dotsc)=0$. Therefore, there is nothing to prove.
Assume now that $h(x_{p+1})=0$, and let $\{z_\alpha\}$ denote the
preimages of $x_{p+1}$ under $T$. Writing
$A(z)=D_p(z,x_{p+1},\dotsc)$, we have
  \begin{equation*}
  \Proba(|D_p|>t |\boF_{p+1})(x_{p+1}, \dotsc)=\sum_{|A(z_\alpha)|>t} g(z_\alpha).
  \end{equation*}

Since $\phi$ has a weak moment of order $q>2$, it has a strong moment
of order $2$. Therefore, \eqref{eq_mainineq_A} gives
  \begin{equation*}
  |A(z)|\leq \sum_{j\leq p-h} \Lip_j(K) c_{p-h-j}^{(0)} + \sum_{j=p-h+1}^p \Lip_j(K)
  \eqqcolon A_1(z)+A_2(z).
  \end{equation*}
If $|A(z)|>t$, then $A_1(z)>t/2$ or $A_2(z)>t/2$. Therefore,
$\Proba(|D_p|>t |\boF_{p+1})$ is bounded by
  \begin{equation}
  \label{wpxcouvxcvxcv}
  \sum_{A_1(z_\alpha)>t/2} g(z_\alpha) + \sum_{A_2(z_\alpha)>t/2}g(z_\alpha).
  \end{equation}
For the first sum,
  \begin{align*}
  \sum_{A_1(z_\alpha)>t/2} g(z_\alpha)
  &\leq C \sum g(z_\alpha) (A_1(z_\alpha)/t)^{2q-2}
  \\&
  \leq C \sum_{h\geq 0} \mu(\phi=h) t^{-(2q-2)} \left( \sum_{j\leq p-h} \Lip_j(K) c_{p-h-j}^{(0)} \right)^{2q-2}
  \\&
  \leq C t^{-(2q-2)} \sum_{h\geq 0} \mu(\phi=h) \sum_{j\leq p-h} \Lip_j(K)^{2q-2} c_{p-h-j}^{(0)}.
  \end{align*}
The coefficient of $\Lip_j(K)^{2q-2}$ in this expression is
$\sum_{h=0}^{p-j} c_h^{(2)} c_{p-h-j}^{(0)} \leq c_{p-j}^{(0)}$.
Therefore, this is bounded by $Ct^{-(2q-2)} \sum_{j\leq p}
\Lip_j(K)^{2q-2} c_{p-j}^{(0)}$.

The second sum of~\eqref{wpxcouvxcvxcv} is bounded by $C\sum
\mu(\phi=\ell)$, where the sum is restricted to those $\ell$ with
$\sum_{p-\ell+1}^p \Lip_j(K)>t/2$. Let $h$ be the smallest such
$\ell$, the sum is bounded by
  \begin{equation*}
  \mu(\phi \geq h) \leq C h^{-q} \leq C h^{-q} \left(\sum_{p-h+1}^p \Lip_j(K)/t\right)^{2q-2}.
  \end{equation*}
To bound the last sum, we use the inequality $(\sum_{p-h+1}^p x_j)^2
\leq h\sum x_j^2$, to obtain
  \begin{align*}
  h^{-q}\left(\sum_{p-h+1}^p \Lip_j(K)\right)^{2q-2}
  &= h^{-q}\left(\sum_{p-h+1}^p \Lip_j(K)\right)^{2} \cdot \left(\sum_{p-h+1}^p \Lip_j(K)\right)^{2q-4}
  \\&
  \leq h^{-q}\left(\sum_{p-h+1}^p \Lip_j(K)\right)^{2} \cdot \left(h\sum_{p-h+1}^p \Lip_j(K)^2\right)^{q-2}
  \\&
  \leq h^{-2} \left(\sum_{p-h+1}^p \Lip_j(K)\right)^{2} \cdot \left( \sum_{j\in \Z} \Lip_j(K)^2\right)^{q-2}.
  \end{align*}
This concludes the proof.
\end{proof}

To proceed, we need an analogue of Rosenthal-Burkholder inequality
for weak moments. Although it is not written explicitly in
Burkholder's article \cite{burkholder}, it follows easily from the
techniques developed there, giving the following statement.
\begin{thm}
\label{thm_rosenthal_weak}
Let $(D_p)$ be a sequence of reverse martingale differences with
respect to a decreasing filtration $\boF_p$ (i.e., $D_p$ is
$\boF_p$-measurable and $\E(D_p|\boF_{p+1}) = 0$). For all $Q\geq 2$,
  \begin{equation*}
  \norm{\sum D_p}_{L^{Q,w}}^Q \leq C \norm{\sum \E(D_p^2 |\boF_{p+1})}_{L^{Q/2,w}}^{Q/2}
  + C \norm{\sup |D_p|}_{L^{Q,w}}^Q.
  \end{equation*}
In particular,
   \begin{equation*}
  \norm{\sum D_p}_{L^{Q,w}}^Q \leq C \norm{\sum \E(D_p^2 |\boF_{p+1})}_{L^{Q/2,w}}^{Q/2}
  + C \sum \norm{D_p}_{L^{Q,w}}^Q.
  \end{equation*}
\end{thm}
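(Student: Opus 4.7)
The plan is to adapt Burkholder's good-$\lambda$ argument, which produces the strong $L^Q$ Rosenthal-Burkholder inequality, to the weak $L^Q$ setting; since the good-$\lambda$ inequality is already phrased at the level of distribution functions, the adaptation amounts essentially to a bookkeeping exercise.

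First I would reduce to a finite-index forward martingale by fixing $P$ and reindexing: set $\tilde D_n = D_{P-n}$ and $\tilde{\boF}_n = \boF_{P-n}$, so that $(\tilde D_n)$ is a forward martingale difference sequence with respect to the increasing filtration $(\tilde{\boF}_n)$. Define $\tilde M^* = \sup_N |\sum_{n=0}^{N} \tilde D_n|$, $\tilde s^2 = \sum_n \E(\tilde D_n^2 | \tilde{\boF}_{n-1}) = \sum_{p \leq P} \E(D_p^2 | \boF_{p+1})$, and $\tilde D^* = \sup_n |\tilde D_n| = \sup_{p \leq P} |D_p|$. The classical Burkholder good-$\lambda$ inequality (as in \cite[Inequality~(21.5) and the proof of Theorem~21.1]{burkholder}) yields, for every $\beta > 1$ and $\delta > 0$,
\[
\Proba(\tilde M^* > \beta \lambda) \leq \Proba(\tilde s > \delta \lambda) + \Proba(\tilde D^* > \delta \lambda) + C(\beta,\delta)\, \Proba(\tilde M^* > \lambda),
\]
with $C(\beta,\delta) \to 0$ as $\delta \to 0$ for $\beta$ fixed. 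Multiplying by $(\beta \lambda)^Q$, taking the supremum in $\lambda > 0$, and using the immediate identity $\norm{\tilde s}_{L^{Q,w}}^Q = \norm{\tilde s^2}_{L^{Q/2,w}}^{Q/2}$, I obtain
\[
\norm{\tilde M^*}_{L^{Q,w}}^Q \leq (\beta/\delta)^Q \bigl(\norm{\tilde s^2}_{L^{Q/2,w}}^{Q/2} + \norm{\tilde D^*}_{L^{Q,w}}^Q\bigr) + \beta^Q C(\beta,\delta)\, \norm{\tilde M^*}_{L^{Q,w}}^Q.
\]
Choose $\beta, \delta$ so that $\beta^Q C(\beta,\delta) \leq 1/2$; the last term is then absorbed into the left. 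Since $\tilde M^* \geq |\sum_{p\leq P} D_p|$ and the weak $L^Q$ norm is lower semi-continuous under convergence in probability, sending $P \to \infty$ gives the first assertion of the theorem.

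The one genuine subtlety is that absorption requires $\norm{\tilde M^*}_{L^{Q,w}}$ to be finite a priori. To secure this, I would first stop $\tilde M$ at the first index where its absolute value exceeds a constant $K$: the stopped martingale is bounded and hence has finite weak-$L^Q$ norm, its conditional square function and jump maximum remain dominated by $\tilde s$ and $\tilde D^*$, so the absorption yields a bound on the stopped maximum that is uniform in $K$, and monotone convergence as $K \to \infty$ removes the truncation. The ``in particular'' refinement is then immediate from the union bound $\Proba(\sup|D_p|>t) \leq \sum_p \Proba(|D_p|>t)$, which gives $\norm{\sup|D_p|}_{L^{Q,w}}^Q \leq \sum_p \norm{D_p}_{L^{Q,w}}^Q$. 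The main obstacle, such as it is, is precisely this stopping-time bookkeeping; once it is in place, the argument is a distributional rewriting of Burkholder's classical absorption.
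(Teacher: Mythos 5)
Your proof is correct and follows essentially the same route as the paper: start from Burkholder's good-$\lambda$ inequality for the maximal function, the conditional square function and the jump maximum, multiply by $(\beta t)^Q$ and take the supremum over $t$ to obtain a weak-$L^Q$ self-improvement, absorb the maximal term by choosing $\delta$ small (after securing a priori finiteness by truncation/stopping), and deduce the ``in particular'' line from the union bound. The only differences are cosmetic (you reindex to a forward martingale and state the good-$\lambda$ inequality in its split form rather than as a joint-event estimate), and the paper's citation is to inequality (21.2) of Burkholder rather than (21.5).
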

\begin{proof}
By a truncation argument, it suffices to prove the result for bounded
random variables, and $p\in [0, P]$. Define three random variables
  \begin{equation*}
  X = \sup_{0\leq p \leq P} \left| \sum_{k=p}^P D_k\right|,
  \quad
  Y = \left(\sum \E(D_p^2 |\boF_{p+1})\right)^{1/2},
  \quad
  Z = \max_{0\leq p \leq P} |D_p|.
  \end{equation*}
The inequality (21.2) in \cite{burkholder} gives, for any
$0<\delta<\beta-1$,
  \begin{equation*}
  \Proba(X > \beta t, \max(Y,Z) \leq \delta t) \leq \epsilon \Proba(X > t),
  \end{equation*}
where $\epsilon=\delta^2/(\beta-\delta-1)^2$. In particular,
  \begin{align*}
  (\beta t)^{Q} \Proba(X > \beta t)
  &\leq (\beta t)^{Q} \Proba (\max(Y,Z) > \delta t) + (\beta t)^{Q} \epsilon \Proba(X > t)
  \\&
  \leq \beta^{Q}\delta^{-Q}  \norm{\max(Y,Z)}_{L^{Q,w}}^Q + \beta^{Q} \epsilon \norm{X}_{L^{Q,w}}^Q.
  \end{align*}
Taking the supremum over $t$, we obtain
  \begin{equation*}
  \norm{ X}_{L^{Q,w}}^Q \leq \beta^{Q}\delta^{-Q}  \norm{\max(Y,Z)}_{L^{Q,w}}^Q +
  \beta^{Q} \epsilon \norm{X}_{L^{Q,w}}^Q.
  \end{equation*}
If $\beta>1$ is fixed, and $\delta$ is chosen small enough so that
$\beta^Q \epsilon<1$, this yields $\norm{ X}_{L^{Q,w}}^Q \leq C
\norm{\max(Y,Z)}_{L^{Q,w}}^Q$. Since $\left|\sum_0^P D_p\right| \leq
X$ and $\norm{Y}_{L^{Q,w}}^Q = \norm{Y^2}_{L^{Q/2,w}}^{Q/2}$, this
proves the theorem.
\end{proof}

\begin{proof}[Proof of Theorem~\ref{thm_nonuniform_weak}]
We have $K-\E(K)= \sum D_p$, hence
  \begin{equation*}
  \norm{K-\E(K)}_{L^{2q-2, w}}^{2q-2}
  \leq C \norm{ \sum \E(D_p^2 |\boF_{p+1})}_{L^{q-1,w}}^{q-1}
  + C \sum \norm{D_p}_{L^{2q-2,w}}^{2q-2}.
  \end{equation*}
For the first term, we use the inequality $\norm{\cdot}_{L^{Q,w}}
\leq \norm{\cdot}_{L^Q}$. Therefore, this term is bounded by
  \begin{equation*}
  C \E\left( \left[\sum_p \E(D_p^2 | \boF_{p+1})\right]^{q-1}\right).
  \end{equation*}
Since $\phi$ has a weak moment of order $q$, it has a strong moment
of order $2$. Therefore, \eqref{pouwxcvlmkj} gives
$\E(D_p^2|\boF_{p+1}) \leq \sum_{j\leq p} c_{p-j}^{(0)} \Lip_j(K)^2$.
Hence, the first term in Rosenthal-Burkholder inequality is bounded
by
  \begin{equation*}
  C\left(\sum_{p} \sum_{j=0}^p \Lip_j(K)^2 c_{p-j}^{(0)}\right)^{q-1}
  \leq C \left(\sum_j \Lip_j(K)^2 \right)^{q-1}.
  \end{equation*}

Let us now turn to $\norm{D_p}_{L^{2q-2,w}}$. Integrating the
estimates of Lemma~\ref{lem_devroye_Kp_opt_weak_Lq}, we get
  \begin{equation}
  \label{omiw<uxcvmlkj}
  \norm{D_p}_{L^{2q-2, w}}^{2q-2}
  \leq C \sum_{j\leq p} \Lip_j(K)^{2q-2}
  c_{p-j}^{(0)}
  + C \left(\sum \Lip_j(K)^2\right)^{q-2} \sup_{h>0} \left(h^{-1}\sum_{j=p-h+1}^p \Lip_j(K)\right)^2.
  \end{equation}
We should sum those estimates over $p$. For the first sum, we obtain
  \begin{equation*}
  \sum_j \Lip_j(K)^{2q-2} \sum_{p\geq j} c_{p-j}^{(0)}
  \leq C \sum_j \Lip_j(K)^{2q-2} \leq C \left(\sum_j \Lip_j(K)^2 \right)^{q-1}.
  \end{equation*}
For the second sum, let us define a function $f$ on $\Z$ by $f(j) =
\Lip_j(K)$. This function belongs to $\ell^2(\Z)$. The corresponding
maximal function $Mf(p)= \sup_{h>0} \frac{1}{2h+1} \sum_{j=p-h}^{p+h}
f(j)$ also belongs to $\ell^2(\Z)$ and satisfies $\norm{Mf}_{\ell^2}
\leq C \norm{f}_{\ell^2}$, by Hardy-Littlewood maximal inequality. In
particular,
  \begin{equation*}
  \sum_p \sup_{h>0} \left(h^{-1}\sum_{p-h+1}^p \Lip_j(K)\right)^2
  \leq C \sum_j \Lip_j(K)^2.
  \end{equation*}
Therefore, the contribution of the second term
in~\eqref{omiw<uxcvmlkj} is bounded by $C\left(\sum
\Lip_j(K)^2\right)^{q-1}$. This concludes the proof of
Theorem~\ref{thm_nonuniform_weak}.
\end{proof}

\begin{rmk}
In view of Theorems~\ref{thm_Lq_inv} and~\ref{thm_nonuniform_weak},
it would seem natural to try to prove a weak polynomial concentration
inequality in invertible systems with weak moment controls on the
return time. We have not been able to prove such a statement.
\end{rmk}

\section{Applications}

\label{sec_applications}

In this section, we first give examples of dynamical systems
satisfying an exponential concentration inequality or only a
polynomial concentration inequality. We also give examples of systems
satisfying a weak polynomial concentration inequality. Second, we
present several applications of these inequalities to specific
observables. We shall not attempt to be exhaustive. Previous results
are found in
\cite{collet_concentration_BV,chazottes_collet_schmitt,chazottes_intermittent}.
For instance, we strengthen the bounds obtained in
\cite{chazottes_collet_schmitt} since for dynamical systems modeled
by a uniform Young tower with exponential tails, we can now use an
exponential concentration inequality instead of a polynomial
concentration inequality with moment $2$ as in
\cite{chazottes_collet_schmitt}. For systems modeled by a non-uniform
Young tower, only a polynomial concentration inequality with moment
$2$ was known for intermittent maps of the interval (under some
restrictions on the parameter). We now have at our disposal an
optimal polynomial concentration inequality for these maps, and more
generally, for dynamical systems modeled by non-uniform Young towers
with polynomial tails.

\subsection{Examples of dynamical systems}

There are well-known dynamical systems $(X,T)$ which can be modeled
by a uniform Young tower with exponential tails
\cite{lsyoung_annals}. Examples of invertible dynamical systems
fitting this framework are for instance Axiom A attractors, H\'enon
attractors for Benedicks-Carleson parameters
\cite{benedicks_lsyoung}, piecewise hyperbolic maps like the Lozi
attractor, some billiards with convex scatterers, etc. Such systems
admit an SRB measure $\mu$ and there is an invertible uniform Young
tower $(\Delta_\Z,\hat{T}_\Z,\hat{\mu}_\Z)$ and a projection map
$\pi:\Delta_\Z\to X$ such that $T\circ \pi=\pi \circ \hat{T}_\Z$ and
$\mu=\hat{\mu}_\Z\circ \pi^{-1}$. In the non-invertible case, there
is a non-invertible Young tower $(\Delta,\hat{T},\hat{\mu})$ and a
corresponding projection map. A non-invertible example is the
quadratic family for Benedicks-Carleson parameters. In both cases, it
can also be ensured that the projection map is contracting, i.e.,
$d(\pi x, \pi y) \leq \hat d_\beta(x,y)$ for every $x,y$ in the same
partition element. Here, $\hat d_\beta$ denotes the (unilateral or
bilateral) symbolic distance in the tower given by $\hat d_\beta(x,y)
= \beta^{s(x,y)}$ for some $\beta<1$. In particular, if $f$ is a
bounded Lipschitz function on $X$, it lifts to a function $f\circ
\pi$ which is Lipschitz in the tower. More generally, if $f$ is
H\"older continuous, then its lift is Lipschitz for $\hat d_\beta$ if
$\beta$ is close enough to $1$. Therefore, all the results we proved
in the previous sections for Lipschitz observables $K$ have a
counterpart about H\"older ones, we will not give further details in
this direction and restrict to the Lipschitz situation for ease of
exposition. We will also assume for simplicity that $X$ is bounded.

\begin{thm}
Let $(X,T)$ be a dynamical system modeled by a uniform Young tower
with exponential tails and let $\mu$ be its SRB measure. There exists
$C>0$ such that, for any $n\in \N$, for any separately Lipschitz
function $K(x_0,\dotsc, x_{n-1})$,
\begin{equation}
\label{main_ineq_bis}
\int e^{K(x,Tx,\dotsc, T^{n-1}x)} \dd \mu (x) \leq e^{\int K(x,\dotsc, T^{n-1}x)\dd \mu(x)}
e^{C\sum_{j=0}^{n-1} \Lip_j(K)^2}.
\end{equation}
\end{thm}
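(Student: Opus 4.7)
The plan is to lift everything to the Young tower model and invoke the exponential concentration inequality already proved there. Let $(\Delta,\hat T,\hat\mu)$ (or $(\Delta_\Z,\hat T_\Z,\hat\mu_\Z)$ in the invertible case) be the uniform Young tower with exponential tails and $\pi$ the projection satisfying $T\circ\pi=\pi\circ\hat T$ and $\mu=\hat\mu\circ\pi^{-1}$. Given a separately Lipschitz function $K(x_0,\dotsc,x_{n-1})$ on $X^n$, I would define the lift
\begin{equation*}
\tilde K(\hat x_0,\dotsc,\hat x_{n-1}) \coloneqq K(\pi \hat x_0,\dotsc, \pi \hat x_{n-1})
\end{equation*}
on the product tower. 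The crucial point is the contraction property $d(\pi x,\pi y)\leq \hat d_\beta(x,y)$ of $\pi$ on each partition element, which immediately yields $\Lip_i(\tilde K)\leq \Lip_i(K)$ for every $i$.

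Next I would use $T\circ\pi=\pi\circ\hat T$ to rewrite, for every $\hat x$,
\begin{equation*}
\tilde K(\hat x,\hat T\hat x,\dotsc,\hat T^{n-1}\hat x) = K(\pi\hat x, T\pi\hat x,\dotsc, T^{n-1}\pi\hat x).
\end{equation*}
Integrating against $\hat\mu$ and using $\mu=\hat\mu\circ\pi^{-1}$ gives
\begin{equation*}
\int_\Delta e^{\tilde K(\hat x,\dotsc,\hat T^{n-1}\hat x)}\dd\hat\mu(\hat x) = \int_X e^{K(x,\dotsc,T^{n-1}x)}\dd\mu(x),
\end{equation*}
and similarly for the expectation $\int K d\mu$. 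Applying Theorem~\ref{thm_exponential} in the non-invertible case (or Theorem~\ref{thm_exponential_bis} in the invertible case) to $\tilde K$ on the tower yields
\begin{equation*}
\int_\Delta e^{\tilde K}\dd\hat\mu \leq e^{\int \tilde K\dd\hat\mu}\, e^{C\sum_i \Lip_i(\tilde K)^2} \leq e^{\int K\dd\mu}\, e^{C\sum_i \Lip_i(K)^2},
\end{equation*}
which is exactly~\eqref{main_ineq_bis}.

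There is essentially no genuine obstacle here, since the groundwork was laid by assuming the contracting semiconjugacy to a uniform Young tower. The only small point to check carefully is that the lift $\tilde K$ is indeed separately Lipschitz on the tower with the correct constants; this requires only the contraction of $\pi$ restricted to each partition element, which is part of the hypothesis. In the invertible case one uses the bilateral version exactly as in Theorem~\ref{thm_subshift_bilateral}; no separate argument is needed. The Hölder case alluded to in the paragraph preceding the theorem would follow by the same scheme after choosing $\beta<1$ close enough to $1$ so that Hölder observables on $X$ pull back to Lipschitz observables for $\hat d_\beta$.
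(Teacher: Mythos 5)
Your proposal is correct and is essentially the paper's own argument, which is simply to observe that the result is an immediate consequence of Theorem~\ref{thm_exponential} (non-invertible case) or Theorem~\ref{thm_exponential_bis} (invertible case) via lifting through the contracting semiconjugacy $\pi$; you have spelled out the two points the paper leaves implicit (that $\Lip_i(K\circ\pi^{\otimes n})\leq C\Lip_i(K)$ using the contraction of $\pi$ on partition elements together with the boundedness of $X$, and the change of variables $\mu=\hat\mu\circ\pi^{-1}$ after conjugating the orbits).
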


This theorem is an obvious consequence of
Theorem~\ref{thm_exponential_bis} in the invertible case and of
Theorem~\ref{thm_exponential} in the non-invertible case.
Inequality~\eqref{main_ineq_bis} was previously known only for
uniformly piecewise expanding maps of the interval and subshifts of
finite type equipped with a Gibbs measure
\cite{collet_concentration_BV}. Under the assumptions of the previous
theorem, only a polynomial concentration with moment $2$ had been
proven \cite{chazottes_collet_schmitt0}.

An immediate consequence of~\eqref{main_ineq_bis} is the following
inequality for upper deviations: for all $t>0$ and for all $n\in\N$
 \begin{multline}\label{dev_exp}
 \mu\left\{ x \in X\st K(x,Tx,\dotsc, T^{n-1}x) -\int K(y,\dotsc, T^{n-1}y)\dd \mu(y)>t\right\}\\
 \leq e^{-\frac{t^2}{4C\sum_{j=0}^{n-1} \Lip_j(K)^2}}.
 \end{multline}
The same bound holds for lower deviations by applying~\eqref{dev_exp}
to $-K$.

Let us now consider dynamical systems modeled by a non-uniform Young
tower with polynomial tails. In the invertible case, there is an
invertible non-uniform Young tower
$(\Delta_\Z,\hat{T}_\Z,\hat{\mu}_\Z)$ and a projection map
$\pi:\Delta_\Z\to X$, and the SRB measure is
$\mu=\hat{\mu}_\Z\circ\pi^{-1}$ provided that $\sum \phi(\alpha)
\hat{\mu}_\Z(\Delta_{\alpha,0}) <\infty$. If $\sum \phi(\alpha)^q
\hat{\mu}_\Z(\Delta_{\alpha,0}) <\infty$, we shall simply say that
the tower has $L^q$ tails. Similarly, if $\sum_{\phi(\alpha)>n}
\hat{\mu}_\Z(\Delta_{\alpha,0}) \leq C n^{-q}$, we shall say that the
tower has weak $L^q$ tails. We can of course rephrase what we have
just said in the non-invertible case.

\begin{thm}
\label{thm_nonuniform_bis}
Let $(X,T)$ be a dynamical system modeled by a non-uniform Young
tower with $L^q$ tails, for some $q\geq 2$. Then $T$ satisfies a
polynomial concentration inequality with moment $2q-2$, i.e., there
exists a constant $C>0$ such that, for any $n\in\N$, for any
separately Lipschitz function $K(x_0,\dotsc, x_{n-1})$,
  \begin{equation*}
  \int \left|K(x,\dotsc, T^{n-1}x) -\int K(y,\dotsc, T^{n-1}y)\dd\mu(y)\right|^{2q-2}\dd\mu(x)
  \leq C\left(\sum_{j=0}^{n-1} \Lip_j(K)^2 \right)^{q-1}.
  \end{equation*}
\end{thm}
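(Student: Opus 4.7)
The plan is to reduce Theorem~\ref{thm_nonuniform_bis} to the concentration inequalities already proved on the tower itself, namely Theorem~\ref{thm_nonuniform} in the non-invertible case and Theorem~\ref{thm_Lq_inv} in the invertible case. The key point that makes this reduction almost automatic is the contracting property of the projection $\pi$: one has $d(\pi x,\pi y)\leq \hat d_\beta(x,y)$ whenever $x,y$ lie in a common partition element of the tower, and $d$ is uniformly bounded since $X$ is bounded, so the inequality $d(\pi x,\pi y)\leq C\hat d_\beta(x,y)$ holds on all of $\Delta$ (resp.\ $\Delta_\Z$) for some $C>0$.

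First, given a separately Lipschitz function $K(x_0,\dotsc,x_{n-1})$ on $X^n$, I would define its lift
\[
\tilde K(\hat x_0,\dotsc,\hat x_{n-1}) \coloneqq K(\pi\hat x_0,\dotsc,\pi\hat x_{n-1})
\]
on $\Delta^n$ (resp.\ $\Delta_\Z^n$). The contracting property of $\pi$ immediately gives $\Lip_i(\tilde K)\leq C\Lip_i(K)$ for every $i$, where the Lipschitz constants of $\tilde K$ are computed with respect to $\hat d_\beta$. Next, using the semi-conjugacy $T\circ\pi=\pi\circ\hat T$, one has
\[
\tilde K(\hat x,\hat T\hat x,\dotsc,\hat T^{n-1}\hat x) = K(\pi\hat x, T\pi\hat x,\dotsc, T^{n-1}\pi\hat x),
\]
and the pushforward relation $\mu=\hat\mu\circ\pi^{-1}$ then yields, for every measurable $\Phi$,
\[
\int_\Delta \Phi(\tilde K(\hat x,\dotsc,\hat T^{n-1}\hat x))\dd\hat\mu(\hat x) = \int_X \Phi(K(x,\dotsc,T^{n-1}x))\dd\mu(x),
\]
and the same identity with $\tilde K$ replaced by its mean.

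Applying Theorem~\ref{thm_nonuniform} (in the non-invertible case) or Theorem~\ref{thm_Lq_inv} (in the invertible case) to $\tilde K$ on the tower, one obtains
\[
\int_\Delta \bigl|\tilde K(\hat x,\dotsc,\hat T^{n-1}\hat x)-\E_{\hat\mu}(\tilde K)\bigr|^{2q-2}\dd\hat\mu(\hat x)\leq C\Bigl(\sum_j \Lip_j(\tilde K)^2\Bigr)^{q-1}.
\]
Substituting the bounds $\Lip_j(\tilde K)\leq C\Lip_j(K)$ on the right and translating the left-hand side through $\pi$ as above yields exactly the inequality in the statement of Theorem~\ref{thm_nonuniform_bis}.

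There is no real obstacle: the content of the theorem is entirely in the tower estimates, and the role of the present statement is just to pass through the semi-conjugacy. The only small point worth checking is that the lift of a Lipschitz function on $X$ is Lipschitz on the tower in its \emph{separation-time} distance, which is precisely what the contracting property of $\pi$ (with $\beta$ chosen close enough to $1$ if one later wants to treat H\"older observables) provides.
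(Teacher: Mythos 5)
Your proposal is correct and is essentially the argument the paper intends: the paper presents Theorem~\ref{thm_nonuniform_bis} as a direct consequence of Theorem~\ref{thm_nonuniform} (non-invertible case) or Theorem~\ref{thm_Lq_inv} (invertible case) via the contracting projection $\pi$, exactly as the paper does explicitly for the exponential-tails version a few lines earlier. Your careful handling of the case where two points lie in different partition elements (so that $\hat d_\beta=1$ and the bound holds trivially since $X$ is bounded) is the only detail the paper leaves unstated, and your treatment of it is correct.
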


Using Markov's inequality we get at once that, for any $t>0$ and for any $n\in\N$,
\begin{multline}\label{dev_poly}
 \mu\left\{ x \in X\st \big|K(x,Tx,\dotsc, T^{n-1}x) -\int K(y,\dotsc, T^{n-1}y)\dd \mu(y)\big|>t\right\}\\
 \leq C\, \frac{\left(\sum_{j=0}^{n-1} \Lip_j(K)^2 \right)^{q-1}}{t^{2q-2}}.
 \end{multline}

If the tails are only in weak $L^q$,
Theorem~\ref{thm_nonuniform_weak} shows that~\eqref{dev_poly} still
holds.

The fundamental example is an expanding map of the interval with an
indifferent fixed point \cite{lsyoung_recurrence}. For the sake of
definiteness, we consider for $\alpha\in(0,1)$ the so-called
``intermittent'' map $T:[0,1]\to[0,1]$ defined by
\begin{equation}\label{MPmap}
 T(x) =
  \begin{cases}
  x(1+2^\alpha x^\alpha) & \text{if } 0\leq x \leq 1/2, \\
   2x-1       & \text{if } 1/2<x \leq 1.
  \end{cases}
\end{equation}
There is a unique absolutely continuous invariant probability measure
$\dd\mu(x)=h(x)\dd x$ such that $h(x)\sim x^{-\alpha}$ as $x\to 0$.
This map is modeled by a non-uniform Young tower $(\Delta,\hat{\mu})$
such that $\hat{\mu}\{\phi=n\}\sim C/n^{\frac{1}{\alpha}+1}$. The
return time has a weak moment of order $1/\alpha$. Thus, for
$\alpha\in (0,1/2)$, the previous results yield:

\begin{cor}\label{cor-MP}
Let $T$ be the map~\eqref{MPmap} and $\mu$ its absolutely continuous
invariant probability measure. There exists a constant $C>0$ such
that, for any $n\in \N$, for any separately Lipschitz function
$K(x_0,\dotsc, x_{n-1})$,
\begin{multline*}
 \mu\left\{ x \in X\st \big|K(x,Tx,\dotsc, T^{n-1}x) -\int K(y,\dotsc, T^{n-1}y)\dd \mu(y)\big|>t\right\}\\
 \leq C\, \frac{\left(\sum_{j=0}^{n-1} \Lip_j(K)^2 \right)^{1/\alpha-1}}{t^{\frac{2}{\alpha}-2}}.
 \end{multline*}
\end{cor}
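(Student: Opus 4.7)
The plan is to deduce this directly from Theorem~\ref{thm_nonuniform_weak} by transferring the problem from the interval to the Young tower that models $T$, and verifying that the tail hypothesis of the theorem is met with the right exponent. First, I would recall the construction in \cite{lsyoung_recurrence}: the intermittent map~\eqref{MPmap} is conjugated (up to the projection map $\pi$) to a non-uniform Young tower $(\Delta, \hat T, \hat\mu)$ whose return time to the basis satisfies $\hat\mu\{\phi=n\} \sim C/n^{1/\alpha+1}$. Summing gives $\hat\mu\{\phi > n\} \sim C'/n^{1/\alpha}$, so $\phi$ has a \emph{weak} moment of order $q=1/\alpha$. The standing assumption $\alpha \in (0,1/2)$ precisely ensures $q > 2$, which is what Theorem~\ref{thm_nonuniform_weak} requires.

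Next, I would lift the observable. Given a separately Lipschitz function $K(x_0,\dotsc,x_{n-1})$ on $[0,1]^n$, set
\begin{equation*}
\tilde K(y_0,\dotsc,y_{n-1}) = K(\pi y_0, \dotsc, \pi y_{n-1}), \qquad y_i \in \Delta.
\end{equation*}
Because the projection is contracting for the symbolic distance on the tower (i.e.\ $d(\pi y, \pi y') \leq \hat d_\beta(y,y')$ on each partition element, as discussed just before Theorem~\ref{thm_nonuniform_bis}), one has $\Lip_j(\tilde K) \leq \Lip_j(K)$ for every $j$, so $\tilde K$ is separately Lipschitz on $\Delta^n$ with
\begin{equation*}
\sum_{j=0}^{n-1} \Lip_j(\tilde K)^2 \leq \sum_{j=0}^{n-1} \Lip_j(K)^2.
\end{equation*}
The intertwining $T\circ \pi = \pi \circ \hat T$ and the identity $\mu = \hat\mu \circ \pi^{-1}$ then yield, for every Borel set $B$,
\begin{equation*}
\mu\{x : (x,Tx,\dotsc,T^{n-1}x) \in B\} = \hat\mu\{y : (\pi y,\dotsc,\pi\hat T^{n-1}y)\in B\},
\end{equation*}
so in particular
\begin{equation*}
\int K(x,\dotsc,T^{n-1}x)\,\dd\mu(x) = \int \tilde K(y,\dotsc,\hat T^{n-1}y)\,\dd\hat\mu(y).
\end{equation*}

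Finally, I would apply Theorem~\ref{thm_nonuniform_weak} to $\tilde K$ on the tower with $q=1/\alpha$. This gives, for every $t>0$,
\begin{equation*}
\hat\mu\Bigl\{ y : \bigl|\tilde K(y,\dotsc,\hat T^{n-1}y) - \textstyle\int \tilde K\,\dd\hat\mu\bigr| > t \Bigr\}
\leq C t^{-(2/\alpha-2)} \Bigl(\sum_j \Lip_j(\tilde K)^2\Bigr)^{1/\alpha - 1}.
\end{equation*}
Rewriting the left-hand side via the projection and bounding $\Lip_j(\tilde K) \leq \Lip_j(K)$ on the right produces exactly the claimed inequality. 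There is no real obstacle here: everything has been set up by Theorem~\ref{thm_nonuniform_weak} and the construction of Young's tower for~\eqref{MPmap}. The only point requiring genuine care is to make sure the tail exponent, the distance contraction under $\pi$, and the constraint $q > 2$ line up correctly --- which is why the corollary is restricted to $\alpha \in (0,1/2)$.
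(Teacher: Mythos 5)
Your proof is correct and follows essentially the same route as the paper: recognize that the tower for~\eqref{MPmap} has a return time with weak moment $q=1/\alpha>2$, invoke Theorem~\ref{thm_nonuniform_weak}, and transfer the bound to $[0,1]$ via the contracting projection $\pi$. The paper treats the lifting step generically in the discussion preceding Theorem~\ref{thm_nonuniform_bis} and then states the corollary without rehearsing it; you simply made that step explicit.
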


This estimate readily gives bounds for the moments of order $q\not=
2/\alpha-2$. Indeed, if $Z$ is a random variable satisfying
$\Proba(|Z|>t)\leq (A/t)^Q$, then using the formula $\E(|Z|^q) = \int
q t^{q-1}\Proba(|Z|>t)\dd t$ and the tail estimates, one gets
  \begin{equation*}
  \E(|Z|^q) \leq \frac{Q}{Q-q} A^q \quad\text{for }q<Q,
  \end{equation*}
and if $Z$ is bounded
  \begin{equation*}
  \E(|Z|^q) \leq \frac{q}{q-Q} A^Q \norm{Z}_{L^\infty}^{q-Q} \quad \text{for }q>Q.
  \end{equation*}


For $q<2/\alpha-2$, this generalizes to arbitrary separately
Lipschitz functions of $n$ variables the moment bounds obtained for
ergodic sums of Lipschitz functions in
\cite{melbourne_nicol_large_deviations} (while the moment bounds for
$q>2/\alpha-2$ are apparently new, even for ergodic sums). On the
other hand, we improve the result in \cite{chazottes_intermittent} in
two respects: first, we obtain a polynomial concentration inequality
with moment $2$ for any $\alpha\in(0,1/2)$ instead of
$(0,4-\sqrt{15})$; second, we also obtain a polynomial concentration
inequality with a moment whose order is larger than $2$ and depends
on $\alpha\in(0,1/2)$.

\begin{rmk}
There is a difference between Theorems~\ref{thm_nonuniform} (about
strong moments) and~\ref{thm_nonuniform_weak} (about weak moments):
in the former, the range of parameters is $q\geq 2$, while we require
$q>2$ in the latter. It turns out that
Theorem~\ref{thm_nonuniform_weak} is \emph{false} for $q=2$, as
testified by the map~\eqref{MPmap} with $\alpha=1/2$. For such a map,
if $f$ is a H\"older function with $\int f\dd\mu=0$ and $f(0)\not=0$,
then $S_n f/\sqrt{n\log n}$ converges in distribution to a gaussian
\cite[Page 88]{gouezel_stable}. If Theorem~\ref{thm_nonuniform_weak}
were true for $q=2$, we would have $\mu\{ |S_n f|>t\} \leq C t^{-2}
n$, hence $\mu\{ |S_n f / \sqrt{n \log n}|> t\} \leq C t^{-2}(n\log
n)^{-1} n \to 0$, implying that $S_n f / \sqrt{n \log n}$ tends in
probability to $0$ and giving a contradiction.
\end{rmk}

There are also invertible examples exhibiting an intermittent
behavior, notably coming from billiards. Indeed, apart from the
stadium billiard (with a weak moment of order $2$ and therefore not
covered by our results), Chernov and Zhang studied in
\cite{chernov_zhang_polynomial, chernov_zhang_variable} several
classes of billiards for which the decay of correlations behaves like
$O((\log n)^C / n^{1/\alpha-1})$, for some parameter $\alpha$ that
can be chosen freely in $(0,1/2]$ and some $C>0$. This decay rate is
obtained by modeling those billiards by nonuniform invertible Young
towers with well controlled tails. Therefore, we can apply
Theorem~\ref{thm_nonuniform_bis} to those maps, yielding polynomial
concentration inequalities for any exponent $p<2/\alpha-2$, just like
in the above one-dimensional non-invertible situation.

\subsection{Empirical covariance}

For a Lipschitz observable $f$ such that $\int f\dd\mu=0$, the auto-covariance of the process $\{f\circ T^k\}$
is defined as usual by
\begin{equation}\label{def-autocov}
C_f(\ell)=\int f\cdot f\circ T^\ell \dd\mu.
\end{equation}
An obvious estimator for $C_f(\ell)$ is
\[
\widehat{C}_f(n,\ell,x)=\frac{1}{n} \sum_{j=0}^{n-1} f(T^j x)f(T^{j+\ell} x).
\]
We could as well consider the covariance between $\{f\circ T^k\}$ and
$\{g\circ T^k\}$, for a pair of Lipschitz observables $f,g$. For each
$\ell\geq 0$, the ergodic theorem tells us that
$\widehat{C}_f(n,\ell,x)\to C_f(\ell)$ $\mu$-almost surely, as
$n\to\infty$. Considering the function of $n+\ell$ variables
$K(x_0,\dotsc,x_{n+\ell-1})=\frac{1}{n} \sum_{j=0}^{n-1}
f(x_j)f(x_{j+\ell})$, we obtain immediately (noting that $\int
\widehat{C}_f(n,\ell,x)\dd\mu(x)= C_f(\ell)$) the following theorems.

\begin{thm}
Let $(X,T)$ be a dynamical system modeled by a uniform Young tower
with exponential tails and $\mu$ its SRB measure. Let $f:X\to\R$ be a
Lipschitz function. There exists a constant $c>0$ such that, for any
$n,\ell\in\N$ and for any $t>0$,
\[
\mu\left\{x \in X\st  \big|\widehat{C}_f(n,\ell,x)-C_f(\ell)\big| >t\right\}
\leq 2 e^{-c \frac{n^2t^2}{n+\ell}}.
\]
\end{thm}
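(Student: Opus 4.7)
The plan is to apply the exponential concentration inequality~\eqref{main_ineq_bis} (or equivalently~\eqref{dev_exp}) to the separately Lipschitz function of $n+\ell$ variables
\[
K(x_0,\dotsc,x_{n+\ell-1})=\frac{1}{n}\sum_{j=0}^{n-1} f(x_j)\,f(x_{j+\ell}).
\]
Evaluated along the orbit, this gives $K(x,Tx,\dotsc,T^{n+\ell-1}x)=\widehat{C}_f(n,\ell,x)$, and by $T$-invariance of $\mu$ each summand integrates to $\int f\cdot f\circ T^\ell\dd\mu = C_f(\ell)$, so $\int K(x,\dotsc,T^{n+\ell-1}x)\dd\mu(x)=C_f(\ell)$.

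The only real computation is to estimate the Lipschitz constants $\Lip_i(K)$. Since $f$ is Lipschitz on the bounded space $X$, $f$ is also bounded; set $M=\max(\|f\|_\infty,\Lip(f))$. The variable $x_i$ enters $K$ through the term $f(x_i)f(x_{i+\ell})$ (when $0\le i<n$) and/or through $f(x_{i-\ell})f(x_i)$ (when $\ell\le i<n+\ell$); in both cases replacing $x_i$ by $x_i'$ changes $K$ by at most $\frac{M^2}{n}d(x_i,x_i')$ per occurrence. Since each index appears in at most two such products,
\[
\Lip_i(K)\le \frac{2M^2}{n}\quad\text{for every }i,
\]
while the index is nonzero only for at most $2\ell+(n-\ell)\le n+\ell$ values of $i$ (the indices $i\in[0,\ell)\cup[\ell,n)\cup[n,n+\ell)$ when $n\ge\ell$, and symmetrically when $n<\ell$). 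Consequently
\[
\sum_{i=0}^{n+\ell-1}\Lip_i(K)^2 \;\le\; \frac{4M^4(n+\ell)}{n^2}.
\]

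With this estimate in hand, the exponential concentration inequality~\eqref{dev_exp} (applied to both $K$ and $-K$, which accounts for the factor $2$) yields, for every $t>0$,
\[
\mu\bigl\{x\st|\widehat{C}_f(n,\ell,x)-C_f(\ell)|>t\bigr\}
\;\le\; 2\exp\!\left(-\frac{t^2}{4C\sum_i\Lip_i(K)^2}\right)
\;\le\; 2\exp\!\left(-c\,\frac{n^2 t^2}{n+\ell}\right)
\]
for a suitable $c>0$ depending only on $f$ and on the constant $C$ of Theorem~\ref{thm_exponential_bis}. This is exactly the claimed inequality.

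There is no real obstacle here: the entire content is the combinatorial Lipschitz bookkeeping on $K$, which shows that the natural scaling $1/n$ in the estimator produces $\sum\Lip_i(K)^2$ of order $(n+\ell)/n^2$; once this is in place, the exponential concentration inequality for uniform Young towers with exponential tails does all the work.
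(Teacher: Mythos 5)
Your proposal is correct and follows exactly the paper's (very brief) argument: apply the exponential concentration inequality to the function $K(x_0,\dotsc,x_{n+\ell-1})=\frac{1}{n}\sum_{j=0}^{n-1}f(x_j)f(x_{j+\ell})$, note that $\int K(x,\dotsc,T^{n+\ell-1}x)\dd\mu(x)=C_f(\ell)$ by invariance, and bound $\sum_i\Lip_i(K)^2\leq C(n+\ell)/n^2$ using that $f$ is bounded Lipschitz and each variable enters at most two products. The paper states this as an immediate consequence without spelling out the Lipschitz bookkeeping; you have simply filled in that computation.
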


\begin{thm}
Let $(X,T)$ be a dynamical system modeled by a non-uniform Young
tower with weak $L^q$ tails, for some $q\geq 2$, and $\mu$ its SRB
measure. Let $f:X\to\R$ be a Lipschitz function. There exists a
constant $c>0$ such that, for any $n,\ell\in\N$ and for any $t>0$,
\[
\mu\left\{x\in X :  \big|\widehat{C}_f(n,\ell,x)-C_f(\ell)\big| >t\right\}
\leq c\ \Big(\frac{n+\ell}{n^2}\Big)^{q-1}\frac{1}{t^{2q-2}}.
\]
\end{thm}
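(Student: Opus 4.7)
The plan is to apply the weak $L^q$ deviation bound~\eqref{dev_poly} (which, as noted just above the theorem, remains valid when the tower has only weak $L^q$ tails by virtue of Theorem~\ref{thm_nonuniform_weak}) to a suitable separately Lipschitz function of $n+\ell$ variables encoding the empirical covariance. Define $K:X^{n+\ell}\to\R$ by
\[
K(x_0,\dotsc,x_{n+\ell-1}) = \frac{1}{n}\sum_{j=0}^{n-1} f(x_j)f(x_{j+\ell}),
\]
so that $K(x,Tx,\dotsc,T^{n+\ell-1}x) = \widehat C_f(n,\ell,x)$. Using the $T$-invariance of $\mu$ together with~\eqref{def-autocov}, each summand satisfies $\int f(T^j y)f(T^{j+\ell}y)\dd\mu(y) = C_f(\ell)$, so that $\int K(y,\dotsc,T^{n+\ell-1}y)\dd\mu(y) = C_f(\ell)$.

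The key step is the estimation of the Lipschitz constants of $K$. For each index $i\in[0,n+\ell-1]$, the variable $x_i$ can appear at most twice among the $n$ summands: as the first factor in the summand $j=i$ (when $0\le i\le n-1$), and as the second factor in the summand $j=i-\ell$ (when $\ell\le i\le n+\ell-1$). Since $X$ is bounded, the Lipschitz function $f$ is also bounded, and each relevant summand changes by at most $\Lip(f)\norm{f}_\infty d(x_i,x_i')$. Hence $\Lip_i(K)\le 2\Lip(f)\norm{f}_\infty/n$ for every $i$, so that
\[
\sum_{i=0}^{n+\ell-1}\Lip_i(K)^2 \;\le\; \frac{4(n+\ell)\Lip(f)^2\norm{f}_\infty^2}{n^2} \;=\; C\,\frac{n+\ell}{n^2},
\]
where $C$ depends only on $f$.

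Applying~\eqref{dev_poly} in its weak $L^q$ form to the function $K$ and centering by its mean $C_f(\ell)$ yields
\[
\mu\left\{x\in X : \left|\widehat C_f(n,\ell,x) - C_f(\ell)\right| > t\right\} \;\le\; C'\left(\frac{n+\ell}{n^2}\right)^{q-1}\frac{1}{t^{2q-2}},
\]
which is precisely the desired inequality after absorbing constants into $c$. There is no real obstacle here: the argument is a direct application of the concentration machinery, entirely parallel to the exponential-tail version proved just above, and reduces to the elementary combinatorial fact that each coordinate appears in at most two of the $n$ terms, giving the $O(1/n)$ pointwise Lipschitz bound whose square, summed over $n+\ell$ coordinates, produces the variance-type factor $(n+\ell)/n^2$ in the final estimate.
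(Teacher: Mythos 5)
Your proof is correct and follows essentially the same route as the paper: introduce $K(x_0,\dotsc,x_{n+\ell-1})=\frac{1}{n}\sum_{j=0}^{n-1}f(x_j)f(x_{j+\ell})$, note that its mean under $\mu_{n+\ell}$ is $C_f(\ell)$ by invariance, observe that each coordinate enters at most two summands so $\Lip_i(K)=O(1/n)$, hence $\sum_i\Lip_i(K)^2 = O((n+\ell)/n^2)$, and plug into the weak polynomial deviation bound~\eqref{dev_poly}. The paper simply states this as an immediate consequence without spelling out the Lipschitz computation, but your argument supplies exactly the details it leaves implicit.
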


\subsection{Empirical measure}

Given $x\in X$ in an ergodic compact dynamical system $(X,T,\mu)$,
let
\[
\boE_n(x)=\frac{1}{n} \sum_{j=0}^{n-1} \delta_{T^j x}
\]
be the associated empirical measure. By Birkhoff's ergodic theorem, $\boE_n(x)$ vaguely
converges to $\mu$, for $\mu$-almost every $x$. Our aim is to quantify the `speed' at which
this convergence takes place. We use the Kantorovich distance
(compatible with vague convergence): for two probability measures
$\mu_1,\mu_2$ on $X$, let
\begin{equation*}
\textup{dist}_{\scriptscriptstyle{K}}(\mu_1,\mu_2)=
\sup\left\{ \int g \dd\mu_1 -\int g \dd\mu_2 : g:X\to\R\text{ is $1$-Lipschitz}\right\}.
\end{equation*}
Set
\[
\mathcal{D}_n(x)=\textup{dist}_{\scriptscriptstyle{K}}(\boE_n(x),\mu).
\]
We have the following general bounds.

\begin{thm}\label{dev_kanto-exp}
Let $(X,T)$ be a dynamical system modeled by a uniform Young tower with exponential tails
and $\mu$ its SRB measure. Let $f:X\to\R$ be a Lipschitz function with $\int f\dd\mu=0$.
There exists a constant $C>0$ such that, for any $n\in\N$ and for any $t>0$,
\[
\mu\left\{x \in X\st
\Big|\mathcal{D}_n(x)-
\int \mathcal{D}_n(y)\dd\mu(y)\Big| >\frac{t}{\sqrt{n}}\right\}
\leq 2 e^{-C t^2}.
\]
\end{thm}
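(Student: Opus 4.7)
The plan is to realize $\mathcal{D}_n$ as a separately Lipschitz function $K(x_0,\dotsc,x_{n-1})$ with very small individual Lipschitz constants, and then apply the exponential concentration inequality of Theorem~\ref{thm_exponential_bis} (in its deviation form~\eqref{dev_exp}).

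By duality of the Kantorovich distance, write
\[
\mathcal{D}_n(x) = \sup_{g\in \Lip_1(X)} \left(\frac{1}{n}\sum_{j=0}^{n-1} g(T^j x) - \int g\dd\mu\right),
\]
where $\Lip_1(X)$ denotes the set of $1$-Lipschitz functions on $X$. This is of the form $K(x,Tx,\dotsc,T^{n-1}x)$, where
\[
K(x_0,\dotsc,x_{n-1}) \coloneqq \sup_{g\in \Lip_1(X)} \left(\frac{1}{n}\sum_{j=0}^{n-1}g(x_j) -\int g\dd\mu \right).
\]
Taking a supremum of $1$-Lipschitz functions preserves the Lipschitz property coordinate by coordinate: if we modify only $x_i$ into $x'_i$, each candidate $g$ changes the sum by at most $\tfrac{1}{n}d(x_i,x'_i)$, so $\Lip_i(K) \leq 1/n$ for every $i$. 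Consequently
\[
\sum_{j=0}^{n-1} \Lip_j(K)^2 \leq n \cdot \frac{1}{n^2} = \frac{1}{n}.
\]

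The system $(X,T,\mu)$ satisfies an exponential concentration inequality (Theorem~\ref{thm_exponential_bis} in the invertible case, Theorem~\ref{thm_exponential} in the non-invertible case, both transferred through the projection from the Young tower as in the preceding Subsection). Applying the deviation bound~\eqref{dev_exp} to $K$ and to $-K$ and taking $s=t/\sqrt{n}$, we obtain
\[
\mu\left\{ x\in X \st \left|\mathcal{D}_n(x)-\int \mathcal{D}_n\dd\mu\right| > \frac{t}{\sqrt{n}}\right\}
\leq 2 \exp\!\left(-\frac{(t/\sqrt{n})^2}{4C/n}\right) = 2 e^{-Ct^2},
\]
which is exactly the claim.

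The only non-trivial point is the verification that $\Lip_i(K)\le 1/n$; this is standard but worth noting, since it depends on the fact that $K$ is a supremum over $1$-Lipschitz $g$ of quantities whose $i$-th coordinate enters only through the single term $g(x_i)/n$. Once this estimate is in hand, the proof reduces to a direct substitution into~\eqref{dev_exp}, so there is no real obstacle. Note that the hypothesis $\int f\dd\mu=0$ in the statement plays no role and may be discarded (the function $f$ does not even appear in the conclusion); presumably it is a remnant of the formulation and could be removed.
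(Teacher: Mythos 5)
Your proof is correct and follows exactly the same route as the paper: realize $\mathcal{D}_n$ via the Kantorovich--Rubinstein duality as a separately Lipschitz $K$ with $\Lip_i(K)\leq 1/n$ for every $i$, so $\sum_j\Lip_j(K)^2\leq 1/n$, and plug this into~\eqref{dev_exp}. Your remark that the hypothesis on $f$ is vestigial (the conclusion does not mention $f$) is also accurate.
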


\begin{thm}\label{dev_kanto-poly}
Let $(X,T)$ be a dynamical system modeled by a non-uniform Young
tower with weak $L^q$ tails, for some $q\geq 2$, and $\mu$ its SRB
measure. Let $f:X\to\R$ be a Lipschitz function with $\int
f\dd\mu=0$. There exists a constant $C>0$ such that, for all $n\in\N$
and all $t>0$,
\[
\mu\left\{x \in X\st
\Big|\mathcal{D}_n(x)-
\int \mathcal{D}_n(y)\dd\mu(y)\Big| >\frac{t}{\sqrt{n}}\right\}
\leq \frac{C}{t^{2q-2}}.
\]
\end{thm}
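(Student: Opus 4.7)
The plan is to realize $\mathcal{D}_n$ as a separately Lipschitz function of the orbit $(x, Tx, \dotsc, T^{n-1}x)$ and then apply the weak polynomial concentration inequality of Theorem~\ref{thm_nonuniform_weak} (in the form, valid for systems modeled by non-uniform Young towers with weak $L^q$ tails, summarised at the beginning of this section). Concretely, define
\[
K(x_0,\dotsc,x_{n-1})
= \textup{dist}_{\scriptscriptstyle{K}}\Bigl(\frac{1}{n}\sum_{j=0}^{n-1}\delta_{x_j},\, \mu\Bigr)
= \sup\Bigl\{ \frac{1}{n}\sum_{j=0}^{n-1} g(x_j) - \int g\,\dd\mu \st g\colon X\to\R \text{ is } 1\text{-Lipschitz} \Bigr\},
\]
so that $K(x,Tx,\dotsc,T^{n-1}x) = \mathcal{D}_n(x)$.

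The key estimate is $\Lip_j(K)\leq 1/n$ for every $j$. Fixing every coordinate except $x_j$ and replacing $x_j$ by $x'_j$, the quantity inside the supremum changes by $\frac{1}{n}(g(x'_j)-g(x_j))$, whose absolute value is at most $\frac{1}{n}d(x_j,x'_j)$ because $g$ is $1$-Lipschitz. Since this bound is uniform in $g$, it passes to the supremum and hence to the two-sided difference, giving $\Lip_j(K)\leq 1/n$. Consequently
\[
\sum_{j=0}^{n-1}\Lip_j(K)^2 \leq n \cdot (1/n)^2 = 1/n.
\]

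Applying Theorem~\ref{thm_nonuniform_weak} to $K$, with its deviation level set to $t/\sqrt{n}$, yields
\[
\mu\bigl\{|\mathcal{D}_n - \textstyle\int \mathcal{D}_n\,\dd\mu| > t/\sqrt{n}\bigr\}
\leq C\,(t/\sqrt{n})^{-(2q-2)}\,(1/n)^{q-1}
= C\,t^{-(2q-2)},
\]
which is precisely the claimed bound.

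I do not anticipate any serious obstacle: the argument is purely structural, its only nontrivial input being the weak-moment martingale concentration estimate of Section~\ref{sec_non_uniform_weak}, together with the standard fact that the Kantorovich distance to a fixed probability measure is $1/n$-Lipschitz in each atom of its empirical-measure argument. The companion exponential estimate (Theorem~\ref{dev_kanto-exp}) follows by exactly the same reduction, substituting $t/\sqrt{n}$ into~\eqref{dev_exp} and using $\sum\Lip_j(K)^2 \leq 1/n$, so that the Gaussian exponent becomes $t^2/(4C)$.
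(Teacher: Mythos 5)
Your argument is exactly the paper's: you introduce the same observable $K$, use the same bound $\Lip_j(K)\leq 1/n$ (so $\sum_j\Lip_j(K)^2\leq 1/n$), plug the threshold $t/\sqrt n$ into the weak polynomial concentration inequality (\eqref{dev_poly}, shown valid for weak $L^q$ tails via Theorem~\ref{thm_nonuniform_weak}), and the powers of $n$ cancel to give $C/t^{2q-2}$. This is correct and matches the paper's proof; as you note, the Lipschitz observable $f$ appearing in the theorem statement plays no role here.
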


These bounds follow at once by applying either~\eqref{dev_exp}
or~\eqref{dev_poly} to the function
\[
K(x_0,\dotsc,x_{n-1})=
\sup\left\{ \frac{1}{n}\sum_{j=0}^{n-1} g(x_j) -\int g \dd\mu : g:X\to\R\;\textup{is}\;1-\textup{Lipschitz}\right\}
\]
whose Lipschitz constants are uniformly bounded by $1/n$. The natural next step is to seek for
an upper bound for $\int \mathcal{D}_n(y)\dd\mu(y)$. We are not
able to obtain an {\em a priori} sufficiently good estimate unless we restrict to one-dimensional
systems.

\begin{cor}\label{vitesse_exp_kanto}
Let $(X,T)$ be a one-dimensional dynamical system satisfying the
assumptions of Theorem~\ref{dev_kanto-exp}. There exist some
constants $B,C>0$ such that, for any $n\in\N$ and for any $t>0$,
\[
\mu\left\{x \in X\st\mathcal{D}_n(x)> \frac{t}{n^{1/2}}+\frac{B}{n^{1/4}}\right\}\leq e^{-Ct^2}.
\]
\end{cor}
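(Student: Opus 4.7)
The strategy is to bootstrap Theorem~\ref{dev_kanto-exp}, which already provides Gaussian concentration of $\mathcal{D}_n$ around its mean at rate $n^{-1/2}$, with an \emph{a priori} upper bound $\int\mathcal{D}_n\dd\mu\leq Bn^{-1/4}$. The one-dimensionality of $X$ enters only for this second estimate; combining the two inequalities immediately yields the corollary (after absorbing a constant factor~$2$ into~$C$).

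To bound $\int\mathcal{D}_n\dd\mu$, I would exploit the explicit one-dimensional formula for the Kantorovich distance: assuming $X\subset[a,b]$, one has $\textup{dist}_{\scriptscriptstyle{K}}(\mu_1,\mu_2)=\int_a^b|F_1(s)-F_2(s)|\dd s$, where $F_i$ denotes the cumulative distribution function of~$\mu_i$. Writing $F_n(s,x)=\frac{1}{n}\sum_{j=0}^{n-1}\mathbf{1}_{T^jx\leq s}$ and $F(s)=\mu((-\infty,s])$, Fubini gives $\int\mathcal{D}_n\dd\mu=\int_a^b\int|F_n(s,\cdot)-F(s)|\dd\mu\dd s$. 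Since $\mathbf{1}_{(-\infty,s]}$ is not Lipschitz, I would approximate it at scale $\epsilon>0$ by the piecewise-affine function $\chi_{s,\epsilon}$ equal to $1$ on $(-\infty,s]$, to $0$ on $[s+\epsilon,+\infty)$, and affine in between; this function has Lipschitz constant at most $1/\epsilon$ and satisfies $|\chi_{s,\epsilon}-\mathbf{1}_{(-\infty,s]}|\leq\mathbf{1}_{[s,s+\epsilon]}$.

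Applying~\eqref{main_ineq_bis} to $K(x_0,\dotsc,x_{n-1})=\frac{1}{n}\sum_{j=0}^{n-1}\chi_{s,\epsilon}(x_j)$, whose Lipschitz constants are bounded by $1/(n\epsilon)$, and using the inequality with $\lambda K$ for varying $\lambda\in\R$, one obtains the subgaussian second-moment bound $\int(K-\E K)^2\dd\mu\leq C/(n\epsilon^2)$, hence $\int|K-\E K|\dd\mu\leq Cn^{-1/2}\epsilon^{-1}$ by Cauchy--Schwarz. The two approximation errors (between $F_n(s,\cdot)$ and $K$, and between $F(s)$ and $\E K$) are each controlled in mean by $\mu([s,s+\epsilon])$, giving $\int|F_n(s,\cdot)-F(s)|\dd\mu\leq Cn^{-1/2}\epsilon^{-1}+2\mu([s,s+\epsilon])$. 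Integrating over $s\in[a,b]$ and using the elementary identity $\int_a^b\mu([s,s+\epsilon])\dd s\leq\epsilon$ (a second use of Fubini, which crucially requires no regularity of~$\mu$), one obtains $\int\mathcal{D}_n\dd\mu\leq C(b-a)n^{-1/2}\epsilon^{-1}+2\epsilon$. Optimizing in $\epsilon$ by taking $\epsilon=n^{-1/4}$ produces $\int\mathcal{D}_n\dd\mu\leq Bn^{-1/4}$, as required.

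The real obstacle is the discontinuity of $\mathbf{1}_{(-\infty,s]}$: one cannot feed it directly into the concentration inequality, and the unavoidable trade-off between the Lipschitz norm $1/\epsilon$ and the $L^1$ smoothing error $\epsilon$ is what forces the exponent $1/4$ rather than the $1/2$ one might naively hope for. Improving the rate would require either decay-of-correlations estimates for bounded-variation observables or a direct analysis of the one-dimensional empirical process, both of which go beyond the purely Lipschitz framework developed in the previous sections.
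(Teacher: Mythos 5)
Your proof is correct and follows essentially the same route as the paper, which reduces the corollary to the bound $\int\mathcal{D}_n\dd\mu\leq Bn^{-1/4}$ and then delegates that estimate to \cite[Theorem~5.2]{chazottes_collet_schmitt}; you have simply reproduced that reference's argument in full (one-dimensional Kantorovich formula, Lipschitz mollification of the indicator at scale $\epsilon$, a variance bound of order $1/(n\epsilon^2)$, Fubini, and optimization at $\epsilon=n^{-1/4}$). The one cosmetic difference is that you control the variance via the exponential concentration inequality itself, whereas the paper invokes summability of the auto-covariance of Lipschitz observables; for a function $f$ with $\Lip(f)=1/\epsilon$, both yield $\mathrm{Var}(\frac1n\sum f\circ T^j)\lesssim\Lip(f)^2/n$, so the two are interchangeable here. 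One small caveat: the phrase ``absorbing a constant factor $2$ into $C$'' is not quite legitimate for small $t$ (since $2e^{-Ct^2}>1$ while $e^{-C't^2}<1$ for $t>0$); the clean fix is to use the one-sided deviation bound~\eqref{dev_exp} directly, which has no factor of $2$ and gives exactly the claimed form.
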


\begin{cor}\label{vitesse_poly_kanto}
Let $(X,T)$ be a one-dimensional dynamical system satisfying the
assumptions of Theorem~\ref{dev_kanto-poly}. There exist some
constants $B,C>0$ such that, for any $n\in\N$ and for any $t>0$,
\[
\mu\left\{x \in X\st\mathcal{D}_n(x)> \frac{t}{n^{1/2}}+\frac{B}{n^{1/4}}\right\}\leq \frac{C}{t^{2q-2}}.
\]
\end{cor}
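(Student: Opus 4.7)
The plan is to split the deviation into two pieces via the triangle inequality: for any $t>0$,
\begin{equation*}
\mu\bigl\{\mathcal D_n > tn^{-1/2} + Bn^{-1/4}\bigr\}
\leq \mu\bigl\{|\mathcal D_n - \mathbb E(\mathcal D_n)| > tn^{-1/2}\bigr\}
+ \mu\bigl\{\mathbb E(\mathcal D_n) > Bn^{-1/4}\bigr\}.
\end{equation*}
Theorem~\ref{dev_kanto-poly} already controls the first term by $Ct^{-(2q-2)}$. Thus the only missing step is to prove the \emph{deterministic} bound $\mathbb E(\mathcal D_n) \leq B n^{-1/4}$ for some constant $B$, after which the second term is identically zero and the corollary follows.

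To bound $\mathbb E(\mathcal D_n)$, I use that $X$ is one-dimensional (and bounded, so contained in some interval $I$): the Kantorovich distance is then the $L^1$-distance between cumulative distribution functions, namely $\mathcal D_n(x) = \int_I |F_n(s,x)-F(s)|\dd s$ with $F_n(s,x)=n^{-1}\sum_{j=0}^{n-1}\mathbf 1\{T^jx\leq s\}$ and $F(s)=\mu((-\infty,s])$. By Fubini,
\begin{equation*}
\mathbb E(\mathcal D_n) = \int_I \int_X |F_n(s,x)-F(s)|\dd\mu(x)\dd s,
\end{equation*}
and it suffices to bound the inner integral uniformly in $s$ by $C\epsilon + C/(\epsilon\sqrt n)$ for a parameter $\epsilon>0$ to be optimized.

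For each $s$, pick a Lipschitz approximation $g_s^\epsilon$ of the indicator $\mathbf 1_{\leq s}$ with $\Lip(g_s^\epsilon)\leq 1/\epsilon$ and $|\mathbf 1_{\leq s}-g_s^\epsilon|\leq \mathbf 1_{(s,s+\epsilon]}$. Then
\begin{equation*}
|F_n(s,x)-F(s)| \leq \bigl|F_n(s,x) - n^{-1}S_n g_s^\epsilon(x)\bigr|
+ \bigl|n^{-1}S_n g_s^\epsilon(x) - \mu(g_s^\epsilon)\bigr|
+ |\mu(g_s^\epsilon)-F(s)|.
\end{equation*}
After integrating in $x$, the first and third terms are each bounded by $\mu((s,s+\epsilon])$ (using invariance of $\mu$ for the first). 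For the middle term, I apply Theorem~\ref{thm_nonuniform_weak} to $K(x_0,\dotsc,x_{n-1}) = n^{-1}\sum_j g_s^\epsilon(x_j)$, whose Lipschitz constants are all $1/(n\epsilon)$. This yields $\norm{n^{-1}S_n g_s^\epsilon - \mu(g_s^\epsilon)}_{L^{2q-2,w}}^{2q-2} \leq C(n\cdot n^{-2}\epsilon^{-2})^{q-1} = C(n\epsilon^2)^{-(q-1)}$, hence $\norm{\cdot}_{L^{2q-2,w}}\leq C/(\epsilon\sqrt n)$; since $2q-2>1$, weak-$L^{2q-2}$ dominates $L^1$, so the $\mu$-integral of the middle term is bounded by $C/(\epsilon\sqrt n)$.

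Integrating over $s\in I$ and using the Fubini identity $\int_I \mu((s,s+\epsilon])\dd s = \epsilon$, we get $\mathbb E(\mathcal D_n) \leq C\epsilon + C/(\epsilon\sqrt n)$. Optimizing with $\epsilon = n^{-1/4}$ yields $\mathbb E(\mathcal D_n)\leq B n^{-1/4}$, finishing the proof. The only mildly delicate point is to remember that the concentration inequality provides a \emph{weak} $L^{2q-2}$ estimate on $n^{-1}S_n g_s^\epsilon - \mu(g_s^\epsilon)$, which must be converted to an $L^1$ estimate (this uses $2q-2>1$, guaranteed by $q\geq 2$); beyond that, everything is a routine triangle-inequality/Fubini computation with a standard Lipschitz-approximation-of-indicators argument.
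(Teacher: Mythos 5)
Your proof is correct, and the overall architecture matches the paper's: the probability is split by inserting $\E(\mathcal D_n)$, the first piece is handled by Theorem~\ref{dev_kanto-poly}, and the corollary reduces to the deterministic bound $\E(\mathcal D_n) \leq B n^{-1/4}$. The difference is in how that last bound is established. The paper does not prove it; it cites \cite[Theorem 5.2]{chazottes_collet_schmitt} and merely hints that one should use the one-dimensional CDF representation of the Kantorovich distance together with \emph{summability of the autocovariance} of Lipschitz observables -- i.e.\ a variance ($L^2$) bound on $n^{-1}S_n g_s^\epsilon - \mu(g_s^\epsilon)$ of order $\Lip(g)/\sqrt{n}$. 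You instead make the argument self-contained within this paper by applying Theorem~\ref{thm_nonuniform_weak} to $K=n^{-1}\sum_j g_s^\epsilon(x_j)$ and converting the resulting weak-$L^{2q-2}$ bound to an $L^1$ bound. This is a genuine (if small) simplification: it avoids invoking a separate decay-of-correlations fact and uses only the machinery already developed here. Everything else (Fubini, the Lipschitz mollification $g_s^\epsilon$ with $|\mathbf 1_{\leq s}-g_s^\epsilon|\leq\mathbf 1_{(s,s+\epsilon]}$, the optimization $\epsilon=n^{-1/4}$) is the same as what the cited reference does.

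Two cosmetic points. First, $\int_I \mu((s,s+\epsilon])\dd s \leq \epsilon$ rather than $=\epsilon$, and the contribution $\int_I C/(\epsilon\sqrt n)\dd s$ picks up a factor $|I|$; both are harmless since $X$ is bounded (an assumption the paper makes). Second, the boundary case $q=2$: Theorem~\ref{thm_nonuniform_weak} is stated for $q>2$, and the paper itself notes it fails at $q=2$, so strictly speaking your step through it needs $q>2$ -- but Theorem~\ref{dev_kanto-poly} already has this issue built in, so you are not introducing a new gap.
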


These two corollaries follow immediately if we can prove that
there exists $B>0$ such that, for any $n\in\N$,
\[
\int \mathcal{D}_n\dd\mu\leq \frac{B}{n^{1/4}}.
\]
The proof is found in \cite[Theorem 5.2]{chazottes_collet_schmitt}.
The point is that in dimension one, there is a special representation
of Kantorovich distance in terms of the distribution functions. The
estimate then follows easily using the fact that the auto-covariance
of Lipschitz observables is summable under the above assumptions.

For the map~\eqref{MPmap}, we can use Corollary~\ref{cor-MP} to get
the bound
\[
\mu\left\{x \in X\st\mathcal{D}_n(x)>\frac{t}{n^{1/2}}+\frac{B}{n^{1/4}}\right\}\leq \frac{C}{t^{\frac{2}{\alpha}-2}},
\]
for any $n\in\N$ and for any $t>0$.

\begin{rmk}
What explains the power $1/4$ of $n$ is the fact that at some stage,
one has to approximate a characteristic function of a set by a
Lipschitz function. If one can control the auto-covariance of
functions with bounded variation, one gets
\[
\int \mathcal{D}_n\dd\mu\leq \frac{B}{\sqrt{n}}.
\]
This is the case for uniformly piecewise expanding maps of the interval \cite{collet_concentration_BV}. This is
also the case for the quadratic map with Benedicks-Carleson parameters \cite{lsyoung_quadratic}.
Since we proved that this system satisfies an exponential concentration inequality, we get
\[
\mu\left\{x \in X\st\mathcal{D}_n(x)>\frac{t}{\sqrt{n}}\right\}\leq e^{-C t^2},
\]
for any $n\in\N$ and for any $t$ greater than some $t_0>0$.
\end{rmk}

\subsection{Kernel density estimation}

The estimation from an orbit of the density $h$ of the invariant measure of a one-dimensional dynamical system $(X,T)$ is based on the estimator
\[
h_n(s;x)=\frac{1}{n a_n} \sum_{j=0}^{n-1} \psi\Big(\frac{s-T^j x}{a_n} \Big)
\]
where $a_n$ is a sequence of positive numbers going to $0$ but such
that $n a_n$ goes to $\infty$, and $\psi$ is a `kernel', that is, a
non-negative Lipschitz function with compact support. We suppose that
it is fixed in the sequel.

As proved in \cite[Appendix C]{chazottes_collet_schmitt0}, the density of the invariant measure
for a one-dimensional system modeled by a uniform Young tower with exponential tails has the following property:
there exist some constants $B>0$ and $\tau>0$ such that
\begin{equation}\label{besov}
\int \big| h(s)-h(s-t)\big| \dd s \leq B |t|^\tau,\;\forall t\in\R.
\end{equation}
We have the following result about the $L^1$ convergence of empirical densities.
\begin{thm}\label{kernel}
Let $(X,T)$ be a one-dimensional dynamical system modeled by a uniform Young tower with exponential tails
and $\mu$ its SRB measure. There exist $c_1,c_2>0$ such that, for
any $t>c_1(a_n^\tau +1/(\sqrt{n}a_n^2))$ and for any $n\in\N$
\[
\mu\left\{x \in X\st \int  \big| h_n(s;x)-h(s)\big| \dd s> t\right\} \leq e^{-c_2 n a_n^2 t^2}.
\]
\end{thm}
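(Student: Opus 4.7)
The plan is to apply the exponential concentration inequality~\eqref{dev_exp} to the separately Lipschitz function
\[
K(x_0,\dotsc,x_{n-1}) = \int \bigl|h_n(s;x) - h(s)\bigr|\dd s,
\]
where $h_n(s;x) = \frac{1}{na_n}\sum_{j=0}^{n-1}\psi((s-x_j)/a_n)$ is viewed as a function of the coordinates $x_j = T^j x$. Two ingredients are needed: a good bound on the Lipschitz constants $\Lip_j(K)$, and a good bound on $\E K$.

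For the Lipschitz constants, the reverse triangle inequality gives
\[
\bigl|K(x_0,\dotsc,x_j,\dotsc) - K(x_0,\dotsc,x'_j,\dotsc)\bigr|
\leq \frac{1}{na_n}\int\bigl|\psi((s-x_j)/a_n) - \psi((s-x'_j)/a_n)\bigr|\dd s.
\]
After the change of variable $u = (s-x_j)/a_n$ and using that $\psi$ is Lipschitz with compact support, this is bounded by $C|x_j - x'_j|/(na_n)$. Therefore $\Lip_j(K) \leq C/(na_n)$ and $\sum_j\Lip_j(K)^2 \leq C/(na_n^2)$, so~\eqref{dev_exp} yields
\[
\mu\bigl\{|K - \E K| > s\bigr\} \leq 2\exp(-c\,na_n^2 s^2), \qquad s > 0,
\]
for some $c > 0$.

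To bound $\E K$, set $\bar h_n(s) = \E h_n(s;\cdot) = \int\psi(u)h(s - a_n u)\dd u$ (using invariance of $\mu$) and split $\E K \leq \E\!\int|h_n - \bar h_n|\dd s + \int|\bar h_n - h|\dd s$. Assuming the normalization $\int\psi = 1$, the bias term is controlled by~\eqref{besov} via Fubini:
\[
\int|\bar h_n(s) - h(s)|\dd s \leq \int\psi(u)\int|h(s - a_n u) - h(s)|\dd s\dd u
\leq C a_n^\tau.
\]
For the variance term, the key step is to apply~\eqref{main_ineq_bis} a second time, to $h_n(s;\cdot)$ for each fixed $s$ viewed as a function of $(x_0,\dotsc,x_{n-1})$. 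One computes $\Lip_j(h_n(s;\cdot)) \leq C/(na_n^2)$, hence $\sum_j \Lip_j^2 \leq C/(na_n^4)$; differentiating~\eqref{main_ineq_bis} twice at $0$ then gives $\mathrm{Var}(h_n(s;\cdot)) \leq C/(na_n^4)$, so $\E|h_n(s;\cdot) - \bar h_n(s)| \leq C/(\sqrt n\, a_n^2)$. Since both $h_n(s;\cdot)$ and $\bar h_n(s)$ vanish outside a bounded $s$-set of size $|X| + O(a_n)$, integration produces $\E\!\int|h_n - \bar h_n|\dd s \leq C/(\sqrt n\, a_n^2)$.

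Combining, $\E K \leq C'(a_n^\tau + 1/(\sqrt n\, a_n^2))$; taking $c_1 = 2C'$ ensures $t - \E K > t/2$ under the hypothesis on $t$, and the concentration bound then gives the theorem with $c_2 = c/4$. The main obstacle is the variance part of $\E K$: a direct bound via decay of correlations would be cumbersome, whereas the two-fold use of the exponential concentration inequality (once for $K$ itself and once, for each fixed $s$, for $h_n(s;\cdot)$) is the cleanest route and avoids invoking any additional mixing argument.
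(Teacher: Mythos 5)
The proposal is correct and follows essentially the same strategy the paper intends: apply the exponential concentration inequality~\eqref{dev_exp} to the one-variable reduction $K(x_0,\dotsc,x_{n-1})=\int|h_n(s;x)-h(s)|\dd s$ (with $\Lip_j(K)\leq C/(na_n)$ giving $\sum\Lip_j(K)^2\leq C/(na_n^2)$), and then control $\E K$ by splitting it into a bias term estimated via~\eqref{besov} (giving $Ca_n^\tau$) and a variance term estimated by applying a concentration bound pointwise in $s$ to $h_n(s;\cdot)$ (giving $C/(\sqrt{n}a_n^2)$). The paper does not spell out the argument, deferring to \cite[Theorem 5.2]{chazottes_collet_schmitt0} and \cite[Theorem III.2]{collet_concentration_BV}, but those proofs proceed exactly along these lines, the only change being that the exponential inequality replaces the moment-$2$ Devroye inequality in the first step; note that for the variance of $h_n(s;\cdot)$ one only needs the moment-$2$ consequence, so ``differentiating~\eqref{main_ineq_bis} twice at $0$'' is a slightly roundabout (but correct) way of invoking the $Q=2$ polynomial inequality, and the resulting $1/(\sqrt n\,a_n^2)$ term is precisely what appears in the statement.
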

The proof is similar to the proof of Theorem 5.2 in
\cite{chazottes_collet_schmitt0} except that we use an exponential
concentration inequality instead of a polynomial concentration
inequality with moment $2$; hence we obtain a much stronger bound.
(See also \cite[Theorem III.2]{collet_concentration_BV} for uniformly
piecewise expanding maps of the interval.) The property~\eqref{besov}
is used to obtain an upper bound for $\int \big| h_n(s;x)-h(s)\big|
\dd s\dd\mu$.

We do not know if the property~\eqref{besov} holds for the density of
the invariant measure of all one-dimensional system modeled by a
non-uniform Young tower with polynomial tails. But for the special
case of the intermittent map~\eqref{MPmap}, it is easy to check
that~\eqref{besov} is true with $\tau=1-\alpha$. Therefore, applying
Corollary~\ref{cor-MP} we get the following result.

\begin{thm}
Let $T$ be the map~\eqref{MPmap} and $\mu$ its absolutely continuous
invariant probability measure. There exist $c_1,c_2>0$ such that for
any $t>c_1(a_n^{1-\alpha} +1/(\sqrt{n}a_n^2))$ and for any $n\in\N$
\[
\mu\left\{x \in X \st \int  \big| h_n(s;x)-h(s)\big| \dd s> t\right\} \leq
\frac{c_2}{n^{\frac{1}{\alpha}-1}a_n^{\frac{2}{\alpha}-2} t^{\frac{2}{\alpha}-2}}.
\]
\end{thm}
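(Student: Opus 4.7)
The plan is to mimic the proof of Theorem~\ref{kernel}, substituting the weak polynomial concentration inequality of Corollary~\ref{cor-MP} for the exponential one, and using that the map~\eqref{MPmap} satisfies property~\eqref{besov} with exponent $\tau = 1-\alpha$ (a direct consequence of $h(x)\sim x^{-\alpha}$ near the origin, as noted in the paragraph preceding the statement). I introduce the observable
\[
K(x_0,\dotsc,x_{n-1}) = \int_{\R} \Bigl| \frac{1}{n a_n} \sum_{j=0}^{n-1} \psi\Bigl(\frac{s-x_j}{a_n}\Bigr) - h(s) \Bigr| \dd s,
\]
so that $K(x,Tx,\dotsc,T^{n-1}x) = \int|h_n(s;x)-h(s)|\dd s$. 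Modifying only the coordinate $x_j$ and using that $\psi$ is Lipschitz with compact support, the change of variable $u=(s-x_j)/a_n$ together with the fact that the affected $s$-region has Lebesgue measure of order $a_n$ yields $\Lip_j(K)\leq C/(n a_n)$ uniformly in $j$, hence $\sum_j \Lip_j(K)^2 \leq C/(n a_n^2)$.

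Corollary~\ref{cor-MP} then produces, for every $u>0$,
\[
\mu\bigl\{\,|K-\E K| > u\,\bigr\} \leq \frac{C}{n^{1/\alpha-1}\, a_n^{2/\alpha-2}\, u^{2/\alpha-2}}.
\]
The remaining task is to prove that $\E K \leq B\bigl(a_n^{1-\alpha} + 1/(\sqrt{n}\,a_n^2)\bigr)$ for some constant $B>0$, after which I set $u=t/2$ and observe that, for $t\geq 2B\bigl(a_n^{1-\alpha}+1/(\sqrt n\,a_n^2)\bigr)$, the event $\{K>t\}$ is contained in $\{|K-\E K|>t/2\}$; substituting into the concentration bound yields the announced inequality with $c_1:=2B$ and $c_2:=C\cdot 2^{2/\alpha-2}$. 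To estimate $\E K$, I follow verbatim the bias--variance split of \cite[proof of Theorem 5.2]{chazottes_collet_schmitt0}: with $\bar h_n := \psi_{a_n}\ast h$ where $\psi_{a_n}(t)=a_n^{-1}\psi(t/a_n)$, the bias $\int|\bar h_n - h|\dd s$ is bounded by $B_1\,a_n^{1-\alpha}$ via~\eqref{besov}, while the fluctuation $\E\int|h_n(s;\cdot)-\bar h_n(s)|\dd s$ is controlled by $B_2/(\sqrt n\,a_n^2)$ using two successive Cauchy--Schwarz inequalities in $x$ and in $s$ (the latter made possible by the fact that $h_n(s;\cdot)$ is supported in $s$ on a bounded interval), combined with the summability of the autocovariances of Lipschitz observables of~\eqref{MPmap}, which is valid precisely because $\alpha<1/2$.

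The main obstacle is the variance estimate: the family $g_s(y) = \psi((s-y)/a_n)$ has Lipschitz seminorm of order $1/a_n$, so one cannot invoke a uniform variance bound; instead one must combine the polynomial decay of correlations for Lipschitz observables (summable since $\alpha<1/2$) with the fact that $g_s$ is localized in a window of width $a_n$, in order to obtain the required $1/(\sqrt n\,a_n^2)$ bound after integrating in $s$. This computation is essentially the same as the one carried out for Theorem~\ref{kernel} in \cite{chazottes_collet_schmitt0}, and the passage from uniform expansion to the intermittent setting introduces no obstacle beyond keeping careful track of the quantitative decay rate; the novelty here lies entirely in feeding the resulting Lipschitz-constant estimate into the weak polynomial inequality of Corollary~\ref{cor-MP} rather than into an exponential one.
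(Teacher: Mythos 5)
Your proposal is correct and follows essentially the same route as the paper: the paper's proof is a one-line invocation of Corollary~\ref{cor-MP} with $\tau=1-\alpha$ in~\eqref{besov}, deferring the Lipschitz-constant computation and the bias--variance bound on $\E K$ to the proof of Theorem~\ref{kernel} and to \cite{chazottes_collet_schmitt0}, and your write-up simply fills in those steps explicitly. In particular your bound $\Lip_j(K)\leq C/(na_n)$, the deduction $\sum_j\Lip_j(K)^2\leq C/(na_n^2)$, and the centering of $K$ about $\E K$ before applying the weak polynomial bound are exactly what the paper's argument requires.
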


\subsection{Tracing orbit properties}

Let $A$ be a measurable subset of  $X$ such that $\mu(A)>0$ and define for all $n\in\N$
\begin{equation*}
\mathcal{S}_A(x,n)=\frac{1}{n} \inf_{y\in A} \sum_{j=0}^{n-1} d(T^j x,T^j y),
\end{equation*}
where $d$ is the distance on $X$. This quantity, between $0$ and $1$,
measures how well we  can trace the orbit of some initial condition not in $A$
by an orbit from an element of $A$.

\begin{thm}
Let $(X,T)$ be a dynamical system modeled by a uniform Young tower with exponential tails
and $\mu$ its SRB measure. There exist constants $c_1,c_2>0$ such that, for any
measurable subset $A\subset X$ with $\mu(A)>0$, for any $n\in\N$ and for any $t>0$
\[
\mu\left\{x \in X\st \mathcal{S}_A(x,n)> c_1\frac{\sqrt{\log n}}{\mu(A)\sqrt{n}} + \frac{t}{\sqrt{n}}\right\}
\leq e^{-c_2t^2}.
\]
\end{thm}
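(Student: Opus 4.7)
The plan is to apply the exponential deviation inequality~\eqref{dev_exp} to the function $K:X^n\to\R$ defined by
\[
K(x_0,\dotsc,x_{n-1})=\inf_{y\in A}\frac{1}{n}\sum_{j=0}^{n-1} d(x_j,T^j y),
\]
so that $K(x,Tx,\dotsc,T^{n-1}x)=\mathcal{S}_A(x,n)$. If the $j$-th argument is changed from $x_j$ to $x_j'$ while the others are kept fixed, then for every $y\in A$ the quantity $\frac{1}{n}\sum_i d(x_i,T^i y)$ changes by at most $\frac{1}{n}d(x_j,x_j')$, and taking the infimum over $y\in A$ preserves this bound. Hence $\Lip_j(K)\leq 1/n$ for every $j$, so $\sum_{j=0}^{n-1}\Lip_j(K)^2\leq 1/n$, and~\eqref{dev_exp} applied with $s=t/\sqrt n$ gives
\[
\mu\Bigl\{x\in X\st \mathcal{S}_A(x,n)-\textstyle\int\mathcal{S}_A(y,n)\,\dd\mu(y)>t/\sqrt n\Bigr\}\leq e^{-c_2 t^2}
\]
for an absolute constant $c_2>0$.

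It then suffices to establish the deterministic estimate $\int \mathcal{S}_A(y,n)\,\dd\mu(y)\leq c_1\sqrt{\log n}/(\mu(A)\sqrt n)$. The idea is to construct, for each typical $x$, an auxiliary point $y=y(x)\in A$ whose orbit eventually coincides with that of $x$, at an affordable initial cost. First, apply~\eqref{dev_exp} to the Birkhoff sum of a Lipschitz approximation $f_\epsilon\geq\mathds{1}_A$ (with $\Lip(f_\epsilon)\lesssim 1/\epsilon$ and $\int f_\epsilon\leq\mu(A)+C\epsilon$) to control the first hitting time $\tau(x)=\inf\{k\geq 0\st T^k x\in A\}$: after optimizing $\epsilon$, one obtains $\tau(x)\leq m$ on an event of probability at least $1-e^{-ct^2}$, where $m\asymp\sqrt{n\log n}/\mu(A)$. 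Given such a hit $T^\tau x\in A$, the Markov structure of the Young tower lets us select $y\in A$ in the fiber $T^{-\tau}(T^\tau x)$, so that $T^j y=T^j x$ for $j\geq\tau$ while $d(T^j y,T^j x)\leq 1$ for $j<\tau$; this yields $\mathcal{S}_A(x,n)\leq\tau/n\leq m/n$ on the typical event. Averaging over $x$ produces the required bound on the expectation, which combined with the first paragraph proves the theorem.

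The main obstacle is the fiber-selection step: the branch of $T^{-\tau}$ containing $x$ itself returns $y=x$, which is not in $A$ unless $\tau=0$, so one must exhibit \emph{another} branch of the fiber landing in $A$. For a uniform Young tower with exponential tails, the fiber $T^{-\tau}(T^\tau x)$ contains exponentially many points whose Jacobians are controlled by bounded distortion, and a pigeonhole-type argument ensures that at least one of them lies in $A$ once $\tau$ exceeds a mixing threshold depending only on $\mu(A)$. Weaving this combinatorial estimate with the quantitative hitting-time bound supplied by the concentration inequality is the technical heart of the proof and dictates the specific form of the constants $c_1,c_2$ appearing in the statement.
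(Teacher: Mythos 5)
Your first paragraph is correct and is exactly the first step: with $K(x_0,\dotsc,x_{n-1})=\inf_{y\in A}\tfrac1n\sum_j d(x_j,T^jy)$ one has $\Lip_j(K)\leq 1/n$, so $\sum_j\Lip_j(K)^2\leq 1/n$, and \eqref{dev_exp} gives $\mu\{\mathcal{S}_A(\cdot,n)-\E\,\mathcal{S}_A(\cdot,n)>t/\sqrt n\}\leq e^{-c_2t^2}$ for an absolute constant $c_2$.

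Your bound on the expectation, however, has a genuine gap, located precisely where you flag the ``main obstacle''. Knowing that $T^\tau x\in A$ tells you nothing about whether $A$ meets the fiber $T^{-\tau}(T^\tau x)$: that fiber is a finite, hence $\mu$-null, set, while $A$ is an arbitrary measurable set of positive measure. Bounded distortion controls the \emph{measures} of the cylinders of the fiber, not the position of finitely many preimages inside them, so no pigeonhole-type argument forces one of these preimages into $A$, regardless of $\tau$ (already for the doubling map one can take $A$ a union of short intervals at scale $2^{-\tau}$ that $T^{-\tau}(T^\tau x)$ misses for all $x$ outside a set of measure $\mu(A)$). There is a second, independent gap in the hitting-time control: a Lipschitz $f_\epsilon\geq\mathds{1}_A$ with $\int f_\epsilon\leq\mu(A)+C\epsilon$ does not exist for a general measurable $A$ (take $A$ dense of small measure), and the theorem assumes no regularity of $A$.

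In fact the expectation bound comes for free from the concentration inequality itself, which is the route taken in \cite{collet_concentration_BV}. Since $\mathcal{S}_A\geq 0$ and $\mathcal{S}_A(x,n)=0$ for every $x\in A$ (take $y=x$ in the infimum), the lower-tail version of \eqref{dev_exp} (applied to $-K$) gives
\[
\mu(A)\leq\mu\{\mathcal{S}_A\leq 0\}
=\mu\bigl\{\mathcal{S}_A-\E\,\mathcal{S}_A\leq-\E\,\mathcal{S}_A\bigr\}
\leq e^{-n(\E\,\mathcal{S}_A)^2/(4C)},
\]
hence $\E\,\mathcal{S}_A\leq 2\sqrt{C\log(1/\mu(A))/n}$. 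As $a\mapsto a\sqrt{\log(1/a)}$ is bounded on $(0,1]$ while $\sqrt{\log n}$ is bounded below for $n\geq 2$, this is at most $c_1\sqrt{\log n}/(\mu(A)\sqrt n)$, and combined with the first paragraph this proves the theorem with no hitting time, no shadowing, and no Lipschitz approximation of $\mathds{1}_A$.
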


Again, the proof is the same as \cite[Theorem
IV.1]{collet_concentration_BV} because it relies only on the
exponential concentration inequality.

\begin{thm}
Let $(X,T)$ be a dynamical system modeled by a non-uniform Young
tower with weak $L^q$ tails, for some $q\geq 2$, and $\mu$ its SRB
measure. There exist constants $c_1,c_2>0$ such that, for any
measurable subset $A\subset X$ with $\mu(A)>0$, for any $n\in\N$ and
for any $t>0$
\[
\mu\left\{x \in X\st \mathcal{S}_A(x,n)>
\frac{1}{n^{(q-1)/(2q-1)}}\left(t+\frac{c_1}{\mu(A)}\right)\right\}
\leq \frac{c_2}{n^{(q-1)/(2q-1)}t^{2q-2}}.
\]
\end{thm}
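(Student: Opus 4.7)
The plan is to imitate the argument for the exponential counterpart (the immediately preceding theorem), which follows \cite[Theorem IV.1]{collet_concentration_BV}, substituting the weak polynomial concentration inequality for the exponential one.

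Introduce the function
\[
K(x_0,\dotsc,x_{n-1}) = \inf_{y\in A}\frac{1}{n}\sum_{j=0}^{n-1} d(x_j, T^j y),
\]
so that $K(x,Tx,\dotsc,T^{n-1}x) = \mathcal{S}_A(x,n)$. Since each map $x_i \mapsto d(x_i, T^i y)/n$ is $(1/n)$-Lipschitz for every $y$, and the infimum of a family of $(1/n)$-Lipschitz functions is $(1/n)$-Lipschitz, one has $\Lip_i(K)\leq 1/n$ for every $i$, and therefore $\sum_j \Lip_j(K)^2 \leq 1/n$.

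Applying the weak polynomial concentration inequality~\eqref{dev_poly} (available in the weak $L^q$ setting by Theorem~\ref{thm_nonuniform_weak}) gives
\[
\mu\{|K-\E K|>s\} \leq \frac{C}{n^{q-1}s^{2q-2}}.
\]
The key free estimate is that $x\in A$ implies $K(x,Tx,\dotsc,T^{n-1}x)=0$ (take $y=x$ in the infimum), so $\mu\{K=0\}\geq \mu(A)$. Feeding this into the lower tail of the concentration inequality with $s=\E K$ yields
\[
\mu(A) \leq \mu\{K \leq 0\} \leq \frac{C}{n^{q-1}(\E K)^{2q-2}},
\]
whence $\E K \leq C n^{-1/2}\mu(A)^{-1/(2q-2)}$.

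Combining with the upper tail, choose $s = t/n^{(q-1)/(2q-1)}$. The identity $(q-1) - (q-1)(2q-2)/(2q-1) = (q-1)/(2q-1)$ yields
\[
\mu\{K > \E K + s\} \leq \frac{C}{n^{q-1}s^{2q-2}} = \frac{C}{n^{(q-1)/(2q-1)}t^{2q-2}},
\]
which is the announced probability bound provided $\E K + s \leq (c_1/\mu(A) + t)/n^{(q-1)/(2q-1)}$, i.e.~$\E K \leq c_1/(n^{(q-1)/(2q-1)}\mu(A))$. The main technical point is precisely this absorption: the bound on $\E K$ just derived has a stronger $n$-rate but a weaker $\mu(A)$-rate than what is stated, and the ratio between the two equals $\mu(A)^{(2q-3)/(2q-2)} n^{-1/(2(2q-1))}$. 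Both exponents are nonnegative for $q \geq 2$, so under the standing assumptions $\mu(A)\leq 1$ and $n\geq 1$ this ratio is $\leq 1$, and the coarser stated form absorbs the sharper bound with $c_1$ depending only on $C$.
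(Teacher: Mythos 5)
Your proof is correct and follows essentially the same strategy as the paper, which itself just references the argument of Collet--Martinez--Schmitt for the exponential case (\cite[Theorem IV.1]{collet_concentration_BV}) with the weak polynomial concentration inequality substituted for the exponential one: the observable $K$, the bound $\Lip_i(K)\leq 1/n$, the self-improvement trick to bound $\E K$ via the lower tail at $s=\E K$ combined with $\mu\{K=0\}\geq\mu(A)$, and the final absorption are all as in that proof. The exponent arithmetic you carry out explicitly ($(q-1)-(q-1)(2q-2)/(2q-1)=(q-1)/(2q-1)$, the sign check on $(2q-3)/(2q-2)$ and $-1/(2(2q-1))$) is the right verification that the coarser claimed form absorbs the sharper bound on $\E K$ you derived, so the proof is complete.
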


The proof follows the lines of that of \cite[Theorem
IV.1]{collet_concentration_BV} except that one uses the weak
polynomial concentration inequality instead of the exponential
concentration inequality as in the previous theorem.

For the intermittent maps~\eqref{MPmap}, we can use
Corollary~\ref{cor-MP}. We get that there exist constants $c_1,c_2>0$
such that for any subset $A\subset [0,1]$ with $\mu(A)>0$, for any
$n\in\N$ and for any $t>0$
\[
\mu\left\{x \in [0,1]: \mathcal{S}_A(x,n)>
\frac{1}{n^{(1/\alpha-1)/(2/\alpha-1)}}\left(t+\frac{c_1}{\mu(A)}\right)\right\}
\leq \frac{c_2}{n^{(\frac{1}{\alpha}-1)/(\frac{2}{\alpha}-1)}t^{\frac{2}{\alpha}-2}}.
\]

We now formulate similar results for the number of mismatches at a given precision.
Let $A$ be a measurable subset of  $X$ such that $\mu(A)>0$ and $\epsilon>0$.
For all $n\in\N$ define
\begin{equation*}
\mathcal{M}_A(x,n,\epsilon)=\frac{1}{n} \inf_{y\in A} \Card\{0\leq j\leq n-1 \st d(T^j x,T^j j)>\epsilon\}.
\end{equation*}
We have the following result.
\begin{thm}
Let $(X,T)$ be a dynamical system modeled by a Young tower with exponential tails
and $\mu$ its SRB measure. There exist constants $c_1,c_2>0$ such that, if
$A\subset X$ is such that $\mu(A)>0$, then for any $0<\epsilon<1/2$, for any $n\in\N$ and for any $t>0$
\[
\mu\left\{x\in X : \mathcal{M}_A(x,n,\epsilon)>
c_1\epsilon^{-1}\frac{\sqrt{\log n}}{\mu(A)\sqrt{n}} + \frac{t\epsilon^{-1}}{\sqrt{n}}\right\}
\leq e^{-c_2t^2}.
\]
\end{thm}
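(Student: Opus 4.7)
The plan is to follow the same strategy as the preceding $\mathcal{S}_A$ theorem, reducing to the exponential concentration inequality (Theorem~\ref{thm_exponential_bis}) by smoothing the non-Lipschitz indicator $\mathds{1}_{d>\epsilon}$. Fix a Lipschitz function $\chi_\epsilon:[0,\infty)\to[0,1]$ with $\Lip(\chi_\epsilon)\leq 2/\epsilon$, vanishing on $[0,\epsilon/2]$ and equal to $1$ on $[\epsilon,\infty)$ (take $\chi_\epsilon$ affine on $[\epsilon/2,\epsilon]$), so that $\mathds{1}_{u>\epsilon}\leq\chi_\epsilon(u)\leq\mathds{1}_{u>\epsilon/2}$. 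Define
\[
\widetilde{K}(x_0,\dotsc,x_{n-1})=\inf_{y\in A}\frac{1}{n}\sum_{j=0}^{n-1}\chi_\epsilon\bigl(d(x_j,T^j y)\bigr).
\]
An infimum of separately Lipschitz functions is separately Lipschitz, so $\Lip_j(\widetilde{K})\leq 2/(n\epsilon)$ and hence $\sum_j\Lip_j(\widetilde{K})^2\leq 4/(n\epsilon^2)$. The lower bound on $\chi_\epsilon$ gives the pointwise comparison $\mathcal{M}_A(x,n,\epsilon)\leq\widetilde{K}(x,Tx,\dotsc,T^{n-1}x)$.

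The exponential concentration inequality, applied to $\widetilde{K}$ with this bound on the sum of squared Lipschitz constants, yields for some $c>0$ and all $s>0$,
\[
\mu\bigl\{x : \widetilde{K}(x,\dotsc,T^{n-1}x)-\E(\widetilde{K})>s\bigr\}\leq e^{-c n\epsilon^2 s^2}.
\]
Choosing $s=t\epsilon^{-1}/\sqrt{n}$ gives a Gaussian tail $e^{-ct^2}$. Combined with the pointwise comparison, the proof reduces to establishing the expectation bound
\[
\E(\widetilde{K})\;\leq\;c_1\,\frac{\sqrt{\log n}}{\epsilon\,\mu(A)\,\sqrt{n}}.
\]

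The remaining step, bounding $\E(\widetilde{K})$, is the technical heart of the argument and proceeds as in \cite[Theorem IV.1]{collet_concentration_BV}. The idea is to cover the support of $\mu$ by finitely many balls of radius $\epsilon/4$, encode each orbit by its itinerary through these balls, and use Kac's lemma to ensure that a typical orbit enters $A$ within time of order $1/\mu(A)$. After such an entry one compares $x$'s coarse itinerary to that of a point in $A$ by a union bound over the polynomially-many possible itineraries of length $n$; the Gaussian tails supplied by the exponential concentration inequality applied to appropriate Lipschitz test functions produce the $\sqrt{\log n}$ factor through this union bound, while the $1/\mu(A)$ factor is exactly the Kac waiting time. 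The main obstacle is to track the additional $\epsilon^{-1}$ factor through this combinatorial argument compared to the $\mathcal{S}_A$ case, but since the Lipschitz constant of $\chi_\epsilon$ scales as $2/\epsilon$ and enters the concentration bound only through the $(n\epsilon)^{-2}$ factor, this dependence is transparent and the \cite{collet_concentration_BV} argument transfers essentially verbatim.
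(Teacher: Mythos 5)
Your proposal is correct and follows essentially the same route as the paper, which simply defers to Collet's Theorem~IV.2 in \cite{collet_concentration_BV}: smooth the indicator $\mathds{1}_{u>\epsilon}$ by a Lipschitz cutoff $\chi_\epsilon$ with $\Lip(\chi_\epsilon)\leq 2/\epsilon$, apply the exponential concentration inequality to the resulting separately Lipschitz observable $\widetilde K$ (whose Lipschitz constants pick up the extra $\epsilon^{-1}$), and control $\E(\widetilde K)$ by the Kac-plus-union-bound argument that also underlies the $\mathcal{S}_A$ case. One small remark: the expectation bound you leave as the ``technical heart'' can in fact be short-circuited. Since $\chi_\epsilon(u)\leq 2u/\epsilon$ for all $u\geq 0$, one has $\widetilde K\leq \tfrac{2}{\epsilon}\,\mathcal{S}_A$ pointwise, so $\E(\widetilde K)\leq \tfrac{2}{\epsilon}\E(\mathcal{S}_A)\leq c_1\epsilon^{-1}\sqrt{\log n}/(\mu(A)\sqrt{n})$ follows directly from the already-established estimate on $\E(\mathcal{S}_A)$; equivalently, $\mathcal{M}_A\leq \epsilon^{-1}\mathcal{S}_A$ pointwise reduces the whole theorem to the preceding $\mathcal{S}_A$ theorem with no new concentration argument needed. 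Either way, your bookkeeping of the $\epsilon$-dependence is accurate and the proof stands.
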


\begin{thm}
Let $(X,T)$ be a dynamical system modeled by a non-uniform Young
tower with weak $L^q$ tails, for some $q\geq 2$, and $\mu$ its SRB
measure. There exist constants $c_1,c_2>0$ such that, if $A\subset X$
is such that $\mu(A)>0$, then for any $0<\epsilon<1/2$, for any
$n\in\N$ and for any $t>0$
\begin{multline*}
\mu\left\{x \in X\st \mathcal{M}_A(x,n,\epsilon)>
\frac{1}{\epsilon^{(q-1)/(q-1/2)}n^{(q-1)/(2q-1)}}\left(t+\frac{c_1}{\mu(A)}\right)\right\}\\
\leq \frac{c_2}{\epsilon^{(q-1)/(q-1/2)}n^{(q-1)/(2q-1)}t^{2q-2}}.
\end{multline*}
\end{thm}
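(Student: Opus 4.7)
The plan is to follow the template used for the preceding $\mathcal{S}_A$ theorem and for the exponential analogue in \cite[Theorem IV.1]{collet_concentration_BV}: smooth the indicator $\mathds{1}_{\{d>\epsilon\}}$ into a Lipschitz function, apply Theorem~\ref{thm_nonuniform_weak} to control its fluctuations, and bound the mean by a dynamical shadowing argument.

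First, I would fix a continuous piecewise linear cutoff $\chi_\epsilon:[0,\infty)\to[0,1]$ with $\chi_\epsilon(r)=0$ for $r\leq \epsilon/2$ and $\chi_\epsilon(r)=1$ for $r\geq \epsilon$; this function is $(2/\epsilon)$-Lipschitz and satisfies $\chi_\epsilon\geq \mathds{1}_{[\epsilon,\infty)}$. Set
\[
\tilde K(x_0,\dotsc,x_{n-1}) = \inf_{y\in A} \frac{1}{n}\sum_{j=0}^{n-1} \chi_\epsilon(d(x_j, T^j y)).
\]
Since each summand is $(2/\epsilon)$-Lipschitz in $x_j$ and the infimum preserves uniform Lipschitz constants, $\Lip_j(\tilde K)\leq 2/(n\epsilon)$; and dominance of $\chi_\epsilon$ over the indicator gives $\mathcal{M}_A(x,n,\epsilon) \leq \tilde K(x,Tx,\dotsc, T^{n-1}x)$.

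Second, I would feed this into Theorem~\ref{thm_nonuniform_weak}. Since $\sum_j \Lip_j(\tilde K)^2 \leq 4/(n\epsilon^2)$, for every $s>0$,
\[
\mu\bigl\{ |\tilde K - \E\tilde K| > s\bigr\} \leq C s^{-(2q-2)} (n\epsilon^2)^{-(q-1)}.
\]
Writing $\theta = \epsilon^{(2q-2)/(2q-1)} n^{(q-1)/(2q-1)}$ and substituting $s = t/\theta$, a direct computation (using $(2q-2)/(2q-1) = (q-1)/(q-1/2)$) collapses the right-hand side to exactly $c_2/(\theta\, t^{2q-2})$, the bound demanded by the statement.

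The main obstacle is the remaining estimate
\[
\E \tilde K \leq \frac{c_1}{\mu(A)\, \theta},
\]
which is not extractable from Theorem~\ref{thm_nonuniform_weak} alone and needs genuine dynamical input. Here I would adapt \cite[Theorem IV.1]{collet_concentration_BV}: for $\mu$-typical $x$ locate some $k\leq n$ for which $T^k x$ admits a preimage $y \in A$ under $T^k$; the Young tower expansion then forces $d(T^j x, T^j y) \leq \beta^{k-j}$ for $j\leq k$ and $d(T^j x, T^j y)=0$ for $j\geq k$, so only the $O(|\log\epsilon|)$ indices $j\in [k - C|\log\epsilon|, k]$ contribute to $\sum_j \chi_\epsilon(d(T^j x, T^j y))$, giving $\tilde K(x,\dotsc)\leq C|\log\epsilon|/n$ on the good event. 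The waiting time for such a $k$ is controlled by $1/\mu(A)$ together with the weak $L^q$ tails of the return time $\phi$, and balancing the good-event contribution $|\log\epsilon|/n$ against the tail of the bad event yields the announced exponents $(2q-2)/(2q-1)$ and $(q-1)/(2q-1)$.

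Combining the last two steps, the event $\{\mathcal{M}_A > (t + c_1/\mu(A))/\theta\}$ is contained in $\{\tilde K - \E\tilde K > t/\theta\}$, to which the tail bound of the second step applies. The Lipschitz smoothing and the application of Theorem~\ref{thm_nonuniform_weak} are routine; the genuine work is the dynamical mean bound, where the template of \cite{collet_concentration_BV} has to be redone using the weak polynomial tails of $\phi$ in place of the exponential ones.
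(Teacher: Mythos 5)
Your reduction — smooth the indicator into a $(2/\epsilon)$-Lipschitz cutoff $\chi_\epsilon$, define $\tilde K$ as the infimum over $y\in A$, apply Theorem~\ref{thm_nonuniform_weak} with $\sum_j \Lip_j(\tilde K)^2 \le 4/(n\epsilon^2)$, and separate the event using a mean bound — is exactly the route the paper intends (the paper itself only says ``the proofs are almost the same as \cite[Theorem IV.2]{collet_concentration_BV}'', so at this level of granularity you match). Your exponent arithmetic is correct: with $\theta = \epsilon^{(2q-2)/(2q-1)}n^{(q-1)/(2q-1)}$ one indeed has $\theta^{2q-2}(n\epsilon^2)^{-(q-1)} = \theta^{-1}$, which reproduces the stated tail. (Minor: the relevant reference for $\mathcal{M}_A$ is Theorem IV.2 of \cite{collet_concentration_BV}, not IV.1, which is the $\mathcal{S}_A$ version.)

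However, your sketch of the mean bound $\E\tilde K \le c_1/(\mu(A)\theta)$ contains a genuine logical error. You propose to pick $k$ so that $T^k x$ has a preimage $y\in A$ under $T^k$, and then claim simultaneously $d(T^j x, T^j y) \le \beta^{k-j}$ for $j\le k$ and $d(T^j x, T^j y) = 0$ for $j\ge k$. These two statements are incompatible unless $y = x$: the bound $d(T^j x, T^j y)\le\beta^{k-j}$ for all $j<k$ forces $x$ and $y$ to lie in the same $k$-cylinder, and since $T^k$ is injective on $k$-cylinders, $T^k y = T^k x$ then gives $y=x$. If instead $y$ is a genuinely distinct preimage, $T^j x$ and $T^j y$ will lie in different partition elements for some $j<k$, so $d(T^j x, T^j y)$ can be of order $1$ on an uncontrolled set of indices, not just on the last $O(|\log\epsilon|)$ of them. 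The actual mechanism in Collet's argument is to shadow forward with a $y\in A$ sharing a long forward cylinder with $x$ (so the distances degrade as you approach the end of the cylinder, not the beginning) and to pay for the tail of the orbit beyond the cylinder; making that bookkeeping work out to $c_1/(\mu(A)\theta)$ — with a waiting-time estimate that now only has weak $L^q$ control — is precisely the nontrivial part, and your sketch does not carry it out. You do correctly flag this as the genuine work, but the mechanism you describe would not yield the claimed $O(|\log\epsilon|/n)$ contribution on the good event.
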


Once more, the proofs are almost the same as \cite[Theorem
IV.2]{collet_concentration_BV}.

\subsection{Integrated periodogram}

Let $(X,T,\mu)$ be a dynamical system and $f:X\to\R$ be a Lipschitz
function such that $\int f\dd\mu=0$. Define the empirical integrated
periodogram function of the process $\{f\circ T^k\}_{k\geq 0}$ by
\[
J_n(x,\omega)=\int_0^\omega \frac{1}{n} \Big| \sum_{j=0}^{n-1} e^{-\ic js} f(T^j x)\Big|^2 \dd s,\quad \omega\in[0,2\pi].
\]
Let
\[
J(\omega)=C_f(0)\omega + 2\sum_{\ell=1}^\infty \frac{\sin(\omega\ell)}{\ell} \, C_f(\ell),
\]
where $C_f(\ell)$ is defined in~\eqref{def-autocov}.

\begin{thm}
Let $(X,T)$ be a dynamical system modeled by a uniform Young tower with exponential tails
and $\mu$ its SRB measure. Let $f:X\to\R$ be a Lipschitz function such that $\int f\dd\mu=0$.
There exist some positive constants $c_1,c_2$ such that
for any $n\in\N$ and for any $t>0$
\[
\mu\left\{x \in X\st \sup_{\omega\in[0,2\pi]}\big|J_n(x,\omega)-J(\omega)\big|> t +
\frac{c_1(1+\log n)^{3/2}}{\sqrt{n}}\right\}
\leq e^{-c_2 n t^2/(1+\log n)^2}.
\]
\end{thm}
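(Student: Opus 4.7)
The plan is to apply the exponential concentration inequality~\eqref{dev_exp} to $J_n(\omega)$, viewed for each fixed $\omega$ as a separately Lipschitz function of $(x,Tx,\dotsc,T^{n-1}x)$, and then pass to the supremum over $\omega$ by a discretization/union bound argument. Expanding $|S_n(s)|^2$ and integrating in $s$, I obtain
  \[
  J_n(x,\omega)=\frac{\omega}{n}\sum_{j=0}^{n-1}f(T^j x)^2
  +\frac{1}{n}\sum_{\substack{0\leq j,k\leq n-1\\ j\neq k}} f(T^j x)\,f(T^k x)\,\frac{1-e^{-\ic(j-k)\omega}}{\ic(j-k)}.
  \]
Viewed as a function of $n$ arguments $x_j$, modifying $x_i$ changes the diagonal by $O(\norm{f}_\infty\Lip(f)/n)\,d(x_i,x'_i)$ and each off-diagonal pair $(i,k)$ by at most $\tfrac{2\norm{f}_\infty\Lip(f)}{n|i-k|}d(x_i,x'_i)$, uniformly in $\omega$ since $|1-e^{-\ic\ell\omega}|\leq 2$. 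Summing the harmonic contributions over $k\neq i$ gives $\Lip_i(J_n(\cdot,\omega))\leq C(1+\log n)/n$, hence $\sum_i\Lip_i^2\leq C(1+\log n)^2/n$, and~\eqref{dev_exp} yields, for each fixed $\omega$,
  \[
  \mu\{|J_n(\omega)-\E J_n(\omega)|>s\}\leq 2\exp\!\Bigl(-\tfrac{c n s^2}{(1+\log n)^2}\Bigr).
  \]

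Next I control the bias $|\E J_n(\omega)-J(\omega)|$. Since $\E[f(T^j x)f(T^k x)]=C_f(|j-k|)$, $\E J_n(\omega)$ is the Ces\`aro-weighted truncation of $J(\omega)$ at lag $n$, and the exponential tails of the tower force exponential decay of $C_f(\ell)$, giving $|\E J_n(\omega)-J(\omega)|=O(1/n)$ uniformly in $\omega$.

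For the supremum, I discretize $[0,2\pi]$ on a uniform grid of cardinality $N=\lfloor n^2\rfloor$. Since $\omega\mapsto J_n(\cdot,\omega)$ is Lipschitz with constant at most $n\norm{f}_\infty^2$ (as $\partial_\omega J_n=|S_n(\omega)|^2/n$) and $\omega\mapsto J(\omega)$ is Lipschitz with constant $C_f(0)+2\sum_\ell|C_f(\ell)|<\infty$, the discretization error on $\sup_\omega|J_n-J|$ is $O(1/n)$. A union bound over the $N=O(n^2)$ grid points multiplies the above tail by $n^2$, producing $2\exp\!\bigl(2\log n - c n s^2/(1+\log n)^2\bigr)$. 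Choosing $s=t+c_1(1+\log n)^{3/2}/\sqrt{n}$ with $c_1$ large enough absorbs the $2\log n$ factor together with the $O(1/n)$ bias and discretization corrections into the additive threshold, and yields the stated concentration.

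The main technical point is the Lipschitz estimate in the first step: what matters is the uniform-in-$\omega$ $O(1/|\ell|)$ decay of the Fourier coefficients $(1-e^{-\ic\ell\omega})/(\ic\ell)$, which combined with the $1/n$ prefactor produces the crucial $(1+\log n)/n$ bound on $\Lip_i$ after summation over lags. This logarithmic factor is precisely what surfaces in the final inequality as the $(1+\log n)^2$ in the tail exponent and, after the union bound over $\omega$, as the matching $(1+\log n)^{3/2}/\sqrt{n}$ additive threshold.
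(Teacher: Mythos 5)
Your proof is correct, but it takes a genuinely different route from the paper's.

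Both arguments rest on the same three ingredients: the separately Lipschitz bound $\Lip_i(J_n(\cdot,\omega))\leq C(1+\log n)/n$ uniform in $\omega$ (which you derive from the $O(1/|\ell|)$ decay of the Fourier coefficients $(1-e^{-\ic\ell\omega})/(\ic\ell)$), the $O(1/n)$ bias estimate $|\E J_n(\omega)-J(\omega)|$, and a discretization of $[0,2\pi]$ into a polynomial-size grid. The divergence is in how the supremum over $\omega$ is handled. You fix a grid point, apply the pointwise concentration bound~\eqref{dev_exp}, and take a union bound over the $O(n^2)$ grid points; the resulting $\log N$ cost is then absorbed into the additive threshold $c_1(1+\log n)^{3/2}/\sqrt n$, which you correctly identify as the point of the extra $\sqrt{1+\log n}$ factor: squaring it against the $(1+\log n)^2$ in the exponent leaves an $O(\log n)$ term that kills the $N$-fold multiplicity. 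The paper instead applies the concentration inequality once, directly to the supremum $Q_n(x)=\sup_\omega|J_n(x,\omega)-J(\omega)|$ (using that $\Lip_i(Q_n)\leq\sup_\omega\Lip_i(J_n(\cdot,\omega))$), which reduces the problem to bounding $\int Q_n\dd\mu$; that expectation is controlled by a Pisier-type exponential-moment/Jensen argument over the same discretized grid, which is where the $(1+\log n)^{3/2}/\sqrt n$ appears after optimizing $N$ and the exponential tilt $\beta$. The paper's route is more modular (the estimate $\int Q_n\dd\mu\leq c_1(1+\log n)^{3/2}/\sqrt n$ stands on its own and can be reused), while your union bound is somewhat more elementary and keeps all the estimates explicit; both yield the same exponent. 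One minor point of care in your version: when setting $s=t+c_1(1+\log n)^{3/2}/\sqrt n-O(1/n)-O(1/N)$, you should spell out that $c_1$ is taken large enough so that $s\geq t+c_1'(1+\log n)^{3/2}/\sqrt n$ for all $n$, and then use $s^2\geq t^2+(c_1')^2(1+\log n)^3/n$ to see that the factor $2(N+1)=O(n^2)$ is dominated by $e^{-c(c_1')^2(1+\log n)}$; you indicate this, but the step deserves to be made fully explicit since it is where the specific power $3/2$ is used.
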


The observable
$\sup_{\omega\in[0,2\pi]}\big|J_n(x,\omega)-J(\omega)\big|$ was
studied in \cite{chazottes_collet_schmitt} in the same setting but
using the polynomial concentration inequality with moment $2$. We get
here a stronger result since we now have the exponential
concentration inequality at hand.

\begin{proof}
Let
\begin{equation}\label{defKsdf}
K(x_0,\dotsc,x_{n-1})=\sup_{\omega\in[0,2\pi]}\left|
\int_0^\omega \frac{1}{n} \Big| \sum_{j=0}^{n-1} e^{-\ic js} f(x_j)\Big|^2 \dd s -J(\omega)\right|.
\end{equation}
The reader can verify that
\begin{equation}\label{lipsdf}
\sup_{0\leq \ell\leq n-1}\Lip_\ell(K)\leq \frac{c(1+\log n)}{n}
\end{equation}
for some constant $c>0$. Let
\begin{equation}\label{defQ}
Q_n(x)=\sup_{\omega\in[0,2\pi]}\big|J_n(x,\omega)-J(\omega)\big|.
\end{equation}
The major task is to estimate from above $\int Q_n \dd\mu$. We partly
proceed as in \cite[Page 2345]{chazottes_collet_schmitt}: We
discretize $\omega$, that is, given any integer $N\in\N$, we define
the finite sequence of numbers $(\omega_p)$ by $\omega_p=2\pi p/N$,
$p=0,\dotsc,N$. We then define
\[
\overline{Q}_n(x):=\sup_{0\leq p\leq N}\big|J_n(x,\omega_p)-J(\omega_p)\big|.
\]
One can then show that there exists some $C>0$ such that
\begin{equation}\label{QQ}
Q_n(x)\leq \overline{Q}_n(x) + \frac{C}{N}
\end{equation}
for all $x\in X$ and for all integers $n,N\in\N$.

We shall also use the fact (see \cite{chazottes_collet_schmitt} for more details) that
there exists some $C>0$ such that, for all $\omega$ and for any $n\in\N$,
\begin{equation}\label{JJ}
\big|J(\omega)-\int J_n(x, \omega)\dd\mu(x)\big|\leq \frac{C}{n}.
\end{equation}
We now depart from \cite{chazottes_collet_schmitt} and use that for any real $\beta>0$
\begin{equation}\label{pisier}
\int e^{\beta \overline{Q}_n}\dd\mu \leq
\sum_{p=0}^N \int e^{\beta[J_n(x,\omega_p)-J(\omega_p) ]}\dd\mu(x)
+
\sum_{p=0}^N \int e^{\beta[J(\omega_p)-J_n(x,\omega_p)] }\dd\mu(x).
\end{equation}
We estimate each term in the first sum of the right-hand side of this
inequality by using the exponential concentration
inequality~\eqref{main_ineq_bis}, \eqref{lipsdf} and~\eqref{JJ}:
\begin{align*}
\int e^{\beta[J_n(x,\omega_p)-J(\omega_p) ]}\dd\mu(x) & =
\int e^{\beta[J_n(x,\omega_p)-\int J_n(y,\omega_p)\dd\mu(y)]}\dd\mu(x)\cdot e^{\beta[\int J_n(y,\omega_p)\dd\mu(y)-J(\omega_p)]}\\
& \leq e^{C\beta^2 (1+\log n)^2/n}\cdot e^{C\beta/n}.
\end{align*}
We get the same bound for each term in the second sum of the
right-hand side of~\eqref{pisier}, hence
\[
\int e^{\beta \overline{Q}_n}\dd\mu \leq 2(N+1) e^{C\beta^2 (1+\log n)^2/n}\cdot e^{C\beta/n}.
\]
We now use Jensen's inequality,~\eqref{QQ} and~\eqref{defQ} to get
\begin{multline*}
\int \sup_{\omega\in[0,2\pi]}\big|J_n(x,\omega)-J(\omega)\big| \dd\mu(x) \leq \\
\inf_{N\in\N} \left\{\frac{1}{\beta}\log[2(N+1)] + C\beta \frac{(1+\log n)^2}{n}
+\frac{C}{n} + \frac{C}{N}\right\}.
\end{multline*}
It remains to optimize over $N\in\N$ and $\beta>0$ to obtain
\[
\int \sup_{\omega\in[0,2\pi]}\big|J_n(x,\omega)-J(\omega)\big| \dd\mu(x)
\leq \frac{c_1(1+\log n)^{3/2}}{\sqrt{n}}.
\]
We conclude the proof by applying~\eqref{dev_exp} to the
function~\eqref{defKsdf}, taking into account~\eqref{lipsdf} and the
previous estimate.
\end{proof}

\appendix

\section{A technical lemma}

\label{sec_appendix}

Our goal in this section is to prove a technical result that was
required to obtain polynomial concentration estimates in non-uniform
invertible Young towers. Let us consider a non-invertible non-uniform
Young tower in which the return time has a moment of order $q\geq 2$
(i.e., $\sum h^q \mu\{x\in \Delta_0 \st \phi(x)=h\} < \infty$). We
define a function $\Phi_n$ by $\Phi_n(x)=\beta^{\Card\{j\in [1,n]\st
T^j x\in \Delta_0\}}$ for $x\in \Delta_0$, and $\Phi_n=0$ otherwise,
where $\beta<1$ is fixed.

The estimate we need in~\eqref{eq:maineq_invert} is given in the
following theorem.
\begin{thm}
\label{thm_most_technical}
For all nonnegative real numbers $L_k$,
  \begin{equation*}
  \int \left(\sum_r \left(\sum_{k\geq r} L_k \Phi_{k-r}\circ T^r\right)^2\right)^{q-1}
  \leq C \left(\sum L_k^2\right)^{q-1}.
  \end{equation*}
\end{thm}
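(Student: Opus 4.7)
The plan is to decompose the inner sum according to the successive returns of $x$ to the base $\Delta_0$, thereby reducing the problem to a discrete convolution with a geometric kernel on the excursion index, and then to exploit the $q$-th moment hypothesis on $\phi$ through the $T$-invariance of $\mu$. Concretely, I would fix $x \in \Delta$ and let $r_0(x) < r_1(x) < \cdots$ denote the times $r \geq 0$ with $T^r x \in \Delta_0$; since $\Phi_n$ is supported on $\Delta_0$, only $r \in \{r_i\}$ contributes to the outer sum. For $r = r_i$, the definition of $\Phi$ gives $\Phi_{k-r_i}(T^{r_i}x) = \beta^m$ whenever $k \in [r_{i+m}, r_{i+m+1} - 1]$, so setting $T_j := \sum_{k=r_j}^{r_{j+1}-1} L_k$ (the total weight on the $j$-th excursion),
\[
\sum_{k \geq r_i} L_k \Phi_{k-r_i}(T^{r_i} x) = \sum_{m=0}^\infty \beta^m T_{i+m}.
\]

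Next I would apply Young's inequality for discrete convolution ($\ell^2 \star \ell^1 \subset \ell^2$) with the kernel $(\beta^m)_{m \geq 0}$ to get $\sum_i \bigl(\sum_m \beta^m T_{i+m}\bigr)^2 \leq C \sum_j T_j^2$, followed by Cauchy-Schwarz $T_j^2 \leq \tau_j \sum_{k=r_j}^{r_{j+1}-1} L_k^2$ with $\tau_j := r_{j+1} - r_j$. Letting $\tau(k, x)$ denote the length of the excursion of $x$ containing index $k$ (or $0$ if $k < r_0(x)$), reorganizing yields the pointwise bound
\[
\sum_r \Bigl(\sum_{k \geq r} L_k \Phi_{k-r} \circ T^r\Bigr)^2 \leq C \sum_k L_k^2 \tau(k, x).
\]
Jensen's inequality applied with weights $w_k = L_k^2 / \sum L_k^2$ (valid since $q - 1 \geq 1$) then gives
\[
\Bigl(\sum_k L_k^2 \tau(k,x)\Bigr)^{q-1} \leq \Bigl(\sum_k L_k^2\Bigr)^{q-2} \sum_k L_k^2 \tau(k,x)^{q-1}.
\]

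The final step is to observe that if $T^k x \in \Delta_{\alpha, \ell}$ then $\tau(k, x) \leq \phi(\alpha) = (\phi \circ \pi)(T^k x)$, where $\pi$ projects to the base. By $T$-invariance of $\mu$,
\[
\int \tau(k, x)^{q-1} \dd\mu(x) \leq \int (\phi \circ \pi)^{q-1} \dd\mu = \sum_\alpha \phi(\alpha)^q \mu(\Delta_{\alpha, 0}) < \infty,
\]
uniformly in $k$, and integrating the Jensen bound produces the desired estimate $C \bigl(\sum_k L_k^2\bigr)^{q-1}$. I expect the main obstacle to be the combinatorial decomposition in the first paragraph: once one identifies that the natural indexing is by excursions between successive returns to $\Delta_0$, the remaining ingredients (Young, Cauchy-Schwarz, Jensen, invariance) are routine, and the $q$-th moment hypothesis on $\phi$ lines up precisely with the power needed to make $\int \tau(k, x)^{q-1} \dd\mu$ converge.
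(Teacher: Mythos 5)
Your proof is correct, and it takes a genuinely different --- and considerably simpler --- route than the paper's. The paper handles this lemma over roughly five pages: it expands the square, bounds $L_kL_\ell\leq L_k^2+L_\ell^2$, introduces ``weight systems'' to unify the resulting two terms, writes the integrand minus its mean as a sum of reverse martingale differences $\sum_s S_s\circ T^s$, applies von Bahr--Esseen (for $Q\in[1,2]$) and Rosenthal--Burkholder (for $Q>2$) with an induction on $Q$, carries out a five-case analysis on the relative positions of the indices $k,r,s$ and of the height $h$, and invokes quantitative renewal/transfer-operator estimates (Lemma~\ref{lem_hatT_Psi} and the operators $T_k$, $B_b$). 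Your argument bypasses all of this. The decisive observation is that for $r=r_i$ a return time of $x$ to $\Delta_0$, the inner sum collapses into the geometric average $\sum_{m\geq 0}\beta^m T_{i+m}$ of the excursion weights $T_j=\sum_{k=r_j}^{r_{j+1}-1}L_k$; Young's convolution inequality and Cauchy--Schwarz then produce the pointwise bound $\sum_r(\cdots)^2\leq C\sum_k L_k^2\,\tau(k,x)$, where $\tau(k,x)$ is exactly $(\phi\circ\pi)(T^kx)$ for $k\geq r_0(x)$ and $0$ otherwise. Jensen with weights $L_k^2/\sum_jL_j^2$ (legitimate since $q-1\geq 1$) converts the $(q-1)$-th power into a linear expression in $\tau(k,x)^{q-1}$, and $T$-invariance of $\mu$ reduces everything to the marginal moment $\int(\phi\circ\pi)^{q-1}\dd\mu=\sum_\alpha\phi(\alpha)^q\mu(\Delta_{\alpha,0})<\infty$, which is precisely the hypothesis. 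What the paper's longer route buys is uniformity with the rest of the argument (the same martingale/renewal machinery is deployed throughout Sections~\ref{sec_non_uniform}--\ref{sec_non_uniform_invertible}, and the weight-system formalism covers both $L_k^2$ and $L_\ell^2$ pieces at once); what your route buys is transparency and economy --- it requires no mixing, no spectral input, no martingale decomposition, and it makes plain that the estimate is a pure consequence of $T$-invariance together with the $q$-th moment condition on~$\phi$.
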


For the proof, let us expand the square on the left, the resulting
function is bounded by $\sum_r \sum_{k\geq \ell \geq r} L_k L_\ell
\Phi_{k-r}\circ T^r$ since $\Phi_{\ell-r}\circ T^r\leq 1$. Bounding
$L_k L_\ell$ by $L_k^2 + L_\ell^2$, we get two terms that will be
studied separately (but with very similar techniques). The theorem
follows from the following lemmas.
\begin{lem}
\label{lem_first_sum}
We have
  \begin{equation*}
  \int \left(\sum_r \sum_{k\geq r} L^2_k (k-r+1) \Phi_{k-r}\circ T^r\right)^{q-1}
  \leq C \left(\sum L_k^2\right)^{q-1}.
  \end{equation*}
\end{lem}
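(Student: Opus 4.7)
My plan is to exchange the double summation, apply Minkowski's inequality, and then bound a single sequence of functions uniformly. Set $W_k(x) \coloneqq \sum_{r=0}^k (k-r+1)\,\Phi_{k-r}(T^r x)$, so that the integrand is $\sum_k L_k^2\,W_k(x)$. Since $q-1\ge 1$, Minkowski's inequality in $L^{q-1}(\mu)$ gives $\bigl\|\sum_k L_k^2\,W_k\bigr\|_{L^{q-1}} \le \sum_k L_k^2\,\|W_k\|_{L^{q-1}}$, which reduces the lemma to proving that $\|W_k\|_{L^{q-1}(\mu)}$ is bounded uniformly in $k$.

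For that uniform bound, I would first establish a pointwise estimate. Enumerate the returns of the forward orbit of $x$ to $\Delta_0$ in $[0,k]$ as $r_0 < r_1 < \dotsb < r_{N-1}$, and let $\phi_j$ denote the $j$-th excursion length, with $\phi_j = r_j - r_{j-1}$ for $1 \le j \le N-1$ and $\phi_N$ equal to the length of the excursion containing time $k$ (so that $k - r_{N-1} + 1 \le \phi_N$). A direct computation gives $\Phi_{k-r_i}(T^{r_i} x) = \beta^{N-1-i}$, and bounding $k - r_i + 1 \le \sum_{j=i+1}^N \phi_j$ followed by an exchange of summations yields
\[
W_k(x) \le \frac{1}{1-\beta}\sum_{j=1}^N \phi_j(x)\,\beta^{N-j}.
\]
Next I would apply Jensen's inequality with the sub-probability weights $(1-\beta)\beta^{N-j}$, converting this to $W_k(x)^{q-1} \le C\sum_{j=1}^N \phi_j(x)^{q-1}\,\beta^{N-j}$. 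Integrating and re-indexing $m = N - j$,
\[
\int W_k^{q-1}\,\dd\mu \ \le\ C\sum_{m\ge 0}\beta^m \int \phi_{N-m}^{q-1}\,\mathds{1}_{\{N>m\}}\,\dd\mu,
\]
so the task reduces to proving the uniform moment bound $\int \phi_{N-m}^{q-1}\,\mathds{1}_{\{N>m\}}\,\dd\mu \le C'$ for all $m \ge 0$ and $k \ge 0$.

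The case $m = 0$ is straightforward: $\phi_N$ equals $\phi_0 \circ T^k$, where $\phi_0(y)$ is the length of the excursion containing $y$, and $T$-invariance gives $\int \phi_N^{q-1}\,\dd\mu = \int \phi_0^{q-1}\,\dd\mu = \sum_\alpha \phi(\alpha)^q\,\mu(\Delta_{\alpha,0})$, which is finite by hypothesis. The main obstacle will be the case $m \ge 1$: $\phi_{N-m}$ is the length of an earlier excursion, and since the index $N$ is random no single shift of the dynamics reduces it to a fixed-index excursion. To handle it, I would pass to the invertible extension $(\Delta_\Z, T_\Z, \mu_\Z)$, where the return-time point process becomes bilaterally stationary; Palm theory then ensures that $\phi_{N-m}$ has the regular (non-size-biased) distribution of $\phi$ on $\Delta_0$, with $(q-1)$-moment at most $\int \phi^{q-1}\,\dd\mu|_{\Delta_0} < \infty$. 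A hands-on alternative is to compute directly on $\Delta$ by parametrizing $x = (y, l)$ with $y \in \Delta_0$ and $l \in [0, \phi(y))$, and using the identity $\sum_n |\{l : N((y,l)) = n\}| = \phi(y)$ together with $T_0$-invariance of $\mu|_{\Delta_0}$ on the base. Either route would yield $\int W_k^{q-1}\,\dd\mu \le C/(1-\beta)$ uniformly in $k$, closing the argument.
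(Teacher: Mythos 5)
Your proposal takes a genuinely different and substantially more elementary route than the paper. The paper does not prove Lemma~\ref{lem_first_sum} directly: it proves a general inequality for quantities $\sum_{k>r}u(r,k)(k-r)^\gamma\Phi_{k-r}\circ T^r$ over a class of ``weight systems'' $u$, by induction on $Q$, via a martingale decomposition $S=\E(S)+\sum_s S_s\circ T^s$, the von Bahr--Esseen and Rosenthal--Burkholder inequalities, and delicate transfer-operator asymptotics (Lemma~\ref{lem_hatT_Psi}, the five-case analysis). That machinery is there largely because the companion Lemma~\ref{lem_second_sum} involves weights $(\sum_{\ell=r}^{k-1}L_\ell^2)/(k-r)$ that cannot be factored out as a single sequence in $k$. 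For Lemma~\ref{lem_first_sum} alone, your reduction via Minkowski to a uniform bound on $\norm{W_k}_{L^{q-1}}$, followed by the pointwise bound $W_k\le\frac{1}{1-\beta}\sum_{j=1}^N\phi_j\beta^{N-j}$ and sub-probability Jensen, is clean and correct, and it sidesteps all of the operator analysis.

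There are, however, two points you should tighten. First, the Palm-theory step is overstated. It is not true here that $\phi_{N-m}$ ($m\ge1$) has the unbiased distribution of $\phi$: the excursion lengths in a Young tower are correlated, so the excursion $m$ steps before the size-biased ``current'' one is not independent of it, and in particular its $(q-1)$-moment need not be $\int_{\Delta_0}\phi^{q-1}\dd\mu$. The correct computation in the natural extension is
\begin{equation*}
\int \phi_{N-m}^{q-1}\,\mathds{1}_{N>m}\,\dd\mu
\le \int_{\Delta_{0,\Z}}\phi(z)\,\phi(T_0^{-m}z)^{q-1}\,\dd\mu_{\Z}(z),
\end{equation*}
using $T_\Z$-invariance and the fact that $\pi_0(y)$ for $y\sim\mu_\Z$ is $\phi$-size-biased. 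One then closes this with H\"older ($\phi\in L^q$, $\phi(T_0^{-m}\cdot)^{q-1}\in L^{q/(q-1)}$) and $T_0$-invariance of $\mu_\Z|_{\Delta_0}$, obtaining the uniform bound $\int_{\Delta_0}\phi^q\dd\mu$. Since the standing hypothesis is precisely $\phi\in L^q(\mu|_{\Delta_0})$, this does not damage your proof, but you cannot claim the weaker $\int\phi^{q-1}<\infty$ suffices. Second, the ``hands-on alternative'' is not obviously sound as sketched: expanding $\int\phi_{N-m}^{q-1}\mathds{1}_{N>m}\dd\mu$ via the $(y,l)$ parametrization yields $\int_{\Delta_0}\sum_{n>m}|\{l: N((y,l))=n\}|\,\phi(T_0^{n-m-1}y)^{q-1}\dd\mu(y)$, and bounding $|\{l\}|$ by either $\phi(y)$ or $\phi(T_0^n y)$ leaves a sum over $n$ of terms of size $\approx\int\phi^q$, which diverges. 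Without the bilateral shift you cannot reindex the ``$m$-th previous excursion'' to a fixed coordinate, so the natural-extension route is the one to keep.
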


\begin{lem}
\label{lem_second_sum}
We have
  \begin{equation*}
  \int \left(\sum_r \sum_{k\geq r} \sum_{\ell=r}^{k-1} L^2_\ell  \Phi_{k-r}\circ T^r\right)^{q-1}
  \leq C \left(\sum L_k^2\right)^{q-1}.
  \end{equation*}
\end{lem}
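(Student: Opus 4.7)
The plan is to reduce via Jensen's inequality to a uniform bound on the moments of a single auxiliary function. First, swapping the order of summation rewrites the integrand as $\sum_\ell L_\ell^2 S_\ell(x)$ with $S_\ell(x) := \sum_{r=0}^\ell \sum_{m \geq \ell - r + 1} \Phi_m(T^r x)$. Since $q - 1 \geq 1$, Jensen's inequality applied to the probability weights $L_\ell^2/\sum_k L_k^2$ yields
$$\left(\sum_\ell L_\ell^2 S_\ell\right)^{q-1} \leq \left(\sum_k L_k^2\right)^{q-2} \sum_\ell L_\ell^2 S_\ell^{q-1},$$
so it will suffice to prove $\int S_\ell^{q-1}\,\dd\mu \leq C$ with $C$ independent of $\ell$.

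Next, I would recognize $S_\ell$ as essentially a shift of a fixed function. For fixed $x$, only those indices $r$ with $T^r x \in \Delta_0$ contribute; list them as $r_0 < r_1 < \cdots$. Changing variables to $u = r + m$, the sum over $r$ at fixed $u \geq \ell + 1$ becomes geometric and is bounded by $\beta^{M(u)}/(1 - \beta)$, where $M(u)$ is the number of visits of $x$'s orbit to $\Delta_0$ in $(\ell, u]$. Summing in $u$ and setting $y = T^{\ell+1} x$ gives $S_\ell(x) \leq (1-\beta)^{-1} W(y)$, with
$$W(y) := \sum_{m \geq 0} \beta^{N^*_m(y)}, \qquad N^*_m(y) := \Card\{j \in [0, m] \st T^j y \in \Delta_0\}.$$
By invariance of $\mu$ under $T^{\ell+1}$, the problem reduces to proving $W \in L^{q-1}(\mu)$.

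The final step decomposes each orbit into its first excursion to $\Delta_0$ followed by the induced dynamics. Writing $\tau(y) := \inf\{k \geq 0 \st T^k y \in \Delta_0\}$ and $V(z) := \sum_{i \geq 0} \beta^i \phi(T_0^i z)$ on $\Delta_0$ (with $T_0$ the first-return map), splitting the sum defining $W$ by excursions yields $W(y) = \tau(y) + \beta V(T^{\tau(y)} y)$. The $(q-1)$-th moment of $\tau$ is easy: on $\Delta_{\alpha, h}$ with $h \geq 1$ one has $\tau = \phi(\alpha) - h$, so $\Proba(\tau > s) \leq \sum_{n \geq s+2} n\, c_n^{(q)} \leq d_s^{(q-2)}$ via~\eqref{serie_maj}. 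The main obstacle is the $V \circ T^\tau$ term, because the pushforward of $\mu$ under $y \mapsto T^{\tau(y)} y$ is not $\mu|_{\Delta_0}$ but has density proportional to $\phi$, so Minkowski's inequality alone is not enough. Decomposing $\mu$ level-by-level and using $T$-invariance together with the $T_0$-invariance of $\mu|_{\Delta_0}$ nonetheless yields the clean identity $\int (V \circ T^\tau)^{q-1}\,\dd\mu = \int_{\Delta_0} \phi \cdot (V \circ T_0)^{q-1}\,\dd\mu$. H\"older's inequality with exponents $q$ and $q/(q-1)$ together with Minkowski's then give the bound $\norm{\phi}_{L^q}^q / (1-\beta)^{q-1}$, which is finite by the moment hypothesis on $\phi$.
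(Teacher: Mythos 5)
Your proposal is correct, and it proves Lemma~\ref{lem_second_sum} by a genuinely different and more elementary route than the paper. The paper does not argue Lemma~\ref{lem_second_sum} in isolation: both Lemmas~\ref{lem_first_sum} and~\ref{lem_second_sum} are deduced from a more general statement about ``weight systems'' $u(r,k)$, namely
$\int\bigl(\sum_{k>r}u(r,k)(k-r)^\gamma\,\Phi_{k-r}\circ T^r\bigr)^Q\leq C\Sigma^Q$
for $\gamma,Q\geq1$ with $\gamma Q\leq q-1$. That lemma is proved by a reverse-martingale decomposition $S-\E S=\sum_s S_s\circ T^s$ combined with von~Bahr--Esseen (for $Q\in[1,2]$) and Rosenthal--Burkholder (for $Q>2$), an induction that halves $Q$ at each step, a five-case combinatorial analysis of the positions of $k,r$ relative to $s$ and $s-h$, and sharp operator bounds on $\hatT^n\Phi_m$ (Lemma~\ref{lem_hatT_Psi}).

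Your argument bypasses all of that machinery for the specific structure of Lemma~\ref{lem_second_sum}. Swapping the sums to write the integrand as $\sum_\ell L_\ell^2 S_\ell$ and applying Jensen reduces the problem to a single uniform bound $\int S_\ell^{q-1}\dd\mu\leq C$; the change of variables $u=r+m$ plus the geometric sum over the visit times $r\leq\ell$ identifies $S_\ell\leq(1-\beta)^{-1}W\circ T^{\ell+1}$ for a \emph{fixed} function $W$; $T$-invariance kills the $\ell$-dependence; and the excursion decomposition $W=\tau+\beta V\circ T^\tau$ together with the identity $\int(V\circ T^\tau)^{q-1}\dd\mu=\int_{\Delta_0}\phi\cdot(V\circ T_0)^{q-1}\dd\mu$ (which I checked, using both the level structure of $\mu$ and $T_0$-invariance of $\mu|_{\Delta_0}$) reduces to H\"older and Minkowski with $\|\phi\|_{L^q}$. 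This is shorter, avoids the renewal-operator estimates entirely, and is quite transparent. The trade-off is scope: your Jensen step crucially exploits that the weight $L_\ell^2$ multiplies a function of $x$ whose law is $\ell$-independent after pushing forward by $T^{\ell+1}$; for Lemma~\ref{lem_first_sum}, where the weight is $L_k^2(k-r+1)$, the analogous $T_k(x)=\sum_{m\leq k}(m+1)\Phi_m(T^{k-m}x)$ does not reduce to a shift of a fixed function (the map is noninvertible), so the same reduction does not go through. The paper's weight-system formulation, while heavier, handles both lemmas simultaneously and the extra generality is in fact needed to close the induction on $Q$.
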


We will prove a more general result, encompassing those two lemmas
and better suited to induction. We will need the following notion.

\begin{definition}
A weight system is a set of numbers $u(r,k)$ for $r<k$ such that
\begin{enumerate}
\item either $u(r,k)=M_k$ for all $r<k$,
\item or $u(r,k)=(\sum_{j=r}^{k-1} M_j)/(k-r)$ for all $r<k$,
\end{enumerate}
where $M_k$ is a summable sequence of nonnegative real numbers. In
both cases, let $\Sigma= \sum M_k$ be the sum of the weight system.
\end{definition}

Weight systems satisfy the following property.
\begin{lem}
\label{lem_somme_uniforme}
Let $u(r,k)$ be a weight system. For all $m>0$, we have $\sum_{r}
u(r,r+m) \leq \Sigma$.
\end{lem}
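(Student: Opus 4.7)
The plan is to verify the bound directly in each of the two cases defining a weight system, with no need for induction or combinatorics beyond swapping the order of summation.

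In the first case, $u(r,k)=M_k$, so $u(r,r+m)=M_{r+m}$. Summing over $r\geq 0$ gives
\begin{equation*}
\sum_{r\geq 0} u(r,r+m) = \sum_{r\geq 0} M_{r+m} = \sum_{k\geq m} M_k \leq \sum_k M_k = \Sigma,
\end{equation*}
where the last inequality uses nonnegativity of the $M_k$'s.

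In the second case, $u(r,k)=\frac{1}{k-r}\sum_{j=r}^{k-1} M_j$, so $u(r,r+m)=\frac{1}{m}\sum_{j=r}^{r+m-1} M_j$. Exchanging the order of summation (Fubini for nonnegative terms), each index $j$ is hit by exactly $m$ values of $r$, namely $r\in\{j-m+1,\dotsc,j\}$, so
\begin{equation*}
\sum_{r\geq 0} u(r,r+m) = \frac{1}{m}\sum_{r\geq 0} \sum_{j=r}^{r+m-1} M_j = \frac{1}{m}\sum_j M_j \cdot m = \sum_j M_j = \Sigma.
\end{equation*}

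There is essentially no obstacle: the definition is engineered precisely so that the mass $\Sigma$ is preserved (up to a harmless inequality in the first case) under the map $(M_k)\mapsto (u(r,r+m))_r$ for any fixed offset $m$. The point of isolating this lemma is not its difficulty but its uniformity in $m$, which will be crucial for the inductive estimates in the proof of Theorem~\ref{thm_most_technical}.
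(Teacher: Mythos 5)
Your proof is correct and takes essentially the same route as the paper: both cases reduce to an interchange of summation. One small imprecision in your second case: you assert that each index $j$ is hit by \emph{exactly} $m$ values of $r$ and write the chain as equalities ending in $=\Sigma$. That count is only the maximum; if $r$ ranges over a half-line (as it effectively does here), indices $j$ near the boundary are hit by fewer than $m$ values of $r$, so the middle step should be $\leq$ rather than $=$. The paper sidesteps this by bounding $\sum_r M_{r+j}\leq\Sigma$ for each of the $m$ shifts $j\in\{0,\dotsc,m-1\}$ without claiming an exact count. The conclusion $\sum_r u(r,r+m)\leq\Sigma$ is unaffected.
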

\begin{proof}
If $u(r,k)=M_k$, then $\sum u(r,r+m) = \sum M_{r+m} \leq \sum
M_r=\Sigma$. If $u(r,k)=(\sum_{j=r}^{k-1}M_j)/(k-r)$, then
  \begin{equation*}
  \sum u(r,r+m) = m^{-1} \sum_r \sum_{j=0}^{m-1} M_{r+j}
  \leq m^{-1} \sum_{j=0}^{m-1} \Sigma = \Sigma.
  \qedhere
  \end{equation*}
\end{proof}

We will also need the following fact.
\begin{lem}
\label{lem_build_new_weights}
Let $u(r,k)$ be a weight system with sum $\Sigma$, and let
$c_n^{(1)}$ be a sequence with a moment of order $1$. There exists a
weight system $v(r,k)$ with sum at most $C\Sigma$ such that, for all
$s<k$, we have $\sum_{r<s} u(r,k) c_{s-r}^{(1)} \leq v(s,k)$.
\end{lem}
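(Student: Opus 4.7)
The plan is to split according to the type of the weight system $u$ and handle each separately. The type~1 case is almost immediate, so most of the work will be in type~2, where I will convert a convolution-in-$r$ into a Ces\`aro-average in $j$.

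\emph{Type 1.} If $u(r,k)=M_k$, then $\sum_{r<s} u(r,k) c_{s-r}^{(1)} = M_k \sum_{n=1}^{s} c_n^{(1)} \leq C_1 M_k$, where $C_1 = \sum_{n\geq 1} c_n^{(1)} < \infty$. Hence the choice $v(s,k) = C_1 M_k$ is a type~1 weight system with sum $C_1 \Sigma$.

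\emph{Type 2.} Suppose $u(r,k) = (k-r)^{-1} \sum_{j=r}^{k-1} M_j$. Substituting $n=s-r$ and using $(k-s+n)^{-1}\leq (k-s)^{-1}$ for $n\geq 1$, I first bound
  \begin{equation*}
  \sum_{r<s} u(r,k) c_{s-r}^{(1)}
  \leq \frac{1}{k-s} \sum_{n=1}^{s} c_n^{(1)} \sum_{j=s-n}^{k-1} M_j.
  \end{equation*}
Exchanging the two summations and splitting on whether $j\geq s$ or $j<s$, the inner sum becomes $\sum_{n\geq \max(1,s-j)}^{s} c_n^{(1)}$. When $j\geq s$ this is bounded by $C_1$, and when $j<s$ it is bounded by $\sum_{n\geq s-j} c_n^{(1)} \leq d_{s-j}^{(0)}$ via~\eqref{serie_maj}. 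Introducing $B_s \coloneqq \sum_{j<s} M_j d_{s-j}^{(0)}$, I therefore obtain the clean bound
  \begin{equation*}
  \sum_{r<s} u(r,k) c_{s-r}^{(1)}
  \leq \frac{C_1}{k-s} \sum_{j=s}^{k-1} M_j + \frac{B_s}{k-s}.
  \end{equation*}

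The one subtle point is that the extra term $B_s$ depends only on $s$, not on $k$, so it is not obviously of type~2 form. The key observation is that setting $N_j \coloneqq C_1 M_j + B_j$ yields a single type~2 weight system $v(s,k) = (k-s)^{-1}\sum_{j=s}^{k-1} N_j$ dominating the right-hand side, because
  \begin{equation*}
  \frac{1}{k-s}\sum_{j=s}^{k-1} B_j \geq \frac{B_s}{k-s}
  \end{equation*}
(the $j=s$ term alone already covers $B_s$). The total mass is $\sum_j N_j = C_1\Sigma + \sum_j B_j$, and interchanging sums gives $\sum_j B_j = \sum_i M_i \sum_{n\geq 1} d_n^{(0)}$, which is $\leq C\Sigma$ since $\sum_n d_n^{(0)}<\infty$. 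Thus $v$ has sum $\leq C\Sigma$ as required. The main obstacle throughout is exactly this mismatch between the ``backward in $j$'' origin of $B_s$ and the ``forward in $j$'' structure of a type~2 weight, resolved by diagonalising through $N_j \geq B_j$.
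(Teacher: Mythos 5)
Your proof is correct and follows essentially the same route as the paper. After the substitution $n = s-r$ and the bound $(k-s+n)^{-1}\leq (k-s)^{-1}$, your sum-exchange and split on $j\geq s$ versus $j<s$ reproduce exactly the paper's bound, your $B_s$ is the paper's $\sum_{j<s} M_j c_{s-j}^{(0)}$, your $N_j$ is the paper's $M'_j$, and the observation that the $j=s$ summand alone dominates $B_s/(k-s)$ is the same domination the paper uses to wrap the stray term into a type~2 weight system.
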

\begin{proof}
Let $w(s,k) = \sum_{r<s} u(r,k) c_{s-r}^{(1)}$. If $u(r,k)$ is of the
first type (i.e., $u(r,k)=M_k$), then $w(s,k) = \sum_{r<s} M_k
c_{s-r}^{(1)} \leq CM_k$, and one can take $v(s,k)=CM_k$. If $u(r,k)$
is of the second type (i.e., $u(r,k)=(\sum_{j=r}^{k-1} M_j)/(k-r)$),
then
  \begin{align*}
  w(s,k) & = \sum_{r<s} u(r,k) c_{s-r}^{(1)}
  = \sum_{r<s} \frac{1}{k-r} \left(\sum_{j=r}^{k-1} M_j\right) c_{s-r}^{(1)}
  \\&
  \leq \frac{1}{k-s} \left( \sum_{j<s} M_j \sum_{r\leq j} c_{s-r}^{(1)}
  + \sum_{j=s}^{k-1}M_j \sum_{r<s} c_{s-r}^{(1)}\right)
  \\&
  \leq \frac{1}{k-s} \left( \sum_{j<s} M_j c_{s-j}^{(0)}
  + C\sum_{j=s}^{k-1}M_j \right).
  \end{align*}
Let $M'_s = CM_s + \sum_{j<s} M_j c_{s-j}^{(0)}$, we get $w(s,k) \leq
\frac{1}{k-s} ( M'_s+C\sum_{j=s+1}^{k-1}M_j)$, which is bounded by
$\frac{1}{k-s} \sum_{j=s}^{k-1}M'_j$. Moreover, $\sum M'_j \leq C\sum
M_j$ since the sequence $c_{n}^{(0)}$ is summable. This shows that
$w$ is bounded by a weight system $v$ with sum at most $C\Sigma$.
\end{proof}

The main lemma is the following:
\begin{lem}
Consider a weight system $u(r,k)$, and real numbers $\gamma\geq 1$
and $Q\geq 1$ with $\gamma Q\leq q-1$. We have
  \begin{equation*}
  \int \left( \sum_{k> r}u(r,k) (k-r)^\gamma \Phi_{k-r}\circ T^r \right)^Q
  \leq C \Sigma^{Q}.
  \end{equation*}
\end{lem}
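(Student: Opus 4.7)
The plan is to prove this by induction on the exponent $Q$, using the preceding auxiliary lemmas on weight systems together with the renewal-theoretic bounds of Subsection~\ref{subsec_renewal}. Write $F_u(x) = \sum_r \sum_{k > r} u(r,k)(k-r)^\gamma \Phi_{k-r}(T^r x)$ for the function whose $Q$-th moment is to be controlled. Throughout I will use the estimate $\int \Phi_m \, d\mu \leq c_m^{(q-1)}$, which is a minor modification of Lemma~\ref{lem_Psin} since $\Phi_m$ agrees with $\Psi_m$ restricted to $\Delta_0$ up to a harmless factor.

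For the base case $Q = 1$: Fubini and the $T$-invariance of $\mu$ give $\int F_u \, d\mu = \sum_m m^\gamma \bigl(\sum_r u(r,r+m)\bigr) \int \Phi_m \, d\mu$. Lemma~\ref{lem_somme_uniforme} bounds $\sum_r u(r,r+m) \leq \Sigma$, and the hypothesis $\gamma \leq q-1$ ensures $\sum_m m^\gamma c_m^{(q-1)} < \infty$, whence $\int F_u \leq C \Sigma$.

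For the inductive step, assume the result holds with exponent $Q - 1$ (the constraint $\gamma(Q-1) \leq q-1$ is a fortiori true). I would write $\int F_u^Q = \int F_u \cdot F_u^{Q-1}$ and expand the outer factor:
\[
\int F_u^Q = \sum_r \sum_{k > r} u(r,k)(k-r)^\gamma \int \Phi_{k-r} \circ T^r \cdot F_u^{Q-1} \, d\mu.
\]
The plan is to extract a summable decay kernel in $k - r$ from each inner integral, so that Lemma~\ref{lem_build_new_weights} can repackage the product of $u(r,k)$ with this kernel into a new weight system $v$ of sum $\leq C \Sigma$. To do so, I would expand $F_u^{Q-1}$ as a $(Q-1)$-fold sum and split the summation indices $r'$ according to whether they lie in $[0, r)$ or in $[r, \infty)$. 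The ``past'' indices decouple from $\Phi_{k-r} \circ T^r$ via conditioning on $\boF_r$, and the renewal estimate $\norm{T_k - \Pi} \leq c_k^{(q-2)}$ from~\eqref{TkOK_optimal} supplies the needed decay in $r - r'$. The ``future'' indices are handled by the change of variables $y = T^r x$, reducing to a correlation between $\Phi_{k-r}(y)$ and shifted copies of the basis functions from Subsection~\ref{subsec_renewal}, controlled by the spectral gap of $\hatT$. Once all decay kernels have been extracted and absorbed via Lemma~\ref{lem_build_new_weights}, the inductive hypothesis applied to $F_v^{Q-1}$ yields $\int F_u^Q \leq C \Sigma^Q$.

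The main obstacle will be the bookkeeping in the inductive step: the constraint $\gamma Q \leq q - 1$ is saturated in the intended applications (Lemmas~\ref{lem_first_sum} and~\ref{lem_second_sum} correspond to $\gamma = 1$, $Q = q - 1$), so there is no slack whatsoever. Each decay kernel produced must lie in exactly the right moment class, the polynomial factor $(k-r)^\gamma$ must be absorbed by $c_m^{(q-1)}$ without loss, and the transfer-operator bounds must be used in their sharp form. A secondary difficulty is that the ``future'' decomposition produces correlations with shifts $T^{r'-r}$ for several indices $r' \geq r$ simultaneously; unwinding these against the structure of $\Phi_{k-r}$ may well require a nested induction on the number of future indices to make the argument precise.
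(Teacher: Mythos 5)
Your base case $Q=1$ is correct (it matches the paper's computation of $\E(S(\gamma,u))$ via Lemma~\ref{lem_somme_uniforme} and the bound $\int\Phi_m \leq c_m^{(q-1)}$), and you have identified the role of Lemmas~\ref{lem_somme_uniforme} and~\ref{lem_build_new_weights}. But the inductive step as outlined has gaps that make the approach unworkable, not just onerous bookkeeping.

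First, the step $Q\to Q-1$ restricts you to integer $Q$, whereas the lemma must hold for all real $Q\geq 1$: the intended application is $Q=q-1$, and $q$ is not assumed to be an integer. The paper's induction instead runs $Q\to Q/2$ (with $\gamma\to 2\gamma$, which preserves $\gamma Q\leq q-1$), terminating when $Q$ falls into $[1,2]$, so it works over the reals.

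Second, and more seriously, writing $\int F_u^Q=\int F_u\cdot F_u^{Q-1}$ and trying to ``extract a summable decay kernel in $k-r$'' from $\int \Phi_{k-r}\circ T^r\cdot F_u^{Q-1}$ is precisely the multi-point correlation problem that the paper's martingale method is engineered to avoid. Your decoupling plan does not go through: $\Phi_{k-r}\circ T^r$ depends on the orbit over the entire window $[r,k]$, which overlaps with the ``future'' summands of $F_u^{Q-1}$ (those with $r'\geq r$); conditioning on $\boF_r$ does not separate these, and there is no reason for the resulting correlations to be small in $k-r$ uniformly in the other indices. Moreover, you invoke the ``spectral gap of $\hatT$'' to control the future part, but in the non-uniform Young tower with polynomial tails there is no spectral gap --- the transfer operator only has the polynomial renewal estimates of Proposition~\ref{prop_wiener} and~\eqref{TkOK_optimal}, and these apply to functions in a fixed Banach space on $\Delta_0$, not to arbitrary products of $\Phi$'s at shifted times. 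Expanding $F_u^{Q-1}$ as a $(Q-1)$-fold sum leads to $Q$-point correlation functions of the form $\int \prod_i \Phi_{m_i}\circ T^{r_i}$, for which no usable estimates are provided (or readily available).

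The paper's proof avoids all of this by decomposing $S(\gamma,u)-\E(S(\gamma,u))=\sum_s S_s\circ T^s$ into reverse martingale increments, estimating each $S_s$ pointwise (splitting the index range of $(k,r)$ into five cases relative to $s$ and the height $h$ of the preimage, and using the precise operator bounds of Lemma~\ref{lem_hatT_Psi}), and then applying von Bahr--Esseen for $Q\in[1,2]$ and Rosenthal--Burkholder for $Q>2$. The quadratic-variation term in Rosenthal--Burkholder naturally produces an expression bounded by $\Sigma^{Q/2}\int|S(2\gamma,v)|^{Q/2}$, which is exactly what drives the $Q\to Q/2$ induction. This reduction of high moments of the sum to second-moment control of the increments is what sidesteps the multi-point correlation problem your proposal runs into.
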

This result implies Lemmas~\ref{lem_first_sum}
and~\ref{lem_second_sum}, using it with $\gamma=1$, $Q=q-1$ and the
weights $L_k^2$ for the former, $(\sum_{\ell=r}^{k-1}
L_\ell^2)/(k-r)$ for the latter.

We will prove the lemma directly for $Q\in [1,2]$, while an induction
will be required for $Q>2$. When $u$ is a weight system, let us write
$S(\gamma,u) = \sum_{k> r}u(r,k) (k-r)^\gamma \Phi_{k-r}\circ T^r$.
We will construct another weight system $v(r,k)$ (with sum at most
$C\Sigma$) such that
  \begin{equation*}
  \int |S(\gamma,u)|^Q \leq C \Sigma^Q + C \Sigma^{Q/2} \int |S(2\gamma, v)|^{Q/2}.
  \end{equation*}
By induction, the last integral is bounded by $C \Sigma^{Q/2}$, and
we obtain the desired result.

Let us explain the strategy of the proof. First, since $\int
\Phi_n\leq c_n^{(q-1)}$ by Lemma~\ref{lem_hatT_Psi} below, we have
  \begin{equation*}
  \E(S(\gamma,u)) \leq \sum_{k>r} (k-r)^\gamma u(r,k) c_{k-r}^{(q-1)}
  = \sum_m m^\gamma c_m^{(q-1)}\left(\sum_r u(r, r+m)\right)
  \leq \sum_m m^\gamma c_m^{(q-1)} \Sigma,
  \end{equation*}
by Lemma~\ref{lem_somme_uniforme}. As $\gamma \leq \gamma Q \leq
q-1$, the sum in $m$ is finite, and we get $\E(S(\gamma,u))\leq
C\Sigma$. Consequently, to prove the lemma, it suffices to bound
$\int |S(\gamma,u)-\E(S(\gamma,u))|^Q$.

We decompose $S=S(\gamma,u)$ as $\E(S)+\sum_{s\geq 0} S_s \circ T^s$,
where $S_s\circ T^s$ is a sequence of reverse martingale differences:
writing $\boF_0$ for the Borel $\sigma$-algebra and
$\boF_s=T^{-s}\boF_0$, the function $S_s\circ T^s$ is
$\boF_s$-measurable and $\E(S_s\circ T^s |\boF_{s+1})=0$, i.e.,
$\E(S_s|\boF_1)=0$. For any function $f$, one has $E(f|\boF_s) =
(\hatT^s f)\circ T^s$, where $\hatT$ is the transfer operator.
Therefore, $S_s$ is given by $S_s(z) = \hatT^s S(z)
-\hatT^{s+1}S(Tz)$.

For $Q\in [1,2]$, the von Bahr-Esseen inequality
\cite{vonbahr_esseen} yields
  \begin{equation*}
  \int |S-\E(S)|^Q \leq \sum_s \E(|S_s|^Q|),
  \end{equation*}
while for $Q>2$ Rosenthal-Burkholder inequality gives an additional
term as follows:
  \begin{equation*}
  \int |S -\E(S)|^Q \leq \E\left( \sum_s \E(S_s^2|\boF_{1})\circ T^s\right)^{Q/2}
  + \sum_s \E(|S_s|^Q).
  \end{equation*}
We will split each function $S_s$ into several parts that will be
estimated separately. Plugging those bounds into the inequalities of
von Bahr-Esseen (for $Q\in  [1,2]$) and Rosenthal-Burkholder (for
$Q>2$) will give the desired result.

More precisely, if $h(x)\not=0$, we have $\E(|S_s| |\boF_1) = 0$ at
the (unique) preimage of $x$ and there is nothing to estimate. On the
other hand, if $h(x)=0$ and if $z$ is a preimage of $x$ under $T$, we
have
  \begin{equation*}
  S_s (z)= \hatT^s S(z) - \hatT^{s+1} S(x)
  = \sum_{k>r} (k-r)^\gamma u(r,k) ( \hatT^s (\Phi_{k-r}\circ T^r)(z) - \hatT^{s+1} (\Phi_{k-r}\circ T^r)(x)).
  \end{equation*}
When estimating $\E(S_s^2 |\boF_1)$ or $\E(|S_s|^Q |\boF_1)$, there
is a contribution coming from $\hatT^{s+1}S(x)$ (involving a sum over
$k>r$), and a contribution coming from the sum over the preimages $z$
of $x$ of $\hatT^s S(z)$ (involving a sum over $z$ and over $k>r$).
We will treat separately those contributions depending on the
positions of $k$ and $r$ with respect to $s$ and to $s-h$ (where $h$
is the height of the preimage $z$ of $x$ one is considering). Let
$\pi z$ be the projection of $z$ in the basis of the tower. If $h\leq
s$, we have $\hatT^s S(z) = \hatT^{s-h} S(\pi z)$. (This is the
interesting case: if $h>s$, then all the following estimates become
easier, we will not indicate the trivial modifications to be done in
this case.)

We will study separately the following cases:
\begin{enumerate}
\item $k>r\geq s+1$, contribution of $\hatT^{s-h}S(\pi
    z)-\hatT^{s+1}S(x)$;
\item $k>s+1> r$, contribution solely of $\hatT^{s+1}S(x)$;
\item $k>s-h$, $\min(s+1,k)>r$, contribution solely of
    $\hatT^{s-h}S(\pi z)$;
\item $s+1\geq k>s-h$, $r<k$, contribution solely of
    $\hatT^{s+1}S(x)$;
\item $s-h\geq k>r$, contribution of $\hatT^{s-h}S(\pi
    z)-\hatT^{s+1}S(x)$.
\end{enumerate}
We will treat separately those five contributions, and see that all
of them satisfy the desired bounds. We will need very precise
estimates on the transfer operator, given in the following lemma. We
recall that the notation $d_n^{(Q)}$ indicates a non-increasing
sequence with a moment of order $Q$.
\begin{lem}
\label{lem_hatT_Psi}
We have $\int \Phi_m \leq c_m^{(q-1)}$. For $h(z)=0$, we have
$\hatT^n \Phi_m(z) \leq c_n^{(q)} \Phi_{m-n}(z)$ if $n \leq m$, and
  \begin{equation*}
  \left|\hatT^n (\hatT^m\Phi_m)(z)
    -\sum_{b\leq n} e(b,m)\right|
  \leq
  \sum_{b=0}^n d_{n-b}^{(q-2)} \sum_{i=0}^m c_{b+m-i}^{(q)}c_i^{(q)},
  \end{equation*}
where the scalar $e(b,m)$ only depends on $b$ and $m$ and is bounded
by $\sum_{i=0}^m c_{b+m-i}^{(q)} c_i^{(q)}$.
\end{lem}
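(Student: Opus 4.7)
The plan is to handle all three estimates using the renewal operator framework of Subsection~\ref{subsec_renewal} together with the asymptotic $T_k = \Pi + E_k$ from Proposition~\ref{prop_wiener} (with $\norm{E_k}_\boC \leq c_k^{(q-2)}$, by~\eqref{TkOK_optimal}). The central object is an auxiliary renewal operator $W_n$ on $\boC$, defined by the generating series $\sum_n W_n z^n = (I - \beta \sum_j R_j z^j)^{-1}$. Since $\beta < 1$ and $\sum_j R_j z^j$ has spectral radius at most $1$ on the closed unit disk, Wiener's Lemma gives $\norm{W_n}_\boC \leq c_n^{(q)}$. Expanding the geometric series and reading off each term $R_{k_1} \dotsm R_{k_\ell}$ as tracking a trajectory in $\Delta_0$ with excursions of lengths $k_1, \dotsc, k_\ell$, one obtains, for $x \in \Delta_0$,
  \[
  W_n 1(x) = \sum_{\substack{T^n y = x \\ y \in \Delta_0}} g^{(n)}(y) \, \beta^{\#\{j \in [1,n]\,\st\, T^j y \in \Delta_0\}},
  \]
i.e., $W_n$ is the transfer operator weighted by $\beta$ at each return to $\Delta_0$.

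With $W_n$ in hand, the first two estimates are straightforward. For the pointwise bound, note that when $h(z) = 0$, any preimage $y \in \Delta_0$ of $z$ under $T^n$ has $T^n y = z \in \Delta_0$, so its visits to $\Delta_0$ in $[1, m]$ factor into those in $[1, n]$ and those in $(n, m]$. The latter count coincides with the number of visits of $z$'s orbit to $\Delta_0$ in $[1, m-n]$, yielding $\hatT^n \Phi_m(z) = \Phi_{m-n}(z) \cdot W_n 1(z)$ and hence the claim since $\norm{W_n 1}_\infty \leq c_n^{(q)}$. For the integral bound, since $\Phi_m$ is supported in $\Delta_0$, I would decompose $\int \Phi_m$ according to the position $\ell \in \{0, 1, \dotsc, m\}$ of the last visit of the orbit to $\Delta_0$ in $[1, m]$. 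The prefix produces $W_\ell 1$, while the tail is the indicator of $\{\phi > m - \ell\}$, leading to
  \[
  \int \Phi_m \leq \sum_{\ell = 0}^m \norm{W_\ell}_\boC \, \mu(\phi > m - \ell) \leq \sum_{\ell = 0}^m c_\ell^{(q)} \, c_{m - \ell}^{(q-1)} \leq c_m^{(q-1)},
  \]
by~\eqref{convol_OK} (using $\mu(\phi > k) \leq c_k^{(q-1)}$).

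The third estimate is the delicate one, and this is where I expect the main difficulty. Noting that $\hatT^n(\hatT^m \Phi_m)(z) = \hatT^{n+m}\Phi_m(z)$, I apply the same last-visit decomposition, but now the orbit continues past time $m$: writing $\ell$ for the last visit to $\Delta_0$ in $[1,m]$, $b > m - \ell$ for the subsequent first return time, and $a = n + m - \ell - b \in [0, n-1]$ for the remaining time up to $z$, the excursion structure factors via $R_b$ and $T_a$ into
  \[
  \hatT^{n+m}\Phi_m(z) = \sum_{a=0}^{n-1} T_a \!\left(\sum_{i=0}^{m} R_{n - a + i} \, W_{m - i} 1\right)\!(z),
  \]
after setting $i = m - \ell$. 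Substituting $T_a = \Pi + E_a$ then splits the right-hand side into a constant part, which produces $\sum_{b \leq n} e(b, m)$ with $e(b, m) = \int_{\Delta_0} \sum_i R_{b+i} W_{m-i} 1 \dd\mu$ (setting $b = n - a$, so that $e$ depends only on $b$ and $m$), and an error $\sum_a E_a(\sum_i R_{n-a+i} W_{m-i} 1)(z)$ evaluated at $z$. The bound $|e(b,m)| \leq \sum_i c_{b+m-i}^{(q)} c_i^{(q)}$ follows from $\int R_k f \dd\mu \leq \mu(\phi = k) \norm{f}_\infty \leq c_k^{(q)} \norm{f}_\infty$ applied with $f = W_{m-i} 1$, after the reindexing $i \to m - i$. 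The error is controlled by $\norm{E_a}_\boC \leq c_a^{(q-2)}$ and the Banach-algebra bound $\norm{R_{n-a+i} W_{m-i} 1}_\boC \leq c_{n-a+i}^{(q)} c_{m-i}^{(q)}$; the same reindexing and the substitution $b = n - a$ produce the claimed error bound $\sum_{b=0}^n d_{n-b}^{(q-2)} \sum_i c_{b+m-i}^{(q)} c_i^{(q)}$. The main obstacle will be the careful combinatorial bookkeeping of this last-visit/first-return decomposition and verifying that the scalars $e(b, m)$ are genuinely independent of $n$.
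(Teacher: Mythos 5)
Your proof is correct, and it arrives at the stated bounds by a genuinely different unfolding than the paper's, so it is worth comparing the two. The paper's strategy introduces the intermediate function $\Gamma = \hatT^m\Phi_m$, estimates it pointwise at each height (using the first identity $\hatT^n\Phi_m = \Phi_{m-n}\cdot U_n 1$ to get $|\Gamma|\leq c_{m-i}^{(q)}$ at height $i$, and $\Gamma=0$ above height $m$), and then applies the already-available decomposition $\mathds 1_{\Delta_0}\hatT^n = \sum_{k+b=n}T_kB_b$ to $\Gamma$; the bound on $\norm{B_b\Gamma}_\boC$ falls out from the pointwise bound on $\Gamma$ together with the fact that a preimage at height $i$ sits in a column of return time $b+i$. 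The scalars are then $e(b,m)=\int B_b\Gamma$. Your argument avoids the intermediate $\Gamma$ and instead decomposes $\hatT^{n+m}\Phi_m$ directly by the combinatorics of the last visit to $\Delta_0$ in $[0,m]$ (parametrized by $i=m-\ell$) and the subsequent first return (captured by $R_{n-a+i}$), arriving at $\sum_{a<n}T_a\bigl(\sum_i R_{n-a+i}W_{m-i}1\bigr)$. This is essentially the paper's expression with $B_b\Gamma$ unfolded: one checks that for $b\geq 1$, $B_b\Gamma = \sum_{i\geq 1}R_{b+i}W_{m-i}1$ and $B_0\Gamma = W_m 1$, so your main term differs from the paper's by a harmless reshuffling (your $e(b,m)$ absorb the $i=0$ terms $R_bW_m1$ while the paper's put all of $\int W_m1$ into $e(0,m)$; both families satisfy the stated bound $|e(b,m)|\leq\sum_i c_{b+m-i}^{(q)}c_i^{(q)}$ and depend only on $(b,m)$). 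Your bound $\int\Phi_m\leq c_m^{(q-1)}$ likewise uses the last-visit decomposition instead of the paper's height-decomposition of $\int\Gamma$, but both are a convolution $c^{(q)}\star c^{(q-1)}$. The combinatorial bookkeeping you flag as the main risk does check out: each $y\in\Delta_0\cap T^{-(n+m)}(z)$ determines a unique last visit $\ell\in[0,m]$ and first return past $m$, so the ranges $a\in[0,n-1]$, $i\in[0,m]$ are exactly right, and the $\beta$-exponent in $\Phi_m(y)$ equals that in $\Phi_\ell(y)=$ the $W_{m-i}$ weight. What the paper's route buys is that one never has to do this bookkeeping: introducing $\Gamma$ lets the pointwise bound proved in the first part do all the work, and $B_b\Gamma$ is estimated directly from it. What your route buys is that the identity of the $e(b,m)$ and the structure of the error term are completely explicit from the start. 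Minor cosmetic points: your $W_n$ is exactly the paper's $U_n$ with $\rho=\beta$ (you may just cite Lemma~\ref{lem_Psin}), and the bound $\norm{E_a}\leq c_a^{(q-2)}$ from~\eqref{TkOK_optimal} is a tail sum hence automatically of the nonincreasing type $d_a^{(q-2)}$, which is what appears in the final statement.
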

The function $\Phi_m$ involves $m$ iterates of the transformation.
While the transfer operator is eliminating some number $n\leq m$ of
those iterates, the improvement in the estimates depends on $n$, and
$m-n$ iterates remain ready to be used (under the form of
$\Phi_{m-n}$). Once all the variables are eliminated, $\hatT^n
(\hatT^m\Phi_m)$ converges to the integral of $\Phi_m$ (which is
equal to $\sum_{b\geq 0} e(b,m)$), with a more complicated error term
whose precise form will play an important role later on.
\begin{proof}
Let us first assume $n\leq m$. In this case, $\hatT^n
\Phi_m(z)=\Phi_{m-n}(z)\cdot U_n 1(z)$, where the operator $U_n$ was
introduced in the proof of Lemma~\ref{lem_Psin}. We proved there that
$\norm{U_n}\leq c_n^{(q)}$, the desired estimate follows.

For any point $x$ with height $i\in [0,m]$, we obtain $\hatT^m
\Phi_m(x)=\hatT^{m-i}\Phi_m(\pi x) \leq c_{m-i}^{(q)}$. On the other
hand, if $h(x)=i>m$, we have $\hatT^m \Phi_m(x)=\Phi_m(T^{-m}x)=0$,
since $\Phi_m$ vanishes on points with positive height by definition.
Let $\Gamma=\hatT^m \Phi_m$.

We obtain
  \begin{equation*}
  \int \Phi_m = \int \Gamma \leq \sum_{i=0}^m \mu\{h=i\} c_{m-i}^{(q)}
  \leq \sum_{i=0}^m c_i^{(q-1)} c_{m-i}^{(q)} \leq c_m^{(q-1)}.
  \end{equation*}

Let us now study $\hatT^n (\hatT^m \Phi_m)=\hatT^n \Gamma$, using the
previous information regarding $\Gamma$. We will use the operators
$T_k$ and $B_b$ that were introduced in
Subsection~\ref{subsec_renewal}, so that $\hatT^n
\Gamma(z)=\sum_{k+b=n} T_k B_b \Gamma(z)$ for $h(z)=0$. We explained
there that $T_k=\Pi+E_k$ where $\Pi f=(\int f)1_{\Delta_0}$, and
$\norm{E_k}\leq d_k^{(q-2)}$. Hence,
  \begin{equation*}
  \hatT^n \Gamma(z)= \Pi\cdot\sum_{b\leq n} B_b\Gamma
  + \sum_{k+b=n} E_k B_b \Gamma(z).
  \end{equation*}
We estimate first $\norm{B_b\Gamma}$. We have $B_b\Gamma(x)=\sum
g^{(b)}(y) \Gamma(y)$, where we sum over the points $y\in T^{-b}(x)$
not returning to $\Delta_0$ before time $b$. If $h(y)=i$, the point
$\pi y$ has a return time to the basis equal to $b+i$. Therefore,
$|B_b\Gamma(x)|\leq \sum_{i=0}^m c_{b+i}^{(q)} c_{m-i}^{(q)} =
\sum_{i=0}^m c_{b+m-i}^{(q)} c_{i}^{(q)} $ (in view of the bound on
$\Gamma$ at height $i$). The Lipschitz norm of $B_b\Gamma$ is
estimated in the same way. Thus,
  \begin{equation*}
  \sum_{k+b=n} \norm{E_k B_b\Gamma}
  \leq \sum_{k+b=n} d_k^{(q-2)} \sum_{i=0}^m c_{b+m-i}^{(q)}c_i^{(q)}.
  \end{equation*}
Finally, the statement of the lemma is satisfied letting $e(b,m)=\int
B_b\Gamma=\Pi(B_b\Gamma)$. This scalar is independent of $n$ and
bounded by $\sum_{i=0}^m c_{b+m-i}^{(q)} c_i^{(q)}$.
\end{proof}

We will use the following simple remark. For $\kappa\geq 2$ and
$x,y\geq 0$, we have $(x+y)^\kappa \leq x^\kappa + C y
(x+y)^{\kappa-1}$ (by Taylor's formula). By induction, this implies
  \begin{equation}
  \label{ineq_recur}
  \left(\sum_{i=1}^n x_i\right)^\kappa
  \leq C \sum_{i=1}^n x_i \cdot \left(\sum_{j=1}^i x_j \right)^{\kappa-1}.
  \end{equation}

\subsection{The case \texorpdfstring{$k>r\geq s+1$}{k>r>=s+1}}
When $k>r\geq s+1$, we have $\hatT^{s+1}(\Phi_{k-r}\circ
T^r)(x)=\Phi_{k-r}\circ T^{r-s-1}(x)$, while
$\hatT^{s-h}(\Phi_{k-r}\circ T^r)(\pi z) = \Phi_{k-r}\circ
T^{r-s+h}(\pi z)$. Since $T^{h+1}(\pi z)=x$, those terms coincide,
and their contribution to $S_s(z)$ vanishes.

\subsection{The case \texorpdfstring{$k>s+1>r$}{k>s+1>r}, contribution of
\texorpdfstring{$\hatT^{s+1}S(x)$}{L(s+1)S(x)}} The contribution from
$\Phi_{k-r}\circ T^r$ satisfies
  \begin{equation*}
  \hatT^{s+1}(\Phi_{k-r}\circ T^r) = \hatT^{s+1-r} \Phi_{k-r}
  \leq c_{s+1-r}^{(q)} \Phi_{k-s-1}(x),
  \end{equation*}
by Lemma~\ref{lem_hatT_Psi}. Summing those contributions to $S_s(z)$
(for varying $k$ and $r$) gives a term which is bounded by
  \begin{equation*}
  S_s^{(2)} = \sum_{k>s+1>r} (k-r)^\gamma u(r,k) c_{s+1-r}^{(q)} \Phi_{k-s-1}(x).
  \end{equation*}
Let us note that this term does not depend on $z$. Since $k-r =
(k-s-1)+(s+1-r) \leq 2 (k-s-1)(s+1-r)$ and since $(s+1-r)^\gamma
c_{s+1-r}^{(q)} \leq c_{s+1-r}^{(q-\gamma)}$, we have
  \begin{equation*}
  S_s^{(2)} \leq \sum_{k>s+1} \sum_{r\leq s} u(r,k) c_{s+1-r}^{(q-\gamma)}
  (k-s-1)^\gamma \Phi_{k-s-1}(x).
  \end{equation*}
By Lemma~\ref{lem_build_new_weights}, there exists a new weight
system $v$ such that $\sum_{r\leq s} u(r,k) c_{s+1-r}^{(q-\gamma)}
\leq v(s+1,k)$, yielding $S_s^{(2)} \leq \sum_{k>s+1} v(s+1,k)
(k-s-1)^\gamma \Phi_{k-s-1}(x)$. Moreover, the sum of the weight $v$
is at most $C\Sigma$.

Let $\kappa\geq 1$, we estimate $|S_s^{(2)}(z)|^\kappa$. We apply the
inequality~\eqref{ineq_recur} to $x_k = v(s+1,k) (k-s-1)^\gamma
\Phi_{k-s-1}$, yielding
  \begin{equation*}
  |S_s^{(2)}|^\kappa \leq \sum_{k>s+1} v(s+1,k) (k-s-1)^\gamma \Phi_{k-s-1} \cdot
  \left( \sum_{s+1< \ell \leq k} v(s+1,\ell) (\ell-s-1)^\gamma \right)^{\kappa-1}.
  \end{equation*}
We claim that the last sum is bounded by $C(k-s-1)^\gamma \Sigma$.
Indeed, if the weight $v$ is of the first type (i.e.,
$v(r,\ell)=M_\ell$), then we bound $(\ell-s-1)^\gamma$ by
$(k-s-1)^\gamma$, to obtain $(k-s-1)^\gamma\sum_{\ell=s+2}^k M_\ell
\leq C(k-s-1)^\gamma \Sigma$. On the other hand, if $v$ is of the
second type (i.e., $v(r,\ell)=(\sum_{j=r}^{\ell-1} M_j)/(\ell-r)$),
then the sum is bounded by
  \begin{align*}
  \sum_{\ell=s+2}^k \sum_{j=s+1}^{\ell-1} M_j (\ell-s-1)^{\gamma-1}
  &\leq (k-s-1)^{\gamma-1} \sum_{j=s+1}^{k-1} M_j (k-j)
  \\&
  \leq (k-s-1)^\gamma \sum_{j=s+1}^{k-1} M_j \leq (k-s-1)^\gamma \Sigma.
  \end{align*}
We have proved that, for all $\kappa\geq 1$,
  \begin{equation}
  \label{eq_domine_Sb2}
  |S_s^{(2)}|^\kappa \leq C\sum_{k>s+1} v(s+1,k) (k-s-1)^{\kappa\gamma} \Phi_{k-s-1}  \Sigma^{\kappa-1}.
  \end{equation}

Let us now assume that $Q\in [1,2]$, and let us consider the
contribution of $S_s^{(2)}$ to von Bahr-Esseen inequality. It is
given by
  \begin{equation*}
  \sum_s \E( |S_s^{(2)}|^Q) =
  \sum_s \E( \E(|S_s^{(2)}|^Q|\boF_{1}))
  \leq \sum_s C\sum_{k>s+1} v(s+1,k) (k-s-1)^{Q\gamma} \E(\Phi_{k-s-1})  \Sigma^{Q-1},
  \end{equation*}
by~\eqref{eq_domine_Sb2}. Since $\E(\Phi_{k-s-1}) \leq
c^{(q-1)}_{k-s-1}$, this can be written (letting $k=s+1+m$) as
$\Sigma^{Q-1}\sum_m m^{Q\gamma}c_m^{(q-1)}\sum_s v(s+1, s+1+m)$. For
fixed $m$, the sum $\sum_s v(s+1, s+1+m)$ is bounded by $C\Sigma$ by
Lemma~\ref{lem_somme_uniforme}. As $Q\gamma \leq q-1$,
$m^{Q\gamma}c_m^{(q-1)}$ is summable, and we obtain a bound
$C\Sigma^Q$ as desired.

Assume now $Q>2$. In this case, the second term in the
Rosenthal-Burkholder inequality is bounded by $C\Sigma^Q$ as above.
Using~\eqref{eq_domine_Sb2} (with $\kappa=2$), the first term is at
most
  \begin{equation*}
  C\int \left( \sum_s \sum_{k>s+1} v(s+1,k) (k-s-1)^{2\gamma} \Phi_{k-s-1}\circ T^{s+1}\cdot \Sigma \right)^{Q/2}
  = C\Sigma^{Q/2} \int |S(2\gamma, v)|^{Q/2}.
  \end{equation*}
Since $\gamma'=2\gamma$ and $Q'=Q/2$ satisfy $\gamma'Q'\leq q-1$, we
can argue by induction to show that this term is again bounded by
$\Sigma^Q$.

\subsection{The case \texorpdfstring{$k>s-h$, $\min(s+1,k)> r$}{k>s-h, min(s+1, k)>r}, contribution of
    \texorpdfstring{$\hatT^{s-h}S(\pi z)$}{L(s-h)S(pi z)}}
We should study $S^{(3)}_s(z) = \hatT^{s-h}(\sum_{k>s-h} \sum_{r\leq
\min(s, k-1)} u(r,k) (k-r)^\gamma \Phi_{k-r}\circ T^r)(\pi z)$.

If $k>s-h$ and $r\in (s-h, s]$ with $r<k$, we have
$\hatT^{s-h}(\Phi_{k-r}\circ T^r)(\pi z) = \Phi_{k-r}\circ
T^{r-(s-h)}(\pi z)$. Since the point $T^{r-(s-h)}(\pi z)$ has
positive height, the function $\Phi_{k-r}$ vanishes here. Therefore,
we only have to consider the contribution of $k>s-h\geq r$. This is
exactly the same thing as in the previous subsection, but for the
point $\pi z$ instead of $x$. The inequality~\eqref{eq_domine_Sb2}
gives, for all $\kappa\geq 1$,
  \begin{equation*}
  |S_s^{(3)}(z)|^\kappa \leq C\sum_{k>s-h} v(s-h,k) (k-s+h)^{\kappa\gamma} \Phi_{k-s+h}(\pi z)  \Sigma^{\kappa-1},
  \end{equation*}
where $v$ is a weight system with sum at most $C\Sigma$. For $k\in
(s-h,s+1]$, we simply bound $\Phi_{k-s+h}(\pi z)$ by $1$, while for
$k>s+1$ we bound it by $\Phi_{k-s-1}(x)$, since $T^{h+1}(\pi z)=x$.
Summing over the preimages $z$ of $x$, we get
  \begin{align*}
  \E(|S_s^{(3)}|^\kappa|\boF_1)
  \leq C\Sigma^{\kappa-1}\sum_{h\geq 0} c_h^{(q)} \Biggl(
  &\sum_{k=s-h+1}^{s+1} v(s-h,k) (k-s+h)^{\kappa\gamma}
  \\ &
  + \sum_{k>s+1} v(s-h,k) (k-s+h)^{\kappa\gamma} \Phi_{k-s-1}(x)\Biggr).
  \end{align*}
In the first sum, we bound $k-s+h$ by $h+1$ and we use the inequality
$(h+1)^{\kappa\gamma} c_h^{(q)} \leq c_h^{(q-\kappa\gamma)}$. In the
second sum, we have $c_h^{(q)} (k-s+h)^{\kappa\gamma} \leq
c_h^{(q-\kappa\gamma)} (k-s-1)^{\kappa\gamma}$ by the same argument.
If $\kappa\gamma\leq q-1$, the quantity $\sum_{h\geq 0}
c_h^{(q-\kappa\gamma)} v(s-h, k)$ is bounded by $w(s+1,k)$ where $w$
is a weight system with sum at most $C\Sigma$, by
Lemma~\ref{lem_build_new_weights}. We obtain
  \begin{equation}
  \label{wmpxcvu}
  \begin{split}
  \E(|S_s^{(3)}|^\kappa|\boF_1)
  \leq C\Sigma^{\kappa-1} \Biggl(
  &
  \sum_{h\geq 0} \sum_{k=s-h+1}^{s+1} c_h^{(q-\kappa\gamma)}v(s-h,k)
  \\ &
  + \sum_{k>s+1} w(s+1,k) (k-s-1)^{\kappa\gamma} \Phi_{k-s-1}(x)\Biggr).
  \end{split}
  \end{equation}
The second term is identical to the term appearing in the previous
subsection, in~\eqref{eq_domine_Sb2}. It follows in the same way that
its contribution to the inequalities of von Bahr-Esseen (case $Q\in
[1,2]$) and Rosenthal-Burkholder (case $Q>2$) is bounded by
$C\Sigma^Q$.

Let us consider the first term, first in von Bahr-Esseen inequality
(case $Q\in [1,2]$). Thanks to~\eqref{wmpxcvu} (with $\kappa=Q$), its
contribution is given by
  \begin{align*}
  \sum_s C\Sigma^{Q-1} \sum_{h\geq 0} \sum_{k=s-h+1}^{s+1} c_h^{(q-Q\gamma)}v(s-h,k)
  &= C\Sigma^{Q-1} \sum_{h\geq 0} c_h^{(q-Q\gamma)} \sum_{m=1}^{h+1} \sum_s v(s-h, s-h+m)
  \\&\leq C \Sigma^{Q-1} \sum_{h\geq 0} c_h^{(q-Q\gamma)} \sum_{m=1}^{h+1} \Sigma
  = C \Sigma^{Q} \sum_{h\geq 0} c_h^{(q-Q\gamma-1)},
  \end{align*}
where we used Lemma~\ref{lem_somme_uniforme} for the inequality.
Since $Q\gamma\leq q-1$, this is bounded by $C\Sigma^Q$.

When $Q>2$, we use the Rosenthal-Burkholder inequality. As above, the
last term in this inequality is bounded by $C\Sigma^Q$.
Using~\eqref{wmpxcvu} (with $\kappa=2$), the first term is bounded by
  \begin{equation*}
  \left(\sum_s  C \Sigma  \sum_{h\geq 0} \sum_{k=s-h+1}^{s+1} c_h^{(q-2\gamma)}v(s-h,k)\right)^{Q/2}.
  \end{equation*}
The same computation as above shows that this is bounded by
$(C\Sigma^2)^{Q/2}$.

\subsection{The case \texorpdfstring{$s+1\geq k>s-h$, $r<k$}{s+1>=k>s-h, r<k}, contribution of
    \texorpdfstring{$\hatT^{s+1}S(x)$}{L(s+1)S(x)}}
The contribution coming from $\Phi_{k-r}\circ T^r$ satisfies
  \begin{equation*}
  \hatT^{s+1}(\Phi_{k-r}\circ T^r) = \hatT^{s+1-k} \hatT^{k-r} \Phi_{k-r},
  \end{equation*}
which is controlled by Lemma~\ref{lem_hatT_Psi}. Summing over $k\in
[s-h+1, s+1]$ and $r<k$, we obtain that the resulting contribution
$S_s^{(4)}$ is bounded by
  \begin{align*}
  \sum_{k=s-h+1}^{s+1} \sum_{r<k}u(r,k) (k-r)^\gamma \Biggl( &
  \sum_{b\leq s+1-k} \sum_{i=0}^{k-r} c_{b+k-r-i}^{(q)}c_i^{(q)}
  \\&
  + \sum_{b\leq s+1-k} d_{s+1-k-b}^{(q-2)} \sum_{i=0}^{k-r} c_{b+k-r-i}^{(q)}c_i^{(q)}\Biggr).
  \end{align*}
Since $d_{s+1-k-b}^{(q-2)}$ is bounded, the second term is bounded by
the first one. Since $k-r\leq (b+k-r-i) + i$, we have $k-r \leq
(b+k-r-i+1)(i+1)$, yielding $(k-r)^\gamma c_{b+k-r-i}^{(q)}c_i^{(q)}
\leq c_{b+k-r-i}^{(q-\gamma)}c_i^{(q-\gamma)}$. For $\kappa\geq 1$,
we obtain (letting $m=k-r$)
  \begin{equation*}
  \E(|S_s^{(4)}|^\kappa|\boF_{1})
  \leq \sum_{h\geq 0} c_h^{(q)} \left( \sum_{k=s-h+1}^{s+1} \sum_{b\leq s+1-k}
  \sum_{i\geq 0} c_i^{(q-\gamma)} \sum_{m\geq i} u(k-m, k) c_{b+m-i}^{(q-\gamma)}\right)^\kappa.
  \end{equation*}
Summing over $s$ and using the inequality $\sum x_i^\kappa \leq (\sum
x_i)^\kappa$, we get
  \begin{equation*}
  \sum_s \E(|S_s^{(4)}|^\kappa|\boF_{1})\circ T^s
  \leq \sum_{h\geq 0} c_h^{(q)} \left( \sum_s \sum_{k=s-h+1}^{s+1} \sum_{b\leq s+1-k}
  \sum_{i\geq 0} c_i^{(q-\gamma)} \sum_{m\geq i} u(k-m, k) c_{b+m-i}^{(q-\gamma)}\right)^\kappa.
  \end{equation*}
We reorganize the sums as follows. First, we write $s+1 = k+a$ for
some $a\in [0, h]$, so that the first three sums are replaced by
$\sum_{a=0}^h \sum_k \sum_{b\leq a}$. Then, we move the sum over $k$
to the end: since $\sum_k u(k-m, k) \leq \Sigma$ for all $m$ by
Lemma~\ref{lem_somme_uniforme}, we get a bound
  \begin{equation*}
  \Sigma^\kappa \sum_{h\geq 0} c_h^{(q)} \left(\sum_{a=0}^h \sum_{b\leq a} \sum_{i\geq 0} c_i^{(q-\gamma)}
  \sum_{m\geq i} c_{b+m-i}^{(q-\gamma)}\right)^\kappa.
  \end{equation*}
The sum over $m\geq i$ is bounded by $d_b^{(q-\gamma-1)}$. The
(finite) quantity $\sum_{i\geq 0} c_i^{(q-\gamma)}$ can be factorized
out, giving a multiplicative constant. Since the sum $\sum_{b\leq a}
d_b^{(q-\gamma-1)}$ is uniformly bounded, we get an upper bound
$\Sigma^\kappa \sum_{h\geq 0} (h+1)^\kappa c_h^{(q)} \leq
C\Sigma^\kappa$, when $\kappa\leq q$.

This readily implies that the contributions of $S_s^{(4)}$ to the
inequalities of von Bahr-Esseen (case $1\leq Q\leq 2$) and
Rosenthal-Burkholder (case $Q>2$) are bounded by $\Sigma^Q$, as
desired.

\subsection{The case \texorpdfstring{$s-h\geq k>r$}{s-h>=k>r}}
The contribution coming from $\Phi_{k-r}\circ T^r$ reads
  \begin{equation*}
  \hatT^{s-h}(\Phi_{k-r}\circ T^r)(\pi z) - \hatT^{s+1}(\Phi_{k-r}\circ
  T^r)(x)
  =\hatT^{s-h-k} \hatT^{k-r}\Phi_{k-r}(\pi z) - \hatT^{s+1-k} \hatT^{k-r}\Phi_{k-r}(x).
  \end{equation*}
To estimate those contributions, we use Lemma~\ref{lem_hatT_Psi}. The
main terms $e(b, k-r)$ simplify partially: only those corresponding
to $s-h-k<b \leq s+1-k$ remain. As a consequence, the global
contribution $S_s^{(5)}(z)$ is bounded by
  \begin{equation*}
  \sum_{s-h\geq k>r} (k-r)^\gamma u(r,k)\left(
  \sum_{b=s-h-k+1}^{s+1-k} \sum_{i=0}^{k-r}c_{b+k-r-i}^{(q)}c_i^{(q)}
  + \sum_{b=0}^{s-h-k} d_{s-h-k-b}^{(q-2)} \sum_{i=0}^{k-r}c_{b+k-r-i}^{(q)}c_i^{(q)}\right).
  \end{equation*}
Let us first note that $(k-r)^\gamma c_{b+k-r-i}^{(q)}c_i^{(q)} \leq
c_{b+k-r-i}^{(q-\gamma)}c_i^{(q-\gamma)}$ as in the previous
subsection. We will then handle separately the two pieces
$S_s^{(5.1)}(z)$ and $S_s^{(5.2)}(z)$ of this expression.

Summing over $h$ and then over $s$, and using the inequality $\sum
x_i^\kappa \leq (\sum x_i)^\kappa$ as in the previous subsection, we
get
  \begin{equation*}
  \sum_s \E(|S_s^{(5.1)}|^\kappa|\boF_{1})\circ T^s
  \leq \sum_{h\geq 0} c_h^{(q)} \left( \sum_s \sum_{k\leq s-h} \sum_{b=s-h-k+1}^{s+1-k}
  \sum_{i\geq 0} c_i^{(q-\gamma)} \sum_{m\geq i} u(k-m, k) c_{b+m-i}^{(q-\gamma)}\right)^\kappa.
  \end{equation*}
Let us reorganize the sums essentially as in the previous subsection.
First, let $s+1-h=k+a$ for some $a\geq 1$, so that the first sums
become $\sum_{a\geq 1} \sum_k \sum_{b=a}^{a+h}$. Then, we move the
sum over $k$ to the end, and we use the inequality $\sum_k
u(k-m,k)\leq \Sigma$ for all $m$. This yields a bound
  \begin{equation*}
  \Sigma^\kappa \sum_{h\geq 0} c_h^{(q)} \left( \sum_{a\geq 1} \sum_{b=a}^{a+h}
  \sum_{i\geq 0} c_i^{(q-\gamma)} \sum_{m\geq i} c_{b+m-i}^{(q-\gamma)}\right)^\kappa.
  \end{equation*}
The last sum over $m$ is bounded by $d_b^{(q-\gamma-1)}$, which is
independent of $i$. Therefore, we may factorize out the sum over $i$,
since $\sum_i c_i^{(q-\gamma)} < \infty$. Since $d_b^{(q-\gamma-1)}$
is nonincreasing, we have $\sum_{b=a}^{a+h} d_b^{(q-\gamma-1)} \leq
(h+1) d_a^{(q-\gamma-1)}$. As $q-\gamma-1\geq 0$, the sequence
$d_a^{(q-\gamma-1)}$ is summable, giving yet another multiplicative
constant. We obtain a bound $C \Sigma^\kappa \sum_{h\geq 0}
(h+1)^\kappa c_h^{(q)} \leq C\Sigma^\kappa$ when $\kappa\leq q$.

Let us now study $S_s^{(5.2)}(z)$. We have
  \begin{multline*}
  \sum_s \E(|S_s^{(5.2)}|^\kappa|\boF_{1})\circ T^s
  \\
  \leq \sum_{h\geq 0} c_h^{(q)} \left( \sum_s \sum_{k\leq s-h} \sum_{b=0}^{s-h-k}
  d_{s-h-k-b}^{(q-2)}
  \sum_{i\geq 0} c_i^{(q-\gamma)} \sum_{m\geq i} u(k-m, k) c_{b+m-i}^{(q-\gamma)}\right)^\kappa.
  \end{multline*}
We proceed exactly as above, with the difference that the sum over
$b$ goes from $0$ to $a-1$. We get a bound
  \begin{equation*}
  C\Sigma^\kappa \sum_{h\geq 0} c_h^{(q)} \left(\sum_{a\geq 1} \sum_{b=0}^{a-1} d_{a-1-b}^{(q-2)}
  \cdot d_b^{(q-\gamma-1)} \right)^\kappa.
  \end{equation*}
Since $q-\gamma-1 \leq q-2$, the convolution between
$d_{a-1-b}^{(q-2)}$ and $d_b^{(q-\gamma-1)}$ is bounded by
$c_{a-1}^{(q-\gamma-1)}$. As $\gamma+1\leq q$, the sum over $a$ is
finite, and we obtain a bound $\Sigma^\kappa$.

Gluing the two pieces together, we have shown that $\sum_s
\E(|S_s^{(5)}|^\kappa |\boF_{1})\circ T^s \leq C\Sigma^\kappa$ for
all $\kappa\leq q$. This readily implies that the contributions of
$S_s^{(5)}$ to the inequalities of von Bahr-Esseen (case $1\leq Q\leq
2$) and Rosenthal-Burkholder (case $Q>2$) are bounded by $\Sigma^Q$,
as desired.

\bibliography{biblio}
\bibliographystyle{amsalpha}
\end{document}